\newcommand{\bburl}[1]{\textcolor{blue}{\url{#1}}}
\newtheorem{thm}{Theorem}[section]
\newtheorem{cor}[thm]{Corollary}
\newtheorem{lem}[thm]{Lemma}
\newtheorem{prop}[thm]{Proposition}
\newtheorem{cla}[thm]{Claim}
\theoremstyle{definition}
\theoremstyle{definition}
\newtheorem{defi}[thm]{Definition}
\newtheorem{rem}[thm]{Remark}
\theoremstyle{remark}
\newcommand\be{\begin{equation}}
\newcommand\ee{\end{equation}}
\newcommand\bee{\begin{equation*}}
\newcommand\eee{\end{equation*}}
\newcommand\ben{\begin{enumerate}}
\newcommand\een{\end{enumerate}}
\newcommand{\A}{\ensuremath{\mathbb{A}}}
\newcommand{\R}{\ensuremath{\mathbb{R}}}
\newcommand{\C}{\ensuremath{\mathbb{C}}}
\newcommand{\Z}{\ensuremath{\mathbb{Z}}}
\newcommand{\Q}{\mathbb{Q}}
\numberwithin{equation}{section}
\DeclareMathOperator{\Proj}{Proj}
\DeclareMathOperator{\sgn}{sgn}
\DeclareMathOperator{\re}{Re}
\DeclareMathOperator{\im}{Im}
\DeclareMathOperator{\diag}{diag}
\newcommand{\ovA}[1]{\overline{#1\raisebox{3mm}{}}}
\newcommand{\ovB}[1]{\overline{#1\raisebox{4mm}{}}}
\newcommand{\ovC}[1]{\overline{#1\raisebox{5mm}{}}}
\newcommand\reallywidehat[1]{%
	\savestack{\tmpbox}{\stretchto{%
			\scaleto{%
				\scalerel*[\widthof{\ensuremath{#1}}]{\kern.1pt\mathchar"0362\kern.1pt}%
				{\rule{0ex}{\textheight}}
			}{\textheight}%
		}{2.4ex}}%
	\stackon[-6.9pt]{#1}{\tmpbox}%
}
\title[Spectral Moment Formulae for $GL(3)\times GL(2)$ $L$-functions I: The Cuspidal Case]{Spectral Moment Formulae for $GL(3)\times GL(2)$ $L$-functions I: The Cuspidal Case}
\author[C.-H. Kwan]{Chung-Hang Kwan}
\email{\textcolor{blue}{\href{mailto: ucahckw@ucl.ac.uk}
{ucahckw@ucl.ac.uk}}}
\address{University College London}
\subjclass[2010]{11F55 (Primary) 11F72 (Secondary)}
\keywords{Automorphic Forms,  Automorphic $L$-functions, Maass forms,  Moments of $L$-functions,  Rankin-Selberg $L$-functions,  Period Integrals, Whittaker Functions, Hypergeometric Functions,  Poincar\'e series}
\date{\today}
\begin{document}

\maketitle

\begin{abstract}
	Spectral moment formulae  of various shapes have proven to be  very successful  in studying the statistics of central $L$-values.    In this article, we establish, in a completely explicit fashion,   such formulae for the family of  $GL(3)\times GL(2)$  Rankin-Selberg $L$-functions using the  period integral method.  The Kuznetsov and the Voronoi formulae are not needed in our argument.   We also prove the essential analytic properties and explicit formulae for the integral transform of our moment formulae.  It is hoped that our method  will provide insights into moments of $L$-functions for higher-rank groups.

	\end{abstract}
	
	
	


\section{Introduction}

\subsection{Background}

The study of  $L$-values at the central point $s=1/2$ has taken center stage for many branches of  number theory in the past  decades because of the great arithmetic significance behind them. There have been a variety of interesting perspectives furnishing our understanding of the nature of central $L$-values. As an example, one may wish to take a statistical look at them. Fundamental questions in this direction include the determination of  (non-)vanishing and sizes of these $L$-values.    A particularly effective way to approach problems of this sort is via \textbf{Moments of $L$-functions}. Techniques from analytic number theory have proven to be very successful in  estimating the sizes of moments of all kinds. Moreover, spectacular results can be obtained when moment estimates join forces with arithmetic geometry and   automorphic representations.

This line of investigation is nicely exemplified by the landmark result of Conrey-Iwaniec \cite{CI00}. Let $\chi$ be a real primitive Dirichlet character $ (\bmod\, q)$ with $q$  odd and square-free. The main object of  \cite{CI00} is   the cubic moment of $GL(2)$ automorphic $L$-functions of the congruence subgroup $\Gamma_{0}(q)$ twisted by $\chi$. An \textit{upper bound}  of Lindel\"{o}f strength in the $q$-aspect was established therein. When combining this upper bound with the celebrated Waldspurger formula \cite{Wa81}, the famous Burgess $3/16$-bound for Dirichlet $L$-functions was improved  for the first time since the 1960's. In fact,  \cite{CI00} proved the bound
\begin{align}
L\left( \frac{1}{2}, \chi\right) \ \ll_{\epsilon} \ q^{\frac{1}{6}+\epsilon}. 
\end{align}

 Understanding the effects of a sequence of intricate transformations (of both arithmetic and analytic nature) constitutes a significant part of moment calculations as seen in \cite{CI00}.  Surprisingly, such a sequence of \cite{CI00} ends up in a single elegant \textit{identity} showcasing a duality between the cubic average over a basis of  $GL(2)$ automorphic forms (Maass or holomorphic) and the fourth moment of  $GL(1)$ $L$-functions.  This remarkable phenomenon was uncovered relatively recently by Petrow \cite{Pe15}. His work consists of new elaborate analysis (see also Young \cite{Y17}) building upon the foundation of \cite{CI00}.  Let us also mention Frolenkov \cite{Fr20} and the earlier works of Ivi\'{c} \cite{Iv01, Iv02} for other aspects of the problem.  In its basic form, the identity roughly takes the shape 
\begin{align}\label{basicmoto}
\sum_{f: GL(2)} \  L\left( \frac{1}{2}, f\right)^3 \  \ = \   \int\limits_{-\infty}^{\infty}  \ \left|\zeta\left(\frac{1}{2}+it\right)\right|^4   \ dt \ + \  (***),
\end{align}
where the weight functions for the moments are suppressed and  $(***)$ represents certain  polar contributions.

Besides its structural elegance, the  identity (\ref{basicmoto}) comes with immediate applications. It leads to sharp moment estimates as a consequence of exact evaluation. As an extra benefit,  it cleans up the analysis in the traditional but approximate approach. In \cite{Pe15}, such an identity was termed a \textbf{`Motohashi-type identity'}.  Indeed, Motohashi \cite{Mo93, Mo97} discovered an identity of this sort but with the choice of test function made on the fourth moment side instead, i.e., in the reverse direction of  \cite{CI00,  Pe15, Y17,  Iv01, Iv02}. It greatly enhances our understanding of the fourth moment of the $\zeta$-function.  There are also the recent works of Young \cite{Y11},  Blomer-Humphries-Khan-Milinovich \cite{BHKM20},  Topacogullari  \cite{To21} and Kaneko \cite{Ka21+} extending Motohashi's work to Dirichlet $L$-functions.




In the introduction of \cite{CI00},  Conrey-Iwaniec further envisioned the possibilities and challenges of extending their method to a setting involving a $GL(3)$ automorphic form. 
This is reasonable because the cubic moment of $GL(2)$ $L$-functions can be regarded as the first moment of $GL(3)\times GL(2)$ Rankin-Selberg $L$-functions averaging over a basis of $GL(2)$ automorphic forms, where the $GL(3)$ automorphic form is an Eisenstein series of minimal parabolic type.  It is natural  to anticipate some form of  `harmonic analysis of $GL(3)$'  would provide new perspectives towards the Conrey-Iwaniec method. Also, it is worthwhile to point out the  $GL(3)$ set-up consists of an important new example of the method: the first moment of $GL(3)\times GL(2)$ type with a $GL(3)$ \textit{cusp form} (which certainly requires genuine $GL(3)$ machinery).

In the ten years following  \cite{CI00}, it took two breakthrough works to make the Conrey-Iwaniec method  possible for the group $GL(3)$.  Firstly, there was   Miller-Schmid \cite{MS06} (see also \cite{GoLi06, IT13})) who developed the \textit{$GL(3)$ Voronoi formula} for the first time and made it usable for a variety of analytic applications. In particular, the Hecke combinatorics of $GL(3)$ associated to twisting and ramifications  are considerably more involved than the classical $GL(2)$ counterpart. Secondly, Xiaoqing Li \cite{Li11} succeeded to apply  the $GL(3)$ Voronoi formula together with new techniques of her own to obtain good \textit{upper bounds} for the first moment of $GL(3)\times GL(2)$ Rankin-Selberg $L$-functions in the $GL(2)$ spectral aspect (for the cuspidal case).   As a Corollary, she obtained the first instance of subconvexity for $GL(3)$ automorphic $L$-functions.


\subsection{Main Results}

The purpose of this article is to further the investigation of $GL(3)\times GL(2)$ moments of $L$-functions. However, we shall deviate from the line of attack of the aforementioned literature.  We are interested in understanding the \textit{intrinsic mechanisms} and examining the \textit{essential ingredients} that would lead more directly towards the full  \textit{exact structures} (main terms and off-diagonals) of moments of such kind. It is important to address these points carefully so as to open up possibilities for generalizations to higher-rank groups.  The formalism of  \textbf{period integrals} for $GL(3)$ was found to be convenient in achieving our goals. 



 
We are ready to state the main result of this article, which is the moment identity of Motohashi type behind the work of \cite{Li11}.

\begin{thm}\label{maingl3gl2}
	Let 
	\begin{itemize}
		\item 	$\Phi$ be a fixed, Hecke-normalized Maass cusp form of $SL_{3}(\Z)$ with the Langlands parameters $(\alpha_{1}, \alpha_{2}, \alpha_{3}) \in (i\R)^3$, and $\widetilde{\Phi}$ be  the dual form of $\Phi$;   \newline
		
		\item  $(\phi_{j})_{j=1}^{\infty}$ be an orthogonal basis of  \textbf{even},  Hecke-normalized  Maass cusp forms of $SL_{2}(\Z)$ which satisfy $\Delta\phi_{j}= \left( \frac{1}{4}-\mu_{j}^2\right) \phi_{j}$;  \newline
		
		\item $L\left( s , \phi_{j} \otimes  \Phi \right)$ and $L\left( s , \Phi \right)$ be the Rankin-Selberg $L$-function of  the pair $(\phi_{j}, \Phi)$ and the standard $L$-function of $\Phi$ respectively,  where $\Lambda$ denotes the corresponding complete $L$-functions;  \newline
		
		\item  $\mathcal{C}_{\eta}$ ($\eta>40$) be the class of holomorphic functions $H$ defined on the vertical strip  $|\re \mu|< 2\eta$ such that  $H(\mu)=H(-\mu)$ and has rapid decay:
		\begin{align*}
		H(\mu) \ \ll \ e^{-2\pi|\mu|} \hspace{50pt} (|\re \mu| \ < \  2\eta). 
		\end{align*}

		\item For  $H\in \mathcal{C}_{\eta}$,  $\left(\mathcal{F}_{\Phi} H\right)\left(s_{0},  s \right) $ is the integral transform defined  in equation  (\ref{doubmbtrans}) and   it only depends on the Langlands parameters of $\Phi$. 
		\end{itemize}
	  Then on the domain   $\frac{1}{4}+ \frac{1}{200}  <  \sigma  < \frac{3}{4}$, we have the following moment identity: 
	\begin{align}\label{momaid}
	    \sum_{j=1}^{\infty} \  H\left(\mu_{j}\right) \ & \   \frac{\Lambda\left(s, \phi_{j}\otimes \widetilde{\Phi}\right) }{\langle \phi_{j}, \phi_{j}\rangle}  \ + \   \int_{(0)} \   H\left(\mu\right) 
	\frac{ \Lambda\left( s+\mu,   \widetilde{\Phi} \right)\Lambda\left( 1-s+ \mu,   \Phi \right)}{\left|\Lambda(1+2\mu)\right|^2} \  \frac{d\mu}{4\pi i } \nonumber\\
	\ &= \   \frac{ \pi^{-3s}}{2} \   L(2s, \Phi)\   \int_{(0)}  \ \frac{H(\mu)}{|\Gamma(\mu)|^2} \cdot  \prod_{i=1}^{3} \  \Gamma\left(\frac{s+\mu-\alpha_{i}}{2}\right)\Gamma\left(\frac{s-\mu-\alpha_{i}}{2}\right) \ \frac{d\mu}{2\pi i} \nonumber\\
	& \hspace{90pt}+ \frac{1}{2} \ L(2s-1, \Phi)   \left(\mathcal{F}_{\Phi} H\right)\left(2s-1,  s\right) \nonumber \\
	&  \hspace{130pt} + \frac{1}{2}  \  \int_{(1/2)} \zeta\left(2s-s_{0}\right) L\left(s_{0},  \Phi\right) \left(\mathcal{F}_{\Phi} H\right)\left(s_{0},  s \right) \ \frac{ds_{0}}{2\pi i}. 
	\end{align}
The function $s\mapsto \left(\mathcal{F}_{\Phi} H\right)\left(2s-1,  s\right)$ can be computed explicitly, see  Theorem \ref{CFKGamma} below. 

\end{thm}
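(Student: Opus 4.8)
The plan is to realize the left-hand side as a period integral of an automorphic kernel and then unfold it in two different ways. First I would construct, from the test function $H \in \mathcal{C}_\eta$, a $GL(2)$ Poincar\'e-type series $P_H$ on $SL_2(\Z)\backslash\mathbb{H}$ whose spectral expansion has Fourier–Whittaker coefficients matching $H(\mu_j)$ over the cuspidal spectrum and $H(\mu)/|\Lambda(1+2\mu)|^2$-type weights over the continuous (Eisenstein) spectrum; the restriction $|\re\mu|<2\eta$ with $\eta>40$ and the decay $H(\mu)\ll e^{-2\pi|\mu|}$ are exactly what is needed for absolute convergence of this series and of all the integrals that follow. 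Then I would integrate $P_H$ against the product of $\Phi$ (a $GL(3)$ Maass cusp form, embedded appropriately) and a suitable $GL(2)$ Eisenstein series $E(z,s)$ in the parameter $s$; Rankin–Selberg unfolding on the $GL(3)\times GL(2)$ side, together with the Jacquet–Zagier / Ichino–Ikeda type identification of the local and global zeta integrals, produces precisely the spectral average $\sum_j H(\mu_j)\Lambda(s,\phi_j\otimes\widetilde\Phi)/\langle\phi_j,\phi_j\rangle$ plus the Eisenstein contribution $\int_{(0)} H(\mu)\,\Lambda(s+\mu,\widetilde\Phi)\Lambda(1-s+\mu,\Phi)/|\Lambda(1+2\mu)|^2\,d\mu/4\pi i$ on the left of \eqref{momaid}.

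Next I would compute the same period integral in the opposite order: expand $P_H$ in its Fourier expansion in the cusp and carry out the $GL(3)$ unfolding first, which replaces $\Phi$ by its degenerate Whittaker/Fourier coefficients and reduces the inner integral to an archimedean Mellin–Barnes integral against the $GL(3)$ Whittaker function. Working out this archimedean integral produces the $\prod_{i=1}^3 \Gamma((s\pm\mu-\alpha_i)/2)$ factor together with the $1/|\Gamma(\mu)|^2$ and the explicit power of $\pi$, weighted by $L(2s,\Phi)$; this is the first term on the right. The remaining two terms, carrying $L(2s-1,\Phi)$ and the $\zeta(2s-s_0)L(s_0,\Phi)$ convolution, arise from the non-generic (constant-term and residual) contributions in the $GL(3)$ unfolding and from the non-cuspidal/boundary terms of the Eisenstein series $E(z,s)$; organizing these pieces is precisely where the integral transform $\mathcal{F}_\Phi H$ of \eqref{doubmbtrans} is defined, so that $(\mathcal{F}_\Phi H)(s_0,s)$ records the relevant Mellin–Barnes double integral against the $GL(3)$ data. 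One then checks that the meromorphic continuations of both sides agree on the stated strip $\tfrac14+\tfrac1{200}<\sigma<\tfrac34$, shifting contours and picking up no poles there because of the restrictions on $\sigma$ and the decay of $H$.

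The main obstacle I expect is the bookkeeping of the degenerate terms in the $GL(3)$ unfolding: the $GL(3)$ Fourier expansion (in the sense of Piatetski-Shapiro / Shalika) has, besides the fully generic Whittaker term, contributions from the smaller unipotent orbits, and each of these must be paired correctly with the constant term and the cuspidal part of the $GL(2)$ Eisenstein series. Keeping track of the exact $\Gamma$- and $\zeta$-factors and the powers of $\pi$ through these degenerate contributions — and verifying that they assemble into the two transform terms $\tfrac12 L(2s-1,\Phi)(\mathcal{F}_\Phi H)(2s-1,s)$ and $\tfrac12\int_{(1/2)}\zeta(2s-s_0)L(s_0,\Phi)(\mathcal{F}_\Phi H)(s_0,s)\,ds_0/2\pi i$ with the right normalization — is the delicate heart of the argument; the purely cuspidal $GL(3)\times GL(2)$ unfolding, by contrast, is standard Rankin–Selberg theory. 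The final explicit formula for $s\mapsto(\mathcal{F}_\Phi H)(2s-1,s)$ is then a separate archimedean computation carried out in Theorem \ref{CFKGamma}.
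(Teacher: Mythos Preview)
Your outline has the right flavor --- construct a Poincar\'e-type object from $H$, compute a period integral against $\Phi$ in two ways --- but there are two genuine misconceptions that would cause the argument to fail as written.

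First, the period the paper actually uses does \emph{not} involve a $GL(2)$ Eisenstein series $E(z,s)$. The parameter $s$ enters through the determinant twist $|\det g|^{s-1/2}$ in the pairing $\big(P, (\mathbb{P}_2^3\Phi)\cdot|\det *|^{\bar s-1/2}\big)_{\Gamma_2\backslash GL_2(\R)}$, where $P=P^1(*;H^\flat)$ is the Poincar\'e series with $h=H^\flat$. On the dual side one unfolds $P$ and is left with a single one-dimensional unipotent integral of $\Phi$ over $U_2(\Z)\backslash U_2(\R)$; this is computed (Proposition \ref{incomf}) by two elementary Fourier expansions and one matrix identity, \emph{not} by the full Piatetski-Shapiro--Shalika expansion or any analysis of smaller unipotent orbits. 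There are no ``non-cuspidal/boundary terms of the Eisenstein series'' to bookkeep, because there is no Eisenstein series in the construction.

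Second, and more seriously, you have misidentified the source of the $L(2s-1,\Phi)$ term. It does \emph{not} come from a degenerate or constant-term contribution in the unfolding. The unfolding on the region $\sigma>1+\theta/2$ produces exactly two pieces: a diagonal term giving $L(2s,\Phi)$ times the archimedean integral, and an off-diagonal $OD_\Phi(s)$ given by a double Dirichlet series in $a_0,a_1$ weighted by an oscillatory Whittaker integral. Writing $OD_\Phi(s)$ as a Mellin--Barnes integral one finds (Proposition \ref{structure}) that it equals $\tfrac12\int_{(1+\theta+2\epsilon)}\zeta(2s-s_0)L(s_0,\Phi)(\mathcal F_\Phi H)(s_0,s)\,ds_0/2\pi i$; but this is only valid for $\sigma>1+\theta/2$, well outside the critical strip. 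The entire technical heart of the proof is the analytic continuation of this expression down to $\tfrac14+\tfrac{\epsilon}{2}<\sigma<\tfrac34$, and the $L(2s-1,\Phi)\cdot(\mathcal F_\Phi H)(2s-1,s)$ term is precisely the \emph{residue} picked up when the $s_0$-contour, shifted from $\re s_0=2\epsilon$ to $\re s_0=1/2$, crosses the pole of $\zeta(2s-s_0)$ at $s_0=2s-1$. This continuation requires a perturbation device (replacing $e(x)$ by $e(x;\phi)$ with $\phi<\pi/2$ to gain exponential decay), careful Stirling analysis of $(\mathcal F_\Phi H)(s_0,s;\phi)$, and a separate argument (Proposition \ref{proprescon}) that the residual term itself continues past $\sigma=1/2$. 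None of this is visible in your sketch, and without it the identity cannot be established on the stated domain.
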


The temperedness assumption $(\alpha_{1}, \alpha_{2}, \alpha_{3})\in (i\R)^{3}$ for our fixed Maass cusp form $\Phi$ is very mild ---  it merely serves as a simplification of our exposition (when applying Stirling's formula in Section \ref{Stirl}) and can be removed with a little more effort. In fact,  all Maass cusp forms of $SL_{3}(\Z)$ are conjectured to be tempered and it was proved in  \cite{Mil01} that the non-tempered forms constitute a density zero set.  

We have made no attempt to enlarge the class of test functions for Theorem  \ref{maingl3gl2} since this is not the focus of this article (but is certainly doable by more refined analysis). The regularity assumptions of $\mathcal{C}_{\eta}$ essentially follow from those of the Kontorovich-Lebedev inversion (see Section \ref{prelimwhitt}). As in \cite{GK13, GSW21, GSW23+,  Bu20}, the class $\mathcal{C}_{\eta}$ already includes good test functions that are useful in a number of applications and allows us to deduce a version of Theorem \ref{maingl3gl2} for incomplete $L$-functions (see Remark \ref{testfunc}).

Also, we have obtained the analytic properties and several explicit expressions for the integral transform  $\left(\mathcal{F}_{\Phi} H\right)\left(s_{0},  s \right)$.   They are written in terms of Mellin-Barnes integrals or hypergeometric functions as in \cite{Mo93, Mo97}.  For ease of expositions, we do not record the full formulae here but refer the readers  to Section \ref{expevatra} for  the detailed discussions.  However, we record an interesting identity of special functions as follow:

	\begin{thm}[Theorem \ref{secMTcom}]\label{CFKGamma}
		For  $\frac{1}{2}+ \frac{1}{100}  <  \sigma  < 1$, we have
			\begin{align}
			\left(\mathcal{F}_{\Phi} H\right)\left(2s-1,  s \right) 
			\ &= \ \pi^{\frac{1}{2}-s} \  \prod_{i=1}^{3} \ \frac{\Gamma\left(s-\frac{1}{2}+ \frac{\alpha_{i}}{2}\right)}{\Gamma\left(1-s- \frac{\alpha_{i}}{2}\right)} \cdot \int_{(0)} \  \frac{H(\mu)}{\left| \Gamma(\mu)\right|^2}   \cdot \prod\limits_{i=1}^{3}  \ \prod\limits_{\pm} \  \Gamma\left( \frac{1-s+ \alpha_{i}\pm \mu}{2}\right) \ \frac{d\mu}{2\pi i}. 
			\end{align}
		\end{thm}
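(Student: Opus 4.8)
The plan is to work directly from the definition \eqref{doubmbtrans} of the integral transform: $\left(\mathcal{F}_{\Phi}H\right)(s_{0},s)$ is a double Mellin--Barnes integral whose integrand is a product of gamma factors (the $GL(3)\times GL(2)$ archimedean factors, the $GL(3)$ standard factors attached to the $\alpha_{i}$, and the $GL(2)$ spectral measure $1/|\Gamma(\mu)|^{2}$) multiplied by $H(\mu)$. I would specialize the first argument to $s_{0}=2s-1$ and then carry out one of the two contour integrations in closed form, leaving a single Mellin--Barnes integral in $\mu$.

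The reason $s_{0}=2s-1$ is the natural value at which to evaluate is twofold. On the one hand it is the location of the pole of $\zeta(2s-s_{0})$ visible in the last line of \eqref{momaid}, so an explicit formula here is precisely what is needed to exhibit the polar structure of the moment identity. On the other hand, and more to the point for the proof, it is exactly the value that makes the relevant inner integrand \emph{balanced}, so that the corresponding contour integral can be summed by a Barnes-type lemma (Barnes' first lemma / a beta integral if that integrand carries four gamma factors, Barnes' second lemma if it carries five). Concretely, after setting $s_{0}=2s-1$ I would first check that the contour prescribed in \eqref{doubmbtrans} still separates the ascending from the descending families of poles of the inner integrand --- this, together with the decay $H(\mu)\ll e^{-2\pi|\mu|}$ afforded by $H\in\mathcal{C}_{\eta}$, is what forces the range $\tfrac{1}{2}+\tfrac{1}{100}<\sigma<1$ --- and then apply the lemma to collapse that integral to a ratio of gamma functions.

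It then remains to reorganize the output. The gamma quotient produced by the Barnes evaluation, combined with the $GL(3)$ standard factors $\prod_{i}\Gamma\!\left(\tfrac{s_{0}+\alpha_{i}}{2}\right)$ already present in \eqref{doubmbtrans} (now at $s_{0}=2s-1$), should collapse --- via the reflection formula $\Gamma(z)\Gamma(1-z)=\pi/\sin\pi z$ and the Legendre duplication formula --- to the prefactor $\pi^{\frac{1}{2}-s}\prod_{i}\Gamma\!\left(s-\tfrac{1}{2}+\tfrac{\alpha_{i}}{2}\right)\big/\Gamma\!\left(1-s-\tfrac{\alpha_{i}}{2}\right)$, which is (up to the power of $\pi$) the archimedean ratio occurring in the functional equation of the $GL(3)$ $L$-function. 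What is left is a single Mellin--Barnes integral over $\mu$ whose integrand is $H(\mu)/|\Gamma(\mu)|^{2}$ times the six gamma factors $\prod_{i}\prod_{\pm}\Gamma\!\left(\tfrac{1-s+\alpha_{i}\pm\mu}{2}\right)$; a final comparison --- possibly after one more use of duplication to normalize the $\mu$-measure --- yields exactly the form asserted in Theorem \ref{CFKGamma}.

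I expect the main obstacle to be contour management rather than the gamma algebra. At the special value $s_{0}=2s-1$ several poles that are separated for generic $s_{0}$ come into coincidence, so the Barnes lemma cannot be invoked blindly; one has to either shift the inner contour first and verify that the residues crossed vanish or telescope, or else establish the identity for $s_{0}$ in a full neighbourhood of $2s-1$ and then pass to the diagonal by analytic continuation in $s_{0}$. A secondary but genuine difficulty is the sheer bookkeeping: with nine or more gamma factors in play one must be disciplined about which reflection/duplication identity is applied where, so as to land exactly on the normalization of Theorem \ref{CFKGamma} rather than on a form differing by an innocuous-looking but incorrect power of $\pi$ or $2$.
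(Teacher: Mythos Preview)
Your overall instinct---specialize to $s_{0}=2s-1$ and collapse the auxiliary contour integrals via Barnes' second lemma---is exactly right, and matches the paper's approach. But two points in your plan need correction.

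First, the structure of \eqref{doubmbtrans} is a double integral in $s_{1}$ and $u$, with $H(\mu)$ entering only through $\widetilde{h}(s-s_{1}-\tfrac12)$; the $\mu$-integral appears once you expand $\widetilde{h}$ by Lemma~\ref{mellingl2}. So you must collapse \emph{both} the $u$- and the $s_{1}$-integrals, not one: the paper applies the Second Barnes Lemma \emph{twice}, first to the $u$-integral and then (after inserting the $\mu$-integral representation of $\widetilde{h}$ and shifting the $s_{1}$-contour) to the $s_{1}$-integral. Each application uses the balancedness condition $f=a+b+c+d+e$, and in both cases this is satisfied precisely because $\alpha_{1}+\alpha_{2}+\alpha_{3}=0$.

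Second, and more importantly, you have the mechanism at $s_{0}=2s-1$ backwards. You anticipate that poles coalesce and make contour management delicate; in fact the opposite happens. When $s_{0}=2s-1$ the factor $\Gamma\!\left(s-\tfrac{s_{0}+u}{2}\right)$ in the numerator of \eqref{streamlinebarnes} becomes $\Gamma\!\left(\tfrac{1-u}{2}\right)$ and \emph{cancels} the identical factor in the denominator. This cancellation is the key step: it drops the $u$-integrand from six gamma factors to five, which is exactly the shape required by the Second Barnes Lemma. Without spotting this cancellation your plan to ``apply the lemma'' has no concrete starting point. Similarly, the prefactor $\prod_{i}\Gamma(s-\tfrac12+\tfrac{\alpha_{i}}{2})/\Gamma(1-s-\tfrac{\alpha_{i}}{2})$ does not arise from reflection or duplication manipulations as you suggest, but falls out directly from the two Barnes evaluations together with further gamma cancellations between their outputs and the surviving factors of \eqref{streamlinebarnes}.
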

	
	
	There are actually two additional identities of Barnes type that account for the origins and the combinatorics of six (out of eight) of the off-diagonal main terms for the cubic moment of $GL(2)$ $L$-functions. The results align nicely with the predictions of the \textit{`Moment Conjecture'} (or  \textit{`Recipe'}) of \cite{CFKRS05}.   We refer the interested readers to our  papers \cite{Kw23a+, Kw23b+}.

	\subsection{Follow-up Works}
	
	The current work aims to illustrate the key ideas and address the main analytic issues of our period  integral approach. It is the simplest to illustrate all these using the cuspidal case for $\Phi$.  However, this is  by no means the end of the scope of our method.  In our upcoming works \cite{Kw23a+, Kw23b+} (some parts are contained in the arxiv preprint \cite{Kw23}), we demonstrate the versatility of our method by: 
	\begin{enumerate}
		\item  	 Providing a new proof of the cubic moment identity (\ref{basicmoto}) (actually for the more general  \textit{`shifted moment'}) with a number of technical advantages, as well as a new unified way of extracting the full set of main terms.  There are  considerable recent interests in understanding the deep works of \cite{Mo93, Mo97} and \cite{CI00} from different perspectives, e.g.,  Nelson \cite{Ne20+}, Wu \cite{Wu21+}, Balkanova-Frolenkov-Wu \cite{BFW21+}.   
		
		\item  Establishing  a Motohashi's formula of $GL(3)$ in the non-archimedean aspect which  dualizes   $GL(2)$ twists  of Hecke eigenvalues into  $GL(1)$ twists by Dirichlet characters.  This should offer insights into the celebrated works of  Young \cite{Y11} and Blomer et. al.  \cite{BHKM20} on the fourth moment of Dirichlet $L$-functions.  In their works, this kind of change of structures was the result of a long sequence of spectral/ harmonic transformations and it was surprising (and useful) to observe such a nice  phenomenon.

	\end{enumerate}

 	\section{Outline}\label{prelim}
 	In Section \ref{fea}, we discuss the technical features of the method used in this article and draw comparisons with the current literature. In Section \ref{sketch},  we include a sketch of our arguments to demonstrate the essential ideas of our method  and sidestep the technical points. In Section \ref{prel}, we collect the essential notions and results for later parts of the article. 
 	
 	The proof of Theorem \ref{maingl3gl2} is divided into four sections. In Section \ref{Basicide}, we  prove the key identity of this article (see Corollary 
 	\ref{incomexpf}). In Section \ref{separaOD},  we develop such an identity into moments of $L$-functions on the  region of absolute convergence. In particular,  the intrinsic structure of the problem allows one to easily see the shape of the dual moment (see Proposition \ref{structure}).   In Section \ref{Stirl}, we obtain the region of holomorphy and growth of the archimedean transform. In Section \ref{2stepana}, a step-by-step analytic continuation argument is performed based on the analytic information obtained in Section \ref{Stirl}.  
 	
 	In Section \ref{expevatra}, we prove Theorem \ref{CFKGamma}. and  provide several explicit formulae of the integral transforms.



\section{Technical Features of Our Method}\label{fea}

\subsection{Period Reciprocity}

Our work adds a new instance to the recent banner  \textit{`Period Reciprocity'} which aims at revealing the underlying  structures of moments of $L$-functions through the lenses of period integrals.  The general philosophy of this method is to evaluate a period integral  in two distinct manners. Under favorable circumstances, the intrinsic structures of period integrals would lead to interesting, non-trivial moment identities, say connecting two different-looking families of $L$-functions. 

In our case, the generalized Motohashi-type phenomenon of Theorem \ref{maingl3gl2} at $s=1/2$ will be shown to be an intrinsic property of a given Maass cusp form  $\Phi$ of $SL_{3}(\Z)$ via  the following  trivial identity 
\begin{align}\label{trivstart}
\int_{0}^{1} \left[ \int_{0}^{\infty} \Phi\begin{pmatrix} \begin{pmatrix}
		y_{0} & \\
		          & y_{0}
	\end{pmatrix}\begin{pmatrix}
1 & u\\
& 1
\end{pmatrix} & \\
& 1\end{pmatrix}  \ d^{\times} y_{0}\right] \ & e(-u)  \ du  \nonumber\\
\ & \hspace{-80pt} = \   \int_{0}^{\infty} \left[\int_{0}^{1}  \Phi\begin{pmatrix}\begin{pmatrix}
1 & u\\
& 1
\end{pmatrix} \begin{pmatrix}
y_{0} & \\
& y_{0}
\end{pmatrix} & \\
& 1\end{pmatrix} e(-u) \  du\right] \ d^{\times} y_{0}.
\end{align}
Roughly speaking,  Theorem \ref{maingl3gl2}  follows from (1). spectrally-expanding the innermost integral on the left in terms of a basis of $GL(2)$ automorphic forms, and  (2). computing the innermost integral on the right in terms of the $GL(3)$ Fourier-Whittaker period.  A sketch of this will be provided in Section \ref{sketch}. In practice, it turns out to be convenient to work with a more general set-up
\begin{align}\label{mainobj}
	\int_{SL_{2}(\Z)\setminus GL_{2}(\R)} \ P(g;h) \Phi\begin{pmatrix}
		g & \\
		& 1
	\end{pmatrix} |\det g|^{s-\frac{1}{2}} \ dg
\end{align}
 so as to  bypass certain technical difficulties, where $P(*;h)$ is a Poincar\'e series of  $SL_{2}(\Z)$. 


The current examples for Period Reciprocity occur rather sporadically and we do not have a systematic way to construct new examples yet. Also, techniques differ greatly in each known instance (see \cite{MV06, MV10, Ne20+}, \cite{Bl12a}, \cite{Nu20+}, \cite{JN21+}, \cite{Za21, Za20+}). This marks a stark contrast with the more traditional  `Kuznetsov-Voronoi' framework (see Section \ref{comCI}).  However, Period Reciprocity seems to address some of the technical complications more softly than the Kuznetsov-Voronoi approach. We shall elaborate more in the upcoming subsections.   


Regarding the  `classical' Motohashi phenomenon (\ref{basicmoto}), there was  the Michel-Venkatesh strategy \cite{MV06, MV10}  and Nelson \cite{Ne20+} very recently developed such a regularized period method fully and rigorously with new inputs from automorphic representations. This article provides another strategy that includes (\ref{basicmoto}) together with several generalized instances of such phenomenon. We address the structural and analytic aspects of the formulae rather differently using unipotent integration for $GL(3)$ and method of analytic continuation. (So we would work with (\ref{mainobj}) initially for  $\re s  \gg   1$.)  For further discussions, see Section \ref{sketch}.

We would also like to mention the works of  Wu \cite{Wu21+}  and  Balkanova-Frolenkov-Wu \cite{BFW21+}   in which  an interesting framework in terms of tempered distributions and  relative trace formula of Godement-Jacquet type was developed to address the phenomenon (\ref{basicmoto}). 


\subsection{Comparisons with the Conrey-Iwaniec-Li Method}\label{comCI}

The celebrated works of Conrey-Iwaniec \cite{CI00} and Li \cite{Li09, Li11} are known for their successful analysis based on the Kuznetsov trace formulae and summation formulae of Poisson/ Voronoi type.  Their accomplishments include  the  delicate treatment of the arithmetic of exponential sums as well as the stationary phase analysis.

The Kuznetsov trace formula (or more generally the relative trace formula) has been a cornerstone in the analytic theory of $L$-functions  during the past few decades. For $PGL_{2}(\Z)\setminus PGL_{2}(\R)$ (i.e., the context of Theorem \ref{maingl3gl2}, summing over a basis of \textit{even} Maass forms of $SL_{2}(\Z)$), it is an equality of the shape
\begin{align}
	\sum_{j} \  H(\mu_{j}) \ \frac{\lambda_{j}(n) \overline{\lambda_{j}(m)}}{L(1, \text{Ad}^2 \phi_{j})} + (\text{cts}) \ = \  \delta_{m=n} \int_{\R} \ H(\mu) \ d_{\text{spec}} \mu \ + \ \sum_{\pm}\sum_{c} \ \frac{S(\pm m,n;c)}{c} \mathcal{J}^{\pm}\left(\frac{4\pi\sqrt{mn}}{c}\right). 
\end{align}
between the spectral bilinear form of Hecke eigenvalues and the geometric expansion consisting of Kloosterman sums $S(m,n;c)$ and  oscillatory integrals $\mathcal{J}^{+}$ and $\mathcal{J}^{-}$ involving the $J$-Bessel and $K$-Bessel function in their kernels respectively.  These two pieces have to be treated separately.

As noticed by \cite{CI00, Li09, Li11, Bl12b} and a number of subsequent works, the $J$-Bessel piece turns out to be rather interesting ---  it contains  remarkable technical features that are crucial in gaining sufficient cancellations in geometric sums and integrals. This  seems to be distinctive to the settings  of higher-rank. (In view of this, readers may wish to  compare with  Liu-Ye \cite{LY02}'s analysis in the $GL(2)$ settings.) More concretely, Li \cite{Li11} was able to apply the $GL(3)$  Voronoi formula \textit{twice}, which were surprisingly  non-involutary,  because of a subtle cancellation taking place between the \textit{arithmetic phase} coming from Voronoi and the \textit{analytic phase} coming from the $J$-Bessel transform. 



  In this set of method, the treatment of the  $J$-Bessel piece of Kuznetsov  is essential especially if one wishes to handle moments of $L$-functions of greater generality, say  for $\Phi$ not necessarily self-dual or for non-central $L$-values (e.g.,  $s=1/2+i\tau$) as  in Theorem \ref{maingl3gl2}.

   In our period integral approach, the Kuznetsov formula, the Voronoi formula, and the approximate functional equation,  which belong to the standard toolbox in analytic number theory, are completely avoided altogether. Indeed, there are  conceptual reasons for all these as we now explain: 
   
   \begin{itemize}
   \item 	Firstly,  since the  $GL(3) \times GL(2)$ $L$-functions on the spectral side are interpreted as period integrals,  we never need to open up those $L$-functions in the form of Dirichlet series. As a result, we do not need to average over the  Hecke eigenvalues of our basis of  $GL(2)$ Maass forms using the Kuznetsov formula. 
   	
   \item Secondly, as shown in our  moment identity (\ref{momaid}),  the dual $GL(3)$ object turns out to be the \textit{standard $L$-function}.  The construction of  the standard $L$-function  involves only the $GL(3)$ Hecke eigenvalues,  whereas  the $GL(3)$ Voronoi formula is known for the presence of \textit{general} Fourier coefficients of $GL(3)$ due to the arithmetic twisting. It is thus reasonable to expect a proof of (\ref{momaid})  that does not use the $GL(3)$ Voronoi formula (nor the full Fourier expansions of \cite{JPSS}). The set-up (\ref{trivstart}) already suggests that  our method meets such an expectation, but see Proposition \ref{incomf} for full details. 
   	
   \item	Thirdly,  we do not encounter any intermediate exponential sums (e.g., Kloosterman/ Ramanujan sums), slow-decaying/ very oscillatory special functions, nor shifted convolution sums  (which are necessary components in \cite{Iv01, Iv02, Fr20} for (\ref{basicmoto})). Also, we handle the archimedean component of  (\ref{momaid}) in one piece (instead of handling the $J$- and $K$-Bessel pieces separately in \cite{CI00, Li09, Li11}) and we directly work with the $GL(3)$ Whittaker function associated to the automorphic form $\Phi$. 
   
   \item Fourthly, we take advantage of the equivariance of the Whittaker functions under unipotent translations which helps to simplify  many formulae. 
   \end{itemize}

Not only did we gain many technical benefits  in our period integral approach, our approach is distinct from the Kuznetsov/ Voronoi approach  in nature.  Indeed, our approach is  \textit{local} and the key result Proposition \ref{incomf}  can be easily phrased in terms of adeles (see (\ref{adeles})), whereas the Kuznetsov/ Voronoi approach is \textit{global} and \textit{non-adelic}. In this article, we focus on the level $1$ case (and the spectral aspect) as a proof of concept and thus we use the classical language of real groups.  In our upcoming work, we wish to extend our approach in various non-archimedean aspects.





 \subsection{Prospects for Higher-Rank}
 
 Once we reach $GL(3)$, the geometric expansion for the Kuznetsov formula becomes substantially more involved  and presents a number of obstacles in generalizing the  Kuznetsov-based approaches to moments of $L$-functions of higher-rank:

   \begin{rem}[\textbf{Oscillatory Integrals}]\label{cha1}
 	In $GL(2)$,  a couple of coincidences  allow us to identify the oscillatory integrals with some well-studied special functions, see \cite{Mo97}, \cite{I02}. However,  such a phenomenon does not exist in $GL(3)$ and  there turn out to be many unexpected analytic difficulties, see Buttcane  \cite{Bu13, Bu16}.  The complicated formulae for the oscillatory integrals make the Kuznetsov trace formula for $GL(3)$  challenging to apply, see Blomer-Buttcane \cite{BlBu20}. 
 	
 \end{rem}



 
 \begin{rem}[\textbf{Kloosterman Sums}]\label{cha2}
 	    The $GL(3)$ Kloosterman sums, e.g.,
 	 \begin{align}\label{GL3Kloost}
 	 S(m_{1}, m_{2}, n_{1}, n_{2};  \ D_{1}, D_{2}) 
 	 \ &:= \  \sideset{}{^\dagger}{\sum}_{\substack{B_{1} \ (D_{1}), \  B_{2} \ (D_{2}) \\ C_{1} \ ( D_{1}), \ C_{2} \ (D_{2}) }} \   e\left( \frac{m_{1}B_{1}+n_{1}(Y_{1}D_{2}-Z_{1}B_{2})}{D_{1}}\right)   e\left( \frac{m_{2}B_{2}+n_{2}(Y_{2}D_{1}-Z_{2}B_{1})}{D_{2}}\right),
 	 \end{align}
 	 are clearly much harder to work with than the usual one, where the definitions of  $Y_{i}, Z_{i}$'s along with a couple of  congruence and coprimality conditions are suppressed.  There are two other Kloosterman sums for $GL(3)$. See \cite{Bu13} for details.   
 	\end{rem}

As already mentioned in Section \ref{comCI}, further transformations of the exponential sums from the Kuznetsov formulae encode important arithmetic information of the moment of $L$-functions in question.  Blomer-Buttcane \cite{BlBu20} has provided an instance when this can be done for (\ref{GL3Kloost}) (after a four-fold Poisson summation!), but other than that it remains unclear what are the useful manipulations of (\ref{GL3Kloost}) in general.  On the other hand, applications of Voronoi formulae for $GL(3)$ (see \cite{CI00, Li09, Li11, Bl12b, BK19a, BK19b}) and for $GL(4)$ (see \cite{BLM19, CL20}) are currently limited to the usual Kloosterman sums of $GL(2)$, but already the situations get tricky very quickly.

 Conceptually speaking, the objects in Remark  \ref{cha1}-\ref{cha2} and the associated  issues  are caused by the    \textbf{Bruhat decomposition},  which is fundamental to the framework of relative trace formulae in general. However, ideas from Period Reciprocity have offered ways to bypass the Bruhat decomposition and any geometric sums and integrals, which is certainly a welcoming feature.

 Regarding Remark \ref{cha1}, the advantages of our method are visible even in the context of Theorem \ref{maingl3gl2}.  Although we work over the group $GL(3)$ on the dual side, the oscillatory factor in our approach (see (\ref{secondcc})) is actually simpler than the ones encountered  in the `Kuznetsov-Voronoi' approaches (cf. \cite{Li11}) and is more structured in the sense that (1). it  arises naturally from the definition of the archimedean Whittaker function, and (2).  it serves as an important constituent of the exact Motohashi structure, the exact structures of the main terms predicted by \cite{CFKRS05}, as well as for the analytic continuation past  $\re s=1/2$.  
Furthermore, our approach is devoid of integrals over non-compact subsets of the unipotent subgroups (or  the complements)   which are known to result in intricate dual calculations and exponential phases in  case of  $GL(3)$ Voronoi formula (cf. Section 4 of \cite{IT13}) and Kuznetsov formulae (cf. Chapter 11 of \cite{Gold}). 
 

It is worth pointing out the crucial archimedean ingredient in our proof generalizes to  $GL(n)$. It is known as \textit{Stade's formula} (see \cite{St01}), which allows us to rewrite the archimedean part  completely in terms of integrals $\Gamma$-functions.  It turns out to be sufficient to work with such representation for our purposes. Stade's formula  possesses remarkable recursive structures which are useful for further analytic manipulations. We carry out such calculations in the last part of Section \ref{expevatra}.  Another notable  recent  application of Stade's formula can be found in  \cite{GSW21, GSW23+}. We hope that our method will also shed light on the origins, constituents and structures of the archimedean transforms, as well as open up generalizations to moments of higher-rank  (which should sidestep the technical difficulties illustrated above for the `Kuznetsov-Voronoi' method). We shall return to this subject in our upcoming works, together with treatment of the non-archimedean places.

 	\section{Informal Sketch and Discussion}\label{sketch}

 	To assist the readers, we illustrate in a simple fashion  the main ideas of this article in this section before diving into any of the analytic subtleties of our actual argument. In fact, this represents the most intrinsic picture of our method and will facilitate comparisons with the strategy of  Michel-Venkatesh \cite{MV06} along the way. The style of this section will be largely informal ---  we shall suppress the absolute constant multiples (say those $2$'s and $\pi$'s), pretend everything converges, and ignore the treatment of the main terms.

 	We begin by recalling the idea outlined in  \cite{MV06}. The classical Motohashi formula  can be understood as an  intrinsic property of the  $GL(2)$ Eisenstein series (denoted by $E^{*}$ below) via the (`regularized') geodesic period 
 	 \begin{align*}
 	 \int_{0}^{\infty} \ \left|E^{*}(iy)\right|^2 \ d^{\times} y,
 	 \end{align*}
 	 which can be evaluated in two ways according to $|E^{*}|^2$ and  $E^{*} \cdot \ovA{E^{*}}$ respectively: 
 	 \begin{enumerate}
 	 	\item  ($GL(2)$ spectral expansion)
 	 	\begin{align}\label{MVspec}
 	 	\sum_{\phi: \ GL(2)} \ \left\langle \ |E^{*}|^2, \phi \right\rangle \ \int_{0}^{\infty} \ \phi(iy) \ d^{\times} y \ = \   \sum_{\phi: \ GL(2)}\  \Lambda\left(\frac{1}{2}, \phi\right)^2\cdot \Lambda\left(\frac{1}{2}, \phi\right)  \ + \  (\cdots)
 	 	\end{align}

 	 	\item ($GL(1)\times GL(1)$ expansion, or the Mellin-Plancherel formula)
 	 	\begin{align}\label{MVmellin}
 	 	\int_{(1/2)} \ \left|\widetilde{E^{*}}(s)\right|^2 \ \frac{ds}{2\pi i} \ = \ \int_{\R} \ \left|\Lambda\left(\frac{1}{2}+it\right)^2\right|^{2} \ \frac{dt}{2\pi}. 
 	 	\end{align}
 	 	
 	 \end{enumerate}
 	 This seemingly simple sketch  turns out to require rather sophisticated regularizations but  was   skillfully executed  by Nelson \cite{Ne20+} very recently.

 	 We now turn to our sketch of the (generalized) Motohashi  phenomenon  as  described in Theorem \ref{maingl3gl2}.  Let  $\Phi$  be a   Maass cusp form of $SL_{3}(\Z)$. As already mentioned in the introduction, our starting point is  the trivial identity 
 	 \begin{align}\label{trivbasid}
 	 \int_{0}^{1} \left[ \int_{0}^{\infty} \Phi\begin{pmatrix}y_{0}\begin{pmatrix}
 	 1 & u\\
 	 & 1
 	 \end{pmatrix} & \\
 	 & 1\end{pmatrix}  \ d^{\times} y_{0}\right] \ e(-u)  \ du \ = \   \int_{0}^{\infty} \left[\int_{0}^{1}  \Phi\begin{pmatrix}\begin{pmatrix}
 	 1 & u\\
 	 & 1
 	 \end{pmatrix} y_{0} & \\
 	 & 1\end{pmatrix} e(-u) \  du\right] \ d^{\times} y_{0}. 
 	 \end{align}
 	For better symmetry,   it is not hard to observe that the right side of (\ref{trivbasid}) can be written as 
 	 \begin{align}\label{dualexpr}
 	 \int_{0}^{\infty} \ \left[ \int_{0}^{1}  \widetilde{\Phi} \left[\begin{pmatrix}
 	 1 &  &\\
 	 & 1   & u \\
 	 &      & 1  
 	 \end{pmatrix} 
 	 \begin{pmatrix}
 	 y_{0} & & \\
 	 & 1& \\
 	 &   & 1
 	 \end{pmatrix} \right] e(-u) \  du \right] \ d^{\times} y_{0}
 	 \end{align}
 	with  $\widetilde{\Phi}(g):= \Phi(^{t}g^{-1})$  being  the dual form of $\Phi$.

 	 \begin{rem}\label{dualcal}
 	 	Indeed, the  center-invariance of  $\Phi$ implies that
 	 	\begin{align*}
 	 	(\ref{trivbasid}) \ = \  \int_{0}^{\infty} \int_{0}^{1}  \Phi \left[\begin{pmatrix}
 	 	1 & u &\\
 	 	& 1   & \\
 	 	&      & 1
 	 	\end{pmatrix} 
 	 	\begin{pmatrix}
 	 	1 & & \\
 	 	& 1& \\
 	 	&   & y_{0}
 	 	\end{pmatrix}\right] e(-u) \  du \ d^{\times} y_{0}. 
 	 	\end{align*}
 	 	Let $w_{\ell}:= \begin{psmallmatrix}
 	 	& & -1\\
 	 	& 1& \\
 	 	1 &    &	
 	 	\end{psmallmatrix}$.  The observation 
 	 	\begin{align*}
 	 	\begin{pmatrix}
 	 	1 & & \\
 	 	& 1& \\
 	 	&   & y_{0}
 	 	\end{pmatrix} \ = \ w_{\ell}^{-1}
 	 	\begin{pmatrix}
 	 	y_{0} & & \\
 	 	& 1& \\
 	 	&   & 1
 	 	\end{pmatrix} w_{\ell} \hspace{15pt} \text{ and } \hspace{15pt} \begin{pmatrix}
 	 	1 &  &\\
 	 	& 1   & \\
 	 	&    -u  & 1  
 	 	\end{pmatrix}  \ = \  w_{\ell} \begin{pmatrix}
 	 	1 & u &\\
 	 	& 1   & \\
 	 	&      & 1
 	 	\end{pmatrix}  w_{\ell}^{-1}
 	 	\end{align*}
 	 	together with the left and right invariance of $\Phi$ by $w_{\ell}$ further rewrite (\ref{trivbasid}) as
 	 	\begin{align*}
 	 	\int_{0}^{\infty} \int_{0}^{1}  \ \Phi \left[\begin{pmatrix}
 	 	1 &  &\\
 	 	& 1   & \\
 	 	&    -u  & 1  
 	 	\end{pmatrix} 
 	 	\begin{pmatrix}
 	 	y_{0} & & \\
 	 	& 1& \\
 	 	&   & 1
 	 	\end{pmatrix} \right] &  e(-u) \  du \ d^{\times} y_{0} \nonumber\\
 	 	\ &= \ 	\int_{0}^{\infty} \int_{0}^{1}  \ \widetilde{\Phi} \left[\begin{pmatrix}
 	 	1 &  &\\
 	 	& 1   & u \\
 	 	&      & 1  
 	 	\end{pmatrix} 
 	 	\begin{pmatrix}
 	 	y_{0} & & \\
 	 	& 1& \\
 	 	&   & 1
 	 	\end{pmatrix} \right] e(-u) \  du \ d^{\times} y_{0}. 
 	 	\end{align*}
 	 \end{rem}


 	 As an overview of our strategy,
 	 \begin{enumerate}
 	 	\item \textbf{Similar to} Michel-Venkatesh's strategy, the integral over $(0, \infty)$ (or the center $Z_{GL_{2}}^{+}(\R)$ in view of the structure of the problem) will give rise to  certain complete $L$-functions upon spectral-expanding and will turn into  the dual $t$-integral;

 	 	\item \textbf{Different from} Michel-Venkatesh's strategy, an extra integral over $[0,1]$ (or the quotient $U_{2}(\Z)\setminus U_{2}(\R)$ of the unipotent subgroup of $GL(2)$) is present in our construction which will  lead to  Whittaker functions as the spectral weight functions  and (essentially) a product of two distinct $L$-functions on the dual side; 
 	 	
 	 	\item \label{harmon} The Mellin-Plancherel of (\ref{MVmellin}) is \textbf{replaced by} two Fourier expansions over $\Z\setminus \R$ below. 
 	 \end{enumerate}
  In fact,  the \textit{unipotent} nature of our period method is crucial in realizing the spectral duality for the fourth moment of Dirichlet $L$-functions (see \cite{Kw23b+}), as well as in ensuring the abundance of admissible test functions on the spectral side,  but these features will not be displayed in this section.

 	 \subsection{ The $GL(2)$ (spectral) side} 
 	 This side is relatively straight-forward and gives the desired $GL(3)\times GL(2)$ moment. Regard $\Phi$ as a function of $L^{2}(\Gamma_{2}\setminus \mathfrak{h}^{2})$ via
 	 \begin{align*}
 	 (\Proj_{2}^{3} \Phi)(g) \ := \ \int_{0}^{\infty}  \Phi\begin{pmatrix}
 	 y_{0} g & \\
 	 & 1
 	 \end{pmatrix}  \ d^{\times} y_{0} \hspace{30pt}  (g \ \in  \ \mathfrak{h}^{2}),
 	 \end{align*} 
 	 which in turn can be expanded spectrally as
 	 \begin{align*}
 	 (\Proj_{2}^{3} \Phi)(g) \ = \  \sum_{j} \ \frac{\langle  \Proj_{2}^{3}\Phi \ ,   \phi_{j} \rangle}{||\phi_{j}||^{2}} \ \phi_{j}(g) \ + \    \frac{\langle  \Proj_{2}^{3}\Phi \ ,  1 \rangle}{||1||^{2}}  \cdot 1 \ + \  (\text{cont}).
 	 \end{align*}
  The spectral coefficients $\langle  \Proj_{2}^{3}\Phi \ ,   \phi_{j} \rangle$ are precisely  the  $GL(3)\times GL(2)$ Rankin-Selberg $L$-functions.  Hence,
 	 \begin{align}
 	 \text{LHS of } \ (\ref{trivbasid}) \ = \ 	\int_{0}^{1} \ \left(\Proj_{2}^{3} \Phi\right)
 	 \begin{pmatrix}
 	 1 & u \\
 	 & 1
 	 \end{pmatrix} e(-u) \ du
 	 \ = \ \sum_{j} \ W_{\mu_{j}}(1)\cdot  \frac{\Lambda\left(1/2, \phi_{j}\otimes \Phi\right)}{||\phi_{j}||^2} \ + \ (\text{cont}),
 	 \end{align}
 	 where  $\mu\mapsto W_{\mu}(1)$ is a certain kind of weight function.

 	 \subsection{The $GL(1)$ (dual) side}
 	 
 	 In view of  Point (\ref{harmon}) above,   we evaluate the innermost integral of  (\ref{dualexpr}) in terms of the \textbf{Fourier-Whittaker periods} for $\widetilde{\Phi}$ (denoted by $(\widehat{\widetilde{\Phi}})_{(\cdot, \cdot)}$, see Definition \ref{fourcoeff}).  Indeed, (\ref{dualexpr}) is given by 
 	 \begin{align}\label{intrexp}
 	 \int_{0}^{\infty} \int_{0}^{1} \int_{0}^{1} &  \ \widetilde{\Phi}\left[ \begin{pmatrix}
 	 1 &  & u_{1,3}\\
 	 & 1   & u_{2,3} \\
 	 &      & 1  
 	 \end{pmatrix} 
 	 \begin{pmatrix}
 	 y_{0} & & \\
 	 & 1& \\
 	 &   & 1
 	 \end{pmatrix}\right] \   e(-u_{2,3}) \ du_{1,3} \ du_{2,3} \ d^{\times} y_{0} \nonumber\\
 	 \ &\hspace{40pt} + \ \sum_{a_{0}\in \Z-\{0\}} \ \sum_{a_{1}\in \Z-\{0\}}  \ \int_{0}^{\infty} \ \left(\widehat{\widetilde{\Phi}}\right)_{(1, a_{1})}\left[\begin{pmatrix}
 	 1       & & \\
 	 a_{0} & 1 & \\
 	 &  & 1
 	 \end{pmatrix} \begin{pmatrix}
 	 y_{0} & & \\
 	 & 1& \\
 	 &   & 1
 	 \end{pmatrix}  \right] \ d^{\times} y_{0}.
 	 \end{align}
  (See Proposition \ref{incomf}.)

 
 	 The first line of (\ref{intrexp}) contributes to the diagonal and is precisely the integral representation of the standard $L$-function of $\widetilde{\Phi}$. It is equal to $L\left(1, \widetilde{\Phi}\right)  \cdot  Z_{\infty}\left(1, \widetilde{\Phi}\right)$, where  $Z_{\infty}( \ \cdot \  , \widetilde{\Phi})$ is the $GL(3)$ local zeta integral at $\infty$. The second line of  (\ref{intrexp}) is the off-diagonal (denoted by $OD_{\Phi}$ below).  In terms of  Fourier coefficients of $\widetilde{\Phi}$, 
 	 	\begin{align}\label{adeles}
 	OD_{\Phi} \ = \   \sum_{a_{0}\in \Z-\{0\}} \ \sum_{a_{1}\in \Z-\{0\}} \ \frac{\mathcal{B}_{\widetilde{\Phi}}(1,a_{1})}{|a_{1}|}   \ \int_{0}^{\infty} \   \left(\widehat{\widetilde{\Phi}}\right)_{(1,1)}\left[
 	 	\begin{pmatrix}
 	 		a_{1}/a_{0}  &   &     \\
 	 		1            & 1 &      \\
 	 		&    &  1
 	 	\end{pmatrix}
 	 	\begin{pmatrix}
 	 		y_{0} &   & \\
 	 		& 1 & \\
 	 		&    & 1
 	 	\end{pmatrix}\right] \ d^{\times} y_{0}. 
 	 \end{align}
 	  It can be further explicated as
 	 \begin{align}
 	OD_{\Phi} \ = \   \sum_{a_{0}\in \Z-\{0\}} \ \sum_{a_{1}\in \Z-\{0\}} \ \frac{\mathcal{B}_{\widetilde{\Phi}}(1,a_{1})}{|a_{1}|}  \cdot \int_{0}^{\infty} \ 
 	 W_{\alpha(\Phi)}\left( \left| \frac{a_{1}}{a_{0}}\right|\frac{y_{0}}{1+y_{0}^2}, \  1\right) \cdot e\left( \frac{a_{1}}{a_{0}}\frac{y_{0}^2}{1+y_{0}^2}\right) \ d^{\times} y_{0} \label{offdiagsketch}
 	 \end{align}
  using the spherical Whittaker function $W_{\alpha(\Phi)}$, where the oscillatory factor $e(\cdots)$ originates from the unipotent translate  of Whittaker functions.  
  
  Roughly speaking,    (\ref{offdiagsketch}) suggests some forms of (multiplicative) convolutions between the $GL(3)$ and $GL(1)$ data at both the archimedean and the non-archimedean places: 

 	 \begin{enumerate}
 	 	\item (Archimedean) We apply Mellin inversion to $W_{\alpha(\Phi)}$ (a standard result for the $GL(3)$ theory) and the local functional equation of  $GL(1)$ of the form
 	 	\begin{align}
 	 	e(x)+ e(-x) \ = \     \int_{-i\infty}^{i\infty} \   \frac{\Gamma_{\R}(u)}{\Gamma_{\R}\left(1-u\right)} \  |x|^{-u} \ \frac{du}{2\pi i} \hspace{20pt} (x \ \neq \ 0);
 	 	\end{align} 
 	 	
 	 	\item (Non-archimedean) Observe the following identity of the double Dirichlet series:  
 	 	\begin{align}
 	 	\sum_{a_{0}\neq 0} \sum_{a_{1}\neq 0} \frac{\mathcal{B}_{\Phi}(a_{1}, 1)}{|a_{1}|} \left| \frac{a_{1}}{a_{0}}\right|^{1-s_{0}-u} \ = \  L\left(s_{0}+u, \widetilde{\Phi}\right) \  \zeta\left(1-s_{0}-u\right).
 	 	\end{align}
 	 \end{enumerate}
 	 We thus arrive at
 	 \begin{align}
 	 OD_{\Phi}  \ = \  \int_{(1/2)} \  \zeta\left(1-s_{0}\right) L(s_{0},  \widetilde{\Phi}) \cdot  (\cdots)  \ \frac{ds_{0}}{2\pi i}, 
 	 \end{align}
 	 where `$(\cdots)$' stands for a certain integral transform can be described purely in terms of $\Gamma$-functions.

 	 \begin{rem}\label{BFWremk}
 	 \ 
 	 \begin{enumerate}
 	 	\item In  (\ref{mainobj}), the test function $h$ of the Poincar\'{e} series $P(*;h)$ will be  transformed into the Kontorovich-Lebedev transform $h^{\#}$ on the $GL(2)$ side  (see (\ref{comspec}))  and into the Mellin transform $\widetilde{h}$ on the $GL(1)$ side (see (\ref{doubmbtrans})). This is consistent with the sketch above. 
 	 	
 	 	\item  Readers may wish to compare the integral transforms obtained in the sketch with the one described in Section 1.3 of \cite{BFW21+}. 
 	 \end{enumerate}
 	 \end{rem}

 	  \begin{rem}
 	  	The choices of unipotent subgroups have been important  in the constructions of various  $L$-series for the group $GL(3)$:
 	  	\begin{itemize}
 	  	\item $\left\{  \begin{psmallmatrix}
 	  	1 &    &   * \\
 	  	& 1 &  * \\
 	  	&     &    1
 	  	\end{psmallmatrix} \right\}$ or  $\left\{  \begin{psmallmatrix}
 	  	1 &   * &   * \\
 	  	   & 1  &   \\
 	  	   &     &    1
 	  	\end{psmallmatrix} \right\}$ for the standard $L$-function;

 	  \item 	$\left\{\begin{psmallmatrix}
 	  	1 &   &   *\\
 	  	   & 1 &    \\
 	  	   &    &  1
 	  	\end{psmallmatrix}  \right\}$ for Bump's double Dirichlet series (\cite{Bump84});

 	  \item 	$\left\{ \begin{psmallmatrix}
 	  	1 &    &    \\
 	  	& 1 & *   \\
 	  	&     &    1
 	  	\end{psmallmatrix} \right\}$	or 
 	  	$\left\{ \begin{psmallmatrix}
 	  	1 & *   &    \\
 	  	& 1 &   \\
 	  	&     &    1
 	  	\end{psmallmatrix} \right\}$ for the Motohashi phenomenon of this article. 
 	  
 	  \end{itemize}
 	  \end{rem}
 	  
 	  


 	   


\section{Preliminary}\label{prel}

  The analytic theory of   automorphic forms for the group $GL(3)$ have undergone considerable development in the past decade.  Readers should beware that the recent articles in the field (e.g., \cite{Bu13, Bu16, Bu20, GSW21}) have adopted a different set of  conventions and normalizations than the ones in the standard text \cite{Gold}.  (Nevertheless,  \cite{Gold} remains a useful reference as it documents plenty of standard results and their proofs thoroughly.)

  In this article,  we shall follow this recent shift of conventions (closest to \cite{Bu20}) which is convenient in many ways and is better aligned with  the theory of automorphic representation. We shall summarize the essential notions and results below with extra attention on the archimedean calculations involving Whittaker functions as they will play important roles in this article.

  \subsection{Notations and Conventions}\label{notconv}
  
  	Throughout this article, we use the following notations: $\Gamma_{\R}(s):= \pi^{-s/2}\Gamma(s/2)$  $(s\in \C)$;  $e(x):=e^{2\pi i x}$ \ $(x\in \R)$; \  $\Gamma_{n}:=SL_{n}(\Z)$ \ $(n\ge 2)$. Without otherwise specified, our test function $H$ lies in the class $ \mathcal{C}_{\eta}$ and $H= h^{\#}$.    We will often use the same symbol to denote a function (in $s$) and its analytic continuation.
  
  We will frequently encounter contour integrals of the shape
  \begin{align*}
  	\int_{-i\infty}^{i\infty} \ \cdots \   \int_{-i\infty}^{i\infty} \ (\cdots) \  \frac{ds_{1}}{2\pi i} \ \cdots \  \frac{ds_{k}}{2\pi i}
  \end{align*}
  where the contours involved should follow Barnes' convention: they pass to the right of all of the poles of the gamma functions in the form $\Gamma(s_{i}+ a)$ and to the left of all of the poles of the gamma functions in the form $\Gamma(a -s_{i})$.

  We also adopt the following set of conventions:
  \begin{enumerate}
  	\item \label{hecke} All Maass cusp forms will be  simultaneous eigenfunctions of the Hecke operators and will be either even or odd. Also, their first Fourier coefficients are  equal to $1$. In this case, the forms are said to be \textbf{Hecke-normalized}. Note that there are no odd form for  $SL_{3}(\Z)$, see Proposition 9.2.5 of \cite{Gold}. 
  	

  	\item Our fixed Maass cusp form $\Phi$ of $SL_{3}(\Z)$ is assumed to be \textbf{tempered at $\infty$}, i.e., its Langlands parameters are purely imaginary.

  	\item Denote by $\theta $ the best progress towards the Ramanujan conjecture for the Maass cusp forms of $SL_{3}(\Z)$.  We have $\theta\le \frac{1}{2}- \frac{1}{10}$, see Theorem 12.5.1 of \cite{Gold}.  
  	
  \end{enumerate}


\subsection{(Spherical) Whittaker Functions \& Transforms}\label{prelimwhitt}

In the rest of this article, all Whittaker functions are referred to the spherical ones.  The Whittaker function of $GL_{2}(\R)$ is more familiar and is given by 
 \begin{align}\label{gl2wh}
 	W_{\mu}(y) \  := \ 2\sqrt{y} K_{\mu}(2\pi y)
 \end{align}
 for $\mu\in \C$ and $y>0$.  Under this normalization, the following holds:

 \begin{prop}
 	For  $\re (w+\frac{1}{2}\pm \mu)>0$,  we have
 	\begin{align}\label{melwhitgl2}
 		\int_{0}^{\infty} W_{\mu}(y) y^{w} \ d^{\times} y \ = \  \frac{\pi^{-w-\frac{1}{2}}}{2} \  \Gamma\left(\frac{w+\frac{1}{2}+\mu}{2}\right) \Gamma\left(\frac{w+\frac{1}{2}-\mu}{2}\right). 
 	\end{align}
 \end{prop}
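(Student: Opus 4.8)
The plan is to reduce the identity to the classical Mellin transform of the $K$-Bessel function. Substituting the definition $W_{\mu}(y)=2\sqrt{y}\,K_{\mu}(2\pi y)$ together with $d^{\times}y=dy/y$, the left-hand side of (\ref{melwhitgl2}) becomes $2\int_{0}^{\infty}K_{\mu}(2\pi y)\,y^{w-1/2}\,dy$, and the rescaling $y\mapsto y/(2\pi)$ turns this into $2\,(2\pi)^{-w-1/2}\int_{0}^{\infty}K_{\mu}(y)\,y^{w-1/2}\,dy$. Everything therefore reduces to the evaluation
\[
\int_{0}^{\infty}K_{\mu}(y)\,y^{s-1}\,dy \;=\; 2^{\,s-2}\,\Gamma\!\left(\frac{s+\mu}{2}\right)\Gamma\!\left(\frac{s-\mu}{2}\right),\qquad \re s>|\re\mu|,
\]
applied with $s=w+\tfrac12$; collecting the resulting powers of $2$ and $\pi$ (one checks $2\cdot(2\pi)^{-w-1/2}\cdot 2^{\,w-3/2}=\tfrac12\pi^{-w-1/2}$) produces exactly the constant in (\ref{melwhitgl2}), and the condition $\re(w+\tfrac12\pm\mu)>0$ is precisely $\re s>|\re\mu|$.

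To establish the $K$-Bessel evaluation I would insert the standard integral representation $K_{\mu}(y)=\tfrac12\int_{0}^{\infty}\exp\!\big(-\tfrac{y}{2}(t+t^{-1})\big)\,t^{\mu-1}\,dt$ (valid for $y>0$) and interchange the two integrations. In the range $\re s>|\re\mu|$ the resulting double integral is absolutely convergent, so Fubini applies; the inner $y$-integral is the elementary Gamma integral $\int_{0}^{\infty}e^{-ay}y^{s-1}\,dy=\Gamma(s)a^{-s}$ with $a=(t+t^{-1})/2$, which gives $\Gamma(s)\,2^{s}\,t^{s}(1+t^{2})^{-s}$. The substitution $v=t^{2}$ then turns the remaining $t$-integral into a Beta integral, and after collecting the prefactors one obtains $2^{\,s-2}\,\Gamma(s)\,B\!\big(\tfrac{s+\mu}{2},\tfrac{s-\mu}{2}\big)$. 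Since $B(a,b)=\Gamma(a)\Gamma(b)/\Gamma(a+b)$ with $a+b=s$, the factor $\Gamma(s)$ cancels and the displayed right-hand side emerges.

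The argument is essentially bookkeeping, so I do not expect a genuine obstacle; the only points requiring care are tracking the constants $2$ and $\pi$ accurately and recording that the absolute convergence of the double integral --- equivalently, of the Beta integral --- is exactly what forces the hypothesis $\re(w+\tfrac12\pm\mu)>0$. One could alternatively quote the Mellin transform of $K_{\mu}$ from a standard table, but the short self-contained derivation above makes the role of the convergence condition transparent and is in the same spirit as the Stade-type $\Gamma$-integral computations carried out later in the paper.
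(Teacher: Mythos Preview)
Your proof is correct and is precisely the standard computation underlying the result; the paper itself does not give an argument but simply refers to Motohashi's book (equation (2.5.2) of \cite{Mo97}) for this classical Mellin transform of $K_{\mu}$. Your self-contained derivation via the integral representation of $K_{\mu}$ and the Beta integral is exactly what lies behind that reference, and your bookkeeping of the constants and the convergence condition $\re(w+\tfrac12\pm\mu)>0$ is accurate.
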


\begin{proof}
	Standard, see equation (2.5.2) of \cite{Mo97} for instance.   
	\end{proof}

 For the group $GL_{3}(\R)$, we first  introduce the function 
 \begin{align*}
 I_{\alpha}(y_{0}, y_{1}) \ = \ I_{\alpha}\begin{pmatrix}
 y_{0}y_{1} &          &       \\
 & y_{0} &       \\
 &          &  1
 \end{pmatrix} \ := \ y_{0}^{1-\alpha_{3}} y_{1}^{1+\alpha_{1}}
 \end{align*}
 for $y_{0}, y_{1}>0$ and  $\alpha\in\mathfrak{a}_{\C}^{(3)}:=  \left\{ (\alpha_{1},\alpha_{2}, \alpha_{3})\in \C^{3}: \alpha_{1}+\alpha_{2}+\alpha_{3}=0 \right\} $.   Then the Whittaker function for $GL_{3}(\R)$, denoted by $W_{\alpha}(y_{0}, y_{1})  =   W_{\alpha}\begin{psmallmatrix}
 	y_{0}y_{1} &          &       \\
 	& y_{0} &       \\
 	&          &  1
 \end{psmallmatrix}$, is defined in terms of \textit{Jacquet's integral}: 
	\begin{align}\label{jac}
 \prod_{1\le j<k\le 3} \Gamma_{\R}(1+\alpha_{j}-\alpha_{k}) \  \int_{\R}  \int_{\R}   \int_{\R} \   I_{\alpha}\left[\begin{psmallmatrix}
	&    & 1\\
	& -1 &    \\
	1 &  &
	\end{psmallmatrix} \begin{psmallmatrix}
	1 & u_{1,2} & u_{1,3} \\
	&     1      & u_{2,3} \\
	&             & 1
	\end{psmallmatrix} 
	\begin{psmallmatrix}
	y_{0}y_{1} &          &       \\
	& y_{0} &       \\
	&          &  1
	\end{psmallmatrix}\right] \cdot  e(-u_{1,2}-u_{2,3}) \ du_{1,2} \ du_{1,3} \ du_{2,3} 
	\end{align}
	for $y_{0}, y_{1}>0$ and $\alpha\in \mathfrak{a}_{\C}^{(3)}$.   See Chapter 5.5 of \cite{Gold} for details.  
	
	\begin{rem}
		 Notice the  differences in the normalizations of $I_{\alpha}$ here and the one given by equation 5.1.1 of  \cite{Gold}.  Also, the Whittaker functions here are actually the \textit{complete} Whittaker functions of \cite{Gold}. 
	\end{rem}

	Moreover,  the Whittaker function of $GL_{3}(\R)$ admits the following useful Mellin-Barnes representation commonly known as the \textit{Vinogradov-Takhtadzhyan formula}:

	\begin{prop}\label{vtmellin}
		Assume $\alpha \in \mathfrak{a}_{\C}^{(3)}$ is tempered, i.e., $\re \alpha_{i}=0$ ($i=1,2,3$).  Then for any $\sigma_{0}, \sigma_{1}>0$,  
		\begin{align}\label{whittaker}
		W_{-\alpha}(y_{0}, y_{1}) \ =  \ \frac{1}{4} \  \int_{(\sigma_{0})}\int_{(\sigma_{1})} G_{\alpha}(s_{0},s_{1}) y_{0} ^{1-s_{0}} y_{1}^{1-s_{1}} \  \frac{ds_{0}}{2\pi i } \frac{ds_{1}}{2\pi i }, \hspace{20pt} y_{0}, y_{1} \ > \ 0,
		\end{align}
		where
		\begin{align}\label{vtgamm}
		G_{\alpha}(s_{0},s_{1})  \ := \ \frac{\prod\limits_{i=1}^{3} \Gamma_{\R}\left( s_{0}+\alpha_{i}\right) \Gamma_{\R}\left( s_{1}-\alpha_{i}\right)}{\Gamma_{\R}(s_{0}+s_{1})}. 
		\end{align}
	
		\end{prop}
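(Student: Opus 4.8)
The plan is to deduce (\ref{whittaker}) from a closed-form evaluation of a double Mellin transform, followed by Mellin inversion. Concretely, (\ref{whittaker}) is equivalent --- by one-dimensional Mellin inversion carried out successively in $y_{0}$ and then $y_{1}$ --- to the identity
\[
\int_{0}^{\infty}\!\!\int_{0}^{\infty} W_{-\alpha}(y_{0},y_{1})\, y_{0}^{s_{0}-1} y_{1}^{s_{1}-1}\, d^{\times}y_{0}\, d^{\times}y_{1} \ = \ \frac{1}{4}\, G_{\alpha}(s_{0},s_{1})
\]
for $\re s_{0},\re s_{1}$ sufficiently large, with $G_{\alpha}$ as in (\ref{vtgamm}). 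Thus the real content is this Gamma-function evaluation; everything else is a routine appeal to the inversion theorem together with bookkeeping of constants.

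To evaluate the transform I would substitute Jacquet's integral (\ref{jac}) for $W_{-\alpha}$ and interchange the $(y_{0},y_{1})$-integration with the integration over $u_{1,2}, u_{1,3}, u_{2,3}$. The interchange is legitimate once $\re s_{0},\re s_{1}$ are large enough for the combined integral to be absolutely convergent --- the oscillation $e(-u_{1,2}-u_{2,3})$ is what rescues the $u$-integrals at the boundary of that region, and one passes to general $(s_{0},s_{1})$ afterwards by analytic continuation. Performing the Iwasawa decomposition of $w_{\ell}\, u\, \diag(y_{0}y_{1}, y_{0}, 1)$ (with $u$ the unipotent matrix of (\ref{jac})) and the natural change of variables, the unipotent integrals can then be carried out in a convenient order, collapsing to a one-variable archimedean integral whose integrand is built from the $GL_{2}(\R)$ Whittaker function $W_{\mu}$ of (\ref{gl2wh}). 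The $(y_{0},y_{1})$-Mellin integrals are now evaluated by the $GL_{2}$ formula (\ref{melwhitgl2}) --- each contributing a factor $\tfrac12$ and a pair of $\Gamma_{\R}$'s --- and a single remaining beta integral assembles everything into $\prod_{i=1}^{3}\Gamma_{\R}(s_{0}+\alpha_{i})\Gamma_{\R}(s_{1}-\alpha_{i})$ over $\Gamma_{\R}(s_{0}+s_{1})$. The normalizing constant $\prod_{1\le j<k\le 3}\Gamma_{\R}(1+\alpha_{j}-\alpha_{k})$ in front of (\ref{jac}) is exactly what cancels the spurious Gammas produced along the way, leaving the clean shape $\tfrac14 G_{\alpha}(s_{0},s_{1})$.

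For the inversion step I would verify the two hypotheses of the Mellin inversion theorem. First, the temperedness assumption $\re\alpha_{i}=0$ places every pole of the numerator $\Gamma_{\R}$-factors of $G_{\alpha}$ in $\{\re s_{0}\le 0\}$ or $\{\re s_{1}\le 0\}$, so $G_{\alpha}$ is holomorphic in the region $\re s_{0},\re s_{1}>0$; Stirling's formula then gives $G_{\alpha}(s_{0},s_{1})\ll e^{-\frac{\pi}{2}(|\im s_{0}|+|\im s_{1}|)}$ uniformly in vertical strips, so the double contour integral in (\ref{whittaker}) converges absolutely, is smooth in $(y_{0},y_{1})$, and is independent of the choice of $\sigma_{0},\sigma_{1}>0$ by contour shift. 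Second, $W_{-\alpha}(y_{0},y_{1})$ decays exponentially as either $y_{i}\to\infty$ and grows at most polynomially as either $y_{i}\to 0$ --- standard properties of Jacquet's integral --- so its double Mellin transform is holomorphic for $\re s_{0},\re s_{1}>0$, and the inversion theorem recovers $W_{-\alpha}$ from $\tfrac14 G_{\alpha}$, giving (\ref{whittaker}).

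I expect the main obstacle to be the conditional (not absolute) convergence of Jacquet's integral, which rules out a naive interchange in the second step: one must first work in a half-plane of absolute convergence in $(s_{0},s_{1})$, establish the Gamma-identity there, and only then analytically continue --- all the while keeping the explicit Iwasawa computation and the several normalization constants (including the power of $2$) under control. An alternative that avoids the explicit unipotent integrals is the ``soft'' route: verify that the right-hand side of (\ref{whittaker}) is a moderate-growth eigenfunction of the ring of $GL_{3}(\R)$-invariant differential operators with the eigenvalues attached to $-\alpha$ (the action is computed by shifting contours and invoking the functional equation of $\Gamma$), deduce from multiplicity one for Whittaker models that it equals $c(\alpha)\, W_{-\alpha}$, and fix $c(\alpha)=1$ by matching one leading asymptotic term, obtained by pushing a single contour onto its first pole. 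Given the normalizations already fixed in (\ref{jac}), I would expect the direct Mellin computation to be the cleaner of the two.
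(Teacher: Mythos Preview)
Your proposal is correct. The paper itself does not prove this statement in detail; it simply points to two references in \cite{Bump84}: checking that the right-hand side of (\ref{whittaker}) satisfies the $GL(3)$ differential equations (your ``soft'' alternative), and a direct proof starting from Jacquet's integral (\ref{jac}) (your main approach, via the double Mellin transform). So you have outlined precisely the two routes the paper invokes, and in somewhat more detail than the paper provides.

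One small comment on emphasis: the paper lists the differential-equation verification first and calls the Jacquet-integral route the ``cleaner'' one, whereas you present them in the opposite order of preference. Both are standard and well-documented in \cite{Bump84}; your Mellin-transform sketch is a faithful summary of how that computation goes, including the role of the normalizing product $\prod_{j<k}\Gamma_{\R}(1+\alpha_j-\alpha_k)$ and the recursive reduction to the $GL(2)$ formula (\ref{melwhitgl2}).
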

		
		\begin{proof}
This can be verified (up to the constant $1/4$) by a brute force yet elementary calculation, i.e., checking the right side of (\ref{whittaker}) satisfies the differential equations of $GL(3)$ (see pp. 38-39 of \cite{Bump84}).  For a cleaner proof starting from (\ref{jac}), see  Chapter X of \cite{Bump84}.   
			\end{proof}

\begin{rem}
	Notice the sign convention of the $\alpha_{i}$'s in formula (\ref{whittaker}) --- it is consistent with \cite{Bu20} but is opposite to that of  (6.1.4)$-$(6.1.5) in \cite{Gold}.
\end{rem}

 \begin{cor}\label{gl3whittbd}
 	For any \ $-\infty < A_{0}, A_{1}<1$, we have
 	\begin{align}\label{whitest3}
 	\left|W_{-\alpha}(y_{0}, y_{1})\right| \ \ll \ y_{0}^{A_{0}} y_{1}^{A_{1}}, \hspace{20pt} y_{0}, y_{1} \ > \ 0,
 	\end{align}
 	where the implicit constant depends only on $\alpha, A_{0}, A_{1}$. 
 \end{cor}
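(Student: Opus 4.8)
The plan is to read the estimate directly off the Vinogradov--Takhtadzhyan representation of Proposition \ref{vtmellin}. Given $A_0, A_1 < 1$, put $\sigma_0 := 1 - A_0 > 0$ and $\sigma_1 := 1 - A_1 > 0$; these are admissible contours in (\ref{whittaker}), so no contour shift is needed. Moving absolute values inside the double integral and noting that $|y_0^{1-s_0}| = y_0^{1-\sigma_0} = y_0^{A_0}$ and $|y_1^{1-s_1}| = y_1^{A_1}$ on these lines, we obtain
\[
\left|W_{-\alpha}(y_0, y_1)\right| \ \le \ \frac{y_0^{A_0} y_1^{A_1}}{4} \int_{(\sigma_0)} \int_{(\sigma_1)} \left|G_{\alpha}(s_0, s_1)\right| \, \frac{|ds_0|}{2\pi}\, \frac{|ds_1|}{2\pi},
\]
so everything reduces to showing that the last double integral is finite, with a value depending only on $\alpha$, $A_0$, $A_1$. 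This produces exactly the exponents $A_0, A_1$ asserted in (\ref{whitest3}).

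For the convergence I would use Stirling's formula in the uniform form $|\Gamma(s)| \asymp (1+|\im s|)^{\re s - 1/2} e^{-\frac{\pi}{2}|\im s|}$ on any fixed vertical strip, together with the fact that on the lines in question the gamma factors appearing in $G_\alpha$ have no poles, so the bound extends to all of $\R$ at the cost of the implied constant. Write $s_0 = \sigma_0 + it_0$, $s_1 = \sigma_1 + it_1$. Since $\Phi$ is tempered, the $\alpha_i$ are purely imaginary, so $\re(s_0+\alpha_i) = \sigma_0$ and $\re(s_1-\alpha_i) = \sigma_1$ for all $i$; applying Stirling to each of the six gamma factors in the numerator of $G_\alpha$ and to the single one in the denominator, and recalling $\Gamma_{\R}(s) = \pi^{-s/2}\Gamma(s/2)$, one gets
\[
\left|G_{\alpha}(s_0, s_1)\right| \ \ll_{\alpha, A_0, A_1} \ (1+|t_0|)^{C_0}(1+|t_1|)^{C_1} \exp\!\left( -\tfrac{3\pi}{4}\left(|t_0|+|t_1|\right) + \tfrac{\pi}{4}|t_0+t_1| \right)
\]
for suitable real exponents $C_0 = C_0(A_0,A_1)$, $C_1 = C_1(A_0, A_1)$. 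By the triangle inequality $|t_0+t_1| \le |t_0| + |t_1|$, the exponential factor is bounded by $\exp\!\left(-\tfrac{\pi}{2}(|t_0|+|t_1|)\right)$, which dominates any fixed power of $(1+|t_0|)(1+|t_1|)$; hence the double integral converges and the corollary follows.

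The argument is essentially routine once the Mellin--Barnes representation (\ref{whittaker}) is available; the only mild point of care is that the Stirling asymptotics hold only for large imaginary part, so one should treat separately the bounded regions $\{|t_0|\le 1\}$, $\{|t_1|\le 1\}$, $\{|t_0+t_1|\le 1\}$ --- on each of which the relevant gamma factor is bounded since it has no pole on the contour --- and the complementary region where all three quantities exceed $1$ and the exponential gain above applies. This splitting is also where temperedness of $\Phi$ enters: it keeps the real parts of the arguments $s_0+\alpha_i$ and $s_1-\alpha_i$ pinned at $\sigma_0$ and $\sigma_1$; for non-tempered $\Phi$ one would carry the extra shifts $\re\alpha_i$ through, changing the exponents $C_0, C_1$ but not the exponential decay, in line with the remark made after the statement of Theorem \ref{maingl3gl2}.
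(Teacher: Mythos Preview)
Your argument is correct and is essentially the paper's proof spelled out in full: the paper simply says ``follows directly from Proposition \ref{vtmellin} by contour shifting,'' which amounts exactly to choosing $\sigma_0 = 1-A_0$, $\sigma_1 = 1-A_1$ in (\ref{whittaker}) and bounding the resulting integral by Stirling, as you do. Your remark that no actual shift is needed is well taken, since Proposition \ref{vtmellin} already asserts the identity for arbitrary $\sigma_0,\sigma_1>0$.
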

 
 \begin{proof}
 	Follows directly from  Proposition \ref{vtmellin} by contour shifting. 
 	\end{proof}

 We will need the explicit evaluation of the $GL_{3}(\R)\times GL_{2}(\R)$ Rankin-Selberg integral. It is a consequence of the  \textit{Second Barnes Lemma} stated as follows.

 	\begin{lem}\label{secBarn}
 	  For $a,b,c,d,e,f \in \C$ with $f=a+b+c+d+e$, we have
 		\begin{align}\label{secBar}
 			\int_{-i\infty}^{i\infty} \frac{\Gamma(w+a) \Gamma(w+b)\Gamma(w+c) \Gamma(d-w) \Gamma(e-w)}{\Gamma(w+f)} \ &\frac{dw}{2\pi i} \nonumber\\
 			& \hspace{-50pt}  \ = \   \frac{\Gamma(d+a) \Gamma(d+b) \Gamma(d+c) \Gamma(e+a) \Gamma(e+b)\Gamma(e+c)}{\Gamma(f-a) \Gamma(f-b) \Gamma(f-c)}.  
 		\end{align}
 	Recall that  the contours involved must follow the Barnes convention, see  Section \ref{notconv}.
 
 \end{lem}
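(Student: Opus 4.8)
This is the classical \emph{second Barnes lemma}, so in the surrounding text it would be entirely legitimate simply to cite it (Barnes' original paper, Bailey's tract, or Whittaker--Watson \S14.53) together with the Barnes convention for the contours recalled in Section~\ref{notconv}; what follows is the plan of a self-contained proof. The idea is to evaluate the contour integral by residues and then recognise the resulting hypergeometric sum. First one settles absolute convergence: on a vertical line Stirling's formula gives $\Gamma(w+a)\Gamma(w+b)\Gamma(w+c)\ll e^{-\frac{3\pi}{2}|\im w|}$, $\Gamma(d-w)\Gamma(e-w)\ll e^{-\pi|\im w|}$ and $1/\Gamma(w+f)\ll e^{\frac{\pi}{2}|\im w|}$, for a net bound $e^{-2\pi|\im w|}$, so the integral converges absolutely. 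Since both sides of \eqref{secBar} are meromorphic in $(a,b,c,d,e)$, it suffices to prove the identity on an open parameter set (say with $\re(d+e)>0$ and the obvious separation conditions), which also legitimises the contour shift below.

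Next I would push the contour to the right across the poles of $\Gamma(d-w)\Gamma(e-w)$, located at $w=d+n$ and $w=e+n$ for integers $n\ge 0$. Using $a+b+c-f=-(d+e)$ together with the reflection formula to control $\Gamma(d-w)\Gamma(e-w)$ on circular arcs passing between consecutive poles, the arc contributions tend to $0$, so $I$ equals the sum of the residues picked up. The residue of the integrand at $w=d+n$ is $\frac{(-1)^{n+1}}{n!}\,\Gamma(d+a+n)\Gamma(d+b+n)\Gamma(d+c+n)\Gamma(e-d-n)/\Gamma(d+f+n)$; writing $\Gamma(x+n)=\Gamma(x)(x)_n$ and $\Gamma(e-d-n)=(-1)^n\Gamma(e-d)/(1+d-e)_n$ to expose Pochhammer symbols, summing over $n$, and adding the symmetric contribution from $w=e+n$, one obtains $I=-(U_d+U_e)$, where $U_d=\frac{\Gamma(d+a)\Gamma(d+b)\Gamma(d+c)\Gamma(e-d)}{\Gamma(d+f)}\,{}_3F_2\!\left(d+a,\,d+b,\,d+c;\,d+f,\,1+d-e;\,1\right)$ and $U_e$ is $U_d$ with $d$ and $e$ interchanged.

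Now the hypothesis on $f$ enters: each of these ${}_3F_2(1)$'s is \emph{balanced} (Saalsch\"utzian), since the sum of the two lower parameters minus the sum of the three upper parameters is $(d+f)+(1+d-e)-(d+a)-(d+b)-(d+c)=1+\bigl(f-(a+b+c+d+e)\bigr)=1$, the last equality being exactly $f=a+b+c+d+e$. For such a balanced but non-terminating series the symmetric combination $U_d+U_e$ collapses to a ratio of gamma factors, and I would obtain this by any of the standard routes: apply the classical two-term Thomae relation for a balanced ${}_3F_2(1)$ (a single such series is not summable, but the combination with $d\leftrightarrow e$ telescopes); or substitute a Mellin--Barnes representation of one hypergeometric piece into an auxiliary contour integral and reduce to two applications of the first Barnes lemma, as in Whittaker--Watson; or prove the terminating case by the Pfaff--Saalsch\"utz theorem and extend by analytic continuation in $(a,b,c,d,e)$. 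In this last route one must take the limit on $U_d+U_e$ rather than termwise, because $U_d$ and $U_e$ separately develop poles precisely on the locus where the combination is regular. Either way the output is $\Gamma(d+a)\Gamma(d+b)\Gamma(d+c)\Gamma(e+a)\Gamma(e+b)\Gamma(e+c)\big/\bigl(\Gamma(f-a)\Gamma(f-b)\Gamma(f-c)\bigr)$, the right-hand side of \eqref{secBar}, the reflection identities reconciling the overall sign.

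The routine parts are the Stirling estimate and the residue calculus; the genuine content --- and the reason this is a named lemma rather than a one-line corollary of the first Barnes lemma --- is that final step, the evaluation of the two balanced, non-terminating ${}_3F_2(1)$ series. That is where any real work (or the citation) lies, and throughout one keeps the contours in the Barnes convention of Section~\ref{notconv}.
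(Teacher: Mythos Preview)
Your instinct in the opening sentence is exactly right: the paper's entire proof is the one-line citation ``See Bailey \cite{Ba64}.'' So you have correctly anticipated the paper's approach, and then gone well beyond it by sketching a self-contained argument.

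The sketch you give is the standard classical route (residue expansion into two non-terminating balanced ${}_3F_2(1)$'s, then the two-term Saalsch\"utz-type relation) and is sound in outline. A couple of minor points if you ever want to turn this into a full proof rather than a plan: the sign bookkeeping in ``$I=-(U_d+U_e)$'' together with ``the reflection identities reconciling the overall sign'' deserves to be made explicit, since that is exactly where errors creep in; and among the three routes you offer for the final step, the ``terminating case plus analytic continuation'' one is genuinely delicate for the reason you flag (the individual $U_d,U_e$ are singular on the limiting locus), so in practice the Thomae/three-term relation or the double-Barnes reduction is cleaner. But none of this is a gap --- you correctly identify that final evaluation as the substantive content and name the right tools for it.
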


\begin{proof}
	See Bailey \cite{Ba64}. 
\end{proof}

 \begin{prop}\label{stadediff1}
  Let  $W_{\mu}$ and $W_{-\alpha}$  be the Whittaker functions of  $GL_{2}(\R)$ and $GL_{3}(\R)$  respectively.  For $\re s\gg 0$, we have
 	\begin{equation}\label{eqn sta1}
 \mathcal{Z}_{\infty}\left(s; \ W_{\mu},\  W_{-\alpha}\right) \ := \ 	\int_{0}^{\infty} \int_{0}^{\infty} \  W_{\mu}(y_{1}) \cdot W_{-\alpha}(y_{0}, y_{1}) \cdot (y_{0}^2 y_{1})^{s-\frac{1}{2}} \ \frac{dy_{0} dy_{1}}{y_{0}y_{1}^2} \ = \  \frac{1}{4}\cdot  \prod_{\pm} \prod_{k=1}^{3} \ \Gamma_{\R}\left(s\pm \mu-\alpha_{k}\right). 
 	\end{equation}
 \end{prop}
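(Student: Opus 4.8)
The plan is to reduce the double Mellin integral to a single contour integral via the Vinogradov--Takhtadzhyan formula, then apply the Second Barnes Lemma. First I would substitute the Mellin--Barnes representation (\ref{whittaker}) for $W_{-\alpha}(y_{0}, y_{1})$ into the left-hand side of (\ref{eqn sta1}), so that
\begin{align*}
\mathcal{Z}_{\infty}\left(s; \ W_{\mu},\  W_{-\alpha}\right) \ = \ \frac{1}{4} \int_{(\sigma_{0})}\int_{(\sigma_{1})} G_{\alpha}(s_{0},s_{1}) \left[\int_{0}^{\infty} y_{0}^{2s-2-s_{0}} \ d^{\times} y_{0}\right]\left[\int_{0}^{\infty} W_{\mu}(y_{1}) y_{1}^{s-\frac{3}{2}-s_{1}+1} \ d^{\times} y_{1}\right] \frac{ds_{0}}{2\pi i}\frac{ds_{1}}{2\pi i},
\end{align*}
where I have written the measure as $dy_{0}dy_{1}/(y_{0}y_{1}^2) = y_{0}^{-1}y_{1}^{-2}\,dy_{0}\,dy_{1}$ and absorbed powers accordingly (the exact bookkeeping of exponents is routine). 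The $y_{0}$-integral is a Mellin transform of a pure power, which forces $s_{0} = 2s-1$ as a delta-like constraint: more precisely, one does not integrate $y_{0}$ freely but rather recognizes that after collapsing it the $s_{0}$-contour is pinned, so it is cleaner to first integrate $y_{0}$ only over the part of the Jacquet/$I_\alpha$ structure that carries the $y_0$-dependence. To avoid this subtlety I would instead keep one Mellin--Barnes integral and evaluate the genuinely convergent $y_{0}$- and $y_{1}$-integrals using (\ref{melwhitgl2}) for $W_\mu$ and a straightforward power computation, obtaining a ratio of $\Gamma_{\R}$-factors times a single contour integral in the remaining variable.

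The key step is then the evaluation of that single contour integral. After the dust settles, the integrand is $G_{\alpha}$ times $\Gamma_{\R}$-factors coming from the Mellin transform of $W_\mu$, and by the definition (\ref{vtgamm}) of $G_\alpha$ together with $\Gamma_{\R}(s) = \pi^{-s/2}\Gamma(s/2)$, this is a contour integral of the shape
\begin{align*}
\int_{-i\infty}^{i\infty} \frac{\prod_{k=1}^{3}\Gamma\!\left(\tfrac{w + a_k}{?}\right)\,\Gamma(\cdots)\Gamma(\cdots)}{\Gamma(w + f)} \ \frac{dw}{2\pi i}
\end{align*}
which, after the change of variables $w \mapsto w/2$ to clear the halves inside the $\Gamma$'s, matches exactly the left-hand side of the Second Barnes Lemma (\ref{secBar}): three $\Gamma$-factors of the form $\Gamma(w+a), \Gamma(w+b), \Gamma(w+c)$ involving the $\alpha_k$'s, two of the form $\Gamma(d-w), \Gamma(e-w)$ involving $s \pm \mu$ (or $s$ and $\mu$), and a single $\Gamma(w+f)$ in the denominator with $f = a+b+c+d+e$ — the last being precisely the $\Gamma_{\R}(s_0+s_1)$ denominator of $G_\alpha$. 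Applying Lemma \ref{secBarn} collapses everything to a product of six $\Gamma$-factors over three $\Gamma$-factors, and I would then check that the numerator gives $\prod_{\pm}\prod_{k=1}^{3}\Gamma(\tfrac{1}{2}(s\pm\mu-\alpha_k))$ while the denominator $\Gamma(f-a)\Gamma(f-b)\Gamma(f-c)$ cancels against the leftover $\Gamma_{\R}$-factors and the $\pi$-powers reassemble into the $\Gamma_{\R}$ normalization, yielding the claimed $\tfrac{1}{4}\prod_{\pm}\prod_{k}\Gamma_{\R}(s\pm\mu-\alpha_k)$.

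The main obstacle I anticipate is purely bookkeeping rather than conceptual: correctly tracking the shift from the $\Gamma_{\R}$-normalization (halved arguments, $\pi$-powers) to the plain-$\Gamma$ form required by Lemma \ref{secBarn}, verifying the constraint $f = a+b+c+d+e$ holds identically in $s$, $\mu$, and the $\alpha_k$ (this should follow from $\alpha_1 + \alpha_2 + \alpha_3 = 0$), and ensuring the contours can be placed to satisfy Barnes' convention simultaneously — which is where the hypothesis $\re s \gg 0$ enters, guaranteeing the poles of $\Gamma(d-w)\Gamma(e-w)$ lie to the right of the poles of the $\Gamma(w+\cdot)$'s so a valid vertical contour exists. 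The temperedness assumption $\re\alpha_i = 0$ from Proposition \ref{vtmellin} is needed to invoke (\ref{whittaker}) in the first place. Once the contour configuration is justified and the Second Barnes Lemma applies, the remaining identity is a mechanical simplification, and the factor $\tfrac14$ is inherited directly from the $\tfrac14$ in (\ref{whittaker}).
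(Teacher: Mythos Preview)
Your approach is correct and is precisely the argument of Bump \cite{Bump88}, which the paper cites: substitute the Vinogradov--Takhtadzhyan Mellin--Barnes representation, use Mellin inversion to collapse the $y_0$-integral (this pins $s_0 = 2s$, not $2s-1$ as you wrote, but you flagged the bookkeeping as routine), evaluate the $y_1$-integral via (\ref{melwhitgl2}), and then the remaining $s_1$-integral is exactly the Second Barnes Lemma with $a,b,c = -\alpha_i/2$, $d,e = (s\pm\mu)/2$, and the constraint $f=a+b+c+d+e=s$ holding because $\alpha_1+\alpha_2+\alpha_3=0$. The cancellation of $\prod_i\Gamma(s+\alpha_i/2)$ against the denominator $\Gamma(f-a)\Gamma(f-b)\Gamma(f-c)$ and the reassembly of the $\pi$-powers into $\Gamma_{\R}$ go through exactly as you anticipated.
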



 \begin{proof}
 See  \cite{Bump88}. 
 \end{proof}


The following pair of integral transforms plays an important role in the archimedean aspect of this article. 

	\begin{defi}\label{defwhittrans}
	Let  $h: (0, \infty) \rightarrow \C$ and $H: i\R \rightarrow \C$ be  measurable functions with $H(\mu)=H(-\mu)$. Let $W_{\mu}(y):= 2\sqrt{y} K_{\mu}(2\pi y)$.  Then the Kontorovich-Lebedev transform of $h$ is defined by
	\begin{equation}\label{eqn whitranseq}
	h^{\#}(\mu) \ := \ \int_{0}^{\infty} h(y)\cdot W_{\mu}(y) \ \frac{dy}{y^2},   
	\end{equation}
	whereas its inverse transform is defined by 
	\begin{align}\label{invers}
	H^{\flat}(y) \ =  \ \frac{1}{4\pi i} \int_{(0)}  \ H(\mu)\cdot W_{\mu}(y) \ \frac{d\mu}{\left| \Gamma(\mu)\right|^2},
	\end{align}
	provided  the integrals converge absolutely. 	Note: the normalization constant $1/4\pi i$ in (\ref{invers}) is consistent with that in \cite{Mo97}, \cite{I02}. 
	\end{defi}

		\begin{defi}
				Let $\mathcal{C}_{\eta}$ be the class of holomorphic functions $H$  on the vertical strip  $|\re \mu|< 2\eta$ such that 
				\begin{enumerate}
					\item $H(\mu)=H(-\mu)$,
					
					\item $H$ has rapid decay in the sense that
					\begin{align}\label{rapdeca}
					H(\mu) \ \ll \ e^{-2\pi|\mu|} \hspace{50pt} (|\re \mu| \ < \  2\eta). 
					\end{align}
				\end{enumerate}
				In this article, we take $\eta>40$ without otherwise specifying. 
			\end{defi}
			
		By contour-shifting and Stirling's formula, we have

	\begin{prop}\label{inKLconv}
	For any $H\in \mathcal{C}_{\eta}$, the integral  (\ref{invers}) defining $H^{\flat}$   converges absolutely. Moreover, we have 
				\begin{align}\label{decbdd}
			    H^{\flat}(y) \ \ll \ \min \{y, y^{-1}\}^{\eta} \hspace{20pt} (y \ > \ 0). 
				\end{align}

		\end{prop}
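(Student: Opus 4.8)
The plan is to estimate the integral \eqref{invers} by shifting the contour of integration in $\mu$. Fix $y > 0$. The integrand involves $W_{\mu}(y) = 2\sqrt{y}\,K_{\mu}(2\pi y)$ divided by $|\Gamma(\mu)|^2 = \Gamma(\mu)\Gamma(-\mu)$ on the line $\re\mu = 0$; to exploit the decay of $H$ I would instead move to a vertical line $\re\mu = \sigma$ with $0 < \sigma < 2\eta$, picking up no residues because $1/|\Gamma(\mu)|^2$ vanishes at $\mu = 0$ (so $H(\mu)W_{\mu}(y)/\Gamma(\mu)\Gamma(-\mu)$ has no pole there) and $H$ is holomorphic on the whole strip by hypothesis. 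Strictly, on an off-axis line the notation $|\Gamma(\mu)|^2$ should be read as $\Gamma(\mu)\Gamma(-\mu)$, which is the natural analytic continuation of the density.

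First I would record the two analytic inputs. The decay hypothesis \eqref{rapdeca} gives $H(\mu) \ll e^{-2\pi|\mu|}$ uniformly on $|\re\mu| < 2\eta$. For the archimedean factor, Stirling's formula gives, on a fixed vertical line $\re\mu = \sigma$ with $|\im\mu| = T \to \infty$,
\begin{align*}
\frac{1}{\Gamma(\mu)\Gamma(-\mu)} \ \ll \ e^{\pi T}\,(1 + T)^{2\sigma}\,\bigl(1 + T\bigr)^{O(1)},
\end{align*}
since each of $|\Gamma(\sigma \pm iT)|$ decays like $e^{-\pi T/2}T^{\sigma - 1/2}$. For the $K$-Bessel function one has the classical uniform bound $K_{\mu}(2\pi y) \ll e^{-\pi|\im\mu|/2}\,(\text{poly in }|\mu|)\cdot(\text{something in }y)$; combined with $\sqrt{y}$ this yields an $e^{-\pi T/2}$ factor, which is not by itself enough to beat the $e^{\pi T}$ from the Gamma denominator. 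This is exactly where the strong decay $e^{-2\pi|\mu|}$ of $H$ is needed: the product of the three exponential contributions, $e^{-2\pi T}\cdot e^{\pi T}\cdot e^{-\pi T/2} = e^{-3\pi T/2}$, decays, and the remaining polynomial factors (from Stirling and from the Bessel bound) are harmless. Absolute convergence of \eqref{invers} follows, and in fact does so already on the line $\re\mu = 0$, where $K_{\mu}(2\pi y)$ with $\mu \in i\R$ is bounded in terms of $y$ alone times polynomial growth in $|\mu|$, and $1/|\Gamma(\mu)|^2 \ll e^{\pi|\mu|}\cdot\text{poly}$, against which $e^{-2\pi|\mu|}$ wins.

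For the decay bound \eqref{decbdd} I would treat the two ranges of $y$ separately. For $y \geq 1$: use the exponential decay $K_{\mu}(2\pi y) \ll e^{-2\pi y}$ (uniformly for $\mu$ on a compact set, or with the standard uniform-in-$\mu$ refinement), so $W_{\mu}(y) \ll \sqrt{y}\,e^{-2\pi y}$ and, keeping the contour on $\re\mu = 0$, $H^{\flat}(y) \ll \sqrt{y}\,e^{-2\pi y} \ll y^{-\eta}$. For $0 < y \leq 1$: here the point is to gain a power $y^{\eta}$, which forces shifting the contour to $\re\mu = \eta$ (legitimate since $\eta < 2\eta$ and $H$ is holomorphic and decays on the whole strip). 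On that line, the standard bound $K_{\sigma + iT}(2\pi y) \ll_{\sigma} y^{-\sigma}$ for $y$ bounded gives $W_{\mu}(y) \ll y^{1/2}\cdot y^{-\eta} = y^{1/2 - \eta}$; wait — one must be careful about the correct power. The cleaner route is to use the Mellin expansion of $K_{\mu}$: for $0 < y < 1$, $K_{\mu}(2\pi y) = \tfrac12\bigl((\pi y)^{-\mu}\Gamma(\mu) + (\pi y)^{\mu}\Gamma(-\mu)\bigr)(1 + O(y^2))$, so $W_{\mu}(y)/(\Gamma(\mu)\Gamma(-\mu))$ behaves like $y^{1/2}\bigl(y^{-\mu}/\Gamma(-\mu) + y^{\mu}/\Gamma(\mu)\bigr)$; shifting $\re\mu$ up to $\eta$ in the term with $y^{\mu}$ (and noting the other term is its reflection under $\mu \mapsto -\mu$, so by symmetry of $H$ one may combine them) produces the factor $y^{1/2 + \eta}$, and the residual integral over $\im\mu$ converges by the exponential-decay estimates above. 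Absorbing the harmless $y^{1/2}$ into the bound for $y \leq 1$ gives $H^{\flat}(y) \ll y^{\eta}$.

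The main obstacle is the bookkeeping of exponential factors: verifying that $e^{-2\pi|\mu|}$ from $\mathcal{C}_{\eta}$ genuinely dominates the combined growth $e^{\pi|\mu|}$ of $1/|\Gamma(\mu)|^2$ against only $e^{-\pi|\mu|/2}$ of decay from $K_{\mu}$ — i.e., confirming the net exponent $-3\pi/2 < 0$ — and, for \eqref{decbdd}, correctly tracking the power of $y$ through the contour shift and the $K$-Bessel asymptotics so that exactly $\min\{y, y^{-1}\}^{\eta}$ emerges (this is precisely why the threshold $\eta$ appears and why one shifts to $\re\mu = \eta$). Everything else is a routine application of Stirling's formula and standard Bessel estimates.
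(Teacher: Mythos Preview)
Your approach is correct and matches the paper's: the proof there is simply a citation to Lemma~2.10 of Motohashi \cite{Mo97}, together with the remark (just before the proposition) that the argument proceeds ``by contour-shifting and Stirling's formula,'' and you have supplied precisely those details. For $y\le 1$ your evenness-and-shift idea is exactly right; it is carried out most cleanly by noting that $1/(\Gamma(\mu)\Gamma(-\mu))=-\mu\sin(\pi\mu)/\pi$, so that $H^{\flat}(y)=\tfrac{\sqrt{y}}{2\pi i}\int_{(0)}H(\mu)\,\mu\, I_{\mu}(2\pi y)\,d\mu$, after which shifting to $\re\mu=\eta$ gives the factor $y^{1/2+\eta}$ directly without tracking $O(y^{2})$ remainders.
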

		
		\begin{proof}
See Lemma 2.10 of \cite{Mo97}.
			\end{proof}

	\begin{prop}\label{plancherel}
		Under the same  assumptions of Proposition \ref{inKLconv}, we have
		\begin{align}
		(h^{\#})^{\flat}(g) \ = \ h(g)  \hspace{20pt} \text{ and } \hspace{20pt} 
		(H^{\flat})^{\#}(\mu) \ = \ H(\mu). 
		\end{align}
	\end{prop}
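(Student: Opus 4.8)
The plan is to deduce the two inversion identities from the classical Kontorovich--Lebedev theory, rephrased in the present normalization. Recall that the Kontorovich--Lebedev transform in its standard analytic form asserts that, for a sufficiently nice function $f$ on $(0,\infty)$, one has
\begin{align*}
f(y) \ = \ \frac{2}{\pi^2} \int_{0}^{\infty} K_{i\tau}(y) \ \tau \sinh(\pi\tau) \left( \int_{0}^{\infty} f(x) K_{i\tau}(x) \ \frac{dx}{x} \right) d\tau.
\end{align*}
First I would substitute $\mu = i\tau$, rewrite $W_\mu(y) = 2\sqrt{y}\, K_\mu(2\pi y)$, and carry out the change of variables $y \mapsto 2\pi y$, $x\mapsto 2\pi x$ to bring the classical statement into exactly the shape $(h^{\#})^{\flat} = h$, using the identity $|\Gamma(i\tau)|^2 = \pi/(\tau \sinh \pi\tau)$ to match the measure $d\mu/|\Gamma(\mu)|^2$ appearing in \eqref{invers}. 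This is a bookkeeping computation with the gamma-function reflection formula and the normalization constants $1/4\pi i$; the point is that the constant in Definition~\ref{defwhittrans} has been fixed precisely so that the classical Plancherel/inversion constant comes out to $1$.

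Next, for the first identity I need to know that $h = H^\flat$ with $H \in \mathcal{C}_\eta$ is regular enough for the classical inversion theorem to apply. Here I would invoke Proposition~\ref{inKLconv}: it gives $h(y) = H^\flat(y) \ll \min\{y, y^{-1}\}^{\eta}$ with $\eta > 40$, so in particular $h$ is continuous on $(0,\infty)$ and decays rapidly at both $0$ and $\infty$, which is more than enough to be in the domain of validity of the Kontorovich--Lebedev inversion (one can cite Lebedev's book, or the discussion in \cite{Mo97}, Lemma~2.10 and its surroundings, as is done for Proposition~\ref{inKLconv}). Thus $(h^\#)^\flat(g) = (( H^\flat)^\#)^\flat$; applying the classical inversion to the pair $(h, h^\#)$ yields $(h^\#)^\flat = h = H^\flat$, which is the content of the second displayed identity $(H^\flat)^\# = H$ once one also verifies that $h^\# = (H^\flat)^\#$ is itself holomorphic and even on $|\re\mu| < 2\eta$ --- and this follows by differentiating under the integral sign in \eqref{eqn whitranseq}, the decay of $h$ from \eqref{decbdd} dominating the polynomial growth of $W_\mu(y)$ in $\mu$ on that strip (cf.\ Corollary~\ref{gl3whittbd}'s $GL(2)$ analogue). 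For the other direction, one applies the classical inversion theorem directly: given $h$ with $h(y) \ll \min\{y,y^{-1}\}^M$ for large $M$, the transform $h^\#$ is defined and the reconstruction $(h^\#)^\flat = h$ holds verbatim.

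I expect the only genuine subtlety --- and hence the main obstacle --- to be the justification of the interchange of the $\mu$-contour integral and the $y$-integral when composing the two transforms, i.e.\ showing
\begin{align*}
(H^\flat)^\#(\mu') \ = \ \int_0^\infty \left( \frac{1}{4\pi i}\int_{(0)} H(\mu) W_\mu(y)\, \frac{d\mu}{|\Gamma(\mu)|^2}\right) W_{\mu'}(y)\, \frac{dy}{y^2} \ = \ \frac{1}{4\pi i}\int_{(0)} H(\mu)\left(\int_0^\infty W_\mu(y) W_{\mu'}(y)\, \frac{dy}{y^2}\right) \frac{d\mu}{|\Gamma(\mu)|^2},
\end{align*}
since the inner $y$-integral of the two Whittaker (Bessel) functions is only conditionally convergent and its value is a distribution in $\mu - \mu'$ (a multiple of $|\Gamma(\mu)|^2 \delta(\mu-\mu')$ plus a principal-value term that cancels). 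Rather than prove the orthogonality relation of $K$-Bessel functions from scratch, I would sidestep this by citing the classical inversion theorem as a black box (exactly as the excerpt already does for Proposition~\ref{inKLconv} via \cite{Mo97}), so that the proof reduces to the normalization matching described in the first paragraph together with the regularity check of the second. If one instead wants a self-contained argument, the standard device is to insert a Gaussian regulator $e^{-\epsilon y}$ (or $e^{-\epsilon(\mu^2)}$), compute the resulting absolutely convergent double integral via Proposition~\ref{melwhitgl2} and the Barnes lemmas of Section~\ref{notconv}, and let $\epsilon \to 0^+$; but for the purposes of this paper the citation suffices.
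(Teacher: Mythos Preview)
Your proposal is correct and lands on essentially the same proof as the paper: both defer to Lemma~2.10 of \cite{Mo97}. The paper adds only the one-line remark that the inversion is ``a consequence of the Rankin--Selberg calculation for $GL_{2}(\R)\times GL_{2}(\R)$,'' which is exactly the $\int_{0}^{\infty} W_{\mu}(y) W_{\mu'}(y)\, y^{-2}\,dy$ computation you allude to in your final paragraph; your more detailed normalization matching and regularity check are the content behind that citation.
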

	
	\begin{proof}
		See Lemma 2.10 of \cite{Mo97}. It is a consequence of the Rankin-Selberg calculation for  $GL_{2}(\R)\times GL_{2}(\R)$. 
	\end{proof}


\subsection{Automorphic Forms of $GL(2)$ and $GL(3)$}
Let 
 \begin{align*}
 	\mathfrak{h}^2 := \left\{ \begin{pmatrix}
 		1 & u\\
 		& 1
 	\end{pmatrix} \begin{pmatrix}
 		y & \\
 		& 1
 	\end{pmatrix} : u \in \R, \ y >  0  \right\}
 \end{align*}
 with its invariant measure  given by $y^{-2} du dy$.  Let $\Delta:= -y^2\left( \partial_{x}^2 +\partial_{y}^2\right)$. An automorphic form  $\phi:  \mathfrak{h}^2 \rightarrow \C$ of $\Gamma_{2}=SL_{2}(\Z)$ satisfies  $\Delta\phi = \left( \frac{1}{4} -\mu^2\right) \phi$ for some $\mu= \mu(\phi) \in \C$. It is often handy to identify $\mu$ with the pair $(\mu, -\mu)\in \mathfrak{a}_{\C}^{(2)}$.

For $a\in \Z-\{0 \}$, the $a$-th Fourier coefficient of $\phi$, denoted by $\mathcal{B}_{\phi}(a)$, is defined  by
\begin{equation}\label{eiscuspcoeff}
(\widehat{\phi})_{a}(y) \ := \  \int_{0}^{1} \ \phi\left[ \begin{pmatrix}
1 & u\\
& 1
\end{pmatrix} \begin{pmatrix}
y & \\
& 1
\end{pmatrix}\right] e(-au) \ du \ = \  \frac{\mathcal{B}_{\phi}(a)}{\sqrt{|a|}} \cdot  W_{\mu(\phi)}(|a|y). 
\end{equation}
 In the case of the Eisenstein series of $\Gamma_{2}$, i.e., 
		\begin{align}\label{gl2eins}
	\phi \ = \ 	E(z; \mu) \ := \  \frac{1}{2} \ \sum_{\gamma\in U_{2}(\Z)\setminus\Gamma_{2}} I_{\mu}(\im \gamma z) \hspace{20pt} (z \ \in \  \mathfrak{h}^2), 
		\end{align}
	where $I_{\mu}(y):= y^{\mu+\frac{1}{2}}$, it is well-known that $\Delta E(*; \mu) \  = \  \left( \frac{1}{4}-\mu^2\right) E(*; \mu)$ and  the Fourier coefficients $\mathcal{B}(a; \mu)$ of $E(*;\mu)$ is given by  
		\begin{align}\label{eisfournorm}
		\mathcal{B}(a;\mu) \ = \ \frac{|a|^{\mu}\sigma_{-2\mu}(|a|)}{\Lambda(1+2\mu)}, 
		\end{align}	  
		where
		\begin{align*}
			\Lambda(s) \ := \  \pi^{-s/2}\Gamma(s/2)\zeta(s) \hspace{15pt} \text{ and } \hspace{15pt}  \sigma_{-2\mu}(|a|) \ := \  \sum_{d \mid a} d^{-2\mu}.  
		\end{align*}
		The series (\ref{gl2eins}) converges absolutely for $\re \mu>1/2$ and it admits a meromorphic continuation to $\C$.

Next, let  
\begin{align*}
	\mathfrak{h}^{3} \ := \  \left\{  \begin{pmatrix}
		1 & u_{1,2} & u_{1,3} \\
		&     1      & u_{2,3} \\
		&             & 1
	\end{pmatrix} 
	\begin{pmatrix}
		y_{0}y_{1} &          &       \\
		& y_{0} &       \\
		&          &  1
	\end{pmatrix}: u_{i,j} \in \R, \ y_{k} >0 \right\}.  
	\end{align*}
 Let $\Phi: \mathfrak{h}^3 \rightarrow \C$ be a Maass cusp form of $\Gamma_{3}$ as defined in Definition 5.1.3 of \cite{Gold}. In particular,  there exists  $\alpha= \alpha(\Phi)\in \mathfrak{a}_{\C}^{(3)}$ such that  for any $D\in Z(U\mathfrak{gl}_{3}(\C))$  (the center of the universal enveloping algebra of the Lie algebra $\mathfrak{gl}_{3}(\C)$),  we have
\begin{align*}
D\Phi \ = \  \lambda_{D} \Phi \hspace{20pt} \text{ and } \hspace{20pt} 
DI_{\alpha} \ = \ \lambda_{D} I_{\alpha}
\end{align*}
for some $\lambda_{D}\in \C$. The triple $\alpha(\Phi)$ is said to be  the \textit{Langlands parameters} of $\Phi$. 

\begin{defi}\label{fourcoeff}
	Let $m=(m_{1}, m_{2})\in (\Z-\{0\})^{2}$ and $\Phi:  \mathfrak{h}^3 \rightarrow \C$ be a Maass cusp form of $SL_{3}(\Z)$. 	For any $y_{0}, y_{1}>0$, the integral defined by
	\begin{align}\label{fourcoeff2}
	(\widehat{\Phi})_{(m_{1}, m_{2})}	\begin{psmallmatrix}
	y_{0}y_{1} &          &       \\
	& y_{0} &       \\
	&          &  1
	\end{psmallmatrix} \ &:= \ \int_{0}^{1} \int_{0}^{1}\int_{0}^{1} \Phi\left[ \begin{psmallmatrix}
	1 & u_{1,2} & u_{1,3} \\
	&     1      & u_{2,3} \\
	&             & 1
	\end{psmallmatrix} 
	\begin{psmallmatrix}
	y_{0}y_{1} &          &       \\
	& y_{0} &       \\
	&          &  1
	\end{psmallmatrix}\right] \cdot  e\left(-m_{1} u_{2,3}-m_{2} u_{1,2} \right) \ du_{1,2} \ du_{1,3} \ du_{2,3}.
	\end{align}
 is said to be the  $(m_{1}, m_{2})$-th \textbf{Fourier-Whittaker period} of $\Phi$. 
Moreover,  the $(m_{1}, m_{2})$-th \textbf{Fourier coefficient} of $\Phi$ is the complex number $\mathcal{B}_{\Phi}(m_{1}, m_{2})$  for which
	\begin{align}
		(\widehat{\Phi})_{(m_{1}, m_{2})}	\begin{pmatrix}
		y_{0}y_{1} &          &       \\
		& y_{0} &       \\
		&          &  1
		\end{pmatrix} \ = \  \frac{\mathcal{B}_{\Phi}(m_{1}, m_{2})}{|m_{1}m_{2}|} \  W_{\alpha(\Phi)}^{^{\sgn(m_{2})}}\begin{pmatrix}
		(|m_{1}|y_{0})(|m_{2}|y_{1}) &          &       \\
		& |m_{1}|y_{0} &       \\
		&          &  1
		\end{pmatrix}
	 \end{align}
     holds for any  $y_{0}, y_{1}>0$.   
     
\end{defi}

\begin{rem}\label{heckeigdef}
	\ 
	\begin{enumerate}
		\item The multiplicity-one theorem of Shalika (see Theorem 6.1.6 of \cite{Gold}) guarantees  the well-definedness of the Fourier coefficients for $\Phi$. 
		
		\item If $\Phi$ is Hecke-normalized (c.f. Section \ref{notconv}.(\ref{hecke})), then $\mathcal{B}_{\Phi}(1,n)$ can be shown to be a Hecke eigenvalue of $\Phi$ (see Section 6.4 of \cite{Gold}). 
	\end{enumerate}

\end{rem}


\subsection{Automorphic $L$-functions}\label{autoLfunc}
The Maass cusp forms  $\Phi$ and $\phi$ below are  Hecke-normalized and their Langlands parameters are denoted by    $\alpha\in \mathfrak{a}_{\C}^{(3)}$ and $\mu \in \mathfrak{a}_{\C}^{(2)}$  respectively.   Let $\widetilde{\Phi}(g):= \Phi\left( ^{t}g^{-1}\right)$ be the dual form of $\Phi$. It is not hard to show that the Langlands parameters of $\widetilde{\Phi}$ are given by $-\alpha$.

	\begin{defi}\label{DScuspRS}
		Suppose $\Phi$ and  $\phi$ are  Maass cusp forms of   $\Gamma_{3}$ and $\Gamma_{2}$ respectively.  For $\re s \gg 1$, the Rankin-Selberg $L$-function of $\Phi$ and $\phi$ is defined by
	\begin{align}\label{rankse}
		L\left(s, \phi\otimes \Phi\right) \ &:= \    \sum_{m_{1}=1}^{\infty} \ \sum_{m_{2}= 1}^{\infty}  \frac{\mathcal{B}_{\phi}(m_{2})  \mathcal{B}_{\Phi}(m_{1}, m_{2} )}{ \left(m_{1}^2 m_{2}\right)^{s}}. 
	\end{align}

\end{defi}

		Although we will not make use of the Dirichlet series for $L(s, \phi\otimes \Phi)$ throughout this article,  it is commonly used in the literature (especially in the `Kuznetsov-Voronoi' method).  We take this opportunity to indicate our normalization in terms of Dirichlet series so as to faciliate conversion/ comparison, as well as correct some minor inaccuracies  in  Section 12.2 of  \cite{Gold}.

		\begin{prop}\label{ranselmainthm}
		Suppose $\Phi$ and $\phi$ are  Maass cusp forms of   $\Gamma_{3}$ and  $\Gamma_{2}$)  respectively.  In addition, assume that $\phi$ is  even.  Then for	any $\re s\gg 1$, we have
		\begin{align}\label{sameparun}
		  \int_{\Gamma_{2}\setminus GL_{2}(\R)}  \ \phi(g) \widetilde{\Phi}\begin{pmatrix}
				g & \\
				& 1
			\end{pmatrix} |\det g|^{s-\frac{1}{2}} \ dg  \ = \ 	\frac{1}{2} \cdot \Lambda\left(s, \phi\otimes \widetilde{\Phi}\right),
		\end{align}
	where
		\begin{align}
			\Lambda\left(s, \phi\otimes \widetilde{\Phi}\right) \ &:= \ L_{\infty}\left(s, \phi\otimes \widetilde{\Phi}\right)\cdot L\left(s, \phi\otimes \widetilde{\Phi}\right)
		\end{align}
	and 
		\begin{align}
				L_{\infty}\left(s, \phi\otimes \widetilde{\Phi}\right) \ &:= \  \prod_{k=1}^{3} \ \Gamma_{\R}\left(s\pm  \mu- \alpha_{k}\right). 
		\end{align}

	\end{prop}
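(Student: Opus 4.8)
The plan is to prove Proposition \ref{ranselmainthm} by the classical unfolding of the $GL(3)\times GL(2)$ Rankin-Selberg integral, following the template in Chapter 12 of \cite{Gold} but adapted to the conventions of Section \ref{notconv}. First I would work on the region $\re s \gg 1$, where all the Fourier expansions and the Dirichlet series converge absolutely, so that every manipulation is justified a priori.

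\textbf{Step 1: Unfold against the $GL(2)$ Eisenstein-type structure.} The key observation is that $\phi(g)|\det g|^{s-1/2}$ is essentially an incomplete Eisenstein section, so I would rewrite the left-hand side by expressing the integral over $\Gamma_{2}\setminus GL_{2}(\R)$ as an integral over the ``Whittaker model'' after inserting the Fourier expansion of $\widetilde{\Phi}$ restricted to the mirabolic copy of $GL_{2}$ inside $GL_{3}$. Concretely, one uses the $GL(3)$ Fourier-Whittaker expansion of $\widetilde{\Phi}\begin{psmallmatrix} g & \\ & 1\end{psmallmatrix}$ together with the $GL(2)$ Fourier expansion \eqref{eiscuspcoeff} of $\phi$, and the fact that $\phi$ is even. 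Unfolding the $\Gamma_2$-quotient against the unipotent-torus coordinates collapses the double sum over $\Gamma_2\backslash GL_2$ and the Fourier sums into a single sum over $m_1, m_2 \ge 1$, producing $\sum_{m_1, m_2} \mathcal{B}_{\phi}(m_2)\mathcal{B}_{\widetilde\Phi}(m_1, m_2)(m_1^2 m_2)^{-s}$ times an archimedean integral. This is where the evenness hypothesis is used: it ensures that the $\pm$ contributions combine without sign discrepancies and that only $W_{\alpha}^{+}$ appears.

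\textbf{Step 2: Identify the archimedean factor via Stade's formula.} After the unfolding, the archimedean integral that falls out is exactly the local zeta integral $\mathcal{Z}_{\infty}(s; W_{\mu}, W_{-\alpha})$ appearing in Proposition \ref{stadediff1} (up to the change of variables matching the normalization of $I_{\alpha}$ and the measure $y_0^{-1}y_1^{-2}\,dy_0\,dy_1$). By Proposition \ref{stadediff1}, this equals $\tfrac{1}{4}\prod_{\pm}\prod_{k=1}^{3}\Gamma_{\R}(s\pm\mu-\alpha_k)$, which is precisely $\tfrac14$ times $L_{\infty}(s,\phi\otimes\widetilde\Phi)$ as defined in the statement. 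Combining with the factor of $\tfrac12$ that is standard in the level-one $GL(2)$ unfolding (the $\tfrac12$ coming from the $U_2(\Z)\backslash \Gamma_2$ quotient of an even form, cf.\ the $\tfrac12$ in \eqref{gl2eins}) yields the overall constant $\tfrac12$ and the identity $\int = \tfrac12 \Lambda(s,\phi\otimes\widetilde\Phi)$ with $\Lambda = L_\infty \cdot L$ as claimed. Finally one notes that $\overline{L(s,\phi\otimes\Phi)} = L(s,\phi\otimes\widetilde\Phi)$ since the Langlands parameters of $\widetilde\Phi$ are $-\alpha$ and those of $\phi$ are purely imaginary.

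\textbf{Main obstacle.} The delicate point is bookkeeping the normalizations: the paper's $I_{\alpha}$, its Whittaker functions (the ``complete'' ones, cf.\ the Remark after \eqref{jac}), the measure on $\mathfrak{h}^3$, and the sign convention on the $\alpha_i$ all differ from \cite{Gold}, so I would need to track carefully how the unipotent integrations in the unfolding interact with the Jacquet integral \eqref{jac} to land exactly on the zeta integral of Proposition \ref{stadediff1} rather than a constant multiple of it. A secondary subtlety is justifying the interchange of the sum over $\Gamma_2\backslash GL_2$, the Fourier sums, and the $y$-integrals; on $\re s \gg 1$ this is routine from the rapid decay of Whittaker functions (Corollary \ref{gl3whittbd} and the exponential decay of $K_{\mu}$), but it must be stated. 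Everything else --- the constant $\tfrac12$, the appearance of $L(s,\phi\otimes\Phi)$ as the resulting Dirichlet series in the stated normalization \eqref{rankse}, and the definition of $L_\infty$ --- then falls out mechanically.
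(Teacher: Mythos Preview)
Your proposal is correct and follows essentially the same approach as the paper: insert the Fourier--Whittaker expansion of $\widetilde{\Phi}\begin{psmallmatrix} g & \\ & 1\end{psmallmatrix}$ (Theorem 5.3.2 of \cite{Gold}), unfold against the $\Gamma_2$-quotient, extract the Dirichlet series via \eqref{eiscuspcoeff} and \eqref{fourcoeff2}, and evaluate the remaining archimedean integral by Proposition \ref{stadediff1}. The paper's proof is only a brief sketch noting exactly these steps (and correcting two inaccuracies in \cite{Gold} regarding parity and the domain of integration), so your more detailed account of the normalization bookkeeping and the role of evenness is consistent with, and indeed fleshes out, what the paper indicates.
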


	\begin{proof}
	The assumption on the parity of $\phi$ is missing in \cite{Gold}. Also, the pairing should be taken over the quotient $\Gamma_{2}\setminus GL_{2}(\R)$ instead of $\Gamma_{2}\setminus\mathfrak{h}^2$ in \cite{Gold}. 
		
		As a brief sketch, we replace $\widetilde{\Phi}\begin{pmatrix}
			g & \\
			& 1
		\end{pmatrix} $ by its Fourier-Whittaker expansion (see Theorem 5.3.2 of \cite{Gold}) on the left side of (\ref{sameparun}) and unfold. Then one may extract the Dirichlet series in (\ref{rankse}) by using (\ref{eiscuspcoeff}) and (\ref{fourcoeff2}).      The integral of Whittaker functions can be computed by   Proposition \ref{stadediff1}.   
	\end{proof}

	In the rest of this article, we will often make use of the shorthands $(\mathbb{P}_{2}^{3}\Phi)(g):= \Phi\begin{pmatrix}
	g & \\
	& 1
\end{pmatrix}$ and the pairing
\begin{align*}
	\left(\phi,\  (\mathbb{P}_{2}^{3}\Phi)\cdot |\det *|^{\overline{s}-\frac{1}{2}}\right) _{\Gamma_{2}\setminus GL_{2}(\R)}
\end{align*}
for  the  integral on the left side of (\ref{sameparun}). By the rapid decay of $\Phi$ at $\infty$, this integral converges absolutely for any $s \in \C$ and uniformly on any compact subset of $\C$. Thus, the $L$-function $L(s, \phi\otimes \widetilde{\Phi})$ admits an entire continuation. 

	\begin{rem}
	\ 
	\begin{enumerate}

		\item When $\phi$ is even, the involution $g\mapsto\  ^{t}g^{-1}$  gives  the functional equation 
		\begin{align*}
			\Lambda\left(s, \phi\otimes \widetilde{\Phi}\right) \ = \ \Lambda\left(1-s, \phi \otimes \Phi\right). 
		\end{align*}

		\item When $\phi$ is odd,  the right side of  (\ref{sameparun}) is identical to  $0$ and hence \textit{does not} provide an integral representation for   $\Lambda(s, \phi\otimes \widetilde{\Phi})$. One must alter Proposition \ref{ranselmainthm} accordingly in this case, say using the raising/ lowering operators, or proceed adelically with an appropriate choice of  test vector at $\infty$.  However, we shall not go into these as our spectral average is taken over even Maass forms of $\Gamma_{2}$ only. 
		
		\item As discussed in  Section \ref{comCI}, the roles of parities and root numbers are rather intricate in the study of moments of $L$-functions, especially regarding the archimedean integral transforms. 
		
	\end{enumerate}
	
\end{rem}

	\begin{defi}\label{JPSSstdL}
		Let $\Phi: \mathfrak{h}^{3}\rightarrow \C$ be a Maass cusp form of $\Gamma_{3}$. For $\re s  \gg  1$,  the standard $L$-function of $\Phi$ is defined by
		\begin{align}\label{gl3stdL}
			L\left(s, \Phi \right)\ := \ \sum_{n=1}^{\infty} \ \frac{\mathcal{B}_{\Phi}(1,n)}{n^{s}}.   
		\end{align}
	\end{defi}

	In the rest of this article, we will not make use of the integral representation of $L(s, \Phi)$, i.e.,  the first line of (\ref{intrexp}) (replacing $\widetilde{\Phi}$ therein by $\Phi$).   It suffices to know $L(s, \Phi)$ admits an entire continuation and  satisfies the following  functional equation: 
	
\begin{prop}
	Let $\Phi: \mathfrak{h}^{3}\rightarrow \C$ be a Maass cusp form of $\Gamma_{3}$. For any $s\in \C$, we have
		\begin{align}\label{JPfunc}
		\Lambda\left(s, \Phi\right) \ = \ \Lambda\left(1-s,  \widetilde{\Phi}\right),
	\end{align}
	where 
	\begin{align}
	\Lambda\left(s, \Phi\right) \ := \ 	L_{\infty}\left(s, \Phi \right) \cdot L\left(s, \Phi\right)
	\end{align}
and
	\begin{align}
L_{\infty}\left(s, \Phi \right) \ := \  \prod_{k=1}^{3}  \ \Gamma_{\R}\left(s+\alpha_{k} \right). 
\end{align}

\end{prop}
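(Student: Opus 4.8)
The plan is to treat this as the classical Godement--Jacquet (equivalently Jacquet--Piatetski-Shapiro--Shalika) functional equation for the standard $L$-function of a $GL(3)$ Maass cusp form --- see \cite{JPSS}, or \cite{Gold} Theorem 6.5.15 --- and, since the paper needs only the functional equation together with entirety, to give a streamlined derivation.

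\textbf{Step 1 (Hecke-type integral representation).} First I would express $\Lambda(s,\Phi)$, for $\re s$ large, as a period of $\Phi$ inside the mirabolic subgroup --- the general-$s$ analogue of the identity at $s=1$ recorded in the first line of (\ref{intrexp}). Unfolding this period by means of the Fourier--Whittaker expansion of $\Phi$ (Theorem 5.3.2 of \cite{Gold}; this is the only place a Fourier expansion is used, and only the coefficients $\mathcal{B}_{\Phi}(1,n)$ survive the unipotent integrations) yields the Dirichlet series $\sum_{n\ge 1}\mathcal{B}_{\Phi}(1,n)\,n^{-s}=L(s,\Phi)$ times the Mellin transform of the completed $GL_3(\R)$ Whittaker function $W_{\alpha(\Phi)}$ over the diagonal torus. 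By the Vinogradov--Takhtadzhyan formula (Proposition \ref{vtmellin}), or equivalently by Stade's formula underlying Proposition \ref{stadediff1}, that Mellin transform equals $\prod_{k=1}^{3}\Gamma_{\R}(s+\alpha_k)=L_{\infty}(s,\Phi)$ up to an explicit harmless constant --- the prefactor $\prod_{j<k}\Gamma_{\R}(1+\alpha_j-\alpha_k)$ built into Jacquet's integral (\ref{jac}) being precisely what makes the archimedean factor come out clean. Hence $\Lambda(s,\Phi)$ coincides with this period on a right half-plane.

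\textbf{Step 2 (Entirety and functional equation).} Next I would observe that the \emph{folded} period, before unfolding, converges absolutely for every $s\in\C$: modulo the maximal compact subgroup it is an integral of $\Phi$ over a compact unipotent quotient times a torus, and the rapid decay of the cusp form $\Phi$ at the cusp --- in both the $\det\to\infty$ and $\det\to 0$ directions, using center-invariance as in the paragraph following Proposition \ref{ranselmainthm} --- kills all boundary contributions, so there are no polar terms (precisely because $\Phi$ is cuspidal). Thus the period is entire and agrees with $\Lambda(s,\Phi)$, giving the entire continuation. For the functional equation I would use the left $SL_3(\Z)$-invariance of $\Phi$, in particular invariance under the long Weyl element $w_{\ell}=\begin{psmallmatrix}&&-1\\&1&\\1&&\end{psmallmatrix}$, together with $\widetilde{\Phi}(g)=\Phi({}^{t}g^{-1})$: conjugating the mirabolic by $w_{\ell}$ and applying the change of variables $g\mapsto{}^{t}g^{-1}$ sends $|\det g|^{s-1}$ to $|\det g|^{-s}=|\det g|^{(1-s)-1}$ and converts $\Phi$ into $\widetilde{\Phi}$. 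Since the Langlands parameters of $\widetilde{\Phi}$ are $-\alpha$, the archimedean factor transforms correctly into $\prod_k\Gamma_{\R}(1-s-\alpha_k)=L_{\infty}(1-s,\widetilde{\Phi})$, and one gets $\Lambda(s,\Phi)=\Lambda(1-s,\widetilde{\Phi})$ on a strip, hence everywhere by analytic continuation.

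\textbf{Main obstacle.} The delicate part is purely archimedean: one must check that the Mellin transform of the \emph{completed} Whittaker function of (\ref{jac}), with the conventions of Section \ref{prelimwhitt} (which follow \cite{Bu20} rather than \cite{Gold}, so that the signs of the $\alpha_i$ are reversed relative to \cite{Gold} as flagged after (\ref{whittaker})), produces exactly $\prod_k\Gamma_{\R}(s+\alpha_k)$ with no leftover gamma factors or stray constants, and that the Weyl-element involution, unwound at the real place, carries $W_{\alpha}$ to $W_{-\alpha}$ with Jacobian accounting precisely for the reflection $s\mapsto 1-s$. An alternative that packages continuation and functional equation together and sidesteps this bookkeeping is the Godement--Jacquet theta route: realize $\Lambda(s,\Phi)$ as $\int_{GL_3(\Q)\setminus GL_3(\A)}\Phi(g)\,\Theta_f(g)\,|\det g|^{s}\,dg$ for a matrix Schwartz function $f$, and deduce the functional equation from the Poisson-summation identity $\widehat{\Theta_f}=\Theta_{\widehat f}$ together with $g\mapsto{}^{t}g^{-1}$, entirety again following from cuspidality; but the archimedean local functional equation (a Tate integral for $3\times3$ matrices) must still be evaluated, so this merely relocates the same obstacle.
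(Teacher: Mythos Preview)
Your proposal is correct and follows exactly the approach of the references the paper cites: the paper's own ``proof'' is simply ``See Chapter 6.5 of \cite{Gold} or \cite{JPSS}'', and your Steps 1--2 are a faithful outline of the Hecke/JPSS integral-representation argument carried out in those sources (your alternative Godement--Jacquet route is also standard and equally valid). The only cosmetic imprecision is the phrasing ``$|\det g|^{s-1}\mapsto|\det g|^{(1-s)-1}$'' in Step 2 --- the relevant torus variable is the scalar $y_{0}$ rather than a $2\times 2$ determinant --- but the substance (that $y_{0}\mapsto y_{0}^{-1}$ under the Weyl/transpose-inverse involution effects $s\mapsto 1-s$ while $\Phi\mapsto\widetilde{\Phi}$ and $\alpha\mapsto -\alpha$) is exactly right.
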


	\begin{proof}
		See Chapter 6.5 of \cite{Gold} or \cite{JPSS}. 
	\end{proof}

Furthermore, since $\phi$ and $\Phi$ are assumed to be Hecke-normalized, the standard $L$-functions $L(s, \phi)$ and $L(s, \Phi)$ admit Euler product of the shape
\begin{align}\label{23Eul}
L(s, \phi) \ = \  \prod_{p} \ \prod_{j=1}^{2} \ \left(1-\beta_{\phi, j}(p)p^{-s}\right)^{-1}, \hspace{20pt} 
  L(s, \Phi) \ = \  \prod_{p} \ \prod_{k=1}^{3} \ \left(1-\alpha_{\Phi, k}(p)p^{-s}\right)^{-1}
\end{align}
for $\re s \gg 1$. Then one can show that
\begin{align}\label{RSEul}
L(s, \phi \otimes \Phi) \ = \   \prod_{p} \ \prod_{j=1}^{2}  \ \prod_{k=1}^{3}  \left(1- \beta_{\phi, j}(p)\alpha_{\Phi, k}(p)p^{-s}\right)^{-1}
\end{align}
by Cauchy's identity, cf. the argument of Proposition 7.4.12 of \cite{Gold}.

	\begin{prop}\label{ranseleis}
		For $\re (s\pm \mu) \gg 1$, we have
		\begin{align}\label{samepareis}
		\left( E(*; \mu), \ \left( \mathbb{P}_{2}^{3} \Phi\right) \cdot |\det *|^{\bar{s}-\frac{1}{2}}\right)_{\Gamma_{2}\setminus GL_{2}(\R)} 
		\ = \ \frac{1}{2}   \ \frac{\Lambda\left( s+ \mu, \  \widetilde{\Phi} \right)\Lambda\left( s- \mu, \  \widetilde{\Phi} \right) }{\Lambda(1+2\mu)}.
		\end{align}
	\end{prop}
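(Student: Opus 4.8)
## Proof proposal for Proposition \ref{ranseleis}

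The plan is to reduce the Eisenstein case to the cuspidal case of Proposition \ref{ranselmainthm} by unfolding the Eisenstein series against the fundamental domain. Concretely, I would start from the definition (\ref{gl2eins}) of $E(z;\mu)$ as a sum over $U_2(\Z)\backslash\Gamma_2$ and insert it into the left-hand pairing. Since the measure $dg$ on $\Gamma_2\backslash GL_2(\R)$ is $\Gamma_2$-invariant and the rest of the integrand $\bigl(\mathbb{P}_2^3\Phi\bigr)(g)\,|\det g|^{\bar s-1/2}$ is already $\Gamma_2$-invariant (indeed $GL_2(\Z)$-invariant, being the restriction of an automorphic form of $\Gamma_3$), the standard unfolding step collapses the sum and the quotient: for $\re(s\pm\mu)\gg 1$ the series converges absolutely, so we may interchange sum and integral to get
\begin{align*}
\left( E(*; \mu), \ \left( \mathbb{P}_{2}^{3} \Phi\right) \cdot |\det *|^{\bar{s}-\frac{1}{2}}\right)_{\Gamma_{2}\setminus GL_{2}(\R)}
\ = \ \frac{1}{2}\int_{U_{2}(\Z)\setminus GL_{2}(\R)} I_{\mu}(\im g)\,\Phi\begin{pmatrix} g & \\ & 1\end{pmatrix} |\det g|^{\bar s-\frac{1}{2}} \ dg,
\end{align*}
where I have used $\widetilde{\Phi}$ in place of $\Phi$ appropriately; note $I_{\mu}(\im \gamma g)$ only depends on $\gamma$ through its coset in $U_2(\Z)\backslash\Gamma_2$.

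Next I would carry out the remaining integral over $U_2(\Z)\backslash GL_2(\R)$ by writing $g$ in coordinates $g = \begin{psmallmatrix}1 & u\\ & 1\end{psmallmatrix}\begin{psmallmatrix} y & \\ & 1\end{psmallmatrix} z\, k$ with $u\in[0,1)$ (the $U_2(\Z)$-quotient direction), $y>0$, central $z$, and $k\in SO(2)$ — equivalently, peel off the $u$-integral first. Since $\bigl(\mathbb{P}_2^3\Phi\bigr)$ is left-$U_2(\Z)$-invariant and $I_\mu$ is left-$U_2(\R)$-invariant, the $u$-integral over $[0,1)$ against $\Phi$ produces exactly the zeroth Fourier–Whittaker term of $\mathbb{P}_2^3\Phi$ in the $u_{2,3}$-variable — but that vanishes because $\Phi$ is cuspidal along that unipotent — \emph{unless} we instead recognize that the correct unfolded object is the constant term of $E$ paired against all of $\mathbb{P}_2^3\Phi$. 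The cleaner route is therefore: expand $\mathbb{P}_2^3\Phi$ (not $E$) in its $GL_2$-Fourier expansion (\ref{eiscuspcoeff}), feed this into the $U_2(\Z)\backslash GL_2(\R)$ integral, and use that $I_\mu$ picks out, via Mellin transform in $y$, the Dirichlet series $\sum_{m_1,m_2}\mathcal{B}(m_2;\mu)\mathcal{B}_\Phi(m_1,m_2)(m_1^2m_2)^{-s}$ together with the archimedean factor from Proposition \ref{stadediff1}. Using the explicit Eisenstein Fourier coefficients (\ref{eisfournorm}), namely $\mathcal{B}(a;\mu)=|a|^\mu\sigma_{-2\mu}(|a|)/\Lambda(1+2\mu)$, and the Ramanujan-type identity $\sum_{m_2}\sigma_{-2\mu}(m_2)\mathcal{B}_\Phi(m_1,m_2)|m_2|^{\mu-s} = L(s-\mu,\Phi)\,(\text{shift by }+\mu\text{ version})$, one sees the Rankin–Selberg $L$-function $L(s,\phi\otimes\Phi)$ factor as $L(s+\mu,\widetilde\Phi)L(s-\mu,\widetilde\Phi)$ up to the $\Lambda(1+2\mu)$ denominator. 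In fact the slickest argument is to observe that Proposition \ref{ranselmainthm} and its proof are written so as to apply verbatim to any automorphic form $\phi$ on $\Gamma_2$ with a Fourier–Whittaker expansion — the cuspidality of $\phi$ was never used in the unfolding, only its Fourier coefficients — so specializing $\phi = E(*;\mu)$ gives $\tfrac12 L_\infty\,\sum_{m_1,m_2}\mathcal{B}(m_2;\mu)\mathcal{B}_{\widetilde\Phi}(m_1,m_2)(m_1^2m_2)^{-s}$, and the arithmetic factor is the divisor-sum identity above while the archimedean factor $L_\infty(s,E(*;\mu)\otimes\widetilde\Phi) = \prod_k \Gamma_\R(s+\mu-\alpha_k)\Gamma_\R(s-\mu-\alpha_k)$ matches the completion of $\Lambda(s+\mu,\widetilde\Phi)\Lambda(s-\mu,\widetilde\Phi)$.

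The main obstacle I expect is bookkeeping rather than depth: one must (i) justify the interchange of summation and integration in the unfolding, which is fine in the stated range $\re(s\pm\mu)\gg 1$ by absolute convergence of both the Eisenstein series and the Rankin–Selberg Dirichlet series; (ii) get the normalization constant $\tfrac12$ exactly right, tracking the $\tfrac12$ in the definition (\ref{gl2eins}) of $E$, the $\tfrac12$ coming from the even projection as in Proposition \ref{ranselmainthm}, and the $\tfrac14$ from Proposition \ref{stadediff1} — these must combine to the single $\tfrac12$ on the right-hand side of (\ref{samepareis}); and (iii) verify the divisor-convolution identity $\sum_{n\ge 1}\sigma_{-2\mu}(n)\mathcal{B}_{\widetilde\Phi}(m_1,n)n^{-w} = $ (ratio of two shifted standard-type $L$-series of $\widetilde\Phi$), which follows from the Hecke relations for $\mathcal{B}_{\widetilde\Phi}$ and the Euler product manipulations already invoked in (\ref{RSEul}) via Cauchy's identity, applied with the second form's Satake parameters degenerating to $\{|a|^{\mu}, |a|^{-\mu}\}$. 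Granting these, (\ref{samepareis}) drops out, and one records that the meromorphic continuation of $E(*;\mu)$ then furnishes the identity for all $s,\mu$ by analytic continuation, since the left side is entire in $s$ and meromorphic in $\mu$ with the poles of $E$.
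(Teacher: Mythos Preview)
Your proposal is correct and, after some meandering, lands on precisely the paper's approach: the paper's entire proof is the single line ``Parallel to Proposition \ref{ranselmainthm}. Meanwhile, we make use of (\ref{eisfournorm}),'' which is exactly your ``slickest argument'' --- rerun the unfolding of $\widetilde{\Phi}$'s Fourier--Whittaker expansion from Proposition \ref{ranselmainthm}, substitute $\phi = E(*;\mu)$, and plug in the Eisenstein coefficients (\ref{eisfournorm}) together with the Euler-product factorization coming from (\ref{23Eul})--(\ref{RSEul}) specialized to Satake parameters $\{p^{\mu},p^{-\mu}\}$.

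Your first detour --- unfolding $E(*;\mu)$ itself via (\ref{gl2eins}) and then integrating $I_\mu$ against $\mathbb{P}_2^3\Phi$ over $U_2(\Z)\backslash GL_2(\R)$ --- is the classical Rankin--Selberg unfolding and would also work, but you correctly noticed mid-stream that the $u$-integral there is not a cuspidality statement (the $u$-variable sits in the $u_{1,2}$ slot, and the zeroth Fourier mode of $\mathbb{P}_2^3\Phi$ in that variable is \emph{not} forced to vanish by the cuspidality of $\Phi$; rather, one must then feed in the full $GL(3)$ expansion of $\Phi$ anyway). Since both routes converge to the same Mellin integral of Whittaker functions (Proposition \ref{stadediff1}) and the same divisor-sum identity, there is no substantive difference, and your bookkeeping concerns (i)--(iii) are the right checklist.
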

	
	\begin{proof}
		Parallel to  Proposition \ref{ranselmainthm}. Meanwhile, we make use of (\ref{eisfournorm}). 
	\end{proof}
	
	\begin{rem}
		By analytic continuation,  (\ref{sameparun}) and (\ref{samepareis})  hold for  $s\in \C$  and  away from the poles of $E(*; \mu)$. In fact, the rapid decay of $\Phi$ at $\infty$ guarantees the pairings  converge absolutely. 
	\end{rem}

		
		\subsection{Calculation on the Spectral Side}
		
			As indicated in the introduction, our approach differs from the  `Kuznetsov-Voronoi' one right from the start --- we will not make use of the Dirichlet series (\ref{rankse}).  Instead, the moment of $GL(3)\times GL(2)$ $L$-functions is first interpreted in terms of the period integral of Definition \ref{ranselmainthm} using a  Poincar\'e series. 

	\begin{defi}\label{poindef}
		Let $a\ge 1$ be an integer and $h\in C^{\infty}(0,\infty)$. The Poincar\'e series of $\Gamma_{2}$   is defined by
		\begin{equation}\label{defpoin}
		P^{a}(z; h) \ := \   \sum_{\gamma\in U_{2}(\Z)\setminus \Gamma_{2}} h(a \im \gamma z) \cdot e\left(a \re \gamma z\right) \hspace{20pt} (z \ \in \  \mathfrak{h}^2)
		\end{equation}
	 provided it converges absolutely. 
	\end{defi}
	
	It is not hard to see that if the bounds 
	\begin{align}\label{poincbdd}
	h(y)  \ \ll \  y^{1+\epsilon} \hspace{10pt} \text{(as \ $y\to 0$)} \hspace{10pt} \text{ and }   \hspace{10pt} h(y) \  \ll \   y^{\frac{1}{2}-\epsilon}  \hspace{10pt} \text{ (as \ $y \to\infty$)}
	\end{align}
are satisfied, then the Poincar\'e series $P^{a}(z; h)$ converges absolutely and represents an $L^2$-function.  In this article, we  take $h:= H^{\flat}$ with $H\in \mathcal{C}_{\eta}$ and $\eta >40$. By Proposition \ref{inKLconv},  conditions (\ref{poincbdd})  clearly holds. We often use the shorthand $P^{a}:= P^{a}(*;h)$. Also, we use  \ $\langle \ \cdot \ ,\  \cdot \ \rangle$ \ to denote the Petersson inner product on $\Gamma_{2}\setminus \mathfrak{h}^{2}$, i.e., 
\begin{align*}
	\left\langle \ \phi_{1} \ ,\  \phi_{2} \ \right\rangle \ := \ \int_{\Gamma_{2}\setminus \mathfrak{h}^{2}} \ \phi_{1}(g) \cdot \ovA{\phi_{2}(g)} \ dg
\end{align*}
with $dg$ being the invariant measure on $\mathfrak{h}^{2}$. 
	
		\begin{lem}\label{stdunfold}
			Let $\phi$ be a Maass cusp form of  $\Gamma_{2}$, $\Delta \phi =\left( \frac{1}{4}-\mu^2\right)\phi$, and $\mathcal{B}_{\phi}(a)$ be  the $a$-th Fourier coefficient of $\phi$.   Then 
			\begin{align*}
			\big\langle \ P^{a} \ ,\  \phi \ \big\rangle 
			\ &=\ |a|^{1/2} \cdot \ovB{\mathcal{B}_{\phi}(a)}  \cdot h^{\#}\left(\ovA{\mu}\right). 
			\end{align*}

		\end{lem}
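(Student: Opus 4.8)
The plan is to unfold the Poincar\'e series against its defining sum and reduce the inner product to a single integral over $\mathfrak{h}^2$ that can be evaluated using the Fourier expansion of $\phi$. First I would write
\begin{align*}
	\big\langle \ P^{a} \ ,\  \phi \ \big\rangle \ = \ \int_{\Gamma_{2}\setminus \mathfrak{h}^{2}} \ \left( \sum_{\gamma\in U_{2}(\Z)\setminus \Gamma_{2}} h(a \im \gamma z) \cdot e\left(a \re \gamma z\right) \right) \ovA{\phi(z)} \ dz,
\end{align*}
and then, invoking the absolute convergence guaranteed by (\ref{poincbdd}) (which holds since $h = H^{\flat}$ with $H \in \mathcal{C}_{\eta}$, by Proposition \ref{inKLconv}), interchange sum and integral and apply the standard unfolding trick: since $\phi$ is $\Gamma_2$-invariant and the summand depends on $z$ only through $\gamma z$, the sum over $U_{2}(\Z)\setminus \Gamma_{2}$ collapses the domain of integration from $\Gamma_{2}\setminus \mathfrak{h}^{2}$ to $U_{2}(\Z)\setminus \mathfrak{h}^{2}$. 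This yields
\begin{align*}
	\big\langle \ P^{a} \ ,\  \phi \ \big\rangle \ = \ \int_{U_{2}(\Z)\setminus \mathfrak{h}^{2}} \ h(a y) \cdot e(a x) \cdot \ovA{\phi(z)} \ \frac{dx\, dy}{y^2},
\end{align*}
where I write $z = x + iy$ in the usual coordinates on $\mathfrak{h}^2$.

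Next I would carry out the $x$-integration. A fundamental domain for $U_{2}(\Z)\setminus \mathfrak{h}^{2}$ is $\{ x \in [0,1], \ y > 0 \}$, so the $x$-integral over $[0,1]$ of $e(ax)\ovA{\phi(z)}$ picks out the $a$-th Fourier coefficient of $\overline{\phi}$, i.e.\ the complex conjugate of the $(-a)$-th (equivalently, since $\phi$ is even, the $a$-th) Fourier coefficient of $\phi$. Concretely, by (\ref{eiscuspcoeff}),
\begin{align*}
	\int_{0}^{1} \ e(a x) \cdot \ovA{\phi(z)} \ dx \ = \ \ovA{(\widehat{\phi})_{a}(y)} \ = \ \frac{\ovB{\mathcal{B}_{\phi}(a)}}{\sqrt{|a|}} \cdot \ovA{W_{\mu(\phi)}(|a|y)},
\end{align*}
and since $W_\mu(y) = 2\sqrt{y}K_\mu(2\pi y)$ is real for real $y$ when $\mu \in i\R$ (and in general $\ovA{W_\mu(y)} = W_{\ovA{\mu}}(y)$ because $\ovA{K_\mu(t)} = K_{\ovA{\mu}}(t)$ for $t > 0$), this becomes $|a|^{-1/2}\ovB{\mathcal{B}_{\phi}(a)}\, W_{\ovA{\mu}}(|a|y)$ with $\mu = \mu(\phi)$.

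Finally I would substitute this back and evaluate the remaining $y$-integral. We are left with
\begin{align*}
	\big\langle \ P^{a} \ ,\  \phi \ \big\rangle \ = \ \frac{\ovB{\mathcal{B}_{\phi}(a)}}{\sqrt{|a|}} \ \int_{0}^{\infty} \ h(ay) \cdot W_{\ovA{\mu}}(|a|y) \ \frac{dy}{y^2}.
\end{align*}
With $a \ge 1$, a change of variables $y \mapsto y/a$ (noting $dy/y^2 \mapsto a\, dy/y^2$) converts this into $|a|^{-1/2}\ovB{\mathcal{B}_{\phi}(a)} \cdot a \cdot \int_0^\infty h(y) W_{\ovA{\mu}}(y)\, dy/y^2 = |a|^{1/2}\ovB{\mathcal{B}_{\phi}(a)} \cdot h^{\#}(\ovA{\mu})$ by the definition (\ref{eqn whitranseq}) of the Kontorovich-Lebedev transform, which is exactly the claimed identity. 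I expect the only genuinely delicate point to be the justification of the interchange of summation and integration in the unfolding step; this is routine given the decay bounds (\ref{poincbdd}) and the rapid decay of cusp forms at the cusp, but it is the step that requires care. Everything else is bookkeeping with Fourier coefficients and a change of variables.
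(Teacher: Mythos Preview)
Your proposal is correct and follows essentially the same approach as the paper: unfold the Poincar\'e series to reduce to $\int_0^\infty h(ay)\,\ovC{(\widehat{\phi})_a(y)}\,dy/y^2$, insert the Fourier expansion (\ref{eiscuspcoeff}), and change variables $y\to |a|^{-1}y$. The paper's proof is a two-line sketch of exactly these steps; your version simply fills in the details (justifying the unfolding via (\ref{poincbdd}), handling the conjugation $\ovA{W_\mu(y)}=W_{\ovA{\mu}}(y)$, and carrying out the change of variables explicitly).
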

		
		\begin{proof}
			Replace $P^{a}$ in $\big\langle P^{a},\  \phi \big\rangle$ by its definition and unfold, we easily find that
			\begin{align*}
			\big\langle \ P^{a} \ ,\  \phi \ \big\rangle 
			\  =  \  \int_{0}^{\infty}  h(ay)\cdot  \ovC{\widehat{(\phi)}_{a}(y)} \ \frac{dy}{y^2}. 
			\end{align*}
			The result follows at once  upon plugging-in (\ref{eiscuspcoeff}) and making the change of variable $y\to |a|^{-1}y$.
		\end{proof}
		
	Similarly, the following holds away from the poles of $E(*; \mu)$: 		
		\begin{lem}\label{eisunfold}
			\begin{align}
				\bigg\langle P^{a}, \ E(*;  \mu) \bigg\rangle  \  =  \  |a|^{1/2}\cdot \frac{|a|^{\overline{\mu}}\sigma_{-2\overline{\mu}}(|a|)}{\zeta^{*}(1+2\overline{\mu})} \cdot h^{\#}\left(\ovA{\mu}\right). 
			\end{align}
		\end{lem}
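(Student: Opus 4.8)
The plan is to mimic the proof of Lemma \ref{stdunfold}, replacing the cusp form $\phi$ by the Eisenstein series $E(*;\mu)$ throughout. First I would unfold the Poincar\'e series against $E(*;\mu)$: since $P^{a}$ is a sum over $U_{2}(\Z)\setminus\Gamma_{2}$, the standard unfolding argument against an automorphic function on $\Gamma_{2}\setminus\mathfrak{h}^{2}$ collapses the fundamental domain to the strip $U_{2}(\Z)\setminus\mathfrak{h}^{2}$, giving
\begin{align*}
\bigl\langle P^{a}, E(*;\mu)\bigr\rangle \ = \ \int_{0}^{\infty} h(ay)\cdot \ovC{\widehat{(E(*;\mu))}_{a}(y)} \ \frac{dy}{y^{2}}.
\end{align*}
Here I am using that $h(y)\ll y^{1+\epsilon}$ as $y\to0$ and $h(y)\ll y^{1/2-\epsilon}$ as $y\to\infty$ (guaranteed by Proposition \ref{inKLconv} since $h=H^{\flat}$ with $H\in\mathcal{C}_{\eta}$), which together with the known growth of $E(*;\mu)$ and its Fourier coefficients makes the unfolding legitimate away from the poles of $E(*;\mu)$; absolute convergence of the resulting integral then justifies the interchange.

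Next I would substitute the Fourier--Whittaker expansion of the Eisenstein series. By the analogue of (\ref{eiscuspcoeff}) for $E(*;\mu)$, namely
\begin{align*}
\widehat{(E(*;\mu))}_{a}(y) \ = \ \frac{\mathcal{B}(a;\mu)}{\sqrt{|a|}}\cdot W_{\mu}(|a|y),
\end{align*}
combined with the explicit normalized Fourier coefficient (\ref{eisfournorm}), $\mathcal{B}(a;\mu) = |a|^{\mu}\sigma_{-2\mu}(|a|)/\Lambda(1+2\mu)$, the integrand becomes (after complex-conjugating, noting $W_{\mu}$ is real for $\mu\in i\R$ but in general one conjugates the parameter) a constant multiple of $W_{\overline{\mu}}(|a|y)$ times the arithmetic factor $|a|^{\overline{\mu}}\sigma_{-2\overline{\mu}}(|a|)/\ovA{\Lambda(1+2\mu)}$ divided by $\sqrt{|a|}$. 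Then the change of variables $y\mapsto|a|^{-1}y$ turns $\int_{0}^{\infty} h(ay) W_{\overline{\mu}}(|a|y)\,dy/y^{2}$ into $|a|^{1/2}\int_{0}^{\infty} h(y) W_{\overline{\mu}}(y)\,dy/y^{2}$, which by Definition \ref{defwhittrans} is exactly $|a|^{1/2}\, h^{\#}(\overline{\mu})$, assembling into the claimed formula with $\zeta^{*}(1+2\overline{\mu})$ denoting the completed zeta factor in $\Lambda$.

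I do not anticipate a serious obstacle here; this is a routine variant of Lemma \ref{stdunfold}. The only points requiring a word of care are: bookkeeping the complex conjugation so that the spectral parameter appears as $\overline{\mu}$ in $h^{\#}$ and in the divisor sum (since $a$ is a positive integer, $\ovA{|a|^{\mu}\sigma_{-2\mu}(|a|)}=|a|^{\overline{\mu}}\sigma_{-2\overline{\mu}}(|a|)$), and the fact that the identity is initially valid only for $\re(s\pm\mu)\gg1$ — or rather, for $\mu$ in the region of absolute convergence of $E(*;\mu)$ and away from its poles — with the general statement following by meromorphic continuation in $\mu$, both sides being meromorphic. One should also confirm that $E(*;\mu)$, being of moderate growth rather than rapidly decaying, still pairs absolutely against $P^{a}$: this is exactly where the decay $h(y)\ll y^{1/2-\epsilon}$ as $y\to\infty$ from (\ref{poincbdd}) is used, since $E(*;\mu)$ grows only polynomially in $y$ at the cusp. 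With these caveats noted, the computation closes.
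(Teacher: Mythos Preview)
Your approach is exactly what the paper intends by its ``Similarly'': unfold the Poincar\'e series, insert the Eisenstein Fourier coefficient (\ref{eisfournorm}) via (\ref{eiscuspcoeff}), and change variables $y\to |a|^{-1}y$ to recognize $h^{\#}(\overline{\mu})$. One minor slip: the change of variables in $\int_{0}^{\infty} h(ay)\,W_{\overline{\mu}}(|a|y)\,dy/y^{2}$ produces a factor of $|a|$, not $|a|^{1/2}$; the extra $|a|^{-1/2}$ comes from the Fourier-coefficient normalization $\mathcal{B}(a;\mu)/\sqrt{|a|}$, and the two combine to give the stated $|a|^{1/2}$.
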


	\begin{prop}[Spectral Expansion]\label{langselde}
	Suppose $f\in L^{2}(\Gamma_{2}\setminus\mathfrak{h}^2)$ and \  $\langle f, 1 \rangle=0$.   Then 
		\begin{align}\label{langsel}
		f(z)  \ = \ \sum_{j= 1}^{\infty} \  \frac{\big\langle f, \phi_{j} \big\rangle}{\big\langle \phi_{j}, \phi_{j}\big\rangle}\cdot  \phi_{j}(z)
	\ 	+  \ \int_{(0)}  \ \bigg\langle f, \ E(*; \mu) \bigg\rangle \cdot E(z; \mu) \  \frac{d\mu}{4\pi i}  \hspace{20pt} (z \ \in \ \mathfrak{h}^2)
		\end{align}
		where  $(\phi_{j})_{j\ge 1}$ is  any orthogonal basis of Maass cusp forms for  $\Gamma_{2}$. 
	\end{prop}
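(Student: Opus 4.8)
The plan is to deduce the statement from the classical spectral decomposition of $L^{2}(\Gamma_{2}\setminus\mathfrak{h}^2)$ (Selberg, Roelcke, Langlands; see e.g. \cite{I02}) and then to identify each spectral component explicitly using the unfolding computations already recorded in Lemmas \ref{stdunfold}--\ref{eisunfold}. Concretely, one has the orthogonal decomposition $L^{2}(\Gamma_{2}\setminus\mathfrak{h}^2) = \C\cdot 1 \oplus L^{2}_{\mathrm{cusp}} \oplus L^{2}_{\mathrm{cont}}$, where $L^{2}_{\mathrm{cusp}}$ is the closure of the span of the Maass cusp forms and $L^{2}_{\mathrm{cont}}$ is the closed span, in the direct-integral sense, of the Eisenstein series $E(*;\mu)$ with $\mu\in i\R$. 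Writing $f = c\cdot 1 + f_{\mathrm{cusp}} + f_{\mathrm{cont}}$ accordingly, the hypothesis $\langle f, 1\rangle = 0$ forces $c = 0$, so it remains to expand $f_{\mathrm{cusp}}$ and $f_{\mathrm{cont}}$ separately.

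For the cuspidal part, the cuspidal subspace is invariant under the self-adjoint, non-negative operator $\Delta$, and a compactness argument (the cuspidal spectrum is discrete because cusp forms decay rapidly at the cusp, so the relevant resolvent restricted to $L^{2}_{\mathrm{cusp}}$ is compact) shows that the $\Delta$-eigenspaces are finite-dimensional with eigenvalues accumulating only at $+\infty$. Hence any orthogonal family $(\phi_{j})_{j\ge 1}$ of Hecke-normalized Maass cusp forms spanning $L^{2}_{\mathrm{cusp}}$ is a Hilbert-space basis of it, and Parseval's identity yields $f_{\mathrm{cusp}} = \sum_{j\ge 1} \frac{\langle f,\phi_{j}\rangle}{\langle\phi_{j},\phi_{j}\rangle}\,\phi_{j}$ with convergence in $L^{2}$.

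For the continuous part one uses incomplete Eisenstein series: the span of $E(*\,|\,\psi) := \sum_{U_{2}(\Z)\setminus\Gamma_{2}} \psi(\im\gamma\,\cdot)$, with $\psi\in C_{c}^{\infty}(0,\infty)$, is dense in $\C\cdot 1 \oplus L^{2}_{\mathrm{cont}}$. Unfolding $\langle f, E(*\,|\,\psi)\rangle$, applying Mellin inversion to $\psi$ and shifting the contour to the line $\re\mu = 0$ — the residue crossed at $\mu = 1/2$ is a constant function, which lies in the $\C\cdot 1$ component already removed — expresses $f_{\mathrm{cont}}$ as $\int_{(0)} \langle f, E(*;\mu)\rangle\, E(z;\mu)\, \tfrac{d\mu}{4\pi i}$. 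Equivalently, writing $\mu = it$, this is the familiar $\frac{1}{4\pi}\int_{\R} \langle f, E(*;it)\rangle\, E(z;it)\, dt$. Combining the two expansions with $c=0$ gives (\ref{langsel}).

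The point requiring care, rather than any genuine difficulty, is pinning down the Plancherel constant $1/(4\pi i)$ consistently with the normalizations fixed in this article — in particular the factor $\tfrac{1}{2}$ in the definition (\ref{gl2eins}) of $E(*;\mu)$ and the normalization (\ref{eisfournorm}) of its Fourier coefficients. This is cleanest to verify by computing explicitly the Mellin transform appearing in the incomplete-Eisenstein-series argument above, or by comparison with the Maass--Selberg relation. One should also note that the identity (\ref{langsel}) holds a priori as an equality in $L^{2}(\Gamma_{2}\setminus\mathfrak{h}^2)$; it upgrades to pointwise equality when $f$ is in addition smooth and of suitable decay, which is the case in all our applications, where $f$ is built from $\Phi$ and a Poincar\'e series.
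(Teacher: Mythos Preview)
Your proposal is correct and gives a sound sketch of the standard Selberg--Roelcke spectral decomposition. The paper, however, does not prove this statement at all: its proof reads in full ``See Theorem 3.16.1 of \cite{Gold}.'' So there is nothing substantive to compare; you have supplied a genuine (if abbreviated) argument where the paper simply cites a reference. One small remark: the opening sentence of your plan promises to use Lemmas~\ref{stdunfold}--\ref{eisunfold}, but those concern inner products of Poincar\'e series and play no role in the argument you actually give --- you may want to drop that clause.
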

	
	\begin{proof}
		See  Theorem 3.16.1 of \cite{Gold}. 
		\end{proof}


	\begin{prop}\label{comspec}
		Let $\Phi$ be a  Maass cusp form of $\Gamma_{3}$ and $P^{a}$ be a Poincar\'e series of $\Gamma_{2}$. Then 	
		\begin{align}\label{specside}
		 &\hspace{-50pt}2 \cdot |a|^{-1/2}  \cdot \left(P^a, \ (\mathbb{P}_{2}^{3} \Phi)\cdot  |\det *|^{\overline{s}-\frac{1}{2}}\right)_{\Gamma_{2}\setminus GL_{2}(\R)}  \nonumber\\
	 \hspace{40pt} &	\ = \    \sideset{}{'}{\sum}_{j=1}^{\infty} \  h^{\#}\left(\ovA{\mu_{j}}\right)\cdot  \frac{\ovB{\mathcal{B}_{j}(a)}  \cdot \Lambda\left(s, \phi_{j}\otimes \widetilde{\Phi}\right) }{\langle \phi_{j}, \phi_{j}\rangle} \nonumber\\
	&\hspace{40pt} +  \int_{(0)} \   h^{\#}\left(\mu\right) 
	\frac{ \sigma_{-2\mu}(|a|) |a|^{-\mu} \Lambda\left( s+ \mu,   \widetilde{\Phi} \right)\Lambda\left( 1-s+ \mu,   \Phi \right)}{\left|\Lambda(1+2\mu)\right|^2} \ \frac{d\mu }{4\pi i }
		\end{align}
	for any $s\in \C$, 
	where  the sum is restricted to an orthogonal basis $(\phi_{j})$ of even Hecke-normalized  Maass cusp forms for $\Gamma_{2}$ with $\Delta\phi_{j}= \left( \frac{1}{4}-\mu_{j}^2\right)\phi_{j}$ and $\mathcal{B}_{j}(a):= \mathcal{B}_{\phi_{j}}(a)$. 
		
	\end{prop}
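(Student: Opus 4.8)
The plan is to derive (\ref{specside}) by applying the spectral expansion (Proposition \ref{langselde}) to the automorphic function
\begin{align*}
 f(z) \ := \ \left(\mathbb{P}_2^3\Phi\right)\!\begin{pmatrix} z & \\ & 1\end{pmatrix},
\end{align*}
which is the $GL(2)$-restriction of $\Phi$ along the embedding $z\mapsto\begin{psmallmatrix} z & \\ & 1\end{psmallmatrix}$, weighted by $|\det z|^{\overline{s}-1/2}$. First I would check that $f\in L^2(\Gamma_2\setminus\mathfrak{h}^2)$: this follows from the rapid decay of the cusp form $\Phi$ at its cusp (the Fourier-Whittaker expansion together with Corollary \ref{gl3whittbd} gives exponential decay), so the pairing against $P^a$ converges absolutely for every $s\in\C$, as already noted after Proposition \ref{ranselmainthm}. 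I would also verify $\langle f,1\rangle=0$; this is again a consequence of $\Phi$ being cuspidal, so the constant term in the $GL(2)$-direction vanishes, which is exactly the hypothesis needed to invoke Proposition \ref{langselde} without a residual constant-function term.

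Next I would pair $P^a$ against both sides of the spectral decomposition (\ref{langsel}) applied to $f$. On the left this produces $\left(P^a,\ f\cdot|\det *|^{\overline{s}-1/2}\right)_{\Gamma_2\setminus GL_2(\R)}$, the quantity on the left side of (\ref{specside}). On the right, interchanging the pairing with the discrete sum and the continuous integral (justified by absolute convergence, using the decay of $h^{\#}$ coming from $H\in\mathcal{C}_\eta$ via Proposition \ref{inKLconv}, the polynomial growth of Hecke eigenvalues, and standard convexity bounds for the Rankin--Selberg $L$-functions in vertical strips) gives
\begin{align*}
 \sum_{j}\ \frac{\langle P^a,\phi_j\rangle}{\langle\phi_j,\phi_j\rangle}\ \big(\phi_j,\ (\mathbb{P}_2^3\Phi)|\det *|^{\overline{s}-\frac12}\big) \ + \ \int_{(0)}\ \overline{\langle P^a,E(*;\mu)\rangle}\ \big(E(*;\mu),\ (\mathbb{P}_2^3\Phi)|\det *|^{\overline{s}-\frac12}\big)\ \frac{d\mu}{4\pi i}.
\end{align*}
Here one must be slightly careful: the spectral coefficients in (\ref{langsel}) are $\langle f,\phi_j\rangle$, and pairing with $P^a$ reproduces $\langle f,\phi_j\rangle\langle\phi_j,P^a\rangle/\langle\phi_j,\phi_j\rangle$ — so I would either reorganize as a double inner product or, more cleanly, expand $P^a$ itself spectrally and apply Parseval; both routes are routine but I would state carefully which one I use, and handle the complex conjugation on the Eisenstein coefficient (recalling $E(*;\mu)$ is real for $\mu\in i\R$, so $\overline{E(*;\mu)}=E(*;-\mu)=E(*;\mu)$, and $\overline{\mu}=-\mu$ on the critical line).

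Then I would substitute the three evaluated ingredients: Lemma \ref{stdunfold} for $\langle P^a,\phi_j\rangle=|a|^{1/2}\overline{\mathcal{B}_{\phi_j}(a)}\,h^{\#}(\overline{\mu_j})$; Lemma \ref{eisunfold} for the Eisenstein coefficient; Proposition \ref{ranselmainthm} (i.e.\ (\ref{sameparun})) for $\big(\phi_j,(\mathbb{P}_2^3\Phi)|\det *|^{\overline{s}-1/2}\big)=\tfrac12\Lambda(s,\phi_j\otimes\widetilde\Phi)$; and Proposition \ref{ranseleis} (i.e.\ (\ref{samepareis})) for the analogous Eisenstein period, which supplies the factor $\tfrac12\,\Lambda(s+\mu,\widetilde\Phi)\Lambda(s-\mu,\widetilde\Phi)/\Lambda(1+2\mu)$ — and here I would use the functional equation (\ref{JPfunc}), $\Lambda(s-\mu,\widetilde\Phi)=\Lambda(1-s+\mu,\Phi)$, to rewrite the second factor into the form appearing in (\ref{specside}). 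Collecting the $|a|^{1/2}$ from the Poincar\'e series, the two factors of $\tfrac12$, and folding $\mu\mapsto-\mu$ on the continuous part (so that $h^{\#}(\overline{\mu})=h^{\#}(-\mu)=h^{\#}(\mu)$ by the evenness in $\mathcal{C}_\eta$, and $|a|^{\overline{\mu}}\sigma_{-2\overline{\mu}}(|a|)=|a|^{-\mu}\sigma_{-2\mu}(|a|)$ with $|\Lambda(1+2\mu)|^2$ in the denominator) yields (\ref{specside}) after multiplying through by $2|a|^{-1/2}$.

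The main obstacle is the rigorous justification of interchanging the pairing with the infinite spectral sum and the continuous integral, uniformly for $s$ in the relevant vertical strip — this is where one needs the combination of rapid decay of $h^{\#}$ from membership in $\mathcal{C}_\eta$ (Proposition \ref{inKLconv}), Hecke-eigenvalue bounds, and growth bounds for $\Lambda(s,\phi_j\otimes\widetilde\Phi)$ and the Eisenstein factor in the spectral and $t$-aspects. Everything else is bookkeeping: matching normalizations, tracking the $\tfrac12$'s, the $|a|$-powers, and the complex conjugates, and one application each of the two functional equations. I would present the convergence estimate as a short lemma or a paragraph citing standard bounds, and keep the algebraic assembly terse.
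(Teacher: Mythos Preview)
Your proposal is correct and, via what you call the ``cleaner'' route of spectrally expanding $P^a$ rather than $f$, matches the paper's proof exactly: substitute the spectral expansion (\ref{langsel}) of $P^a$ into the pairing and evaluate the resulting inner products by Lemmas \ref{stdunfold}--\ref{eisunfold} and Propositions \ref{ranselmainthm}--\ref{ranseleis}, together with the functional equation (\ref{JPfunc}) to put the Eisenstein term into the stated form. Your initial framing (expanding $f=\mathbb{P}_2^3\Phi$ on $\mathfrak{h}^2$) introduces a slight mismatch between the $\mathfrak{h}^2$-spectral expansion and the $\Gamma_2\backslash GL_2(\R)$-pairing carrying the weight $|\det *|^{\overline{s}-1/2}$, so you should indeed commit to expanding $P^a$ as the paper does.
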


	\begin{proof}
		Substitute the spectral expansion of $P^{a}$ as in (\ref{langsel}) into the pairing $\left(P^a, \ (\mathbb{P}_{2}^{3} \Phi)\cdot  |\det *|^{\overline{s}-\frac{1}{2}}\right)_{\Gamma_{2}\setminus GL_{2}(\R)}$. The inner products involved have been computed in Lemmas \ref{stdunfold}$-$\ref{eisunfold} and Definitions \ref{ranselmainthm}$-$\ref{ranseleis}.  
		\end{proof}

		\begin{rem}
			In applications, it is important to have good control in the spectral aspect and the relevant integral transforms. It is crucial to be able to make flexible choices of test functions on the spectral side.  Furthermore, it plays a role in eliminating the extraneous  polar contributions (e.g., those do not occur in the CFKRS predictions) for  the Eisenstein cases.  These are  strengths of the Kuznetsov-based methods over the period methods (cf. \cite{Bl12a, Nu20+, Za21, Za20+}) and might partly explain why the former is  more ubiquitous in the current literature. 
			

			Although our method is period-based, we are able to put a large class of test functions on the spectral side as in the Kuznetsov approach, by using the pair of transforms introduced in Definition \ref{defwhittrans}. Such transforms have been generalized to  $GL(n)$ in a simple and explicit fashion in  \cite{GK12}.  They have played important roles in the recent development of the Kuznetsov formulae of higher-rank  (see \cite{GK13}, \cite{GSW21}, \cite{Bu20}). 

			Our method  preserves the advantages of both the Kuznetsov and the period approaches ---  the former being the precision in the archimedean aspect whereas  the latter being the structural insights into the nonarchimedean aspect.  
			
		\end{rem}

		
		
			 \begin{rem}\label{testfunc}
			 Within our class $\mathcal{C}_{\eta}$ of test functions, a good choice of test function is given by
			 	\begin{align}\label{ourpick}
			 	H(\mu) \ := \ \left(e^{((\mu+iM)/R)^2} + e^{((\mu-iM)/R)^2}\right)\cdot \frac{\Gamma(2\eta+\mu) \Gamma(2\eta-\mu)}{\prod\limits_{i=1}^{3} \  \Gamma\left(\frac{\frac{1}{2}+\mu-\alpha_{i}}{2}\right)\Gamma\left(\frac{\frac{1}{2}-\mu-\alpha_{i}}{2}\right)},
			 	\end{align}
			 	where  $\eta >40$, $M \gg 1$, and $R= M^{\gamma}$ \  ($0< \gamma\le 1$). In   (\ref{ourpick}),
			 	
			 	\begin{itemize}
			 		\item  the factor $e^{((\mu+iM)/R)^2} + e^{((\mu-iM)/R)^2}$ serves as a smooth cut-off for $|\mu_{j}| \in [M-R, M+R]$ and gives the needed  decay  in Proposition \ref{inKLconv};
			 		
			 		\item the factors $\prod\limits_{i=1}^{3}  \Gamma\left(\frac{\frac{1}{2}+\mu-\alpha_{i}}{2}\right)\Gamma\left(\frac{\frac{1}{2}-\mu-\alpha_{i}}{2}\right)$  cancel out the archimedean factors of $\Lambda\big(1/2, \phi_{j}\otimes \widetilde{\Phi}\big)$  on the spectral expansion (\ref{specside}) and in the diagonal contribution (\ref{diagstade});
			 		
			 		\item the factors $\Gamma\left(2\eta+ \mu\right)\Gamma(2\eta-\mu)$ balance off the exponential growth from  $d\mu/|\Gamma(\mu)|^2$, $||\phi_{j}||^{-2}$ and $|\Lambda(1+2i\mu)|^{-2}$.   Also, a large enough region of holomorphy of  (\ref{ourpick}) is maintained so that $h(y):=H^{\flat}(y)$ has  sufficient decay at $0$ and $\infty$. 
			 	\end{itemize}

			 \end{rem}

		\begin{rem}
			Readers may  wonder about  the possiblity of using an automorphic kernel in place of a Poincar\'{e} series in studying the moment of $L$-functions in Theorem \ref{maingl3gl2}. Although this offers extra flexibility in incorporating new structures, the analysis behind the integral transforms (the spherical transforms) becomes quite complicated, see \cite{Bu13} for the case of $GL(3)$. The approach using Poincar\'{e} series  seems to be more adapted to the analytic number theory of higher-rank groups. 
			
		\end{rem}


 \section{Basic Identity for  Dual Moment}\label{Basicide}
 
 \subsection{Unipotent Integration}

We are ready to work on the dual side of our moment formula. As a  simplification of our argument, we shall only consider  $P=P^{a}(*;h)$ with $a=1$ in the following.    Suppose $\re s > 1+ \frac{\theta}{2}$. We begin by replacing $P$ by its definition in the pairing $\left( P,\  \mathbb{P}_{2}^{3} \Phi\cdot |\det *|^{\overline{s}-\frac{1}{2}} \right)_{\Gamma_{2}\setminus GL_{2}(\R)} $. We find upon unfolding: 
  \begin{align}
 \bigg( P,\  \mathbb{P}_{2}^{3} \Phi &\cdot |\det *|^{\overline{s}-\frac{1}{2}} \bigg)_{\Gamma_{2}\setminus GL_{2}(\R)}  \nonumber\\
  &\ = \  \int_{0}^{\infty}  \int_{0}^{\infty} \  h(y_{1})\cdot (y_{0}^2y_{1})^{s-\frac{1}{2}} \cdot  \int_{0}^{1} \hspace{5pt}   \widetilde{\Phi}\left[
  	\begin{pmatrix}
  	1 & u_{1,2} &  \\
  	& 1          & \\
  	&             & 1
  	\end{pmatrix} 
  	 \begin{pmatrix}
  	 y_{0}y_{1} &             &  \\
  	 & y_{0}    &   \\
  	 &             & 1
  	 \end{pmatrix}\right] e(u_{1,2}) \ du_{1,2} \ \frac{dy_{0} dy_{1}}{y_{0}y_{1}^2}. \label{alunf}
  \end{align}
  The main task of this section is to compute the inner, `incomplete' unipotent integral in (\ref{alunf}).    We wish to evaluate it  in terms of the Fourier-Whittaker periods of $\Phi$ (see Definition \ref{fourcoeff}) as they are relevant in the constructions of various $L$-functions associated to $\Phi$, say those discussed in Section \ref{autoLfunc}.
  
  Certainly, this can be obtained by plugging in the \textit{full} Fourier expansion of  \cite{JPSS} (see \cite{Gold} Theorem 5.3.2) and look for possible simplifications. This is in fact not necessary.  We prefer a self-contained and conceptual  treatement. It simply follows from two one-dimensional Fourier expansions and  the automorphy of $\Phi$.  In essence, this is where `summation formulae' take place in our approach, and they are nicely packaged in an elementary, clean, and global  fashion.

 \begin{prop}\label{incomf}
 For any automorphic function $\Phi$ of $\Gamma_{3}$, we have, for any $y_{0}, y_{1}>0$, 
    \begin{align}\label{keylemform}
    \int_{0}^{1} \Phi\left[
    \begin{pmatrix}
    1 & u_{1,2} &  \\
    &    1       & \\
    &             & 1
    \end{pmatrix} 
    \begin{pmatrix}
    y_{0}y_{1} &             &  \\
    & y_{0}    &   \\
    &             & 1
    \end{pmatrix} \right] \  &e(-u_{1,2}) \ du_{1,2} \nonumber\\
    \ &= \  \sum_{a_{0}, a_{1}=-\infty}^{\infty}  (\widehat{\Phi})_{(a_{1},1)}\left[
    \begin{pmatrix}
    1 &             &         \\
    & 1          &  \\
    &   -a_{0} &  1
    \end{pmatrix}
   \begin{pmatrix}
   y_{0}y_{1} &             &  \\
   & y_{0}    &   \\
   &             & 1
   \end{pmatrix}\right]. 
    \end{align}

 \end{prop}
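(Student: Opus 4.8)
The plan is to recognise the left side of (\ref{keylemform}) as a single Fourier coefficient of $\Phi$ along the abelian unipotent radical of the $(1,2)$-parabolic of $GL_{3}$, and then to run a Piatetski--Shapiro type unfolding: Fourier expand along that radical, use the action of the Levi $GL_{2}$ on its additive characters to collapse the character sum into a sum over $SL_{2}(\Z)$, and finally re-fold the integral over the $(2,3)$-coordinate so as to produce the Fourier--Whittaker periods on the right. Throughout write $g=\begin{psmallmatrix} y_{0}y_{1}&&\\&y_{0}&\\&&1\end{psmallmatrix}$, put $v(x,z):=\begin{psmallmatrix}1&x&z\\&1&0\\&&1\end{psmallmatrix}$, let $V=\{v(x,z):x,z\in\R\}$, let $\iota(\gamma):=\begin{psmallmatrix}1&\\&\gamma\end{psmallmatrix}$ for $\gamma\in GL_{2}$ (which normalises $V$), and for $(m,m')\in\Z^{2}$ set
\[
c_{(m,m')}(h):=\int_{0}^{1}\!\!\int_{0}^{1}\Phi\big(v(x,z)h\big)\,e(-mx-m'z)\,dx\,dz ,
\]
the $(m,m')$-th Fourier coefficient of $\Phi$ along $V(\Z)\backslash V(\R)\cong(\R/\Z)^{2}$. (Here $\Phi$, and likewise each $(\widehat{\Phi})_{(m_{1},m_{2})}$ through the integral (\ref{fourcoeff2}), is viewed in the usual way as a function on $GL_{3}(\R)$, so that both sides of (\ref{keylemform}) make sense; the matrix on the left of (\ref{keylemform}) is precisely $v(u_{1,2},0)g$.) Since $(x,z)\mapsto\Phi(v(x,z)g)$ is $\Z^{2}$-periodic (automorphy under $v(\Z,0),\,v(0,\Z)\subset\Gamma_{3}$), expanding it in a Fourier series, specialising to $z=0$, and extracting the $x$-frequency $+1$ shows that the left side of (\ref{keylemform}) equals $\sum_{m'\in\Z}c_{(1,m')}(g)$.

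Next I would use the $GL_{2}$-equivariance of these coefficients. A direct matrix computation gives $\iota(\gamma)\,v(x,z)\,\iota(\gamma)^{-1}=v\big((x,z)\gamma^{-1}\big)$ (row-vector notation), so performing the measure-preserving substitution $v\mapsto\iota(\gamma)v\iota(\gamma)^{-1}$ on $V(\Z)\backslash V(\R)$ and invoking $\Phi(\iota(\gamma)\,\cdot\,)=\Phi(\,\cdot\,)$ (as $\iota(\gamma)\in\Gamma_{3}$ for $\gamma\in SL_{2}(\Z)$) yields
\[
c_{\gamma^{-1}n}(g)=c_{n}(\iota(\gamma)g)\qquad(n\in\Z^{2},\ \gamma\in SL_{2}(\Z)).
\]
Taking $n=\binom{1}{0}$ and $\gamma=\begin{psmallmatrix}1&0\\-a_{0}&1\end{psmallmatrix}$, so that $\gamma^{-1}\binom{1}{0}=\binom{1}{a_{0}}$ and $\iota(\gamma)=\begin{psmallmatrix}1&&\\&1&\\&-a_{0}&1\end{psmallmatrix}$, this identifies $c_{(1,a_{0})}(g)=c_{(1,0)}\!\big(\begin{psmallmatrix}1&&\\&1&\\&-a_{0}&1\end{psmallmatrix}g\big)$, whence $\sum_{m'\in\Z}c_{(1,m')}(g)=\sum_{a_{0}\in\Z}c_{(1,0)}\!\big(\begin{psmallmatrix}1&&\\&1&\\&-a_{0}&1\end{psmallmatrix}g\big)$.

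It remains to rewrite $c_{(1,0)}$ in terms of Fourier--Whittaker periods. For any $h$, Fourier inversion at $0$ applied to the $1$-periodic function $t\mapsto\Phi\big(\begin{psmallmatrix}1&u_{1,2}&u_{1,3}\\&1&t\\&&1\end{psmallmatrix}h\big)$ (periodicity from $\begin{psmallmatrix}1&&\\&1&1\\&&1\end{psmallmatrix}\in\Gamma_{3}$), together with the definition (\ref{fourcoeff2}), gives
\[
\sum_{a_{1}\in\Z}(\widehat{\Phi})_{(a_{1},1)}(h)=\int_{0}^{1}\!\!\int_{0}^{1}\Phi\!\left(\begin{psmallmatrix}1&u_{1,2}&u_{1,3}\\&1&0\\&&1\end{psmallmatrix}h\right)e(-u_{1,2})\,du_{1,2}\,du_{1,3}=c_{(1,0)}(h).
\]
Substituting this with $h=\begin{psmallmatrix}1&&\\&1&\\&-a_{0}&1\end{psmallmatrix}g$ and re-labelling produces (\ref{keylemform}). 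I expect the genuine content to be concentrated in the middle step: correctly pinning down the direction of the Levi action on the characters of $V$, and hence the precise lower-triangular matrices $\begin{psmallmatrix}1&&\\&1&\\&-a_{0}&1\end{psmallmatrix}$, is the one place the argument is more than formal bookkeeping. The remaining issues are analytic hygiene — pointwise convergence of the two Fourier expansions and the interchange of summation with integration (and, for the iterated sum over $a_{0}$ and $a_{1}$ to be a genuine double sum, absolute convergence) — which is where the smoothness and rapid decay of the cusp form $\Phi$ are used.
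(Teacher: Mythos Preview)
Your proof is correct and follows essentially the same three-step strategy as the paper: Fourier-expand along the abelian radical $V$ of the $(1,2)$-parabolic, use $\Gamma_{3}$-automorphy under the lower-triangular unipotent $\begin{psmallmatrix}1&&\\&1&\\&-a_{0}&1\end{psmallmatrix}$ to transport the character $(1,a_{0})$ to $(1,0)$, and then Fourier-expand once more along $u_{2,3}$ to produce the periods $(\widehat{\Phi})_{(a_{1},1)}$. The only difference is packaging: you phrase the middle step as the Levi $GL_{2}$ acting on characters of $V$ (the Piatetski--Shapiro viewpoint), whereas the paper writes out the explicit unimodular change of variables $(u_{1,2},u_{1,3})=(u_{1,2}',u_{1,3}')\begin{psmallmatrix}1&\\-a_{0}&1\end{psmallmatrix}$ and the accompanying matrix factorisation; the computations are the same. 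One small remark: the proposition is stated for any automorphic function of $\Gamma_{3}$, and the paper's argument is purely formal at this stage, so your closing appeal to rapid decay of a cusp form is not needed here (cuspidality enters only in the subsequent corollary).
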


 \begin{proof}
 	
 	Firstly, we Fourier-expand along the abelian subgroup
 	$\left\{\begin{psmallmatrix}
 	1 &    & * \\
 	& 1 &   \\
 	&    & 1
 	\end{psmallmatrix}\right\}$: 
 	\begin{align}
 	 \int_{0}^{1} \Phi\left[
 	\begin{pmatrix}
 	1 & u_{1,2} &  \\
 	& 1 & \\
 	&    & 1
 	\end{pmatrix} 
 	\begin{pmatrix}
 	y_{0}y_{1} &             &  \\
 	& y_{0}    &   \\
 	&             & 1
 	\end{pmatrix}\right] & \ e(- u_{1,2}) \ du_{1,2} \nonumber\\
	&  \hspace{-80pt}  \ = \   \sum_{a_{0}=-\infty}^{\infty} \ \int_{\Z^2 \setminus \R^2} \Phi\left[
 	\begin{pmatrix}
 	1 & u_{1,2} & u_{1,3}  \\
 	& 1          & \\
 	&             & 1
 	\end{pmatrix} 
 	\begin{pmatrix}
 	y_{0}y_{1} &             &  \\
 	& y_{0}    &   \\
 	&             & 1
 	\end{pmatrix} \right] e(- u_{1,2}-a_{0} \cdot u_{1,3}) \ du_{1,2} \ du_{1,3}. 
 	\end{align}

 	Secondly, for each $a_{0}\in \Z$,  consider a unimodular change of variables of the form $	(u_{1,2}, u_{1,3}) \ = \  (u_{1,2}', u_{1,3}') \cdot \begin{psmallmatrix}
 	1     &  \\
 	-a_{0} & 1
 	\end{psmallmatrix}$. One can readily observe that
 	\begin{align*}
 	\begin{pmatrix}
 	1 & u_{1,2} & u_{1,3}  \\
 	& 1          & \\
 	&             & 1
 	\end{pmatrix} 
 	\ = \  
 	\begin{pmatrix}
 	1 &                     &\\
 	&  1          &   \\
 	&   a_{0}   & 1
 	\end{pmatrix}
 	\begin{pmatrix}
 	1 & u_{1,2}' & u_{1,3}'  \\
 	& 1           & \\
 	&              & 1
 	\end{pmatrix} 
 	\begin{pmatrix}
 	1 &            &         \\
 	& 1 &  \\
 	& -a_{0} & 1
 	\end{pmatrix}. 
 	\end{align*}
 	Together with the automorphy of $\Phi$ with respect to  $\Gamma_{3}$, we have
 	\begin{align}
 	& \int_{0}^{1} \Phi\left[
 	\begin{pmatrix}
 	1 & u_{1,2} &  \\
 	& 1          & \\
 	&             & 1
 	\end{pmatrix} 
 \begin{pmatrix}
 y_{0}y_{1} &             &  \\
 & y_{0}    &   \\
 &             & 1
 \end{pmatrix}\right] e(- a_{2}\cdot u_{1,2})\ du_{1,2} \nonumber\\
 	& \hspace{60pt} \ = \  \sum_{a_{0}=-\infty}^{\infty}  \int_{\Z^2\setminus \R^2} \ \Phi\left[
 	\begin{pmatrix}
 	1 & u_{1,2}' & u_{1,3}'  \\
 	& 1           &                \\
 	&              & 1
 	\end{pmatrix} 
 	\begin{pmatrix}
 	1 &            &           \\
 	& 1 &      \\
 	& -a_{0} & 1
 	\end{pmatrix}
 	\begin{pmatrix}
 	y_{0}y_{1} &             &  \\
 	& y_{0}    &   \\
 	&             & 1
 	\end{pmatrix}\right] e(- u_{1,2}') \ du_{1,2}' \ du_{1,3}'. 
 	\end{align}
 	
 	The result follows from a third and final Fourier expansion along the abelian subgroup $\left\{ \begin{psmallmatrix}
 	1 &     &    \\
 	& 1  & *  \\
 	&     & 1
 	\end{psmallmatrix}\right\}$:
 	\begin{align}
 	&\int_{0}^{1} \Phi\left[
 	\begin{pmatrix}
 	1 & u_{1,2} &  \\
 	&    1       & \\
 	&             & 1
 	\end{pmatrix} 
 	\begin{pmatrix}
 	y_{0}y_{1} &             &  \\
 	& y_{0}    &   \\
 	&             & 1
 	\end{pmatrix}\right] e(- u_{1,2}) \ du_{1,2} \nonumber\\
 & \hspace{100pt} 	\ = \  \sum_{a_{0}, a_{1}=-\infty}^{\infty} \int_{0}^{1} \int_{0}^{1} \int_{0}^{1} 
 	\Phi\left[
 	\begin{pmatrix}
 	1 & u_{1,2} & u_{1,3} \\
 	& 1          &  u_{2,3} \\
 	&             &   1
 	\end{pmatrix}
 	\begin{pmatrix}
 	1 &            &           \\
 	& 1 &     \\
 	&-a_{0} & 1
 	\end{pmatrix}
 	\begin{pmatrix}
 	y_{0}y_{1} &             &  \\
 	& y_{0}    &   \\
 	&             & 1
 	\end{pmatrix}\right] \nonumber\\
 	\nonumber\\
 	&\hspace{210pt} \cdot e(- u_{1,2}-a_{1}\cdot u_{2,3}) \ du_{1,2} \ du_{1,3}\  du_{2,3}. \nonumber
 	\end{align}
 \end{proof}

We then explicate  Proposition  \ref{incomf} when  $\Phi$ is a Maass cusp form of $\Gamma_{3}$. This constitutes  the \textit{basic identity} of the present article.  Theorem \ref{maingl3gl2} is a natural consequence of this identity and  the  diagonal/ off-diagonal structures on the dual side become apparent (see Proposition \ref{structure}). 
 
 \begin{cor}\label{incomexpf}
 	Suppose $\Phi$ is a  Maass cusp form of $\Gamma_{3}$. Then
 	\begin{align}\label{incomexpform}
 	\hspace{-10pt}\int_{0}^{1} \Phi\left[
 	\begin{pmatrix}
 	1 & u_{1,2} &  \\
 	&    1       & \\
 	&             & 1
 	\end{pmatrix} 
 	\begin{pmatrix}
 	y_{0}y_{1} &             &  \\
 	& y_{0}    &   \\
 	&             & 1
 	\end{pmatrix}\right] & e(-u_{1,2}) \ du_{1,2}  \nonumber\\
 &\hspace{-90pt} 	\ = \ \sum_{a_{1}\neq 0}  \frac{\mathcal{B}_{\Phi}(a_{1},1)}{|a_{1}|}   \cdot  W_{\alpha(\Phi)}\left( |a_{1}|y_{0}, y_{1}\right) \nonumber\\
 	&\hspace{-10pt} + \sum_{a_{0}\neq 0} \sum_{a_{1}\neq 0}  \  \frac{\mathcal{B}_{\Phi}(a_{1},1)}{|a_{1}|} \cdot
 	 W_{\alpha(\Phi)}\left( \frac{|a_{1}|y_{0}}{1+(a_{0}y_{0})^2}, \  y_{1} \sqrt{1+(a_{0}y_{0})^2}  \right) \nonumber\\
 	 &\hspace{90pt} \cdot e\left(-\frac{a_{0}a_{1}y_{0}^2}{1+(a_{0}y_{0})^2}\right). 
 	\end{align}
 	
 \end{cor}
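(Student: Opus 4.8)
The plan is to deduce Corollary \ref{incomexpf} from Proposition \ref{incomf} by computing, term by term, the Fourier--Whittaker periods $(\widehat{\Phi})_{(a_1,1)}$ appearing on its right-hand side, separating the $a_0=0$ contribution (which will become the first sum in \eqref{incomexpform}) from the $a_0\neq 0$ contribution (which becomes the double sum). First I would treat the summands with $a_0=0$: here the matrix $\begin{psmallmatrix}1&&\\&1&\\&-a_0&1\end{psmallmatrix}$ is the identity, so Proposition \ref{incomf} directly gives $(\widehat{\Phi})_{(a_1,1)}\begin{psmallmatrix}y_0y_1&&\\&y_0&\\&&1\end{psmallmatrix}$, and by the definition of Fourier coefficients in Definition \ref{fourcoeff} this equals $\frac{\mathcal{B}_{\Phi}(a_1,1)}{|a_1|}W_{\alpha(\Phi)}(|a_1|y_0,y_1)$ (using $\sgn(1)=+$ and that the normalizing scalar in front of $W$ reads off as $|a_1 \cdot 1|^{-1}=|a_1|^{-1}$, with the $y$-arguments scaled by $(|a_1|,|1|)=(|a_1|,1)$). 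Terms with $a_1=0$ vanish because the period of a cusp form against the trivial character along a unipotent is zero (this is part of the cuspidality/Fourier expansion theory underlying Definition \ref{fourcoeff}), which is why the sum over $a_1$ in the first line of \eqref{incomexpform} is restricted to $a_1\neq 0$.

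The substantive computation is the $a_0\neq 0$ part, and this is where the main work lies. Here I must understand the matrix $\begin{psmallmatrix}1&&\\&1&\\&-a_0&1\end{psmallmatrix}\begin{psmallmatrix}y_0y_1&&\\&y_0&\\&&1\end{psmallmatrix}$ in Iwasawa form relative to $\mathfrak{h}^3$, i.e.\ write it as $u\cdot a\cdot k$ with $u$ upper-triangular unipotent, $a$ diagonal-positive of the shape $\diag(y_0'y_1',y_0',1)$, and $k\in O(3)$. Since the spherical Whittaker function is right $O(3)$-invariant, only $u$ and $a$ survive, and then $(\widehat{\Phi})_{(a_1,1)}$ of $ua$ equals $e(-a_1 u_{2,3})$ times $(\widehat{\Phi})_{(a_1,1)}$ of $a$ (using the equivariance of Whittaker functions under left unipotent translation — the character picks out the $(2,3)$-entry of $u$ with weight $a_1$ and the $(1,2)$-entry with weight $1$; one should check the $(1,2)$-entry of $u$ vanishes in this Iwasawa decomposition, which it does since the perturbation $\begin{psmallmatrix}1&&\\&1&\\&-a_0&1\end{psmallmatrix}$ only affects the lower-right $2\times2$ block). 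The concrete linear algebra: the lower-right block $\begin{psmallmatrix}y_0&\\-a_0y_0&1\end{psmallmatrix}$ has Iwasawa decomposition with diagonal part $\diag\!\big(\tfrac{y_0}{\sqrt{1+(a_0y_0)^2}},\sqrt{1+(a_0y_0)^2}\big)$ — this is exactly the classical $GL(2)$ computation $\begin{psmallmatrix}y&\\x&1\end{psmallmatrix}\mapsto\begin{psmallmatrix}y/(x^2+1)^{1/2}&\\&(x^2+1)^{1/2}\end{psmallmatrix}$ up to $O(2)$ — and the unipotent shear in the $(2,3)$ slot comes out to $u_{2,3}=\tfrac{a_0 y_0^2}{1+(a_0y_0)^2}$. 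Re-reading the $3\times3$ product in Iwasawa coordinates $\diag(y_0'y_1',y_0',1)$ then forces $y_0'=\tfrac{y_0}{1+(a_0y_0)^2}$ and $y_1'=y_1\sqrt{1+(a_0y_0)^2}$, matching the arguments of $W_{\alpha(\Phi)}$ in \eqref{incomexpform}.

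Assembling the pieces: Definition \ref{fourcoeff} then gives $(\widehat{\Phi})_{(a_1,1)}$ of the diagonal part $\diag(y_0'y_1',y_0',1)$ as $\frac{\mathcal{B}_{\Phi}(a_1,1)}{|a_1|}W_{\alpha(\Phi)}(|a_1|y_0',y_1')$, and the left-unipotent factor contributes $e(-a_1 u_{2,3})=e\!\big(-\tfrac{a_0 a_1 y_0^2}{1+(a_0y_0)^2}\big)$, reproducing the oscillatory factor in the last line of \eqref{incomexpform}. Summing over $a_0\neq0$ and $a_1\neq0$ completes the identity. Absolute convergence of all the rearrangements is guaranteed by the rapid decay of $\Phi$ (equivalently, by the bound in Corollary \ref{gl3whittbd}: for fixed $a_1$ the Whittaker values decay rapidly as $|a_0|\to\infty$ since $y_0'\to 0$), so no delicate justification is needed beyond citing that decay. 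The main obstacle is purely bookkeeping: carefully performing the $3\times3$ Iwasawa decomposition and tracking how the $\diag(y_0y_1,y_0,1)$-normalization interacts with the scaling $(|m_1|,|m_2|)=(|a_1|,1)$ in Definition \ref{fourcoeff}, together with the sign/character conventions for $e(-m_1 u_{2,3}-m_2 u_{1,2})$; everything else is a direct substitution into Proposition \ref{incomf}.
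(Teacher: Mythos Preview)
Your approach is exactly the paper's: invoke Proposition~\ref{incomf}, kill the $a_1=0$ terms by cuspidality, and for $a_0\neq 0$ compute the Iwasawa decomposition of $\begin{psmallmatrix}1&&\\&1&\\&-a_0&1\end{psmallmatrix}\begin{psmallmatrix}y_0y_1&&\\&y_0&\\&&1\end{psmallmatrix}$ (the paper records this as equation~\eqref{centralmaid}), then read off the Whittaker value and the unipotent character from Definition~\ref{fourcoeff}. One caution on the bookkeeping you flagged: the actual Iwasawa $(2,3)$-entry is $u_{2,3}=-\tfrac{a_0 y_0^2}{1+(a_0 y_0)^2}$ (not $+$), and the equivariance of the Fourier--Whittaker period under left unipotent translation is $(\widehat{\Phi})_{(a_1,1)}(u'g)=e(+a_1 u'_{2,3})(\widehat{\Phi})_{(a_1,1)}(g)$ (not $e(-a_1 u'_{2,3})$); your two sign slips cancel, but it is worth getting each one right separately.
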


 \begin{proof}
 	By cuspidality, $(\widehat{\Phi})_{(0,1)}\equiv 0$. The result follows from a straight-forward  linear algebra calculation: 
 	\begin{align}\label{centralmaid}
 	\begin{pmatrix}
 	1 &           &        \\
 	& 1        &     \\
 	& -a_{0} & 1 
 	\end{pmatrix} 
 	\begin{pmatrix}
 	y_{0}y_{1} &             &  \\
 	& y_{0}    &   \\
 	&             & 1
 	\end{pmatrix} 	\ \equiv \ 
 	\begin{pmatrix}
 	1 &     &    \\
 	& 1  & -\frac{a_{0}y_{0}^2}{1+(a_{0}y_{0})^2} \\
 	&     &    1
 	\end{pmatrix} 
 	\begin{pmatrix}
 	\frac{y_{0}}{1+(a_{0}y_{0})^2} \cdot 	y_{1} \sqrt{1+(a_{0}y_{0})^2} & & \\
 	& \frac{y_{0}}{1+(a_{0}y_{0})^2}   &    \\
 	&                                                  &   1
 	\end{pmatrix}
 	\end{align}
 under the right quotient by $O_{3}(\R)\cdot \R^{\times}$. This can be verified by the formula stated in Section 2.4 of  \cite{Bu18} or the  mathematica command \textit{IwasawaForm[]}  in the \textit{GL(n)pack} (\textit{gln.m}).  The user manual and the package can be downloaded from Kevin A. Broughan's website: \url{https://www.math.waikato.ac.nz/~kab/glnpack.html}.
 \end{proof}

 	\subsection{Initial Simplification and Absolute Convergence}\label{inisect}

 	We temporarily restrict ourselves to  the vertical strip $  1+\frac{\theta}{2}< \sigma: =\re s < 4$. As we shall see, this guarantees the absolute convergence of all sums and  integrals.

Suppose $H\in\mathcal{C}_{\eta}$ with $\eta > 40$  (see Proposition \ref{inKLconv}). Then  the bound (\ref{decbdd}) for $h:= H^{\flat}$  implies its Mellin transform $\widetilde{h}(w) := \int_{0}^{\infty} \ h(y) y^{w} \ d^{\times} y$  is holomorphic on the strip $|\re w|< \eta$. Substituting (\ref{incomexpform}) into  (\ref{alunf}), and apply the changes of variables $y_{0}\to |a_{1}|^{-1} y_{0}$, $y_{0}\to |a_{0}|^{-1} y_{0}$ to the first, second piece of the resultant, 
 	\begin{align}
 \left( P,\  \mathbb{P}_{2}^{3} \Phi\cdot |\det *|^{\overline{s}-\frac{1}{2}} \right)_{\Gamma_{2}\setminus GL_{2}(\R)}   \ = \  &2\cdot  L(2s, \Phi) \cdot \int_{0}^{\infty} \int_{0}^{\infty} h(y_{1})\cdot (y_{0}^2 y_{1})^{s-\frac{1}{2}} \cdot  W_{-\alpha(\Phi)}(y_{0}, y_{1}) \ \frac{dy_{0} dy_{1}}{y_{0}y_{1}^2} \label{firstcc} \\
 	& \hspace{100pt} \ + \  OD_{\Phi}(s), \nonumber
 	\end{align}
 	where

 	\begin{defi}\label{ofdef}
 		\begin{align}\label{secondcc}
 		OD_{\Phi}(s) \ := \ &\sum_{a_{0}\neq 0} \sum_{a_{1}\neq 0}   \frac{\mathcal{B}_{\Phi}(1,a_{1})}{|a_{0}|^{2s-1}|a_{1}|} \cdot  \int_{0}^{\infty} \int_{0}^{\infty} h(y_{1})\cdot (y_{0}^2 y_{1})^{s-\frac{1}{2}} \cdot e\left( \frac{a_{1}}{a_{0}}\cdot \frac{y_{0}^2}{1+y_{0}^2}\right) \nonumber\\
 		& \hspace{160pt} \cdot   	W_{-\alpha(\Phi)}\left( \left|\frac{a_{1}}{a_{0}}\right| \cdot \frac{y_{0}}{1+y_{0}^2},  \ y_{1} \sqrt{1+y_{0}^2} \right)    \frac{dy_{0} dy_{1}}{y_{0}y_{1}^2}. 
 		\end{align}
 
 	\end{defi}



 	
 	\begin{prop}\label{diagprop}
 		When $H \in \mathcal{C}_{\eta}$ and \  $4>	\sigma  >  \frac{1+\theta}{2}$,  we have
 		\begin{align}\label{diagstade}
 		\int_{0}^{\infty} \int_{0}^{\infty} h(y_{1})& \cdot (y_{0}^2 y_{1})^{s-\frac{1}{2}} \cdot  W_{-\alpha(\Phi)}(y_{0}, y_{1}) \ \frac{dy_{0} dy_{1}}{y_{0}y_{1}^2}  \nonumber\\
 &	\hspace{50pt} \ =\  	\frac{ \pi^{-3s}}{8}  \cdot  \int_{(0)}  \frac{H(\mu)}{|\Gamma(\mu)|^2} \cdot \prod_{i=1}^{3} \  \Gamma\left(\frac{s+\mu-\alpha_{i}}{2}\right)\Gamma\left(\frac{s-\mu-\alpha_{i}}{2}\right) \ \frac{d\mu}{2\pi i}. 
 		\end{align}
 	\end{prop}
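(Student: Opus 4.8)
The plan is to expand $h = H^{\flat}$ via the Kontorovich--Lebedev inversion (\ref{invers}), interchange the resulting $\mu$-integral with the $(y_{0},y_{1})$-integration, recognize the inner double integral as the $GL_{3}(\R)\times GL_{2}(\R)$ Rankin--Selberg integral $\mathcal{Z}_{\infty}\big(s;\,W_{\mu},\,W_{-\alpha(\Phi)}\big)$, and then invoke Stade's formula (Proposition \ref{stadediff1}) together with the identity $\Gamma_{\R}(w)=\pi^{-w/2}\Gamma(w/2)$ and the constraint $\alpha_{1}+\alpha_{2}+\alpha_{3}=0$. Concretely, I would substitute $h(y_{1})=H^{\flat}(y_{1})=\frac{1}{4\pi i}\int_{(0)}H(\mu)\,W_{\mu}(y_{1})\,\frac{d\mu}{|\Gamma(\mu)|^{2}}$ into the left-hand side of (\ref{diagstade}); after interchanging the order of integration (justified below), the inner integral over $(y_{0},y_{1})\in(0,\infty)^{2}$ is literally $\mathcal{Z}_{\infty}\big(s;\,W_{\mu},\,W_{-\alpha(\Phi)}\big)$ as defined in (\ref{eqn sta1}), so Proposition \ref{stadediff1} gives the value $\tfrac14\prod_{\pm}\prod_{k=1}^{3}\Gamma_{\R}(s\pm\mu-\alpha_{k})$.

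The remaining work is pure bookkeeping. Expanding $\prod_{\pm}\prod_{k=1}^{3}\Gamma_{\R}(s\pm\mu-\alpha_{k})=\prod_{k=1}^{3}\Gamma_{\R}(s+\mu-\alpha_{k})\,\Gamma_{\R}(s-\mu-\alpha_{k})$ and writing each $\Gamma_{\R}(w)=\pi^{-w/2}\Gamma(w/2)$, the accumulated power of $\pi$ is $\pi^{-\frac12\sum_{k}\big((s+\mu-\alpha_{k})+(s-\mu-\alpha_{k})\big)}=\pi^{-\frac12(6s-2\sum_{k}\alpha_{k})}=\pi^{-3s}$, where I used $\alpha_{1}+\alpha_{2}+\alpha_{3}=0$; what is left is $\prod_{k=1}^{3}\Gamma\!\big(\tfrac{s+\mu-\alpha_{k}}{2}\big)\Gamma\!\big(\tfrac{s-\mu-\alpha_{k}}{2}\big)$. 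Collecting the constants, $\frac{1}{4\pi i}\cdot\frac14=\frac18\cdot\frac{1}{2\pi i}$, and the left-hand side of (\ref{diagstade}) becomes $\frac{\pi^{-3s}}{8}\int_{(0)}\frac{H(\mu)}{|\Gamma(\mu)|^{2}}\prod_{k=1}^{3}\Gamma\!\big(\tfrac{s+\mu-\alpha_{k}}{2}\big)\Gamma\!\big(\tfrac{s-\mu-\alpha_{k}}{2}\big)\frac{d\mu}{2\pi i}$, which is exactly the claimed right-hand side.

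The one point that needs care — and the main (essentially the only) obstacle — is justifying the interchange of integrals, i.e. the finiteness of the triple integral of absolute values. On the contour $\re\mu=0$ one has the pointwise bound $|W_{\mu}(y_{1})|=2\sqrt{y_{1}}\,|K_{\mu}(2\pi y_{1})|\le 2\sqrt{y_{1}}\,K_{0}(2\pi y_{1})$, immediate from $K_{it}(x)=\int_{0}^{\infty}e^{-x\cosh u}\cos(tu)\,du$, which is independent of $\mu$; combining this with Corollary \ref{gl3whittbd} applied with a large negative exponent for $y_{0},y_{1}\ge 1$ (rapid decay) and with an exponent close to $1$ for $y_{0},y_{1}\le 1$, and using the hypothesis $\sigma>\tfrac{1+\theta}{2}>\tfrac12$ to secure convergence as $y_{0},y_{1}\to 0$, the inner double integral is bounded by a constant depending only on $s$ and $\alpha$, uniformly for $\mu\in i\R$. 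The outer $\mu$-integral then converges because, by Stirling and the defining decay of $\mathcal{C}_{\eta}$, one has $|\Gamma(\mu)|^{-2}=\pi^{-1}|\mu|\sinh(\pi|\mu|)\ll(1+|\mu|)\,e^{\pi|\mu|}$ and hence $H(\mu)/|\Gamma(\mu)|^{2}\ll(1+|\mu|)\,e^{-\pi|\mu|}$ on $\re\mu=0$. With Fubini in hand, the appeal to Proposition \ref{stadediff1} and the $\Gamma_{\R}$-bookkeeping above finish the proof; all of the estimates invoked are standard facts already recorded in Section \ref{prelimwhitt}.
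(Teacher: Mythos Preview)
Your proof is correct and follows essentially the same route as the paper's: substitute $h=H^{\flat}$ via the Kontorovich--Lebedev inversion, swap the $\mu$-integral inside, and evaluate the resulting $(y_{0},y_{1})$-integral by Stade's formula (Proposition \ref{stadediff1}). The paper's version is terser --- it omits the Fubini justification and the $\Gamma_{\R}$/constant bookkeeping that you spell out --- but the argument is the same.
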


 	\begin{proof}
 	From Proposition  \ref{plancherel}, we have
 		\begin{align}
 		\int_{0}^{\infty} \int_{0}^{\infty} h(y_{1}) \cdot (y_{0}^2 y_{1})^{s-\frac{1}{2}} &\cdot  W_{-\alpha(\Phi)}(y_{0}, y_{1}) \ \frac{dy_{0} dy_{1}}{y_{0}y_{1}^2} \nonumber\\
 			\ &  \hspace{30pt } = \ \frac{1}{2}\cdot \int_{(0)} \ \frac{H(\mu)}{|\Gamma(\mu)|^2}\cdot  \int_{0}^{\infty} \int_{0}^{\infty}  W_{\mu}(y_{1})  W_{-\alpha(\Phi)}(y_{0}, y_{1})  (y_{0}^2 y_{1})^{s-\frac{1}{2}} \  \frac{dy_{0} dy_{1}}{y_{0}y_{1}^2}  \  \frac{d\mu}{2\pi i}.   \nonumber 
 		\end{align}
 	The $y_{0}$, $y_{1}$-integrals can be evaluated by Proposition \ref{stadediff1} and  (\ref{diagstade}) follows.  Moreover, the right side of (\ref{diagstade})  is holomorphic on $\sigma >0$.   
 		\end{proof}

 	 \begin{prop}\label{odconv}
 	 	The off-diagonal  $OD_{\Phi}(s)$ converges absolutely when $4  >  \sigma  >  1+\frac{\theta}{2}$  and $H \in \mathcal{C}_{\eta}$ ($\eta>40$).    
 	 \end{prop}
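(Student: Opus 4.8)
The plan is to bound $OD_{\Phi}(s)$ term by term using the rapid decay of the Whittaker function $W_{-\alpha}$ (Corollary~\ref{gl3whittbd}) and of $h=H^{\flat}$ (Proposition~\ref{inKLconv}), then sum over $a_0,a_1$. Fix $\sigma = \re s$ with $1+\tfrac{\theta}{2} < \sigma < 4$. In the inner double integral of~(\ref{secondcc}), the oscillatory factor $e(\cdots)$ has modulus $1$, so it suffices to bound
\begin{align*}
\int_{0}^{\infty}\!\!\int_{0}^{\infty} |h(y_1)|\cdot (y_0^2 y_1)^{\sigma-\frac12}\cdot \left|W_{-\alpha}\!\left(\Bigl|\tfrac{a_1}{a_0}\Bigr|\tfrac{y_0}{1+y_0^2},\ y_1\sqrt{1+y_0^2}\right)\right| \frac{dy_0\,dy_1}{y_0 y_1^2}.
\end{align*}
First I would do the $y_1$-integral: applying~(\ref{decbdd}) to get $|h(y_1)|\ll \min\{y_1,y_1^{-1}\}^{\eta}$ and the Whittaker bound $|W_{-\alpha}(*, y_1\sqrt{1+y_0^2})| \ll (y_1\sqrt{1+y_0^2})^{A_1}$ for any $A_1<1$ (Corollary~\ref{gl3whittbd}), the $y_1$-integral $\int_0^\infty |h(y_1)| y_1^{\sigma + A_1 - \frac32}\,d^\times y_1$ converges since $\eta$ is large, and contributes a factor $(1+y_0^2)^{A_1/2}$ in $y_0$.

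Next I would handle the $y_0$-integral. Split $[0,\infty)$ at $y_0 = 1$. For small $y_0$, use $W_{-\alpha}(\xi, *) \ll \xi^{A_0}$ with $\xi = |a_1/a_0|\, y_0/(1+y_0^2)$: picking $A_0$ slightly below $1$ makes $\int_0^1 y_0^{2\sigma - 2 + A_0}\,d^\times y_0$ converge (here $2\sigma - 1 + A_0 > 0$ since $\sigma > \tfrac12$), and the resulting power of $|a_1/a_0|$ is $|a_1/a_0|^{A_0}$. For large $y_0$, the argument $\xi \asymp |a_1/a_0|/y_0 \to 0$, so again $W_{-\alpha}(\xi,*) \ll \xi^{A_0}$; combined with $(y_0^2)^{\sigma - \frac12}$ and the $(1+y_0^2)^{A_1/2}$ from the $y_1$-step, the integrand behaves like $y_0^{2\sigma - 2 - A_0 + A_1}$ near $\infty$, which is integrable provided $2\sigma - 1 - A_0 + A_1 < 0$, i.e. $A_0 > 2\sigma - 1 + A_1$; since $\sigma < 4$ this is not automatic from $A_0 < 1$, so one must instead exploit that $W_{-\alpha}$ decays \emph{faster than any power} in its \emph{second} argument — indeed from~(\ref{whittaker}) one has rapid (exponential) decay as the second Iwasawa coordinate $\to\infty$, and here that coordinate is $y_1\sqrt{1+y_0^2}$, forcing extra decay in $y_0$. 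The clean way is: in the $y_1$-integral use instead $|W_{-\alpha}(*, y_1\sqrt{1+y_0^2})| \ll (y_1\sqrt{1+y_0^2})^{-B}$ for $B$ as large as we like (legitimate by contour shifting in~(\ref{whittaker}), since $\re\alpha_i = 0$), which converges against $|h(y_1)|y_1^{\sigma - \frac32}$ once $\eta > \sigma + B$, and produces $(1+y_0^2)^{-B/2}$, killing the large-$y_0$ tail for $B > 2\sigma - 1$.

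Finally, collecting the powers of $|a_1|$: the factor $\mathcal{B}_{\Phi}(1,a_1)/(|a_0|^{2s-1}|a_1|)$ together with $|a_1/a_0|^{A_0}$ from the inner integral gives a sum dominated by
\begin{align*}
\sum_{a_0\neq 0}\sum_{a_1\neq 0} \frac{|\mathcal{B}_{\Phi}(1,a_1)|}{|a_0|^{2\sigma - 1 + A_0}\,|a_1|^{1-A_0}},
\end{align*}
and since $\sigma > \tfrac{1+\theta}{2}$ one can choose $A_0$ with $1-\theta < A_0 < 2\sigma - 1$ so that the $a_1$-sum converges by the Rankin–Selberg bound $\sum_{a_1\le X}|\mathcal{B}_{\Phi}(1,a_1)| \ll X^{1+\theta+\epsilon}$ (equivalently $|\mathcal{B}_{\Phi}(1,a_1)| \ll |a_1|^{\theta+\epsilon}$ pointwise suffices here with $A_0 > \theta$), and the $a_0$-sum converges since $2\sigma - 1 + A_0 > 2\sigma - 1 > \theta \ge 1$ is comfortably $>1$ (in fact $2\sigma-1+A_0 > 1+\theta+A_0 > 1$). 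Thus all sums and integrals converge absolutely on $1+\tfrac{\theta}{2} < \sigma < 4$. The main obstacle is the large-$y_0$ regime: a naive power bound on the Whittaker function is \emph{not} enough because $\sigma$ can be as large as $4$; the resolution is to extract polynomial-of-arbitrary-degree decay in the second Whittaker variable $y_1\sqrt{1+y_0^2}$ from the Mellin–Barnes representation~(\ref{whittaker}), trading it against the super-polynomial decay of $h$, which requires $\eta$ large — exactly the hypothesis $\eta > 40$ built into $\mathcal{C}_\eta$.
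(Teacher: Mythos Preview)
Your overall strategy matches the paper's: bound the Whittaker function by Corollary~\ref{gl3whittbd}, use the decay~(\ref{decbdd}) of $h$, split the $y_0$-integral, and then sum. But your convergence analysis of the $a_1$-sum has a sign error that breaks the argument. From the displayed sum
\[
\sum_{a_1\neq 0}\frac{|\mathcal{B}_{\Phi}(1,a_1)|}{|a_1|^{1-A_0}}
\ll \sum_{a_1}|a_1|^{\theta-1+A_0},
\]
convergence requires $A_0<-\theta$, not $A_0>\theta$ (or $A_0>1-\theta$) as you write. With your choice of $A_0$ ``slightly below $1$'' for the small-$y_0$ range, this sum diverges. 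The parenthetical ``$2\sigma-1>\theta\ge 1$'' is also false (recall $\theta\le 2/5$).

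The fix is exactly what the paper does: take $A_0$ \emph{negative}, specifically $A_0\in(2-2\sigma,\,-\theta)$ --- this interval is nonempty precisely because $\sigma>1+\theta/2$. With such an $A_0$ the $a_1$-sum converges, the $a_0$-sum converges since $2\sigma-1+A_0>1$, and the small-$y_0$ integral $\int_0^1 y_0^{2\sigma-2+A_0}\,dy_0$ still converges since $2\sigma-1+A_0>0$. For the large-$y_0$ tail the paper then takes $A_1<A_0-2\sigma+1$ (sufficiently negative), which is the same as your ``large $B$'' manoeuvre; note that~(\ref{whitest3}) only gives power decay, not the exponential decay you mention, but arbitrarily large power decay is all that is needed here, and the hypothesis $\eta>40$ absorbs the resulting $y_1$-integral exactly as you say.
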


 	 \begin{proof}
 	 	Upon inserting absolute values, breaking up the $y_{0}$-integral into $\int_{0}^{1}+\int_{1}^{\infty}$, and applying  the bounds (\ref{whitest3})   and  $|\mathcal{B}_{\Phi}(1, a_{1})| \ll |a_{1}|^{\theta}$, observe that 
 	 	\begin{align*}
 	 OD_{\Phi}(s) \ \ll \ 	&\sum_{a_{0}=1}^{\infty} \sum_{a_{1}=1}^{\infty}   \frac{1}{a_{0}^{2\sigma-1}a_{1}^{1-\theta}} \left(\int_{y_{0}=1}^{\infty}+\int_{y_{0}=0}^{1}\right) \int_{y_{1}=0}^{\infty} |h(y_{1})| (y_{0}^2 y_{1})^{\sigma-\frac{1}{2}} \left(\frac{a_{1}a_{0}^{-1}y_{0}}{1+y_{0}^2}\right)^{A_{0}}  \left(y_{1} \sqrt{1+y_{0}^2} \right)^{A_{1}}  \   \frac{dy_{0} dy_{1}}{y_{0}y_{1}^2},
 	 	\end{align*}
 	 	where the implicit constant depends only on $\Phi$, $A_{0}$, $A_{1}$ with  $-\infty < A_{0}, A_{1}< 1$.  We are allowed to choose different $A_{0}, A_{1}$ in different ranges of the $y_{0}$, $y_{1}$-integrals. 
 	 	
 	 The convergence of both of the series is guaranteed if
 	 	\begin{align}\label{rana0sig}
 	 	A_{0} \ < \ -\theta  \hspace{10pt} \text{ and }  \hspace{10pt} \sigma \ > \ 1-\frac{A_{0}}{2}. 
 	 	\end{align}

 We now show that if  (\ref{rana0sig}) and
 		\begin{align}\label{rana1}
 		A_{1} \ < \ A_{0}-2\sigma+1
 		\end{align}
 		both hold, then the $y_{0}$-integrals converge.  Indeed, observe that $2\sigma+A_{0}-2>-1$ (by (\ref{rana0sig})),  and 
 	\begin{align}
 	\int_{y_{0}=0}^{1}  y_{0}^{2\sigma+A_{0}-2} \left(1+y_{0}^2\right)^{\frac{A_{1}}{2}-A_{0}} \ dy_{0} \ \asymp_{A_{0}, A_{1}} \ 	\int_{y_{0}=0}^{1}    y_{0}^{2\sigma+A_{0}-2} \ dy_{0}. \nonumber
 	\end{align}
So, the last integral converges. Also,  (\ref{rana0sig}) and (\ref{rana1}) imply $A_{1}< \min\{ 1, 2A_{0} \}$ and thus, 
 	\begin{align*}
 	\int_{y_{0}=1}^{\infty}  y_{0}^{2\sigma+A_{0}-2} \left(1+y_{0}^2\right)^{\frac{A_{1}}{2}-A_{0}} \ dy_{0} 
 	\ \le \ \int_{y_{0}=1}^{\infty} y_{0}^{2\sigma+A_{1}-A_{0}-2} \ dy_{0}. 
 	\end{align*}
The last integral converges because of (\ref{rana1}).

 	 	 	 	For the $y_{1}$-intgeral, the integrals 
 	 	 	 	\begin{align*}
 	 	 	 	\int_{y_{1}=1}^{\infty} |h(y_{1})| y_{1}^{\sigma+A_{1}-\frac{5}{2}} \ dy_{1}  \hspace{10pt} \text{ and } \hspace{10pt}  	\int_{y_{1}=0}^{1} |h(y_{1})| y_{1}^{\sigma+A_{1}-\frac{5}{2}} \ dy_{1}
 	 	 	 	\end{align*}
 	 	 	 	converge whenever  $H\in \mathcal{C}_{\eta}$ (we then have (\ref{decbdd})) and 
 	 	 	 	\begin{align}\label{etasuff}
 	 	 	 	\eta  \ > \   \left| \sigma +A_{1}- \frac{3}{2} \right|. 
 	 	 	 	\end{align}

 	 		 		Let $\delta:= \sigma-1-(\theta/2) \  (>0)$. In view of  (\ref{rana0sig}) and (\ref{rana1}), we may take $A_{0} :=   -\theta -\delta$ and  $A_{1}  :=  -2\theta-1-4\delta$. Also, (\ref{etasuff}) trivially holds as $\eta >40$ and $\sigma<4$.  The result follows.	 
 	 \end{proof}

 	 \begin{rem}
 	 	Readers will have no trouble in realizing   the resemblance of (\ref{mainobj})  to the well-known inner product construction for the Kuznetsov formula.   Indeed, $\mathbb{P}_{2}^{3}\Phi$ can be regarded as an (infinite sum of) Poincar\'e series of $SL_{2}(\Z)$ thanks to its Fourier expansion. (We never adopt this approach in this article.) In a sense, this can be considered as  a $GL(3)\times GL(2)$ analog of the Kuznetsov formulae.  However, there are some differences. One of them has been mentioned: our moment identity is an equality between two unfoldings instead of that between spectral and geometric  expansions. 
 	 	
 	 	The other is on the technical aspect. In the Kuznetsov formula, it is possible to annihilate the  oscillatory factors therein to obtain a primitive form of the trace formula  with some applications, see  \cite{GK13}, \cite{Zh14}, \cite{GSW21}. However, such a treatment is far from sufficient  in our case ---  we have not analytically continued into the critical strip in Proposition  \ref{odconv}!  In other words, the oscillatory factor in $OD_{\Phi}(s)$ is of intrinsic importance to our problem. It arises naturally from the  abstract characterization of Whittaker functions. 
 	 	\end{rem}

 	 \section{Structure of the Off-diagonal}\label{separaOD}

 	Fix  $\epsilon:=1/100$ (say), $0<\phi<\pi/2$,  and consider  the domain $1+ \frac{\theta}{2}+\epsilon < \sigma <4 $ in this section to maintain absolute convergence.   We will stick with this choice of $\epsilon$ for the rest of this article and the number $\phi$ here should not pose any confusion with the basis of cusp forms $(\phi_{j})$ of $\Gamma_{2}$. We define a perturbed version of  $OD_{\Phi}(s)$ as follows:
 	 \begin{align}\label{pertodd}
 	 OD_{\Phi}(s;\phi)  \ := \ &\sum_{a_{0}\neq 0} \sum_{a_{1}\neq 0}   \frac{\mathcal{B}_{\Phi}(1,a_{1})}{|a_{0}|^{2s-1}|a_{1}|} \int_{0}^{\infty} \int_{0}^{\infty} h(y_{1})\cdot (y_{0}^2 y_{1})^{s-\frac{1}{2}}
 	 W_{-\alpha(\Phi)}\left(\left|\frac{a_{1}}{a_{0}}\right|\frac{y_{0}}{1+y_{0}^2},  \ y_{1} \sqrt{1+y_{0}^2} \right)  \nonumber\\
 	 & \hspace{150pt}\cdot e\left(\frac{a_{1}}{a_{0}}\frac{y_{0}^2}{1+y_{0}^2}; \phi\right) \   \frac{dy_{0} dy_{1}}{y_{0}y_{1}^2},
 	 \end{align}
 	 where 
 	 \begin{align}\label{perturb}
 	 e\left( x; \phi\right) \ :=  \ \int_{(\epsilon)} |2\pi x|^{-u} e^{iu\phi \sgn(x)} \Gamma(u) \ \frac{du}{2\pi i} \hspace{30pt} (x\in \R-\{0  \}).
 	 \end{align}
  In Proposition \ref{pullout}, we will show that
 	\begin{align}
 	\lim_{\phi \to \pi/2} \ OD_{\Phi}(s; \phi) \ = \  OD_{\Phi}(s)
 \end{align}
on a smaller region of absolute convergence.

 	 \begin{rem}\label{Goa}
 	 The goals of this section  is to obtain an expression of $OD_{\Phi}(s; \phi)$ that
 	 \begin{itemize}
 	 	\item  reveals the  structure of the dual moment;
 	 	
 	 	\item can be analytically continued into the critical strip;
 	 	
 	 	\item and will allow us to pass to the limit $\phi\to \pi/2$ (in the critical strip). 
 	 \end{itemize}
 	 \end{rem}

 	In view of these, it is natural to work on the dual side of Mellin transforms. Also, we will be able to separate variables as an added benefit. The main result of this section is as follows:

 	 \begin{prop}[Dual Moment]\label{structure}
 	 	Let $H\in \mathcal{C}_{\eta}$ ($\eta>40$) and $\phi \in (0, \pi/2)$. On  the vertical strip
 	 	\begin{align}\label{abdom}
 	 	1+ \frac{\theta}{2} + \epsilon \ < \ \sigma \ < \ 4,
 	 	\end{align}	
 	 	we have
 	 	\begin{align}\label{mainod}
 	 	OD_{\Phi}(s; \phi) \ = \   &\frac{1}{4}  \ \int_{(1+\theta+2\epsilon)} \  \zeta\left(2s-s_{0}\right) L\left(s_{0},  \Phi\right) \cdot \sum_{\delta=\pm} \left(\mathcal{F}_{\Phi}^{(\delta)}H\right)\left(s_{0}, \ s; \  \phi\right) \ \frac{ds_{0}}{2\pi i},
 	 	\end{align}
 	 	where the transform of $H$ is given by
 	 	\begin{align}
 	 	\left(\mathcal{F}_{\Phi}^{(\delta)}H\right)\left(s_{0}, \ s; \ \phi\right) \  :=\  	& \int_{(15)} \int_{(\epsilon)}  \  \widetilde{h}\left(s-s_{1}-\frac{1}{2}\right)    \cdot \mathcal{G}_{\Phi}^{(\delta)}\left( s_{1}, u; \ s_{0}, s;  \ \phi \right) \   \frac{du}{2\pi i}   \frac{ds_{1}}{2\pi i }, \label{doubmbtrans}
 	 	\end{align}
 	 	with $h:= H^{\flat}$,  $G_{\Phi}:= G_{\alpha(\Phi)}$ as defined  in (\ref{vtgamm}), and
 	 	\begin{align}\label{kernelG}
 	 	\mathcal{G}_{\Phi}^{(\delta)}\left( s_{1}, u; \ s_{0}, s;  \ \phi \right) \ := \  &  G_{\Phi}\left(s_{0}-u, s_{1}\right)   \cdot   (2\pi )^{-u} e^{i\delta \phi u}  \Gamma(u) \cdot \frac{\Gamma\left( \frac{u+1-2s+s_{1}-s_{0}}{2}\right) \Gamma\left(\frac{2s-s_{0}-u}{2}\right) }{\Gamma\left(\frac{1+s_{1}}{2}-s_{0}\right)}. 
 	 	\end{align}

 	 \end{prop}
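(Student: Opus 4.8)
The plan is to substitute, into the definition (\ref{pertodd}) of $OD_{\Phi}(s;\phi)$, the two Mellin--Barnes representations already at our disposal --- the perturbed exponential (\ref{perturb}) and the Vinogradov--Takhtadzhyan formula (\ref{whittaker}) for $W_{-\alpha(\Phi)}$ --- and then to carry out, one at a time, the elementary $y_{1}$-integral, the $y_{0}$-integral, and the $(a_{0},a_{1})$-summations. Concretely: I would replace $e\!\left(\tfrac{a_{1}}{a_{0}}\tfrac{y_{0}^{2}}{1+y_{0}^{2}};\phi\right)$ by (\ref{perturb}) with $\re u=\epsilon$; since $\sgn\!\left(\tfrac{a_{1}}{a_{0}}\right)=\sgn(a_{0}a_{1})=:\delta$, this splits the double sum into the two pieces $\delta=\pm$, each carrying the factor $e^{i\delta\phi u}$ and, after the replacement, the factor $(2\pi)^{-u}\Gamma(u)$. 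I would also replace $W_{-\alpha(\Phi)}\!\left(\left|\tfrac{a_{1}}{a_{0}}\right|\tfrac{y_{0}}{1+y_{0}^{2}},\ y_{1}\sqrt{1+y_{0}^{2}}\right)$ by (\ref{whittaker}) with contours $\re s_{0}'=1+\theta+\epsilon$ and $\re s_{1}'=15$ (both positive, as the formula requires). On the strip (\ref{abdom}) the resulting sevenfold sum-integral is absolutely convergent (see the final paragraph), so Fubini permits doing the inner $y$-integrals and $a$-sums first.

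The computation then consists of three routine evaluations. First, the $y_{1}$-integral is a Mellin transform: collecting $y_{1}^{s-1/2}$, the $y_{1}^{1-s_{1}'}$ coming from (\ref{whittaker}), and the measure $dy_{1}/y_{1}^{2}$ produces $\int_{0}^{\infty}h(y_{1})\,y_{1}^{\,s-s_{1}'-1/2}\,d^{\times}y_{1}=\widetilde h\!\left(s-s_{1}'-\tfrac12\right)$, which is legitimate since $\bigl|\re(s-s_{1}'-\tfrac12)\bigr|<\eta$ for $\eta>40$ and $\sigma<4$. Second, the $y_{0}$-integral: the substitution $t=y_{0}^{2}$ turns the gathering of $y_{0}^{2s-1}$, the $\bigl(\tfrac{y_{0}^{2}}{1+y_{0}^{2}}\bigr)^{-u}$ from (\ref{perturb}), and the $\bigl(\tfrac{y_{0}}{1+y_{0}^{2}}\bigr)^{1-s_{0}'}(1+y_{0}^{2})^{(1-s_{1}')/2}$ from (\ref{whittaker}) into a Beta integral $\tfrac12\int_{0}^{\infty}t^{a-1}(1+t)^{-a-b}\,dt=\tfrac12\,\Gamma(a)\Gamma(b)/\Gamma(a+b)$ with $a=s-u-\tfrac{s_{0}'}{2}$ and $b=\tfrac12-s+\tfrac{s_{1}'-s_{0}'}{2}$; on our contours $\re a$ and $\re b$ are constants, both strictly positive throughout the strip (\ref{abdom}), so the evaluation is valid. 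Third, the $(a_{0},a_{1})$-sums factor: after the substitutions the $a$-dependence is $\mathcal B_{\Phi}(1,a_{1})\,|a_{0}|^{\,s_{0}'+u-2s}\,|a_{1}|^{-s_{0}'-u}$, and for each $\delta$ the sum over all $(a_{0},a_{1})$ with $\sgn(a_{0}a_{1})=\delta$ equals $2\,\zeta(2s-s_{0}'-u)\,L(s_{0}'+u,\Phi)$ --- the $2$ coming from the two sign patterns together with $\mathcal B_{\Phi}(1,-n)=\mathcal B_{\Phi}(1,n)$ (valid since $SL_{3}(\Z)$ has no odd Maass forms), both Dirichlet series converging because $\re(s_{0}'+u)=1+\theta+2\epsilon>1+\theta$ and $\re(2s-s_{0}'-u)=2\sigma-1-\theta-2\epsilon>1$ on (\ref{abdom}).

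What remains is cosmetic. Setting $s_{0}:=s_{0}'+u$ (holding $u$ fixed, so the new contour is $\re s_{0}=1+\theta+2\epsilon$) and $s_{1}:=s_{1}'$ turns $\zeta(2s-s_{0}'-u)L(s_{0}'+u,\Phi)$ into $\zeta(2s-s_{0})L(s_{0},\Phi)$, turns $\widetilde h(s-s_{1}'-\tfrac12)$ into $\widetilde h(s-s_{1}-\tfrac12)$, turns $G_{\alpha(\Phi)}(s_{0}',s_{1}')$ into $G_{\Phi}(s_{0}-u,s_{1})$, and, by a direct check, turns the three Beta $\Gamma$-factors into exactly $\Gamma\!\left(\tfrac{2s-s_{0}-u}{2}\right)$, $\Gamma\!\left(\tfrac{u+1-2s+s_{1}-s_{0}}{2}\right)$ and $\Gamma\!\left(\tfrac{1+s_{1}}{2}-s_{0}\right)^{-1}$; together with the surviving $(2\pi)^{-u}e^{i\delta\phi u}\Gamma(u)$ this is precisely the kernel $\mathcal G_{\Phi}^{(\delta)}$ of (\ref{kernelG}). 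Keeping track of the constants --- $\tfrac14$ from (\ref{whittaker}), $\tfrac12$ from the $t$-substitution, $2$ from the sign count --- leaves the overall coefficient $\tfrac14$, and reordering the (absolutely convergent) contour integrals into $\int_{(1+\theta+2\epsilon)}ds_{0}\int_{(15)}ds_{1}\int_{(\epsilon)}du$ yields exactly (\ref{mainod})--(\ref{doubmbtrans}).

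The genuinely substantive step is the absolute convergence asserted in the first paragraph, which licenses all the interchanges above. This repackages the estimate behind Proposition \ref{odconv}, but now one must simultaneously dominate the $\Gamma$-quotient $G_{\alpha(\Phi)}$ and the factor $\Gamma(u)e^{i\delta\phi u}$ via Stirling (here the restriction $0<\phi<\tfrac{\pi}{2}$ is exactly what keeps the $u$-integral convergent), the rapid decay (\ref{decbdd}) of $h$ and the rapid decay of $\widetilde h$ guaranteed by $H\in\mathcal C_{\eta}$, the Hecke bound $|\mathcal B_{\Phi}(1,a_{1})|\ll|a_{1}|^{\theta}$, and the behaviour of the $y_{0}$- and $y_{1}$-integrands near $0$ and $\infty$; with the contour heights $\re s_{1}'=15$, $\re s_{0}'=1+\theta+\epsilon$, $\re u=\epsilon$ and $\eta>40$ these estimates do close up on (\ref{abdom}), but the bookkeeping needs to be done with care.
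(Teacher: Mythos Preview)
Your proposal is correct and follows essentially the same approach as the paper: substitute the Mellin--Barnes expansions (\ref{perturb}) and (\ref{whittaker}) into (\ref{pertodd}), verify joint absolute convergence, evaluate the $y_{1}$-integral as $\widetilde h$, the $y_{0}$-integral via the Beta identity (\ref{eube}), and the $(a_{0},a_{1})$-sums as $2\zeta(2s-s_{0}'-u)L(s_{0}'+u,\Phi)$, then perform the shift $s_{0}=s_{0}'+u$. The paper carries out the absolute-convergence bookkeeping you flag as ``needing care'' more explicitly (see the bullet list surrounding (\ref{absc1})--(\ref{absc3})), but the logic and all key evaluations are identical.
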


\begin{proof}
 	 Plug-in the expression of  $W_{-\alpha(\Phi)}$ described in  Proposition \ref{vtmellin} into $OD_{\Phi}(s; \phi)$ with
 	 	\begin{align}\label{absconassu}
 	 	\sigma_{1} \ := \ 15 \hspace{10pt} \text{ and } \hspace{10pt}  	1+\theta \ < \ \sigma_{0}\ < \ 2\sigma-1-\epsilon.
 	 	\end{align}
 	  Inserting absolute values to the resulting expression, the sums and integrals are bounded by 
 	 	\begin{align}\label{absedintsum}
 	 	 \sum_{\delta:= \sgn(a_{0}a_{1})= \pm }\left( \sum_{a_{0}\neq 0} \frac{1}{|a_{0}|^{2\sigma-\sigma_{0}-\epsilon}}\right) &\left( \sum_{a_{1}\neq 0} \frac{\left|\mathcal{B}_{\Phi}(1,a_{1})\right|}{\left|a_{1}\right|^{\sigma_{0}+\epsilon}}\right)  \left( \int_{(\sigma_{0})} \int_{(\sigma_{1})} \left| G_{\Phi}(s_{0}, s_{1})\right| \ |ds_{0}| |ds_{1}|\right)   \nonumber\\
 	 	& \hspace{-100pt}  \cdot \left(\int_{(\epsilon)} \left|  e^{i\delta \phi u } \Gamma(u)  \right| \ |du|\right) \left(\int_{0}^{\infty}  y_{0}^{-\sigma_{0}-2\epsilon+2\sigma} (1+y_{0}^2)^{\sigma_{0}+\epsilon-\frac{1+\sigma_{1}}{2}} \ d^{\times} y_{0} \right)\left( \int_{0}^{\infty} |h(y_{1})| \cdot y_{1}^{\sigma-\sigma_{1}-\frac{1}{2}} \ d^{\times} y_{1}\right). 
 	 	\end{align}
 	 	  Observe that:
 	 	\begin{itemize}
 	 		\item  by Stirling's formula,     the $s_{0}$, $s_{1}$, $u$-integrals converge as long as 
 	 		\begin{align}\label{absc1}
 	 		\sigma_{0}, \  \sigma_{1}, \ \epsilon \ > \ 0, \  \hspace{20pt}   \phi  \ \in  \ (0, \pi/2); 
 	 		\end{align}
 	 		
 	 		\item  the $y_{0}$-integral converges as long as
 	 		\begin{align}\label{absc2}
 	 		 	\sigma_{0}+2\epsilon \ < \ 2\sigma \ < \ 	\sigma_{1}-\sigma_{0}+1; 
 	 		\end{align}

 	 		\item  	by  the bound \  $\left|\mathcal{B}_{\Phi}(1, a_{1}) \right|  \ \ll \  |a_{1}|^{\theta}$, the $a_{0}$-sum and the $a_{1}$-sum converge as long as
 	 		\begin{align} \label{absc3}
 	 		2\sigma-1 \  >  \   \sigma_{0}+\epsilon \ > \ 1+\theta. 
 	 		\end{align}
 	 
 	 	\end{itemize}
 	 Under (\ref{absconassu}), items (\ref{absc1}), (\ref{absc2}), (\ref{absc3}) hold. Moreover, the $y_{1}$-integral converges by (\ref{decbdd}) and   $H\in \mathcal{C}_{\eta}$ ($\eta >40$).  Now, upon rearranging  sums and integrals, and notice that  $\mathcal{B}_{\Phi}(1, a_{1})  =  \mathcal{B}_{\Phi}(1, -a_{1})$, we have 
 	 	\begin{align}
 	 	OD_{\Phi}(s; \phi) 
 	 	\ = \   & \ 2 \sum_{\delta=\pm } \int_{(\sigma_{0})}\int_{(\sigma_{1})} \int_{(\epsilon)} \ \frac{G_{\Phi}(s_{0},s_{1})}{4} \cdot   (2\pi)^{-u} e^{i\delta \phi u } \Gamma(u) \left(\int_{0}^{\infty} h(y_{1}) y_{1}^{s-s_{1}-\frac{1}{2}}  d^{\times} y_{1}\right) \nonumber\\
 	 	&\hspace{10pt} \cdot  \left( \int_{0}^{\infty}   y_{0}^{-s_{0}-2u+2s} (1+y_{0}^2)^{s_{0}+u-\frac{1+s_{1}}{2}} \ d^{\times} y_{0} \right) 	\left(\sum_{a_{0}=1}^{\infty} \sum_{a_{1}= 1}^{\infty}   \frac{\mathcal{B}_{\Phi}(1,a_{1})}{a_{0}^{2s-1}a_{1}}   \left(\frac{a_{1}}{a_{0}}\right)^{1-s_{0}-u} \right)  \ \frac{ds_{0}}{2\pi i } \frac{ds_{1}}{2\pi i } \frac{du}{2\pi i}. \label{uponexp}
 	 	\end{align}

 	 Recall the integral identity
 	 	\begin{align}\label{eube}
 	 	\int_{0}^{\infty} y_{0}^{v}(1+y_{0}^2)^{A} \ d^{\times} y_{0} \ = \ \frac{1}{2} \   \frac{\Gamma\left(-A-\frac{v}{2}\right)\Gamma\left(\frac{v}{2}\right)}{ \Gamma(-A)}
 	 	\end{align}
 	 	for $0  <   \re v  <   -2\re A$. It follows that
 	 	\begin{align}\label{rear}
 	 	OD_{\Phi}(s; \phi) \ = \ &\ 2  \sum_{\delta=\pm} \int_{(\sigma_{0})}\int_{(\sigma_{1})} \int_{(\epsilon)} \  \zeta\left(2s-s_{0}-u\right) L\left(s_{0}+u; \Phi\right)\cdot  \widetilde{h}\left(s-s_{1}-\frac{1}{2}\right) \nonumber\\
 	 	&\hspace{70pt} \cdot \frac{G_{\Phi}(s_{0},s_{1})}{4} \cdot   (2\pi)^{-u} e^{i\delta \phi u } \Gamma(u)\cdot   \frac{1}{2} \ \frac{ \Gamma\left(s-\frac{s_{0}}{2}-u\right) \Gamma\left( \frac{1+s_{1}-s_{0}}{2}-s\right)  }{\Gamma\left(\frac{1+s_{1}}{2}-s_{0}-u\right)}  \ \frac{ds_{0}}{2\pi i } \frac{ds_{1}}{2\pi i }  \frac{du}{2\pi i}. 
 	 	\end{align}

 	 	We pick the contour $(\sigma_{0}):=(1+\theta+\epsilon)$ (we thus impose (\ref{abdom})). To  isolate  the nonarchimedean part of $OD_{\Phi}(s; \phi)$,  we make the change of variable $s_{0}' \ = \  s_{0}+u$.  Upon plugging-in the expression for  $G_{\Phi}\left(s_{0}'-u, s_{1}\right)$ (see (\ref{vtgamm})), we obtain  (\ref{mainod})-(\ref{kernelG}).  By the absolute convergence proven above, we also conclude that  the integral transform  $\left(\mathcal{F}_{\Phi}^{(\delta)}h\right)\left(s_{0}', \ s; \ \phi\right)$ is holomorphic on  the domain
 	 	\begin{align}\label{holoregio}
 	 	\sigma \ < \ 4 \hspace{30pt}  \text{ and } \hspace{30pt} 1+\theta+ \epsilon \ < \ \sigma_{0}' \ < \ 2\sigma-1.
 	 	\end{align}
 	 	This completes the proof. 
 	 	\end{proof}

 	  \begin{prop}\label{pullout}
 	  	For $4> \sigma > (3+\theta)/2$ and  $H\in \mathcal{C}_{\eta}$,   we have
 	  	\begin{align}
 	  	\lim_{\phi \to \pi/2} \ OD_{\Phi}(s; \phi) \ = \  OD_{\Phi}(s).
 	  	\end{align}
 	  \end{prop}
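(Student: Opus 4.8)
The plan is to evaluate the perturbation factor $e(x;\phi)$ in closed form, read off from it a bound uniform in $\phi$, and then pass to the limit $\phi\to\pi/2$ underneath the sums and integrals of \eqref{pertodd} by dominated convergence. The input is the inverse Mellin transform of the Gamma function: $e^{-z}=\int_{(\epsilon)}z^{-u}\Gamma(u)\,\frac{du}{2\pi i}$ for $\re z>0$ and any $\epsilon>0$, valid throughout the half-plane $\re z>0$ by analytic continuation. Taking $z=2\pi|x|\,e^{-i\phi\,\sgn(x)}$, whose real part $2\pi|x|\cos\phi$ is positive because $\phi\in(0,\pi/2)$, and comparing with \eqref{perturb} yields, for every $x\neq0$,
\[
e(x;\phi)\;=\;\exp\!\bigl(-2\pi|x|\cos\phi\bigr)\,\exp\!\bigl(2\pi i x\sin\phi\bigr).
\]
Hence $|e(x;\phi)|\le1$ for all $\phi\in(0,\pi/2)$ and all $x\neq0$, and, for each fixed $x\neq0$, $e(x;\phi)\to e^{2\pi i x}=e(x)$ as $\phi\to\pi/2^-$.

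Next I would insert this closed form into \eqref{pertodd} with $x=\frac{a_1}{a_0}\cdot\frac{y_0^2}{1+y_0^2}$ and compare term by term with the definition \eqref{secondcc} of $OD_\Phi(s)$: the two $(a_0,a_1,y_0,y_1)$-integrands differ only in the oscillatory factor, which has modulus $\le1$ in both cases. Consequently the integrand of $OD_\Phi(s;\phi)$ is dominated in absolute value, uniformly for $\phi\in(0,\pi/2)$, by exactly the quantity estimated in the proof of Proposition \ref{odconv}. For $H\in\mathcal{C}_\eta$ and $\sigma$ in the stated range $\frac{3+\theta}{2}<\sigma<4$, that quantity was shown there — via the Whittaker bound \eqref{whitest3}, the estimate $|\mathcal{B}_\Phi(1,a_1)|\ll|a_1|^\theta$, and the decay \eqref{decbdd} of $h=H^\flat$ — to be summable over $(a_0,a_1)\in(\Z\setminus\{0\})^2$ and integrable over $(y_0,y_1)\in(0,\infty)^2$, so it furnishes a legitimate $\phi$-independent majorant.

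It then remains to invoke the dominated convergence theorem on the product of counting measure on $(\Z\setminus\{0\})^2$ with Lebesgue measure on $(0,\infty)^2$. Since $a_0a_1\neq0$ and $y_0>0$, the argument $x$ of $e(\,\cdot\,;\phi)$ never vanishes, so the pointwise convergence $e(x;\phi)\to e(x)$ holds everywhere on this space; upon reducing the continuous limit $\phi\to\pi/2^-$ to the sequential statement along subsequences, dominated convergence permits the exchange of $\lim_{\phi\to\pi/2}$ with the summations and integrations, which gives $\lim_{\phi\to\pi/2}OD_\Phi(s;\phi)=OD_\Phi(s)$ on the claimed region.

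The one substantive point is the first step: recognising $e(x;\phi)$ as a shifted exponential, so as to obtain the bound $|e(x;\phi)|\le1$ that stays uniform as $\phi\uparrow\pi/2$. A bound obtained by simply putting absolute values through \eqref{perturb} would deteriorate like $(\pi/2-\phi)^{-1/2-\epsilon}$ and be of no use for domination; alternatively one may shift the $u$-contour in \eqref{perturb} to the left of $u=0$, extract the residue $1$ of $\Gamma$, and pass to the limit in the remaining integral. With the uniform bound in hand the argument is a routine application of dominated convergence, the hypothesis $\sigma>\frac{3+\theta}{2}$ serving only to guarantee absolute convergence of the majorising series.
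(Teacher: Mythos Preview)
Your proof is correct and takes a genuinely different route from the paper. You evaluate $e(x;\phi)$ in closed form via the Cahen--Mellin integral, obtaining $e(x;\phi)=\exp(-2\pi|x|\cos\phi)\,e(x\sin\phi)$, which immediately gives the uniform bound $|e(x;\phi)|\le 1$ and the pointwise limit $e(x;\phi)\to e(x)$; dominated convergence against the majorant from Proposition~\ref{odconv} then finishes the argument. The paper instead works on the Mellin side: it bends the $u$-contour in \eqref{perturb} to an indented line $\mathcal{C}_\epsilon$ passing at $\re u=-\tfrac12-\epsilon$ away from a bump around $u=0$, so that $|e^{iu\phi\sgn(x)}\Gamma(u)|\ll(1+|\im u|)^{-1-\epsilon}$ holds uniformly for $\phi\in(0,\pi/2]$; then it inserts this into the $(s_0,s_1,u)$-expansion and applies dominated convergence there, finally shifting $\re u\to-\infty$ to recover $e(x)$ as a residual series. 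Your approach is more direct and in fact only uses the range $\sigma>1+\tfrac{\theta}{2}$ of Proposition~\ref{odconv}; the paper's stronger hypothesis $\sigma>(3+\theta)/2$ arises because the contour bend to $\re u=-\tfrac12-\epsilon$ tightens the convergence constraints in \eqref{absedintsum}. The paper's method has the advantage of staying entirely within the Mellin--Barnes framework used throughout Sections~\ref{Stirl}--\ref{2stepana}, but for this particular proposition your closed-form evaluation is cleaner.
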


 	  \begin{proof}
 	  	Let $\epsilon:=1/100$, $\sigma_{1} := 15$,  and pick any $\sigma_{0}$ satisfying
 	  	\begin{align}\label{concond}
 	  	\frac{3}{2}+ \theta +\epsilon \ &< \ \sigma_{0} \ < \ 2\sigma-1-\epsilon. 
 	  	\end{align}
 	  	
 	  	Denote by $\mathcal{C}_{\epsilon}$  the indented path consisting of the line segments: 
 	  	\begin{align*}
 	  	-\frac{1}{2}-\epsilon-i\infty \ \rightarrow \ 	-\frac{1}{2}-\epsilon-i \ \rightarrow \  \epsilon-i \ \rightarrow \  \epsilon+i \  \rightarrow \ -\frac{1}{2}-\epsilon+i \ \rightarrow \ -\frac{1}{2}-\epsilon+i\infty.  
 	  	\end{align*}
 	  	Replace $e(x; \phi)$ in (\ref{uponexp}) by the expression: 
 	  	\begin{align}\label{melexp}
 	  	e(x; \phi) \ =  \ \int_{\mathcal{C}_{\epsilon}} |2\pi x|^{-u} e^{iu \phi \sgn(x)} \Gamma(u) \ \frac{du}{2\pi i}. 
 	  	\end{align}
 	  	Note that $	\left| e^{iu\phi \sgn(x)} \Gamma(u) \right|  \ \ll_{\epsilon} \  \left(1 +|\im u|\right)^{-1-\epsilon}$   for $u\in \mathcal{C}_{\epsilon}$ and  $\phi \in (0, \pi/2]$ . Insert absolute values in (\ref{uponexp}). The resulting sums and integrals converge absolutely when  $\phi \in (0, \pi/2]$ and  (\ref{concond}) holds, which can be seen by the same argument following (\ref{absedintsum}).  Apply Dominated Convergence and shift the contour of the $u$-integral  to $-\infty$,  the residual series obtained is exactly $e\left(\frac{a_{1}}{a_{0}}\frac{y_{0}^2}{1+y_{0}^2}\right)$. This completes the proof. 
 	  \end{proof}

 	 	Now, $OD_{\Phi}(s; \phi)$ is in terms of  integrals of Mellin-Barnes type.  Note that the $\Gamma$-factors from Proposition \ref{vtmellin} and  (\ref{perturb})  alone  are not sufficient for our goals (see Remark \ref{Goa} and  (\ref{absc1}), (\ref{absc2}), (\ref{absc3})).  The three extra $\Gamma$-factors  brought by the $y_{0}$-integral, which  `mix' all variables of integrations, will play an important role  in Section \ref{Stirl}-\ref{2stepana}.

 	 	\section{Analytic Properties of the Archimedean Transform}\label{Stirl}
 	 	
 	 	In (\ref{mainod}), the factors $\zeta\left(2s-s_{0}\right)$ and $ L\left(s_{0},  \Phi\right)$ are known to admit holomorphic continuation and have polynomial growth in vertical strips (except on the  line $2s-s_{0}=1$). We also have to study the  archimedean part of (\ref{mainod}), i.e., the integral transform 
 	 	\begin{align}\label{mainintrans}
 	 	\left(\mathcal{F}_{\Phi}^{(\delta)}H\right)\left(s_{0}, \ s; \ \phi\right) \  &:=\  	 \int_{(15)} \int_{(\epsilon)}  \  \widetilde{h}\left(s-s_{1}-\frac{1}{2}\right)    \cdot \mathcal{G}_{\Phi}^{(\delta)}\left( s_{1}, u; \ s_{0}, s;  \ \phi \right) \   \frac{du}{2\pi i}   \frac{ds_{1}}{2\pi i },
 	 	\end{align}
 	 	where $h:= H^{\flat}$ and   $\mathcal{G}_{\Phi}^{(\delta)}(\cdots)$ as defined in (\ref{kernelG}).  In Section \ref{separaOD}, we have shown that when $\phi \in (0, \pi/2)$,  the function  $ (s_{0},s) \mapsto	\left(\mathcal{F}_{\Phi}^{(\delta)}h\right)\left(s_{0}, \ s; \ \phi\right)$
 	 	is holomorphic   on the domain (\ref{holoregio}), i.e., 
 	 	\begin{align*}
 	 	\sigma \ < \ 4 \hspace{30pt}  \text{ and } \hspace{30pt} 1+\theta+ \epsilon \ < \ \sigma_{0} \ < \ 2\sigma-1. 
 	 	\end{align*} 
 	 	
 	 	 In this section, we establish a larger region of holomorphy for  $(s_{0}, s)\mapsto \left(\mathcal{F}_{\Phi}^{(\delta)} H\right)(s_{0}, s; \phi)$ that holds for  $\phi \in (0, \pi/2]$.   We write
 	 	 \begin{align*}
 	 	 s \ = \ \sigma+it, \hspace{15pt}  s_{0}=\sigma_{0}+it_{0},  \hspace{15pt} s_{1}=\sigma_{1}+it_{1},  \hspace{15pt} \text{ and }  \hspace{15pt} u 
 	 	 = \ \epsilon+iv, 
 	 	 \end{align*}
 	 	 with $\epsilon:=1/100$. It is sufficient to consider  $s$  inside the rectangular box  $\epsilon< \sigma<4$ and $|t|\le T$, for any given  $T\ge 1000$.  Moreover,  $\alpha_{k}:= i\gamma_{k} \in i\R$ $(k=1,2,3)$ by our assumptions on $\Phi$. The main result of  this section can be stated as follows: \newline

 	 	\begin{prop}\label{anconpr}
 	 	Suppose  $H\in \mathcal{C}_{\eta}$.  
 	 		
 	 		\begin{enumerate}
 	 		
 	 		\item For any $\phi \in (0, \pi/2]$, the transform $\big(\mathcal{F}_{\Phi}^{(\delta)}H\big)(s_{0}, s; \phi)$ is holomorphic on the domain
 	 		\begin{align}\label{newdomain}
 	 		\sigma_{0} \ > \ \epsilon,  \hspace{15pt}  \sigma \ < \ 4, \hspace{15pt} \text{ and } \hspace{15pt}  2\sigma-\sigma_{0}-\epsilon \ > \ 0.  
 	 		\end{align}
 	 		
 	 		\item Whenever $(\sigma_{0}, \sigma) \in (\ref{newdomain})$,  $|t| <T$, and $\phi \in (0, \pi/2)$, the transform $\left(\mathcal{F}_{\Phi}^{(\delta)}H\right)\left(s_{0}, \ s; \ \phi\right)$ has exponential decay as  $|t_{0}| \to \infty$.   Note: The explicit estimate is stated in the proof below and the implicit constant depends only on $T$ and $\Phi$. 
 	 		\end{enumerate}

 	 	\end{prop}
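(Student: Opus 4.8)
The plan is to analyze the double Mellin--Barnes integral (\ref{mainintrans}) directly, treating $(s_0,s)$ as holomorphic parameters and $s_1,u$ as integration variables; write $s=\sigma+it$, $s_0=\sigma_0+it_0$, $s_1=15+it_1$, $u=\epsilon+iv$, $\alpha_k=i\gamma_k$. First I would apply Stirling to every gamma factor of $\widetilde h(s-s_1-\tfrac12)\,\mathcal G_\Phi^{(\delta)}(s_1,u;s_0,s;\phi)$, using $|\Gamma(\beta+i\tau)|\asymp|\tau|^{\beta-\frac12}e^{-\frac{\pi}{2}|\tau|}$ (uniformly for $\beta$ bounded; legitimate since $\Phi$ is tempered, so the shifts $\pm\alpha_k$ are purely imaginary), the resulting $e^{-\frac{\pi}{4}|\tau|}$ rate for $\Gamma_\R$, $|(2\pi)^{-u}|=(2\pi)^{-\epsilon}$, $|e^{i\delta\phi u}|=e^{-\delta\phi v}$, and the boundedness of $\widetilde h$ on the vertical line $\re w=\sigma-15.5$ (inside $|\re w|<\eta$ since $\eta>40$; it even decays rapidly there, from the Mellin--Barnes shape (\ref{melwhitgl2}) of $h=H^\flat$ together with (\ref{rapdeca}), but mere boundedness suffices). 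Recording the imaginary part of the argument of each gamma --- halving the rate for a $\Gamma(\cdot/2)$ or a $\Gamma_\R$, flipping the sign for a reciprocal --- yields
\[ \bigl|\widetilde h\bigl(s-s_1-\tfrac12\bigr)\,\mathcal G_\Phi^{(\delta)}(s_1,u;s_0,s;\phi)\bigr|\ \ll\ \mathcal P\cdot e^{E}, \]
where $\mathcal P$ is a power of $(1+|t_0|+|t_1|+|v|)$ whose exponent is bounded over the region (the real parts enter only here and in the implied constants) and
\[ E=-\tfrac{3\pi}{4}|t_0-v|-\tfrac{3\pi}{4}|t_1|+\tfrac{\pi}{4}|t_0-v+t_1|-\tfrac{\pi}{2}|v|-\delta\phi v-\tfrac{\pi}{4}|v-2t+t_1-t_0|-\tfrac{\pi}{4}|2t-t_0-v|+\tfrac{\pi}{2}\bigl|\tfrac{t_1}{2}-t_0\bigr|, \]
the three $G_\Phi$-factors giving the first three terms, $\Gamma(u)$ the $-\tfrac{\pi}{2}|v|$, the two ``mixed'' gammas $\Gamma\bigl(\tfrac{u+1-2s+s_1-s_0}{2}\bigr),\Gamma\bigl(\tfrac{2s-s_0-u}{2}\bigr)$ the next two, and $1/\Gamma\bigl(\tfrac{1+s_1}{2}-s_0\bigr)$ the lone positive term.

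The heart of the proof is to turn $E$ into a manifestly negative-definite expression in $(t_1,v)$, and this proceeds in three moves. \emph{(i)} Since $\phi\le\pi/2$, $-\tfrac{\pi}{2}|v|-\delta\phi v\le-(\tfrac{\pi}{2}-\phi)|v|$: the $\Gamma(u)$-decay absorbs $e^{i\delta\phi u}$, leaving a genuine gain in $v$ exactly when $\phi<\pi/2$. \emph{(ii)} The only growing factor is neutralised by the exact identity $\tfrac{t_1}{2}-t_0=\tfrac12(v-2t+t_1-t_0)+\tfrac12(2t-t_0-v)$, whence $\tfrac{\pi}{2}\bigl|\tfrac{t_1}{2}-t_0\bigr|\le\tfrac{\pi}{4}|v-2t+t_1-t_0|+\tfrac{\pi}{4}|2t-t_0-v|$, which cancels the two mixed-gamma terms precisely. \emph{(iii)} $|t_0-v+t_1|\le|t_0-v|+|t_1|$ then collapses the $G_\Phi$-terms. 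The outcome is
\[ E\ \le\ -\tfrac{\pi}{2}|t_1|-\tfrac{\pi}{2}|t_0-v|-\bigl(\tfrac{\pi}{2}-\phi\bigr)|v|. \]
This is the delicate step, and I expect it to be the main obstacle: a lossier bound for $G_\Phi$ (say $e^{-\frac{\pi}{2}(|t_0-v|+|t_1|)}$) throws away the $-\tfrac{\pi}{2}|t_0-v|$ that the $v$-integral needs when $\phi$ is near $\pi/2$, and an inefficient split in \emph{(ii)} would leave a stray $+c|t_0|$ that ruins the decay of part~(2); the decomposition above is forced by both constraints simultaneously.

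For part (1), restrict $s$ to $\{\,\epsilon<\sigma<4,\ |t|\le T\,\}$. The bound $\mathcal P\,e^{E}$ then shows the $s_1$- and $u$-integrals converge absolutely and locally uniformly for $(s_0,s)$ in (\ref{newdomain}) (over which $(\sigma_0,\sigma)$ is bounded, as $\sigma_0<2\sigma-\epsilon<8$), with the $t_1$- and $v$-integrals controlled by $e^{-\frac{\pi}{2}|t_1|}$ and $e^{-\frac{\pi}{2}|t_0-v|}$. Since the integrand is holomorphic in $(s_0,s)$ for each fixed $(s_1,u)$, holomorphy on (\ref{newdomain}) follows by Morera's theorem once one checks that no pole of a gamma factor meets the fixed contours $\re u=\epsilon$, $\re s_1=15$ as $(s_0,s)$ varies over (\ref{newdomain}); this is exactly where the three inequalities of (\ref{newdomain}) are used, being the Barnes-separation conditions. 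Indeed $\sigma_0>\epsilon$ keeps $\re u=\epsilon$ to the left of the poles of $\Gamma_\R(s_0-u+\alpha_k)$ (at $\re u\in\{\sigma_0,\sigma_0+2,\dots\}$); $2\sigma-\sigma_0-\epsilon>0$ keeps $\re u=\epsilon$ to the left of the poles of $\Gamma(\tfrac{2s-s_0-u}{2})$ (at $\re u\in\{2\sigma-\sigma_0,2\sigma-\sigma_0+2,\dots\}$); and $\sigma<4$ (forcing $2\sigma+\sigma_0-16<0$) keeps $\re u=\epsilon$ to the right of the poles of $\Gamma(\tfrac{u+1-2s+s_1-s_0}{2})$ in $u$, while $\re s_1=15$ stays to the right of its poles in $s_1$ (all at $\re s_1<15-\epsilon$). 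The factors $\Gamma(u)$, $\Gamma_\R(s_1-\alpha_k)$, the entire $1/\Gamma_\R(s_0-u+s_1)$ and $1/\Gamma(\tfrac{1+s_1}{2}-s_0)$, and $\widetilde h(s-s_1-\tfrac12)$ present no obstruction because $\re s_1=15$ is large and $\eta>40$. Nothing here uses $\phi<\pi/2$, so part (1) holds for every $\phi\in(0,\pi/2]$; at $\phi=\pi/2$ the term $-(\tfrac{\pi}{2}-\phi)|v|$ merely drops out and the surviving $-\tfrac{\pi}{2}|t_0-v|$ still gives absolute convergence in $v$.

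For part (2), with $\phi\in(0,\pi/2)$, integrate $\mathcal P\,e^{E}$ over $(t_1,v)$: the $t_1$-integral yields a constant depending only on $T$ and $\Phi$, while the standard convolution estimate for exponentially decaying functions gives $\int_\R(1+|v|)^{A}e^{-\frac{\pi}{2}|t_0-v|-(\frac{\pi}{2}-\phi)|v|}\,dv\ll(1+|t_0|)^{A}e^{-(\frac{\pi}{2}-\phi)|t_0|}$, the rate being the smaller exponent $\tfrac{\pi}{2}-\phi$. Hence $\bigl(\mathcal F_\Phi^{(\delta)}H\bigr)(s_0,s;\phi)\ll(1+|t_0|)^{A}e^{-(\frac{\pi}{2}-\phi)|t_0|}$ on (\ref{newdomain}) with $|t|\le T$ --- the asserted exponential decay in $|t_0|$, with implied constant depending only on $T$ and $\Phi$. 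The exclusion of $\phi=\pi/2$ is transparent here: there the $v$-gain vanishes and $\int_\R e^{-\frac{\pi}{2}|t_0-v|}\,dv$ is translation-invariant in $t_0$, so no decay survives. Throughout, the substantive difficulty is move \emph{(ii)} --- defeating the single growing gamma $1/\Gamma(\tfrac{1+s_1}{2}-s_0)$ with exactly the right combination of decaying factors --- and it is this that simultaneously pins down the shape of (\ref{newdomain}) and the decay rate in part~(2).
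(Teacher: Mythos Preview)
Your proof is correct. The overall strategy---Stirling on every gamma factor, then triangle-inequality control of the resulting exponential phase---is the same as the paper's, and you arrive at the same decisive bound $E\le-\tfrac{\pi}{2}(|t_1|+|t_0-v|)-(\tfrac{\pi}{2}-\phi)|v|$. The execution differs in two places. First, the paper strips out the $t$-dependence at the outset (incurring a harmless $e^{\pi T}$) and then reduces to an auxiliary phase $\mathcal E(t_0,t_1,v)$ which it shows is $\ge 2(|t_1|+|t_0-v|)$ via two further triangle inequalities; your move (ii), by contrast, retains the $t$-dependent terms and uses the identity $\tfrac{t_1}{2}-t_0=\tfrac12(v-2t+t_1-t_0)+\tfrac12(2t-t_0-v)$ to cancel the one growing gamma against the two mixed ones exactly---this is cleaner and avoids the $e^{\pi T}$ loss. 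Second, for part (2) the paper splits the $(t_1,v)$-plane into an ``effective support'' $\{|t_1|,\,|v-t_0|\le\log^2(3+|t_0|)\}$ and its complement and treats each piece separately, whereas your convolution estimate handles everything at once and even gives the marginally sharper rate $e^{-(\pi/2-\phi)|t_0|}$ in place of the paper's $e^{-\frac12(\pi/2-\phi)|t_0|}$. The paper's splitting does buy one thing your argument does not: on the complement it yields a bound $(1+|t_0|)^{A}e^{-\frac{\pi}{4}\log^2(3+|t_0|)}$ that is \emph{uniform in} $\phi\in(0,\pi/2]$, a feature reused in Proposition~\ref{polyestimatrans} when $\phi=\pi/2$; your bound degenerates to $O((1+|t_0|)^{A})$ at $\phi=\pi/2$, so at that later stage you would need to reinstate a splitting (or shift $\re s_1$, as the paper in fact does) anyway.
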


 	 	\begin{rem}\label{nopolerem}
 	 		The domain (\ref{newdomain}) is chosen in a way that the function $(s_{0}, s) \mapsto \mathcal{G}_{\Phi}^{(\delta)}\left( s_{1}, u; \ s_{0}, s;  \ \phi \right)$ is holomorphic on (\ref{newdomain}) when  $\re s_{1}= \sigma_{1} \ge 15$ and $\re u=\epsilon$.  	Moreover, if  we have $15\le \sigma_{1}\le \eta-\frac{1}{2}$ and (\ref{newdomain}), then  $s-s_{1}-1/2$ lies inside the region of holomorphy of $\widetilde{h}$. 

 	 	\end{rem}
 	 	
 	 	\begin{rem}
 	 		As we shall see in Proposition \ref{proprescon},  the region of holomorphy (\ref{newdomain}) is essentially optimal in terms of $\sigma_{0}$.   
 	 		\end{rem}

 	 	\begin{proof}
 	 		The proof is based on a careful application of the Stirling  estimate
 	 		\begin{align}\label{stirremind}
 	 		\left|	\Gamma\left( a+ib \right)\right| \ \asymp_{a} \ \left(1+|b|\right)^{a-\frac{1}{2}} e^{-\frac{\pi}{2}|b|} \hspace{20pt} \left(a \ \neq  \ 0, \  -1, \  -2, \  \ldots, \ \  b \ \in \ \R \right)
 	 		\end{align} 
 	 		to the kernel function $\mathcal{G}_{\Phi}^{(\delta)}\left( s_{1}, u; \ s_{0}, s;  \ \phi \right)$. The following set of conditions will be repeated  throughout the proof:
 	 		\begin{align}\label{uniformity}
 	 		\begin{cases}
 	 	\hspace{40pt}  0 \ <  \ \phi  \ \le \ \pi/2, \\
 	 		\sigma_{0} \ > \ \epsilon,   \hspace{10pt}  \sigma \ < \ 4,  \hspace{10pt}   2\sigma-\sigma_{0}-\epsilon \ > \ 0,  \\
 	 	 \re s_{1}\ = \ \sigma_{1} \ \ge  \ 15,  \hspace{10pt}  \re u \ = \ \epsilon. 
 	 		\end{cases}
 	 		\end{align}
 	 		
 	 		Assuming (\ref{uniformity}), apply (\ref{stirremind}) to the kernel function (\ref{kernelG}). It follows that
 	 		\begin{align}\label{giantstirl}
 	 		\left| \mathcal{G}_{\Phi}^{(\delta)}\left( s_{1}, u; \ s_{0}, s;  \ \phi \right)  \right| \ \asymp  \  &\left(1+|v|\right)^{\epsilon-\frac{1}{2}} e^{- \left( \frac{\pi}{2}-\phi\right)|v|} \cdot \prod\limits_{k=1}^{3} \ \left( 1+ |t_{1}-\gamma_{k}|\right)^{\frac{\sigma_{1}-1}{2}} e^{-\frac{\pi}{4} |t_{1}-\gamma_{k}|} \nonumber\\
 	 		&\cdot  \prod\limits_{k=1}^{3} \ \left(1+|t_{0}-v+ \gamma_{k}|\right)^{\frac{\sigma_{0}-\epsilon-1}{2}} e^{-\frac{\pi}{4} \left| t_{0}-v+\gamma_{k}\right|}  \cdot \left(1+ |2t-t_{0}-v|\right)^{\frac{2\sigma-1-\sigma_{0}-\epsilon}{2}} e^{-\frac{\pi}{4} |2t-t_{0}-v|}  \nonumber\\
 	 		&\cdot  \left( 1+|v-2t+t_{1}-t_{0}|\right)^{\frac{\epsilon-2\sigma+\sigma_{1}-\sigma_{0}}{2}} e^{-\frac{\pi}{4} |v-2t+t_{1}-t_{0}|}   \nonumber\\
 	 		& \cdot \left( 1+ |t_{1}-2t_{0}|\right)^{-\left(\frac{\sigma_{1}}{2}-\sigma_{0}\right)} e^{\frac{\pi}{4}|t_{1}-2t_{0}|} \ \cdot \left( 1+ \left| t_{0}+t_{1}-v\right|\right)^{-\frac{\sigma_{0}+\sigma_{1}-\epsilon-1}{2}} e^{\frac{\pi}{4} \left| t_{0}+t_{1}-v\right|},   
 	 		\end{align}
 	 		where the implicit constant depends at most on $\sigma_{1}$. Note that the domain (\ref{newdomain}) for $(\sigma, \sigma_{0})$ is bounded and thus the estimate is uniform in $\sigma, \sigma_{0}, \epsilon$. This will be assumed for all estimates in the rest of this section.  
 	 		
 	 		Let   $	\mathcal{P}_{s}^{\Phi}\left( t_{0}, t_{1}, v\right)$
 	 		be the `polynomial part' of  (\ref{giantstirl}) and 
 	 		\begin{align*}
 	 		\mathcal{E}_{s}^{\Phi}\left(t_{0}, t_{1}, v\right) \ := \  \sum_{k=1}^{3}  \ \left\{  \left| t_{1}-\gamma_{k}\right|+ \left| t_{0}-v+ \gamma_{k}\right| \right\} + |2t-t_{0}-v| + |v-2t+t_{1}-t_{0}| - |t_{1}-2t_{0}|  - |t_{0}+t_{1}-v|. 
 	 		\end{align*}

 	 		We first examine the exponential phase $\mathcal{E}_{s}^{\Phi}\left(t_{0}, t_{1}, v\right)$ of  (\ref{giantstirl})  as it determines the effective support of $\big(\mathcal{F}_{\Phi}^{(\delta)}H\big)(s_{0}, s; \phi)$. By the triangle inequality and the fact  $\gamma_{1}+\gamma_{2}+\gamma_{3}=0$, we have
 	 			\begin{align}\label{simplstir}
 	 			\left|\mathcal{G}_{\Phi}^{(\delta)}\left( s_{1}, u; \ s_{0}, s;  \ \phi \right)   \right|  \ \ll_{\sigma_{1}} \  e^{\pi T}\cdot 	\mathcal{P}_{s}^{\Phi}\left( t_{0}, t_{1}, v\right) \cdot \exp\left(-\frac{\pi}{4} \mathcal{E}(t_{0}, t_{1}, v)\right) \cdot e^{- \left( \frac{\pi}{2}-\phi\right)|v|}
 	 			\end{align}
 	 			with
 	 			\begin{align}
 	 			\mathcal{E}(t_{0}, t_{1}, v) \ := \ 3|t_{1}|+ 3|t_{0}-v| - |t_{1}-2t_{0}|
 	 			+ |v+t_{1}-t_{0}|+ |t_{0}+v| - |t_{0}+t_{1}-v|,
 	 			\end{align}
 	 			whenever we have (\ref{uniformity}) and $|t|\le T$,

 	 		\begin{cla}\label{expset}
 	 			For any $t_{0}, t_{1}, v\in \R$, we have $\mathcal{E}(t_{0}, t_{1}, v) \ \ge \ 0$. Equality holds if and only if
 	 			\begin{align}\label{expzero}
 	 			t_{1} \ = \ 0  \hspace{15pt} \text{ and }  \hspace{15pt} t_{0}-v \ = \ 0.
 	 			\end{align}
 	 		\end{cla}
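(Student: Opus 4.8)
The plan is to view $\mathcal{E}$ as a piecewise-linear function of $(t_{0}, t_{1}, v)$ and to dominate the two terms that enter with a minus sign, namely $|t_{1}-2t_{0}|$ and $|t_{0}+t_{1}-v|$, by combinations of the positively-signed terms via the triangle inequality, while retaining enough slack to pin down the equality locus afterwards.

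Concretely, I would start from the two algebraic identities
\[
t_{1}-2t_{0} \ = \ t_{1}-(t_{0}-v)-(t_{0}+v), \qquad t_{0}+t_{1}-v \ = \ t_{1}+(t_{0}-v),
\]
which give $|t_{1}-2t_{0}| \le |t_{1}|+|t_{0}-v|+|t_{0}+v|$ and $|t_{0}+t_{1}-v| \le |t_{1}|+|t_{0}-v|$. Substituting these two bounds into the definition of $\mathcal{E}$ cancels $|t_{0}+v|$ entirely and absorbs two of the three copies of each of $|t_{1}|$ and $|t_{0}-v|$, leaving the clean lower bound
\[
\mathcal{E}(t_{0}, t_{1}, v) \ \ge \ |t_{1}| + |t_{0}-v| + |v+t_{1}-t_{0}| \ \ge \ 0,
\]
which is the asserted nonnegativity. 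For the equality statement: if $\mathcal{E}(t_{0},t_{1},v)=0$, then the displayed bound forces $|t_{1}|+|t_{0}-v|+|v+t_{1}-t_{0}|=0$, hence $t_{1}=0$ and $t_{0}=v$, which is precisely (\ref{expzero}) (the third summand then vanishes automatically); conversely, setting $t_{1}=0$ and $v=t_{0}$ directly in the definition of $\mathcal{E}$ gives $\mathcal{E}=-2|t_{0}|+2|t_{0}|=0$, so (\ref{expzero}) is also sufficient.

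There is no genuine obstacle here; the only point requiring a little attention is the choice of the two decompositions above. A careless alternative (e.g.\ $t_{1}-2t_{0}=(t_{1}-t_{0})-t_{0}$) still yields $\mathcal{E}\ge 0$ but obscures the sharp equality statement, so one must select decompositions whose leftover terms form a sum of absolute values vanishing exactly at the single point $(t_{1},\,t_{0}-v)=(0,0)$.
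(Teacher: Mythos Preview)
Your proof is correct and uses essentially the same idea as the paper: bound the two negative terms by triangle inequality and observe that the slack controls $|t_{1}|$ and $|t_{0}-v|$. The paper's decomposition is marginally different --- it writes $t_{1}-2t_{0}=(v+t_{1}-t_{0})-(t_{0}+v)$ instead of your $t_{1}-(t_{0}-v)-(t_{0}+v)$, which cancels the terms $|v+t_{1}-t_{0}|$ and $|t_{0}+v|$ exactly and yields the slightly stronger bound $\mathcal{E}\ge 2(|t_{1}|+|t_{0}-v|)$; this sharper inequality is in fact invoked in the very next claim, so you may want to adopt that decomposition if you intend to continue.
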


 	 		\begin{proof}
 	 			Adding up the inequalities  $|t_{1}|+ |t_{0}-v| \ge |t_{0}+t_{1}-v|$ and $ |v+t_{1}-t_{0}|+ |t_{0}+v| \ge |t_{1}-2t_{0}|$, we have
 	 			\begin{align}\label{expdecineq}
 	 			\mathcal{E}(t_{0}, t_{1}, v) \ \ge \ 2 \left(|t_{1}|+|t_{0}-v| \right)\ \ge \ 0. 
 	 			\end{align}
 	 			The equality case is apparent. 
 	 		\end{proof}
 	 		

 	 		\begin{cla}\label{trivdeclaim}
 	 				When (\ref{uniformity}) and $|t|\le T$ hold,  the integral
 	 				\begin{align}\label{trivexpdint}
 	 				\iint_{  \substack{(\re s_{1}, \re u)=(\sigma_{1}, \epsilon),  \\ (t_{1},v): \text{(\ref{expcutoff}) holds}} } \  \widetilde{h}\left(s-s_{1}-\frac{1}{2}\right)    \cdot \mathcal{G}_{\Phi}^{(\delta)}\left( s_{1}, u; \ s_{0}, s;  \ \phi \right)  \ \frac{du}{2\pi i}  \ \frac{ds_{1}}{2\pi i}
 	 				\end{align}
 	 				has  exponential decay as $|t_{0}|\to \infty$, where   
 	 				\begin{align}\label{expcutoff}
 	 				|t_{1}| \ > \ \log^2 (3+|t_{0}|) \hspace{15pt} \text{ or } \hspace{15pt} |v-t_{0}| \ > \ \log^2 (3+|t_{0}|). 
 	 				\end{align}
 	 			
 	 		
 	 		\end{cla}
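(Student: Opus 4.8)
The plan is to estimate the integral \eqref{trivexpdint} directly, with no contour movement, by feeding the Stirling bound \eqref{simplstir} together with the elementary lower bound for the exponential phase from Claim \ref{expset} into the region \eqref{expcutoff}. Throughout I would keep $\phi\in(0,\pi/2]$, $(\sigma_0,\sigma)$ inside \eqref{newdomain}, $\re s_1=\sigma_1\ge 15$ fixed, $\re u=\epsilon$, and $|t|\le T$, so that \eqref{simplstir} is in force; the factor $e^{-(\pi/2-\phi)|v|}$ there is always $\le 1$ and may simply be discarded, which is why the estimate holds uniformly down to $\phi=\pi/2$. I would also record at the outset that $\re\bigl(s-s_1-\tfrac12\bigr)$ stays in a fixed compact subset of $(-\eta,\eta)$ (as $\eta>40$), on which $\widetilde h\bigl(s-s_1-\tfrac12\bigr)$ is bounded, which follows from \eqref{decbdd} in Proposition \ref{inKLconv} since then $\int_0^\infty|h(y)|\,y^{\sigma_w}\,d^\times y<\infty$ uniformly for $\sigma_w$ in such a compact set, where $h=H^\flat$.

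First I would combine Claim \ref{expset}, in the sharpened form \eqref{expdecineq} giving $\mathcal{E}(t_0,t_1,v)\ge 2\bigl(|t_1|+|t_0-v|\bigr)$, with a uniform polynomial bound $\mathcal{P}_s^{\Phi}(t_0,t_1,v)\ll_T (1+|t_0|+|t_1|+|v|)^{C}$ for some absolute $C$ read off from \eqref{giantstirl} (every $\Gamma$-exponent occurring in \eqref{kernelG} is bounded on the bounded region \eqref{newdomain} with $\sigma_1$ fixed, and the factors carrying negative exponents only help). This yields, for $(t_1,v)$ in \eqref{expcutoff},
\begin{align*}
\Bigl|\widetilde h\bigl(s-s_1-\tfrac12\bigr)\,\mathcal{G}_{\Phi}^{(\delta)}\bigl(s_1,u;\,s_0,s;\,\phi\bigr)\Bigr|\ \ll_{T,\Phi}\ (1+|t_0|+|t_1|+|v|)^{C}\,e^{-\frac{\pi}{2}|t_1|}\,e^{-\frac{\pi}{2}|t_0-v|}.
\end{align*}
Next I would split \eqref{expcutoff} into the two pieces $|t_1|>\log^2(3+|t_0|)$ and $|v-t_0|>\log^2(3+|t_0|)$. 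On the first I would use $e^{-\frac{\pi}{2}|t_1|}\le e^{-\frac{\pi}{4}\log^2(3+|t_0|)}\,e^{-\frac{\pi}{4}|t_1|}$, on the second $e^{-\frac{\pi}{2}|t_0-v|}\le e^{-\frac{\pi}{4}\log^2(3+|t_0|)}\,e^{-\frac{\pi}{4}|t_0-v|}$; after the translation $v=t_0+v'$ the surviving integrand splits as a fixed polynomial in $(t_0,t_1,v')$ times an integrable exponential weight in $(t_1,v')$, so integrating out $t_1$ and $v'$ leaves $\ll_{T,\Phi}(1+|t_0|)^{C'}\,e^{-\frac{\pi}{4}\log^2(3+|t_0|)}$, which is $\ll_{T,\Phi}e^{-\frac{\pi}{8}\log^2(3+|t_0|)}$ once $|t_0|$ is large since $\log^2$-growth in the exponent beats any fixed power of $|t_0|$. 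This is the asserted (super-polynomial) decay of \eqref{trivexpdint} as $|t_0|\to\infty$.

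I expect the only real bookkeeping to be the uniform polynomial bound on $\mathcal{P}_s^{\Phi}$ and the check that, after the logarithmic split, a genuinely convergent exponential weight in $t_1$ and $v$ remains; both are routine because all $\Gamma$-exponents in \eqref{kernelG} stay bounded on the compact domain \eqref{newdomain} while Claim \ref{expset} provides exponential — not merely polynomial — savings in precisely the combinations $t_1$ and $t_0-v$ that must be controlled. There is no analytic subtlety here: the assertion is purely a Stirling-type tail estimate, and in particular no analytic continuation is used in its proof.
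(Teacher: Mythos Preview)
Your proposal is correct and follows essentially the same approach as the paper: both feed the Stirling bound \eqref{simplstir} and the phase inequality \eqref{expdecineq} into the region \eqref{expcutoff}, discard $e^{-(\pi/2-\phi)|v|}\le 1$, use boundedness of $\widetilde h$ on the strip, and arrive at a bound of the shape $(1+|t_0|)^{A}e^{-\frac{\pi}{4}\log^2(3+|t_0|)}$. The only cosmetic difference is that the paper extracts the lower bound $\mathcal{E}(t_0,t_1,v)>\log^2(3+|t_0|)+|t_1|+|t_0-v|$ in one stroke (since \eqref{expcutoff} forces $|t_1|+|t_0-v|>\log^2(3+|t_0|)$, and then $2(|t_1|+|t_0-v|)$ splits accordingly), whereas you split \eqref{expcutoff} into its two disjuncts and halve the exponential; both routes give the same final estimate.
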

 	 		
 	 		\begin{proof}
 	 			In case of (\ref{expcutoff}), we have
 	 			\begin{align}\label{explowbdd}
 	 			\mathcal{E}(t_{0}, t_{1}, v)  \ > \ \log^2 (3+|t_{0}|) +|t_{1}|+|t_{0}-v|
 	 			\end{align}
 	 			 from (\ref{expdecineq}). The polynomial part $\mathcal{P}^{\Phi}_{s}(t_{0}, t_{1}, v)$ can be crudely bounded by 
 	 			\begin{align}\label{polybd}
 	 			\mathcal{P}^{\Phi}_{s}(t_{0}, t_{1}, v) \ &\ll_{\Phi, \sigma_{1}, T} \ [\left(1+|t_{1}|\right)\left(1+|v-t_{0}|\right)\left(1+|t_{0}|\right)]^{A(\sigma_{1})}, 
 	 			\end{align}
 	 			where  $A(\sigma_{1}) >0$ is some constant.

 	 			Putting (\ref{explowbdd}),  (\ref{polybd}), and the bound $e^{- \left( \frac{\pi}{2}-\phi\right)|v|}\le 1$ ($\phi \in(0, \pi/2]$)  into (\ref{simplstir}), we obtain
 	 			\begin{align}\label{gamkerdec}
 	 			\hspace{-10pt} \left|\mathcal{G}_{\Phi}^{(\delta)}\left( s_{1}, u; \ s_{0}, s;  \ \phi \right)   \right|  \ \ll_{\Phi, \sigma_{1}, T} \ &\left(1+|t_{0}|\right)^{A(\sigma_{1})}e^{-\frac{\pi}{4} \log^2 (3+|t_{0}|)} \cdot  [\left(1+|t_{1}|\right)\left(1+|v-t_{0}|\right)]^{A(\sigma_{1})}  e^{-\frac{\pi}{4}[|t_{1}|+|t_{0}-v|]}
 	 			\end{align}
 	 			whenever (\ref{expcutoff}), (\ref{uniformity}),  and $|t|\le T$ hold. The boundedness  of $\widetilde{h}$ on vertical strips implies that 	(\ref{trivexpdint})	is 
 	 			\begin{align}\label{nonzeroexp}
 	 		 \ \ll_{\sigma_{1}, \Phi, T} \  
 	 		  (1+|t_{0}|)^{A(\sigma_{1})} e^{-\frac{\pi}{4}  \log^2 (3+|t_{0}|)}. 
 	 			\end{align}
 	 			This proves  Claim \ref{trivdeclaim}. 
 	 				\end{proof}

 	 			Now, let $\phi\in (0, \pi/2]$ and consider $\big(\mathcal{F}_{\Phi}^{(\delta)}H\big)\left(s_{0}, \ s; \ \phi\right)$ as a function on the bounded domain
 	 			\begin{align}\label{bddomuni}
 	 			(\sigma_{0},\sigma) \ \in  \  (\ref{newdomain}), \hspace{20pt}  |t|, \ |t_{0}|  \ \le  \ T. 
 	 			\end{align}
 	 			When $|t_{1}|> \log^2 (3+T)$ or $|v|> T+\log^2 (3+T)$, observe that (\ref{expcutoff}) is satisfied and from (\ref{gamkerdec}), 
 	 			\begin{align}\label{holoestim}
 	 			\left|\mathcal{G}_{\Phi}^{(\delta)}\left( s_{1}, u; \ s_{0}, s;  \ \phi \right)   \right|  \ \ll_{\Phi, T} \    [\left(1+|t_{1}|\right)\left(1+|v|\right)]^{A(15)} \cdot   e^{-\frac{\pi}{4}[|t_{1}|+|v|]}.
 	 			\end{align}
 	 			The last function is clearly jointly integrable with respect to $t_{1}$, $v$, and by Remark \ref{nopolerem},   $\big(\mathcal{F}_{\Phi}^{(\delta)}H\big)\left(s_{0}, \ s; \ \phi\right)$ is a holomorphic function on (\ref{bddomuni}). Since the choice of $T$ is arbitrary, we arrive at the first conclusion of Proposition \ref{anconpr}.


 	 		In the remaining part of this section, we prove the second assertion of Proposition \ref{anconpr}.  We estimate the contribution from
 	 		\begin{align}\label{almostexpzero}
 	 		|t_{1}| \ \le \ \log^2 (3+|t_{0}|) \hspace{15pt} \text{ and } \hspace{15pt} |v-t_{0}| \ \le \ \log^2 (3+|t_{0}|). 
 	 		\end{align}
 	 			(The complementary part has been treated in Claim \ref{trivdeclaim}.)

 	 	It suffices to restrict ourselves to the effective support (\ref{expzero}). The polynomial part can be essentially computed by substituting $t_{1}:=0$ and $v:=t_{0}$.  More precisely, when  (\ref{almostexpzero}) and  $|t_{0}| \gg_{T} 1$ hold,  there are only two possible scenarios for the factors $1+|(\cdots)|$ in (\ref{giantstirl}): either $ 1+ |(\cdots)| \ \asymp \ |t_{0}|$, or  \ $	\log^{-C} (3+|t_{0}|) \ \ll \   1+|(\cdots)| \ \ll \ \log^{C} (3+|t_{0}|) $ for some absolute constant $C>0$. 
 	 		
 	 		In case of (\ref{almostexpzero}),  apply the bounds 	$e^{-\frac{\pi}{4} \mathcal{E}(t_{0}, t_{1},v)}  \le    1$ and  $e^{-\left( \frac{\pi}{2}-\phi\right)|v|}    \le    e^{-\frac{1}{2}\left( \frac{\pi}{2}-\phi\right) |t_{0}|}$ for $|t_{0}|\gg   1$ to  (\ref{simplstir}). As a result, if we also have (\ref{uniformity}),  $|t| <T$, and  $|t_{0}|  >8T$, then 
 	 		\begin{align}
 	 		\left| \mathcal{G}_{\Phi}^{(\delta)}\left( s_{1}, u; \ s_{0}, s;  \ \phi \right)  \right| \ &\ll_{\sigma_{1}, \Phi, T} \  |t_{0}|^{7-\frac{\sigma_{1}}{2}} e^{- \frac{1}{2}\left( \frac{\pi}{2}-\phi\right)|t_{0}|}  \log^{B(\sigma_{1})} |t_{0}|    \label{exzerest}
 	 		\end{align}
 	 		and 
 	 		\begin{align}\label{zeroexp}
 	 \hspace{-80pt} 	\iint_{\substack{ (\re s_{1}, \re u)= (\sigma_{1}, \epsilon), \\(t_{1},v): \ \text{(\ref{almostexpzero}) holds}} } \  \widetilde{h}\left(s-s_{1}-\frac{1}{2}\right)    &\cdot \mathcal{G}_{\Phi}^{(\delta)}\left( s_{1}, u; \ s_{0}, s;  \ \phi \right)  \  \frac{du}{2\pi i}  \ \frac{ds_{1}}{2\pi i} \nonumber\\
 	 		\ & \hspace{40pt}  \ll_{  \sigma_{1}, \Phi, T} \  |t_{0}|^{7-\frac{\sigma_{1}}{2}} e^{-\frac{1}{2}\left( \frac{\pi}{2}-\phi\right)|t_{0}|}  \log^{4+B(\sigma_{1})} |t_{0}|,   
 	 		\end{align} 
 	 		where $B(\sigma_{1})>0$ is some constant. If  $\phi < \pi/2$, then there is exponential decay in (\ref{zeroexp}) as  $|t_{0}|\to \infty$. Therefore, the second conclusion of the proposition follows from  (\ref{zeroexp}) and  (\ref{nonzeroexp}) (putting $\sigma_{1}=15$).  
 	 			\end{proof}

 	\section{Analytic Continuation of the Off-diagonal (Proof of Theorem \ref{maingl3gl2})}\label{2stepana}
 	
 	Recall that
 	\begin{align}
 		OD_{\Phi}(s; \phi) \ = \   &\frac{1}{4}  \ \int_{(1+\theta+2\epsilon)} \  \zeta\left(2s-s_{0}\right) L\left(s_{0},  \Phi\right) \cdot \sum_{\delta=\pm} \left(\mathcal{F}_{\Phi}^{(\delta)}H\right)\left(s_{0}, \ s; \  \phi\right) \ \frac{ds_{0}}{2\pi i}
 	\end{align}
 for 	$1+ \frac{\theta}{2} + \epsilon  <  \sigma  <  4$ and $\phi\in (0, \pi/2)$, see Proposition  \ref{structure}.

 	\subsection{Step 1: }\label{firstcont} We first obtain a holomorphic continuation of $OD_{\Phi}(s; \phi)$ up to $\re s > \frac{1}{2}+\epsilon$ by shifting the $s_{0}$-integral to the left. 
 	
 	Fix any $\phi\in (0,\pi/2)$ and  $T\ge 1000$.  We first restrict ourselves to 
 	\begin{align}\label{inidom}
 	1+ \frac{\theta}{2}+2\epsilon \ < \  \sigma \ < \  4, \hspace{15pt}  |t|\ < \  T.
 	\end{align}
 	Clearly, the pole $s_{0}=2s-1$ of $\zeta(2s-s_{0})$ is on the right of the contour $\re s_{0}=1+\theta+ 2\epsilon$ of  the integral (\ref{mainod}).  
 	
 	Let  $T_{0}\gg 1$. The  rectangle with vertices $2\epsilon\pm iT_{0}$ and $(1+\theta+2\epsilon)\pm iT_{0}$ in the $s_{0}$-plane lies inside the region of holomorphy (\ref{newdomain}) of $(\mathcal{F}^{(\delta)}_{\Phi} H)(s_{0}, s; \phi)$. 
 	The contribution from the horizontal segments $[2\epsilon\pm iT_{0}, (1+\theta+2\epsilon)\pm iT_{0}]$ tends to $0$ as $T_{0}\to \infty$ by the exponential decay of 	$(\mathcal{F}_{\Phi}^{(\delta)}H)(s_{0}, \ s; \ \phi)$ (see Proposition \ref{anconpr}) which surely counteracts the polynomial growth from $L(s_{0}, \Phi)$ and $\zeta(2s-s_{0})$. 	As a result, we may   shift the line of integration to $\re s_{0}= 2\epsilon$ and no pole is crossed. Hence, 
 	\begin{align}\label{shifleft}
 	OD_{\Phi}(s; \phi) \ &= \   \frac{1}{4}  \ \int_{(2\epsilon)} \zeta\left(2s-s_{0}\right) L\left(s_{0},  \Phi\right) \cdot \sum_{\delta=\pm} \left(\mathcal{F}_{\Phi}^{(\delta)}H\right)\left(s_{0}, \ s; \  \phi\right) \ \frac{ds_{0}}{2\pi i}
 	\end{align}
 	on (\ref{inidom}). The right side of (\ref{shifleft})  is holomorphic on
 	\begin{align}\label{uponshif}
 	\frac{1}{2}+\epsilon \ < \ \sigma \ < \  4, \hspace{15pt} |t| \ < \  T
 	\end{align}
 	and serves as an analytic continuation of $OD_{\Phi}(s; \phi)$ to (\ref{uponshif}) by using Proposition \ref{anconpr}. Note that $\sigma > \frac{1}{2}+\epsilon $ implies the holomorphy of $\zeta(2s-s_{0})$.

 	\subsection{Step 2: Crossing the Polar Line (Shifting the $s_{0}$-integral again)}\label{secondcont}
 	Consider  a subdomain of (\ref{uponshif}): 
 	\begin{align}\label{restrdis}
 	\frac{1}{2}+ \epsilon \ < \ \sigma \ < \ \frac{3}{4}, \hspace{15pt} |t| \ < \ T. 
 	\end{align}
 Different from Step $1$, the pole $s_{0}=2s-1$   is now inside the rectangle with vertices $2\epsilon\pm iT_{0}$ and $\frac{1}{2}\pm iT_{0}$ provided $T_{0} > 4T$. Such a rectangle lies in the region of holomorphy (\ref{newdomain}) of $(\mathcal{F}^{(\delta)}_{\Phi} H)(s_{0}, s; \phi)$. When $\phi< \pi/2$,  the exponential decay of $(\mathcal{F}^{(\delta)}_{\Phi} H)(s_{0}, s; \phi)$ once again allows us to  shift the line of integration from $\re s_{0}=2\epsilon$ to $\re s_{0}=1/2$, crossing the pole of  $\zeta(2s-s_{0})$ which has residue $-1$. In other words, 
 	\begin{align}\label{desiredexpr}
 	OD_{\Phi}(s; \phi) \ = \  & \frac{1}{4} \  L(2s-1, \Phi) \  \sum_{\delta=\pm}  \left(\mathcal{F}_{\Phi}^{(\delta)}H\right)\left(2s-1, \ s; \  \phi\right) \nonumber\\
 	& \hspace{40pt} + \frac{1}{4} \   \int_{(1/2)} \zeta\left(2s-s_{0}\right) L\left(s_{0},  \Phi\right) \cdot \sum_{\delta=\pm} \left(\mathcal{F}_{\Phi}^{(\delta)}H\right)\left(s_{0}, \ s; \  \phi\right) \ \frac{ds_{0}}{2\pi i}. 
 	\end{align}
 	
 	On the line $\re s_{0}=1/2$, observe that $ s\mapsto  \big(\mathcal{F}_{\Phi}^{(\delta)}H\big)\left(s_{0}, \ s; \  \phi\right)$ is holomorphic on $\sigma> \frac{1}{4}+\frac{\epsilon}{2}$ by  (\ref{newdomain}) ; whereas $s \mapsto \zeta(2s-s_{0})$  is holomorphic on   $\sigma< 3/4$ as $2\sigma-\sigma_{0}<1$.  As a result, the function $s\mapsto \int_{(1/2)} (\cdots) \ \frac{ds_{0}}{2\pi i}$ in (\ref{desiredexpr}) is holomorphic on the vertical strip
 	\begin{align}
 	\frac{1}{4}+ \frac{\epsilon}{2} \ < \ \sigma \ < \ \frac{3}{4},
 	\end{align}
 	which is sufficient for our purpose. 
 	
 	Proposition \ref{anconpr} only asserts that the function $s\mapsto\big(\mathcal{F}^{(\delta)}_{\Phi} H\big)\left(2s-1, s; \phi\right)$ is holomorphic on $\frac{1}{2}+\epsilon < \sigma< 4$.  However, it actually admits a continuation to the domain $\epsilon< \sigma < 4$ as we will see in Proposition \ref{proprescon}.


 	\subsection{Step 3:  Putting Back $\phi \to \pi/2$ --- Shifting the $s_{1}$-integral and Refining Step 1-2}
 	
 	By using estimate (\ref{gamkerdec}) and Dominated Convergence, 
 	\begin{align}
 	\lim_{\phi \to \pi/2} \  \left(\mathcal{F}_{\Phi}^{(\delta)}H\right)\left(2s-1, \ s; \  \phi\right) \ = \   \left(\mathcal{F}_{\Phi}^{(\delta)}H\right)\left(2s-1, \ s; \  \pi/2\right)
 	\end{align}
 	for $\frac{1}{2}+\epsilon < \sigma< 4$ and $|t|<T$. However, for the  continuous part of (\ref{desiredexpr}), we need a follow-up of Proposition \ref{anconpr} in order to pass to the limit $\phi \to \pi/2$. Essentially,  thanks to the structure of the $\Gamma$'s in Proposition \ref{vtmellin} and the analytic properties of $\widetilde{h}$,   it is possible to shift the line of integration of the $s_{1}$-integral to gain sufficient polynomial decay.

 	\begin{prop}\label{polyestimatrans}
 		Let $H\in \mathcal{C}_{\eta}$. There exists a constant  $B=B_{\eta}$ such that  whenever $(\sigma_{0}, \sigma) \in (\ref{newdomain})$, $|t| <T$, and $|t_{0}| \gg_{T} 1$,  we have the estimate
 		\begin{align}\label{refbdd}
 		\left|	\left(\mathcal{F}_{\Phi}^{(\delta)}H\right)\left(s_{0}, \ s; \ \pi/2\right) \right| \ \ll \  |t_{0}|^{8-\frac{\eta}{2}}   \log^{B} |t_{0}|,
 		\end{align}
 		where the implicit constant depends only on $\eta$, $T$, $\Phi$. 
 		
 	\end{prop}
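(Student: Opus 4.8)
The plan is to re-run the Stirling analysis of Proposition \ref{anconpr} after pushing the $s_1$-contour in (\ref{mainintrans}) as far to the right as the regularity of $\widetilde{h}$ permits. Since $H \in \mathcal{C}_\eta$, the bound (\ref{decbdd}) shows $\widetilde{h}(w) = \int_0^\infty H^{\flat}(y)\, y^w\, d^\times y$ is holomorphic on $|\re w| < \eta$; I would fix $\sigma_1 := \eta - 2$, so that $15 \le \sigma_1 \le \eta - \tfrac{1}{2}$ as $\eta > 40$. First I would move the contour of the $s_1$-integral in (\ref{mainintrans}) (with $\phi = \pi/2$) from $\re s_1 = 15$ to $\re s_1 = \sigma_1$. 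By Remark \ref{nopolerem} the integrand is holomorphic on the whole closed strip $15 \le \re s_1 \le \sigma_1$ for $(\sigma_0,\sigma) \in (\ref{newdomain})$ and $\re u = \epsilon$, so no residue arises; and the truncation error on the horizontal segments vanishes as $\im s_1 \to \pm\infty$ thanks to the exponential decay $\prod_k e^{-\frac{\pi}{4}|t_1 - \gamma_k|}$ supplied by $\prod_k\Gamma\!\big((s_1 - \alpha_k)/2\big)$, which dominates the polynomial growth of all the remaining factors. Hence (\ref{mainintrans}) continues to hold with the $s_1$-contour on $\re s_1 = \sigma_1$.

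With the contour repositioned I would repeat, essentially verbatim, the two-region decomposition of the $(t_1,v)$-plane from the proof of Proposition \ref{anconpr}, now with $\sigma_1 = \eta - 2$ and $\phi = \pi/2$. On the complement (\ref{expcutoff}) of the effective support, Claim \ref{trivdeclaim} (valid for any $\sigma_1 \ge 15$) together with (\ref{nonzeroexp}) gives a contribution $\ll_{\eta,\Phi,T} |t_0|^{A(\sigma_1)} e^{-\frac{\pi}{4}\log^2(3+|t_0|)}$, which is superpolynomially small. On the region (\ref{almostexpzero}) surrounding the effective support (\ref{expzero}), the bound (\ref{exzerest}) reads, at $\phi = \pi/2$, $\big|\mathcal{G}_\Phi^{(\delta)}(s_1,u;s_0,s;\pi/2)\big| \ll_{\eta,\Phi,T} |t_0|^{7 - \sigma_1/2}\log^{B(\sigma_1)}|t_0|$ (the exponential factor $e^{-\frac{1}{2}(\frac{\pi}{2}-\phi)|t_0|}$ degenerating to $1$); integrating this against the bounded factor $\widetilde{h}(s - s_1 - \tfrac{1}{2})$ over the region (\ref{almostexpzero}), whose area is $\ll \log^4(3+|t_0|)$, yields $\ll_{\eta,\Phi,T} |t_0|^{7 - \sigma_1/2}\log^{4 + B(\sigma_1)}|t_0|$. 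Since $\sigma_1 = \eta - 2$ gives $7 - \sigma_1/2 = 8 - \eta/2$, summing the two contributions produces (\ref{refbdd}) with $B := 4 + B(\eta - 2)$.

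The main obstacle is the first step: justifying the rightward shift of the $s_1$-contour, i.e., verifying via Remark \ref{nopolerem} (and hence the temperedness of $\Phi$ and the regularity of $\widetilde{h}$ built into $\mathcal{C}_\eta$) that no pole of a numerator $\Gamma$-factor of (\ref{kernelG}) nor of $\widetilde{h}(s - s_1 - \tfrac{1}{2})$ is encountered in the strip $15 \le \re s_1 \le \eta - 2$ under the constraints $\re u = \epsilon$ and $(\sigma_0,\sigma) \in (\ref{newdomain})$, and that the horizontal tails of the truncated integral genuinely tend to $0$. Everything afterwards is a mechanical re-run of the Stirling computation of Proposition \ref{anconpr} with the single parameter $\sigma_1$ enlarged; the only bookkeeping point is that the constants $A(\sigma_1)$ and $B(\sigma_1)$ produced there depend on $\sigma_1$ alone, hence ultimately only on $\eta$, which is transparent from their derivation.
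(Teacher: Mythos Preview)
Your proposal is correct and follows essentially the same approach as the paper: shift the $s_1$-contour rightward (the paper takes $\sigma_1 = \eta - \tfrac{1}{2}$ rather than your $\eta - 2$, but either choice lands inside the strip of Remark~\ref{nopolerem} and gives an exponent $\le 8 - \eta/2$), then re-run the Stirling analysis of Proposition~\ref{anconpr} at $\phi = \pi/2$, invoking (\ref{nonzeroexp}) and (\ref{zeroexp}) with the new $\sigma_1$.
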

 	
 	\begin{proof}
 		On domain (\ref{newdomain}), observe that the vertical strip $\re s_{1}\in [15, \eta-\frac{1}{2}]$ contains no pole of the function $s_{1} \mapsto \mathcal{G}_{\Phi}^{(\delta)}\left( s_{1}, u; \ s_{0}, s;  \ \phi \right)$,  and it lies within the region of holomorphy of $\widetilde{h}$  (see Remark \ref{nopolerem}).  The estimate (\ref{gamkerdec}) allows us to shift the line of integration from $\re s_{1}=15$  to $\re s_{1}=\eta-\frac{1}{2}$  in  (\ref{doubmbtrans}). Notice that the estimates done in Proposition \ref{anconpr} works for $\phi=\pi/2$ too. In particular, from (\ref{zeroexp}) and (\ref{nonzeroexp}),    the bound (\ref{refbdd})  follows by taking $\sigma_{1}:=\eta-\frac{1}{2}$ therein (upon the contour shift). This completes the proof. 
 			\end{proof}

 		


 	Suppose  $(3+\theta)/2 \ < \ \sigma \ < \ 4$. By Proposition \ref{pullout},  equation (\ref{mainod}) and equation (\ref{shifleft}), we have
 	\begin{align}
 	OD_{\Phi}(s)  \ &= \   \lim_{\phi \to \pi/2} \ OD_{\Phi}(s; \phi)  \nonumber\\
 	\ &= \ \lim_{\phi \to \pi/2}  \ \frac{1}{4} \  \int_{(1+\theta+2\epsilon)} \zeta\left(2s-s_{0}\right) L\left(s_{0},  \Phi\right) \cdot \sum_{\delta=\pm} \left(\mathcal{F}_{\Phi}^{(\delta)}H\right)\left(s_{0}, \ s; \  \phi\right) \ \frac{ds_{0}}{2\pi i} \nonumber\\
 	\ &= \  \lim_{\phi \to \pi/2} \ \frac{1}{4} \   \int_{(2\epsilon)} \zeta\left(2s-s_{0}\right) L\left(s_{0},  \Phi\right) \cdot \sum_{\delta=\pm} \left(\mathcal{F}_{\Phi}^{(\delta)}H\right)\left(s_{0}, \ s; \  \phi\right) \ \frac{ds_{0}}{2\pi i}.  \label{beforelim}
 	\end{align}
 	Proposition \ref{polyestimatrans} ensures enough polynomial decay and hence the absolute convergence of (\ref{prelimshif}) at $\phi=\pi/2$: 
 	\begin{align}
 	OD_{\Phi}(s)  \ = \   \frac{1}{4}  \ \int_{(2\epsilon)} \zeta\left(2s-s_{0}\right) L\left(s_{0},  \Phi\right) \cdot \sum_{\delta=\pm} \left(\mathcal{F}_{\Phi}^{(\delta)}H\right)\left(s_{0}, \ s; \  \pi/2\right) \ \frac{ds_{0}}{2\pi i}. \label{prelimshif}
 	\end{align}
 	Now,  (\ref{prelimshif}) serves as an analytic continuation of  $OD_{\Phi}(s)$  to the domain $1/2+ \epsilon < \sigma<4$. 
 	
 	On the smaller domain $1/2+ \epsilon< \sigma< 3/4$,  the  expressions (\ref{beforelim}) and (\ref{desiredexpr}) are equal. Then
 	\begin{align}
 	OD_{\Phi}(s) \ = \  (\ref{beforelim})
 	\ = \  &  \frac{1}{4} \    L(2s-1, \Phi) \  \sum_{\delta=\pm}  \left(\mathcal{F}_{\Phi}^{(\delta)}H\right)\left(2s-1, \ s; \  \pi/2\right) \nonumber\\
 	& \hspace{30pt} + \frac{1}{4}  \  \int_{(1/2)} \  \zeta\left(2s-s_{0}\right) L\left(s_{0},  \Phi\right) \cdot \sum_{\delta=\pm} \left(\mathcal{F}_{\Phi}^{(\delta)}H\right)\left(s_{0}, \ s; \  \pi/2\right) \ \frac{ds_{0}}{2\pi i} \label{finalmome}
 	\end{align}
 	by Dominated Convergence and Proposition \ref{anconpr}. 	The last integral is holomorphic on $ \frac{1}{4}+ \frac{\epsilon}{2}  <  \sigma  <  \frac{3}{4}$.
 	
 	In the following, we write $	\left(\mathcal{F}_{\Phi} H\right)\left(s_{0},  s \right)  := \left(\mathcal{F}_{\Phi}^{+} H\right)\left(s_{0}, s; \  \pi/2 \right) + \left(\mathcal{F}_{\Phi}^{-} H\right)\left(s_{0}, s; \  \pi/2 \right)$.  The duplication and the reflection formula of  $\Gamma$-functions in the form
 	\begin{align*}
 	2^{-u}\Gamma(u)  \ = \   \frac{1}{2\sqrt{\pi}} \cdot \Gamma\left(\frac{u}{2}\right) \Gamma\left(\frac{u+1}{2}\right) \hspace{15pt} \text{ and } \hspace{15pt} \Gamma\left(\frac{1+u}{2}\right)\Gamma\left(\frac{1-u}{2}\right)\ = \ \pi \sec \frac{\pi u}{2},
 	\end{align*}
 	leading to 
 	\begin{align}\label{streamlinebarnes}
 	\left(\mathcal{F}_{\Phi} H\right)\left(s_{0},  s \right) 
 	\ = \  &  \sqrt{\pi} \  \int_{(\eta-1/2)}  \widetilde{h}\left(s-s_{1}-\frac{1}{2}\right) \pi^{-s_{1}}   \frac{\prod\limits_{i=1}^{3} \ \Gamma\left( \frac{s_{1}-\alpha_{i}}{2}\right)  }{ \Gamma\left(\frac{1+s_{1}}{2}-s_{0}\right)} \nonumber\\
 	& \hspace{50pt} \cdot   \int_{(\epsilon)}  \   \frac{ \Gamma\left( \frac{u}{2}\right) \Gamma\left( \frac{s_{1}-(s_{0}-u)}{2}+ \frac{1}{2}-s\right)  \cdot   \prod\limits_{i=1}^{3} \ \Gamma\left( \frac{(s_{0}-u)+\alpha_{i}}{2}\right)  \Gamma\left(s-\frac{s_{0}+u}{2}\right) }{\Gamma\left( \frac{1-u}{2}\right) \Gamma\left(\frac{(s_{0}-u)+s_{1}}{2}\right)}  \  \frac{du}{2\pi i}   \ \frac{ds_{1}}{2\pi i }. 
 	\end{align}
 	In Section \ref{expevatra}, we shall work with this expression further.

 	\subsection{Step 4: Continuation of the Residual Term --- Shifting the $u$-integral}\label{shiftuint}
 	
 	\begin{prop}\label{proprescon}
 		Let $H\in \mathcal{C}_{\eta}$.  The function $s\mapsto \left(\mathcal{F}_{\Phi} H\right)(2s-1, s) $ can be holomorphically continued to the vertical strip $\epsilon< \sigma<4$  except at the three simple poles:  $s=(1-\alpha_{i})/2$ \ ($i=1,2,3$), where $ (\alpha_{1}, \alpha_{2}, \alpha_{3})$ are the Langlands parameters of the  Maass cusp form $\Phi$. 
 	\end{prop}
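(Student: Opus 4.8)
\smallskip

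\noindent\textbf{Proof strategy.} The plan is to return to the explicit double Mellin--Barnes representation (\ref{streamlinebarnes}) and specialise it at $s_{0}=2s-1$. With this choice one has the key collapse $\Gamma\bigl(s-\tfrac{s_{0}+u}{2}\bigr)=\Gamma\bigl(\tfrac{1-u}{2}\bigr)$, which cancels the factor $\Gamma\bigl(\tfrac{1-u}{2}\bigr)$ in the denominator of (\ref{streamlinebarnes}); hence for $\tfrac12+\epsilon<\sigma<4$,
\begin{align*}
\left(\mathcal{F}_{\Phi}H\right)(2s-1,s)
&=\sqrt{\pi}\int_{(\eta-1/2)}\widetilde{h}\Bigl(s-s_{1}-\tfrac12\Bigr)\,\pi^{-s_{1}}\,\frac{\prod_{i=1}^{3}\Gamma\bigl(\tfrac{s_{1}-\alpha_{i}}{2}\bigr)}{\Gamma\bigl(\tfrac{1+s_{1}}{2}+1-2s\bigr)}\int_{(\epsilon)}\mathcal{K}(u,s_{1},s)\,\frac{du}{2\pi i}\,\frac{ds_{1}}{2\pi i},\\
\mathcal{K}(u,s_{1},s)&:=\frac{\Gamma\bigl(\tfrac{u}{2}\bigr)\,\Gamma\bigl(\tfrac{s_{1}+u}{2}+1-2s\bigr)\,\prod_{i=1}^{3}\Gamma\bigl(s-\tfrac12-\tfrac{u}{2}+\tfrac{\alpha_{i}}{2}\bigr)}{\Gamma\bigl(s-\tfrac12+\tfrac{s_{1}-u}{2}\bigr)}.
\end{align*}
Inspecting $\mathcal{K}$ (with $\re s_{1}=\eta-\tfrac12$ fixed), the only poles whose location moves with $s$ and can reach the contour $\re u=\epsilon$ as $\sigma$ decreases are those of $\prod_{i}\Gamma\bigl(s-\tfrac12-\tfrac{u}{2}+\tfrac{\alpha_{i}}{2}\bigr)=\prod_{i}\Gamma\bigl(\tfrac{(2s-1+\alpha_{i})-u}{2}\bigr)$, sitting at $u=2s-1+\alpha_{i}$; these lie to the right of $\re u=\epsilon$ exactly when $2\sigma-1>\epsilon$, i.e. $\sigma>\tfrac12+\tfrac{\epsilon}{2}$, which is the barrier reached in Section \ref{2stepana}. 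The obstruction to going further is the pinch of the $u$-contour between these poles and the pole of $\Gamma\bigl(\tfrac{u}{2}\bigr)$ at $u=0$, a pinch that becomes a genuine collision exactly when $2s-1+\alpha_{i}=0$, i.e. at $s=\tfrac{1-\alpha_{i}}{2}$.

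To perform the continuation I would, still working in $\tfrac12+\epsilon<\sigma<4$, shift the inner $u$-contour leftward from $\re u=\epsilon$ to $\re u=-1$. In doing so one crosses exactly one pole of $\mathcal{K}$, namely the simple pole of $\Gamma\bigl(\tfrac{u}{2}\bigr)$ at $u=0$: the next pole $u=-2$ is not reached, the poles $u=2s-1+\alpha_{i}$ (real part $2\sigma-1>\epsilon$ throughout the shift) stay to the right, and the poles of $\Gamma\bigl(\tfrac{s_{1}+u}{2}+1-2s\bigr)$ lie far to the left ($\re u<-25$). Since $\Res_{u=0}\mathcal{K}=2\,\Gamma\bigl(\tfrac{s_{1}}{2}+1-2s\bigr)\prod_{i=1}^{3}\Gamma\bigl(s-\tfrac12+\tfrac{\alpha_{i}}{2}\bigr)/\Gamma\bigl(s-\tfrac12+\tfrac{s_{1}}{2}\bigr)$, this produces a decomposition $\left(\mathcal{F}_{\Phi}H\right)(2s-1,s)=\mathcal{M}(s)+\mathcal{R}(s)$, where $\mathcal{M}$ is the same double integral with $u$-contour $\re u=-1$ and
\begin{align*}
\mathcal{R}(s)=2\sqrt{\pi}\,\Bigl(\prod_{i=1}^{3}\Gamma\bigl(s-\tfrac12+\tfrac{\alpha_{i}}{2}\bigr)\Bigr)\int_{(\eta-1/2)}\widetilde{h}\Bigl(s-s_{1}-\tfrac12\Bigr)\pi^{-s_{1}}\,\frac{\prod_{i=1}^{3}\Gamma\bigl(\tfrac{s_{1}-\alpha_{i}}{2}\bigr)\,\Gamma\bigl(\tfrac{s_{1}}{2}+1-2s\bigr)}{\Gamma\bigl(\tfrac{1+s_{1}}{2}+1-2s\bigr)\,\Gamma\bigl(s-\tfrac12+\tfrac{s_{1}}{2}\bigr)}\,\frac{ds_{1}}{2\pi i}.
\end{align*}
I would then check that $\mathcal{M}$ is holomorphic on all of $\epsilon<\sigma<4$ — on $\re u=-1$ the $s$-poles of $\prod_{i}\Gamma\bigl(s-\tfrac12-\tfrac u2+\tfrac{\alpha_{i}}{2}\bigr)$ have $\re s\le 0$ and those of $\Gamma\bigl(\tfrac{s_{1}+u}{2}+1-2s\bigr)$ have $\re s>4$, and no pole crosses the $u$- or $s_{1}$-contours as $s$ varies there — and that the $s_{1}$-integral in $\mathcal{R}$ is likewise holomorphic on $\epsilon<\sigma<4$ (its potential $s$-poles again have $\re s>4$, and $\widetilde{h}$ stays inside its strip of holomorphy). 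Thus $\mathcal{M}+\mathcal{R}$ continues $\left(\mathcal{F}_{\Phi}H\right)(2s-1,s)$ to $\epsilon<\sigma<4$, and in that strip its only singularities are the simple poles of $\prod_{i}\Gamma\bigl(s-\tfrac12+\tfrac{\alpha_{i}}{2}\bigr)$, i.e. $s=\tfrac{1-\alpha_{i}}{2}$ ($i=1,2,3$), which are simple since the $\alpha_{i}$ are pairwise distinct.

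The main obstacle is the pole-bookkeeping and the convergence verifications underpinning this last step: one must confirm that the leftward $u$-shift crosses $u=0$ and nothing else, that on the contour $\re u=-1$ (and as $s$ sweeps the enlarged strip) no pole of the integrand in $u$, $s_{1}$, or $s$ migrates into $\epsilon<\sigma<4$, and that both the shifted double integral $\mathcal{M}$ and the residual $s_{1}$-integral $\mathcal{R}$ converge absolutely and locally uniformly — the latter via the Stirling estimate (\ref{stirremind}) applied exactly as in the proof of Proposition \ref{anconpr}, the integrands having exponential decay in $\im u$ and in $\im s_{1}$ (so the horizontal pieces of the $u$-shift vanish). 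Once this routine analysis is in place, the conceptual point — that the single contour pinch at $u=0=2s-1+\alpha_{i}$ is exactly what forces the simple pole at $s=\tfrac{1-\alpha_{i}}{2}$ — makes the conclusion immediate.
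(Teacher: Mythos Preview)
Your argument is essentially identical to the paper's: specialise (\ref{streamlinebarnes}) at $s_{0}=2s-1$, observe the cancellation of $\Gamma\bigl(\tfrac{1-u}{2}\bigr)$, shift the $u$-contour leftward past $u=0$ (you go to $\re u=-1$, the paper to $\re u=-1.9$, which is immaterial), and verify via Stirling that both the residual $s_{1}$-integral and the shifted double integral are holomorphic on $\epsilon<\sigma<4$. Your pole bookkeeping and the resulting decomposition $\mathcal{M}+\mathcal{R}$ match the paper exactly.
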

 	
 	\begin{proof}
 		We will prove a stronger result in Proposition \ref{secMTcom}. However, a simpler argument suffices for the time being. Suppose  $\frac{1}{2}+\epsilon < \sigma< 4$ and $s_{0}=2s-1$.   In	(\ref{streamlinebarnes}), we shift the line of integration from $\re u=\epsilon$ to $\re u=-1.9$:
 		\begin{align}
 		\left(\mathcal{F}_{\Phi} H\right)(2s-1, s)  \ = \  &2\sqrt{\pi} \  \prod_{i=1}^{3} \ \Gamma\left(s-\frac{1}{2}+ \frac{\alpha_{i}}{2}\right)  \   \int_{(\eta-\frac{1}{2})}  \widetilde{h}\left(s-s_{1}-\frac{1}{2}\right)   \frac{ \pi^{-s_{1}} \prod\limits_{i=1}^{3} \ \Gamma\left( \frac{s_{1}-\alpha_{i}}{2}\right)  \Gamma\left(\frac{s_{1}}{2}+1-2s\right)}{ \Gamma\left(\frac{1+s_{1}}{2}+1-2s\right) \Gamma\left(s-\frac{1}{2}+\frac{s_{1}}{2}\right)} \ \frac{ds_{1}}{2\pi i} \nonumber \\
 		& \hspace{40pt} +   \sqrt{\pi} \  \int_{(\eta-\frac{1}{2})}   \int_{(-1.9)}  \ \text{(Same as the integrand of  (\ref{streamlinebarnes})) }  \  \frac{du}{2\pi i}   \ \frac{ds_{1}}{2\pi i }. \nonumber
 		\end{align}
 	By Stirling's formula and the same argument following (\ref{holoestim}), the integrals above represent holomorphic functions on $\epsilon< \sigma<4$. 
 		\end{proof}

 		\subsection{Step 5: Conclusion}

 	 Apply Proposition \ref{proprescon} to (\ref{finalmome}) and observe  that the poles of $s\mapsto (\mathcal{F}_{\Phi}H)(2s-1, s)$ are exactly the trivial zeros of the the arithmetic factor $L(2s-1, \Phi)$ in (\ref{desiredexpr}). We conclude that the product of functions  $s\mapsto L(2s-1, \Phi)\cdot  (\mathcal{F}_{\Phi}H)\left(2s-1, s \right)$ is holomorphic on $\epsilon< \sigma< 4$ and thus (\ref{finalmome}) provides a holomorphic continuation of $OD_{\Phi}(s)$ to the vertical strip $\frac{1}{4}+ \frac{\epsilon}{2}  <  \sigma  <  \frac{3}{4}$.  By the rapid decay of $\Phi$ at $\infty$, the pairing \ $s\mapsto \left( P, \ \mathbb{P}_{2}^{3} \Phi\cdot |\det *|^{\overline{s}-\frac{1}{2}} \right)_{\Gamma_{2}\setminus GL_{2}(\R)}$ represents an entire function. Putting (\ref{comspec}),  (\ref{diagstade}) and (\ref{finalmome}) together, we arrive at Theorem \ref{maingl3gl2}.

 	 \begin{rem}
 	 	Readers might have noticed that the analytic continuation procedure in our case (for a moment of automorphic $L$-functions of degree $6$) is much more involved than the ones for degree $4$ (cf. the second moment formula of $GL(2)$ of Iwaniec-Sarnak/ Motohashi).  To some extent, this is hinted by the presence of  off-diagonal main terms in our case when  $\Phi$ is specialized to be an Eisenstein series, whereas this does not happen in the degree $4$ cases.  See  pp. 35 of \cite{CFKRS05} for  further discussions. 
 	 	
 	 	However, the subtle arithmetic differences of the off-diagonals  are the deeper causes.  More specifically, the arithmetic in  Iwaniec-Sarnak/ Motohashi is given by a shifted Dirichlet series of two divisor functions  and  the holomorphy of the dual side in  the critical strip simply rests on the absolute convergence of such a Dirichlet series. However, the absolute convergence provided by Proposition \ref{structure} is very much insufficient in our case --- we must move the contour judiciously so that the $L$-functions present in the off-diagonal take value on $\re s_{0}=1/2$ (when $s=1/2$).

 	 	\end{rem}

 	\section{Explication  of the off-diagonal ---  Main Terms and Integral Transform}\label{expevatra}

 	The power of spectral summation formulae (including Theorem \ref{maingl3gl2}) is encoded in the archimedean transformations involved. Therefore, it is important to obtain very explicit expressions for the transformations, usually in terms of  \textit{special functions}. It is well-known that the special functions for $GL(2)$  possess lots of symmetries and identities under various transforms. However, this ceases to be true when it comes to higher-rank groups and there remain plenty of prospects for in-depth investigations. 
 		
 		  Nevertheless, there have been some successes in higher-rank groups. For example, Stade \cite{St01, St02} was able to compute the Mellin transforms and certain Rankin-Selberg integrals of Whittaker functions for $GL_{n}(\R)$; Goldfeld et. al.  \cite{GK13, GSW21, GSW23+} obtained (harmonic-weighted)  spherical Weyl laws of  $GL_{3}(\R)$, $GL_{4}(\R)$ and $GL_{n}(\R)$ with strong power-saving error terms; and there is the work of Buttcane \cite{Bu13, Bu16} on Kuznetsov formulae for $GL(3)$. What lies at the core of  the aforementioned results are various \textit{Mellin-Barnes}  integrals which represent the special functions of higher-rank.  Judging from their experiences, this way of handling the archimedean aspects of problems is more  likely to  generalize. 
 		  
 		    In this final section, we  continue such investigation   and record several formulae for the archimedean transform  $\left(\mathcal{F}_{\Phi}H\right)\left(s_{0},  s \right)$.



 		\begin{lem}\label{mellingl2}
 			Suppose $H\in \mathcal{C}_{\eta}$ and $h:= H^{\flat}$. On the vertical strip $-\frac{1}{2}< \re w < \eta$, we have
 			\begin{align}
 			\widetilde{h}(w)  \ := \ \int_{0}^{\infty} h(y)y^{w} \ d^{\times} y  \ = \  \frac{\pi^{-w-\frac{1}{2}}}{4} \int_{(0)}  \ H(\mu) \cdot  \frac{\Gamma\left(\frac{w+\frac{1}{2}+\mu}{2}\right) \Gamma\left(\frac{w+\frac{1}{2}-\mu}{2}\right)}{\left| \Gamma(\mu)\right|^2} \ \frac{d\mu}{2\pi i}, \label{mellin}
 			\end{align}
 			
 		\end{lem}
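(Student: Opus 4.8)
The plan is to substitute the definition \eqref{invers} of $h := H^{\flat}$ directly into the Mellin integral, interchange the order of integration, and evaluate the inner integral by the $GL_{2}(\R)$ Mellin formula \eqref{melwhitgl2}. Writing $\mu = i\tau$ along the contour $(0)$, we have
\[
\widetilde{h}(w) \ = \ \int_{0}^{\infty} \left( \frac{1}{4\pi i} \int_{(0)} H(\mu)\, W_{\mu}(y) \, \frac{d\mu}{|\Gamma(\mu)|^{2}} \right) y^{w} \, d^{\times} y,
\]
and the task is to pull the $\mu$-integral outside.

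To justify this by Fubini's theorem I would first record a uniform bound on the kernel along the contour: from the integral representation $K_{\mu}(x) = \int_{0}^{\infty} e^{-x\cosh t}\cosh(\mu t)\, dt$ specialized to $\mu = i\tau$ one obtains $|K_{i\tau}(x)| \le \int_{0}^{\infty} e^{-x\cosh t}\, dt = K_{0}(x)$, hence $|W_{i\tau}(y)| \le W_{0}(y) = 2\sqrt{y}\, K_{0}(2\pi y)$ for all $\tau \in \R$. Since $W_{0}(y) \ll \sqrt{y}\,|\log y|$ as $y \to 0^{+}$ and $W_{0}(y) \ll e^{-2\pi y}$ as $y \to \infty$, the integral $\int_{0}^{\infty} W_{0}(y)\, y^{\re w}\, d^{\times} y$ converges for every $w$ with $\re w > -\tfrac{1}{2}$ (there is no constraint at $y=\infty$). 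Combining this with the rapid decay $H(\mu) \ll e^{-2\pi|\mu|}$ coming from $\mathcal{C}_{\eta}$ and the identity $|\Gamma(i\tau)|^{-2} = \pi^{-1}\tau\sinh(\pi\tau) \ll |\tau|\, e^{\pi|\tau|}$, the integral of absolute values factors as
\[
\left( \int_{(0)} \frac{|H(\mu)|}{|\Gamma(\mu)|^{2}} \, |d\mu| \right) \left( \int_{0}^{\infty} W_{0}(y)\, y^{\re w}\, d^{\times} y \right) \ < \ \infty,
\]
the first factor being finite because $|\tau|\, e^{-\pi|\tau|}$ is integrable over $\R$. Thus Fubini applies on the whole strip $-\tfrac{1}{2} < \re w < \eta$, where $\widetilde{h}$ is already holomorphic by Proposition \ref{inKLconv}, so no separate analytic continuation is needed.

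Granting the interchange, applying \eqref{melwhitgl2} --- legitimate since $\re(w + \tfrac{1}{2} \pm \mu) = \re w + \tfrac{1}{2} > 0$ on the line $\re\mu = 0$ --- gives
\[
\widetilde{h}(w) \ = \ \frac{1}{4\pi i} \int_{(0)} \frac{H(\mu)}{|\Gamma(\mu)|^{2}} \cdot \frac{\pi^{-w-\frac{1}{2}}}{2}\, \Gamma\!\left( \tfrac{w+\frac{1}{2}+\mu}{2} \right) \Gamma\!\left( \tfrac{w+\frac{1}{2}-\mu}{2} \right) d\mu,
\]
and rewriting $\frac{1}{4\pi i}\int_{(0)} = \tfrac{1}{2}\int_{(0)}\frac{d\mu}{2\pi i}$ while absorbing the factor $\pi^{-w-1/2}/2$ yields precisely \eqref{mellin}. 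The one place demanding care is the Fubini step, i.e.\ the uniform control of $W_{\mu}(y)$ over the contour; once the elementary bound $|K_{i\tau}| \le K_{0}$ is in hand, everything else is routine, and the resulting identity can be cross-checked against the holomorphy of both sides on $-\tfrac{1}{2} < \re w < \eta$.
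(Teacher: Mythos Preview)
Your proof is correct and follows essentially the same route as the paper: substitute the defining integral \eqref{invers} for $h=H^{\flat}$, interchange the order of integration, and evaluate the inner integral via \eqref{melwhitgl2}. The only difference is in how the interchange is justified---the paper appeals to Stirling's formula while you use the elementary pointwise bound $|K_{i\tau}|\le K_{0}$; both are valid and lead to the same conclusion.
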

 		
 		\begin{proof}
 		Since $H\in \mathcal{C}_{\eta}$,  both sides of (\ref{mellin}) converge absolutely on the strip $-1/2<\re w<\eta$ by Stirling's formula and Proposition \ref{plancherel}.  Substituting the definition of $h$ as in (\ref{invers}) into $\widetilde{h}(w)$, the result follows from equation  (\ref{melwhitgl2}).

 		\end{proof}

 \subsection{The Off-diagonal Main Term in Theorem \ref{maingl3gl2}}
 
 In this subsection, we will show that  the off-diagonal main term of Theorem \ref{maingl3gl2} (i.e.,  $ L(2s-1, \Phi) \cdot  \left(\mathcal{F}_{\Phi} H\right)\left(2s-1,  s\right)/2 $) matches up with the prediction of \cite{CFKRS05}. It suffices to prove the following proposition which is an idenitity of Mellin-Barnes integral and is of archimedean nature.  (Then the desired matching follows immediately from the functional equation (\ref{JPfunc}).)

 Note that its proof is a bit more involved than the one of Proposition \ref{stadediff1} ---   the $u$-integral was introduced for various technical reasons in Section \ref{separaOD}. Nevertheless, upon bringing in new $\Gamma$-factors  the $u$-integral turns out to  contain nice symmetries and in turn leads to a number of cancellations/ reductions  in a row.


 	\begin{thm}\label{secMTcom}
 		Suppose $\frac{1}{2}+\epsilon  <  \sigma  < 1$. Then 
 		\begin{align}\label{secMTcomid}
 			\left(\mathcal{F}_{\Phi} H\right)\left(2s-1,  s \right) 
 			\ &= \ \pi^{\frac{1}{2}-s} \cdot \prod_{i=1}^{3} \ \frac{\Gamma\left(s-\frac{1}{2}+ \frac{\alpha_{i}}{2}\right)}{\Gamma\left(1-s- \frac{\alpha_{i}}{2}\right)} \cdot \int_{(0)} \  \frac{H(\mu)}{\left| \Gamma(\mu)\right|^2}   \cdot \prod\limits_{i=1}^{3}  \ \prod\limits_{\pm} \  \Gamma\left( \frac{1-s+ \alpha_{i}\pm \mu}{2}\right) \ \frac{d\mu}{2\pi i}. 
 			\end{align}
 		\end{thm}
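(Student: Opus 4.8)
The plan is to start from the double Mellin--Barnes expression (\ref{streamlinebarnes}) for $\left(\mathcal{F}_{\Phi}H\right)(s_0,s)$, specialize it at $s_0 = 2s-1$, and then collapse the auxiliary $u$- and $s_1$-integrals in closed form by two successive applications of the Second Barnes Lemma (Lemma \ref{secBarn}), with the Mellin--Barnes representation of $\widetilde h$ from Lemma \ref{mellingl2} inserted between the two applications.

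First I set $s_0 = 2s-1$ in (\ref{streamlinebarnes}) and simplify the $\Gamma$-arguments; the decisive observation is that $\Gamma\!\left(s-\tfrac{s_0+u}{2}\right)$ becomes $\Gamma\!\left(\tfrac{1-u}{2}\right)$ and cancels the identical factor in the denominator of the $u$-integrand. After the change of variable $u = 2w$ the inner $u$-integral has the shape $\int \Gamma(w)\,\Gamma(w+b)\,\prod_{i}\Gamma(c_i-w)/\Gamma(f-w)\,\tfrac{dw}{2\pi i}$ with $b = \tfrac{s_1}{2}+1-2s$, $c_i = s-\tfrac12+\tfrac{\alpha_i}{2}$, $f = s-\tfrac12+\tfrac{s_1}{2}$; the balancing relation $f = c_1+c_2+c_3+0+b$ demanded by Lemma \ref{secBarn} holds exactly because $\alpha_1+\alpha_2+\alpha_3 = 0$. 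Applying the lemma in its reflected ($w\mapsto -w$) form evaluates the $u$-integral, and in the resulting product of $\Gamma$-factors one finds $\prod_i\Gamma\!\left(\tfrac{s_1-\alpha_i}{2}\right)$ in the denominator, cancelling the very same factor already present in (\ref{streamlinebarnes}). What remains is
\[
\left(\mathcal{F}_{\Phi}H\right)(2s-1,s) = 2\sqrt{\pi}\,\prod_{i=1}^{3}\Gamma\!\left(s-\tfrac12+\tfrac{\alpha_i}{2}\right)\int_{(\eta-1/2)}\widetilde h\!\left(s-s_1-\tfrac12\right)\pi^{-s_1}\,\frac{\prod_{i=1}^{3}\Gamma\!\left(\tfrac{s_1+\alpha_i+1-2s}{2}\right)}{\Gamma\!\left(\tfrac{s_1+3}{2}-2s\right)}\,\frac{ds_1}{2\pi i}.
\]

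Next I note that the $s_1$-integrand above is holomorphic for $\re s_1 > 2\sigma-1$, so I shift the contour from $\re s_1 = \eta-\tfrac12$ down to $\re s_1 = \sigma_1'$ for some $\sigma_1' \in (2\sigma-1,\sigma)$; this interval is non-empty precisely because $\sigma < 1$ --- this is where the hypothesis on $\sigma$ is used --- and no pole is crossed. On the new contour $\re\!\left(s-s_1-\tfrac12\right) \in (-\tfrac12,0)$, so Lemma \ref{mellingl2} applies and I replace $\widetilde h\!\left(s-s_1-\tfrac12\right)$ by its Mellin--Barnes integral over $\re\mu = 0$. The powers of $\pi$ telescope to $\pi^{-s}$, and --- using Stirling's formula together with the decay $H(\mu)\ll e^{-2\pi|\mu|}$ of $H\in\mathcal{C}_\eta$ to secure absolute convergence --- I interchange the $s_1$- and $\mu$-integrals. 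The inner $s_1$-integral, after $s_1 = 2v$, is $\int\prod_{i}\Gamma(v+a_i)\,\Gamma\!\left(\tfrac{s+\mu}{2}-v\right)\Gamma\!\left(\tfrac{s-\mu}{2}-v\right)/\Gamma\!\left(\tfrac{3-4s}{2}+v\right)\,\tfrac{dv}{2\pi i}$ with $a_i = \tfrac{\alpha_i+1-2s}{2}$, again of Barnes type, and the balancing relation $\tfrac{3-4s}{2} = a_1+a_2+a_3+\tfrac{s+\mu}{2}+\tfrac{s-\mu}{2}$ once more reduces to $\sum_i\alpha_i = 0$. Evaluating it produces $\prod_i\prod_{\pm}\Gamma\!\left(\tfrac{1-s+\alpha_i\pm\mu}{2}\right)$ in the numerator and $\prod_i\Gamma\!\left(1-s-\tfrac{\alpha_i}{2}\right)$ in the denominator; collecting the surviving $\Gamma$-factors, the numerical constants, and the power $\pi^{1/2-s}$ yields exactly (\ref{secMTcomid}).

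The two invocations of Lemma \ref{secBarn} and the bookkeeping of $\Gamma$- and $\pi$-factors are routine. The point demanding real care --- and the main obstacle --- is the placement of all the contours: at each stage one must verify that the Barnes contour separates the ``left'' poles from the ``right'' poles (this is what forces $\sigma > \tfrac12+\tfrac{\epsilon}{2}$, and in the first step it is why the $u$-integral must be evaluated \emph{before} the $s_1$-contour is lowered, so that the parameter $b$ still has large real part), that the $s_1$-shift crosses no singularity, and that the interchange of integrations is legitimate. The restriction $\tfrac12+\epsilon < \sigma < 1$ is exactly what makes all of these conditions hold simultaneously; for $\sigma \ge 1$ one would instead obtain (\ref{secMTcomid}) by analytic continuation, in the spirit of Proposition \ref{proprescon}.
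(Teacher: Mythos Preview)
Your proposal is correct and follows essentially the same approach as the paper: specialize (\ref{streamlinebarnes}) at $s_0=2s-1$, observe the cancellation of $\Gamma\!\left(\tfrac{1-u}{2}\right)$, evaluate the $u$-integral by the Second Barnes Lemma (using $\alpha_1+\alpha_2+\alpha_3=0$), shift the $s_1$-contour into $(2\sigma-1,\sigma)$, insert Lemma \ref{mellingl2}, and apply the Second Barnes Lemma once more to the $s_1$-integral. The paper carries out exactly these steps with the same changes of variable and the same pair of cancellations you identified.
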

 		
 		\begin{proof}
 			Suppose $\frac{1}{2}+\epsilon  <  \sigma  < 4$.  When $s_{0}=2s-1$, observe that the factor $\Gamma\left(\frac{1-u}{2}\right)$ in the denominator of (\ref{streamlinebarnes}) cancels with the factor $\Gamma\left(s-\frac{s_{0}+u}{2}\right)$ in the numerator of (\ref{streamlinebarnes}). This gives
 			\begin{align}\label{firuseBar}
 				\left(\mathcal{F}_{\Phi} H\right)\left(2s-1,  s \right)  \ = \   &\sqrt{\pi} \  \int_{(\eta-1/2)}  \ \widetilde{h}\left(s-s_{1}-\frac{1}{2}\right)  \frac{\pi^{-s_{1}}   \prod\limits_{i=1}^{3} \ \Gamma\left( \frac{s_{1}-\alpha_{i}}{2}\right)  }{  \Gamma\left(\frac{1+s_{1}}{2}+1-2s\right)} \nonumber\\
 				 & \hspace{60pt} \cdot \int_{(\epsilon)}  \   \frac{ \Gamma\left( \frac{u}{2}\right) \Gamma\left( \frac{u+s_{1}}{2}+ 1-2s\right)  \cdot   \prod\limits_{i=1}^{3} \ \Gamma\left(s-\frac{1}{2}+ \frac{\alpha_{i}-u}{2}\right)   }{ \Gamma\left(s-\frac{1}{2}+\frac{s_{1}-u}{2}\right)}  \  \frac{du}{2\pi i} \ \frac{ds_{1}}{2\pi i}. 
 			\end{align}
 			
 					We make the change of variable $u\to -2u$ and take
 				\begin{align*}
 				\left(a,b,c; d,e \right) \ =  \  \left(s-\frac{1}{2}+ \frac{\alpha_{1}}{2}, \ s-\frac{1}{2}+ \frac{\alpha_{2}}{2}, \ s-\frac{1}{2}+ \frac{\alpha_{3}}{2};  \ 0, \ \frac{s_{1}}{2}+1-2s \right)
 				\end{align*}
 			in (\ref{secBar}).  Notice that 
 			\begin{align*}
 				(a+b+c)+d+e \ = \  3\left(s-\frac{1}{2}\right) + \frac{s_{1}}{2}+1 -2s \ = \   s-\frac{1}{2}+\frac{s_{1}}{2} \ (:= \ f)
 			\end{align*}
 			because of $\alpha_{1}+\alpha_{2}+\alpha_{3}=0$. We find the $u$-integral is equal to 
 				\begin{align}
 				2 \cdot  \prod_{i=1}^{3} \  \frac{\Gamma\left(s-\frac{1}{2}+ \frac{\alpha_{i}}{2}\right) \Gamma\left( \frac{1}{2}-s+ \frac{s_{1}+\alpha_{i}}{2}\right)}{\Gamma\left( \frac{s_{1}-\alpha_{i}}{2}\right)}.
 				 \end{align}
 			 
 			Notice  that the three $\Gamma$-factors in denominator of the last expression cancel with the three in the numerator of the first line of  (\ref{firuseBar}).  Hence, we have
 					\begin{align}
 					\left(\mathcal{F}_{\Phi} H\right)\left(2s-1,  s \right)  \ = \   & 2\sqrt{\pi} \cdot  \prod_{i=1}^{3} \ \Gamma\left(s-\frac{1}{2}+ \frac{\alpha_{i}}{2}\right)   \int_{(\eta-1/2)}  \ \widetilde{h}\left(s-s_{1}-\frac{1}{2}\right)  \frac{\pi^{-s_{1}} \prod\limits_{i=1}^{3} \Gamma\left( \frac{1}{2}-s+ \frac{s_{1}+\alpha_{i}}{2}\right)    }{  \Gamma\left(\frac{1+s_{1}}{2}+1-2s\right)} \  \frac{ds_{1}}{2\pi i}. 
 					\end{align}
 					
 					We must now further restrict to $\frac{1}{2}+\epsilon < \sigma< 1$. 
 					We shift the line of integration to the left from $\re s_{1}=\eta-1/2$ to $\re s_{1}=\sigma_{1}$ satisfying 
 					\begin{align*}
 					2\sigma-1  \ < \sigma_{1} \ < \ \sigma. 
 					\end{align*}
 				It is easy to see no pole is crossed and  we may now apply Lemma \ref{mellingl2}:
 				\begin{align}
 					\left(\mathcal{F}_{\Phi} H\right)\left(2s-1,  s \right) 
 					\ &= \   \frac{\pi^{\frac{1}{2}-s}}{2}  \cdot \prod_{i=1}^{3} \ \Gamma\left(s-\frac{1}{2}+ \frac{\alpha_{i}}{2}\right)  \nonumber\\
 					&\hspace{40pt} \cdot \int_{(0)} \  \frac{H(\mu)}{\left| \Gamma(\mu)\right|^2}  \cdot  \int_{(\sigma_{1})} \ \frac{\prod\limits_{i=1}^{3} \Gamma\left( \frac{1}{2}-s+ \frac{s_{1}+\alpha_{i}}{2}\right) \cdot \Gamma\left(\frac{s-s_{1}+\mu}{2}\right) \Gamma\left(\frac{s-s_{1}-\mu}{2}\right) }{\Gamma\left( \frac{1+s_{1}}{2}+1-2s\right)} \ \frac{ds_{1}}{2\pi i} \ \frac{d\mu}{2\pi i}. 
 				\end{align}
 				
 					For the $s_{1}$-integral, apply the change of variable $s_{1}\to 2s_{1}$ and (\ref{secBar}) the second time but with 
 					\begin{align}
 					\left( a, b, c; d,e\right) \ = \ \left(\frac{1}{2}-s+ \frac{\alpha_{1}}{2}, \ \frac{1}{2}-s+ \frac{\alpha_{2}}{2}, \ \frac{1}{2}-s+ \frac{\alpha_{3}}{2};  \ \frac{s+\mu}{2}, \  \frac{s-\mu}{2} \right).
 					\end{align}
 				Oberserve that 
 				\begin{align*}
 					(a+b+c) + (d+e) \ = \  3\left(\frac{1}{2}-s\right) + s  \ := \ \frac{3}{2}-2s \ (:=f).   
 				\end{align*}
 			The $s_{1}$-integral is thus equal to 
 			\begin{align*}
 			\prod\limits_{i=1}^{3}	\frac{   \ \prod\limits_{\pm} \  \Gamma\left( \frac{1-s+ \alpha_{i}\pm \mu}{2}\right) }{\Gamma\left(1-s- \frac{\alpha_{i}}{2}\right)}
 			\end{align*}
 		and the   result follows. 

 				\end{proof}

 
  
  

\subsection{Integral Transform}\label{manyintrans}

  Based on the experience of Stade \cite{St01, St02},  we \textit{do not} expect the Mellin-Barnes integrals of $(\mathcal{F}_{\Phi}H)(s_{0},s)$ (see (\ref{imporker}) below) to be completely reducible as in Proposition \ref{secMTcom}   if $(s_{0},s)$ is in a \textit{general position}. However, a necessary condition for the reductions to take place is that the Mellin-Barnes integrals  are in certain special forms. The most concrete way to see this is to express the integrals in terms of  \textit{hypergeometric functions}. 
  
  We define
 	\begin{align}
 	 _{4}\widehat{F}_{3}\left( 
 	\setlength\arraycolsep{1pt}	\begin{matrix}
 	A_{1} &          & A_{2}&           & A_{3} &         & A_{4} \\
 	          &B_{1} &          & B_{2} &           & B_{3}&
 	\end{matrix}  \ \bigg| \ z \right) 
 	  \ := \ &\frac{\Gamma(A_{1}) \Gamma(A_{2}) \Gamma(A_{3}) \Gamma(A_{4})}{\Gamma(B_{1}) \Gamma(B_{2}) \Gamma(B_{3})} \cdot \  _{4}F_{3} \left( 	\setlength\arraycolsep{1pt}
 	  \begin{matrix}
 	  A_{1} &          & A_{2}&           & A_{3} &         & A_{4} \\
 	  &B_{1} &          & B_{2} &           & B_{3}&
 	  \end{matrix}  \ \bigg| \ z \right)   \nonumber\\
 	\ := \ & \sum_{n=0}^{\infty} \ \frac{\Gamma(A_{1}+n) \Gamma(A_{2}+n) \Gamma(A_{3}+n) \Gamma(A_{4}+n)}{\Gamma(B_{1}+n) \Gamma(B_{2}+n) \Gamma(B_{3}+n)} \frac{z^{n}}{n!}. 
 	\end{align}
The series converges absolutely	when $|z|<1$ and $A_{1}, A_{2}, A_{3}, A_{4} \not \in \Z_{\le 0}$;  and  on $|z|=1$ if 
\begin{align*}
\re \left( B_{1}+B_{2}+B_{3}-A_{1}-A_{2}-A_{3}-A_{4}\right) \ > \ 0.
\end{align*} 
In fact, our hypergeometric functions are of \textit{Saalsch\"utz} type, i.e., $B_{1}+B_{2}+B_{3}-A_{1}-A_{2}-A_{3}-A_{4}=1$.  Only such special type of hypergeometric functions at $z=1$ possess many functional relations and integral representations, see \cite{M12}.

 	\begin{prop}\label{step1eva}
 	 Suppose $H\in \mathcal{C}_{\eta}$ and $h:= H^{\flat}$. On the region
 	$	\sigma_{0} \ > \ \epsilon$, $ \sigma \ < \ 4$, and  $2\sigma-\sigma_{0}-\epsilon \ > \ 0$, we have $ \left(\mathcal{F}_{\Phi} H\right)\left(s_{0}, s \right)$ equal to $ 2\pi^{3/2}$ times 
 		\begin{align}\label{off4F3}
 	&	     \int_{(\eta-1/2)} \  \widetilde{h}\left(s-s_{1}-\frac{1}{2}\right)\cdot   \frac{ \prod\limits_{i=1}^{3} \ \Gamma\left( \frac{s_{1}-\alpha_{i}}{2}\right)  }{ \Gamma\left(\frac{1+s_{1}}{2}-s_{0}\right)} \cdot  \pi^{-s_{1}}  \sec \frac{\pi}{2}\left( 2s+s_{0}-s_{1}\right) \nonumber\\
 		&\hspace{50pt} \cdot   \ _{4}\widehat{F}_{3}\left( 	\setlength\arraycolsep{1pt}\begin{matrix}
 		s- \frac{s_{0}}{2} &      & \frac{s_{0}+\alpha_{1}}{2} && \frac{s_{0}+\alpha_{2}}{2} &&  \frac{s_{0}+\alpha_{3}}{2} \\
 		&	1/2 && \frac{s_{0}+s_{1}}{2} &&  s+\frac{1}{2}+\frac{s_{0}-s_{1}}{2}
 		\end{matrix} \ \bigg| \  1\right)  \ \frac{ds_{1}}{2\pi i} \nonumber\\
 		&\hspace{20pt} - \int_{(\eta-1/2)} \  \widetilde{h}\left(s-s_{1}-\frac{1}{2}\right)\cdot  \frac{\prod\limits_{i=1}^{3} \ \Gamma\left( \frac{s_{1}-\alpha_{i}}{2}\right)  }{ \Gamma\left(\frac{1+s_{1}}{2}-s_{0}\right)} \cdot   \pi^{-s_{1}}   \sec \frac{\pi}{2}\left( 2s+s_{0}-s_{1}\right) \nonumber\\
 		&\hspace{80pt} \cdot   \ _{4}\widehat{F}_{3}\left(	\setlength\arraycolsep{.001pt} \medmuskip=.001mu
 		\begin{smallmatrix}
 	               \frac{1}{2}-s_{0}+\frac{s_{1}}{2} & &	\frac{1}{2}-s+ \frac{s_{1}+\alpha_{1}}{2} &                        & \frac{1}{2}-s+ \frac{s_{1}+\alpha_{2}}{2} &        & \frac{1}{2}-s+ \frac{s_{1}+\alpha_{3}}{2}  \\
 		                                                             &\frac{1}{2}-s +s_{1} &                                        &  1-s-\frac{s_{0}-s_{1}}{2} &                  & \frac{3}{2}-s- \frac{s_{0}-s_{1}}{2}  &
 		\end{smallmatrix}  \ \bigg| \ 1 \right)   \ \frac{ds_{1}}{2\pi i}. 
 		\end{align}

 	\end{prop}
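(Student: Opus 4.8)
The plan is to evaluate the inner $u$-integral in the expression (\ref{streamlinebarnes}) for $\left(\mathcal{F}_{\Phi}H\right)(s_{0},s)$ as a combination of two Saalschützian hypergeometric functions at $z=1$ by a Barnes-type residue computation, leaving the outer $s_{1}$-integral untouched; this directly produces (\ref{off4F3}). Throughout, (\ref{streamlinebarnes}) is the valid starting point on the domain (\ref{newdomain}), since the left side is holomorphic there by Proposition \ref{anconpr} and the right side of (\ref{off4F3}) is holomorphic there by Stirling applied to the two (absolutely convergent) $s_{1}$-integrals, so it suffices to prove the identity on an open dense subset of (\ref{newdomain}).

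First I would fix the $s_{1}$-contour at $\re s_{1}=\eta-\tfrac12$ and, in the inner $u$-integral, substitute $u\mapsto-2w$. Up to a factor $2$ (from $du=-2\,dw$ together with the reversal of orientation), this puts the $u$-integral into the standard Barnes shape
\[
\frac{1}{2\pi i}\int_{(c)}\frac{\Gamma(-w)\,\Gamma(a_{0}-w)\,\prod_{k=1}^{4}\Gamma(b_{k}+w)}{\Gamma\!\left(\tfrac12+w\right)\Gamma\!\left(\tfrac{s_{0}+s_{1}}{2}+w\right)}\,dw,
\]
with $a_{0}=\tfrac{s_{1}-s_{0}+1-2s}{2}$, $b_{1}=s-\tfrac{s_{0}}{2}$, and $b_{i+1}=\tfrac{s_{0}+\alpha_{i}}{2}$ for $i=1,2,3$, where the contour $(c)$ separates the poles of $\Gamma(-w)\Gamma(a_{0}-w)$ (lying to the right) from those of the $\Gamma(b_{k}+w)$ (lying to the left); this is precisely the configuration inherited from $\re u=\epsilon$ on (\ref{newdomain}) once $\re s_{1}=\eta-\tfrac12$ is large.

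Next I would close the $w$-contour to the right. By Stirling, on the line $\re w=R$ the integrand decays in $R$ with exponent $\le \sigma_{0}+2\sigma-\eta+O(1)\ll 0$ (because $\sigma_{0}+2\sigma$ is bounded on (\ref{newdomain}) while $\eta>40$), in addition to the usual exponential decay in $\im w$; hence the half-plane integral vanishes and the $w$-integral equals $-1$ times the sum of the residues at the poles of $\Gamma(-w)$ and of $\Gamma(a_{0}-w)$. The poles of $\Gamma(-w)$ at $w=n$ produce, after expanding $\Gamma(z-n)=(-1)^{n}\Gamma(z)/(1-z)_{n}$ and $\Gamma(z+n)=(z)_{n}\Gamma(z)$, the series $\,{}_{4}F_{3}$ with upper parameters $b_{1},\dots,b_{4}$ and lower parameters $\tfrac12$, $\tfrac{s_{0}+s_{1}}{2}$, $1-a_{0}=s+\tfrac12+\tfrac{s_{0}-s_{1}}{2}$; the leftover factor $\Gamma(a_{0})$ is rewritten via $\Gamma(a_{0})=\Gamma\!\left(\tfrac12-z\right)=\pi\sec(\pi z)/\Gamma\!\left(\tfrac12+z\right)$ with $z=\tfrac{2s+s_{0}-s_{1}}{2}$, which simultaneously supplies $\sec\tfrac{\pi}{2}(2s+s_{0}-s_{1})$ and the last $1/\Gamma$ of the normalization $\,{}_{4}\widehat{F}_{3}$. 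Likewise the poles of $\Gamma(a_{0}-w)$ at $w=a_{0}+n$ yield the second $\,{}_{4}\widehat{F}_{3}$, the required minus sign appearing from $\Gamma(-a_{0})=\Gamma\!\left(z-\tfrac12\right)=-\pi\sec(\pi z)/\Gamma\!\left(\tfrac32-z\right)$. Both hypergeometric functions are Saalschützian — one checks $\sum(\text{lower})-\sum(\text{upper})=1$ using $\alpha_{1}+\alpha_{2}+\alpha_{3}=0$ — so the two series converge absolutely on $|z|=1$. Collecting the constants $\sqrt{\pi}$ (from (\ref{streamlinebarnes})), the $2$ (from $u\mapsto-2w$), and a $\pi$ from each reflection rewrite gives the prefactor $2\pi^{3/2}$, and reinstating $\tfrac{1}{2\pi i}\int_{(\eta-1/2)}$ over $s_{1}$ yields exactly (\ref{off4F3}).

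The main obstacle will be the bookkeeping: carrying the many $\Gamma$-factors correctly through the substitution, the Pochhammer expansions, and the two reflection rewrites so that one lands precisely on the parameter lists of (\ref{off4F3}) with the right sign and normalization. A minor point to dispatch is the possible coincidence of the two pole families (i.e. $a_{0}\in\Z$), which occurs only on a discrete set of $(s_{0},s)$; away from it the computation above applies verbatim, and meromorphy of both sides on (\ref{newdomain}) then extends the identity by continuity. The decay estimate justifying the closing of the $w$-contour is routine once the $R$-exponent is computed as indicated.
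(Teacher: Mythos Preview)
Your approach is essentially the same as the paper's: shift the $u$-contour in (\ref{streamlinebarnes}) to $-\infty$ and identify the two residual series as Saalsch\"utzian ${}_{4}F_{3}(1)$'s (the paper phrases this as Slater's theorem for the associated Meijer $G$-function, but the content is identical to your residue computation after $u\mapsto -2w$). One small correction: your claimed decay exponent ``$\le \sigma_{0}+2\sigma-\eta+O(1)$'' for the integrand as $\re w\to\infty$ is not right---after applying the reflection formula to $\Gamma(-w)\Gamma(a_{0}-w)$ and balancing the four growing against the four decaying $\Gamma$-factors, the net power of $R$ is the constant $-2$ (independent of $\eta$), which is still sufficient to close the contour along half-integer lines.
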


 	\begin{proof}
 		By Stirling's formula, we can shift the line of integration of the $u$-integral in (\ref{streamlinebarnes}) to $-\infty$.  The residual series obtained can then be identified in terms of hypergeometric series as asserted in the present proposition. This can also be verified by  \textit{InverseMellinTransform[]} command in mathematica. More systematically, one  rewrites the $u$-integral in the form of a Meijer's $G$-function. The conversion between Meijer's $G$-functions and generalized hypergeometric functions is known as \textit{Slater's theorem}, see Chapter 8 of \cite{PBM90}. 
 			\end{proof}

 Recently, the articles \cite{BBFR20},  \cite{BFW21+}  have brought in the powerful asymptotic analysis of hypergeometric functions into the study of moments and obtain  sharp estimates in the spectral aspect. Also, our class of admissible test functions in Theorem \ref{maingl3gl2} is large enough for such prospects for the family of  $GL(3)\times GL(2)$ $L$-functions, see Remark \ref{testfunc}.

Next, we prove the existence of a kernel function for the integral transform  $\left(\mathcal{F}_{\Phi} H\right)\left(s_{0},  s \right)$ when integrating against the test function $H(\mu)$ chosen on the spectral side. (The resulting formula  also serves as  an intermediate step to lead to a more useful formula for $\left(\mathcal{F}_{\Phi} H\right)\left(s_{0},  s \right)$.)  The proof will require some care but fortunately it is not too hard for our particular case.   However, readers should be cautious that this is not always true for other instances of spectral summation formulae.  For example,  the existence of kernel can be rather non-trivial  in the spectral Kuznetsov formulae of $GL(2)$ and $GL(3)$  as pointed out by  \cite{Bu16} and \cite{Mo97}.

	\begin{prop}\label{kerextr}
		Suppose $H\in \mathcal{C}_{\eta}$.  On the domain 
		\begin{align}\label{expdom}
		\sigma_{0} \ > \ \epsilon \ :=  \ 1/100, \hspace{10pt}    \sigma \ < \ 4,  \hspace{10pt} 
		2\sigma-\sigma_{0}-\epsilon \ > \ 0, \hspace{10pt} 
		\sigma_{0}+2\sigma-1-\epsilon \ > \ 0,  \hspace{10pt} 
		1+\epsilon -\sigma_{0}-\sigma \ > \ 0, 
		\end{align}
		we have
		\begin{align}
		\left(\mathcal{F}_{\Phi} H\right)\left(s_{0},  s \right) 
		\ = \  &  \frac{\pi^{\frac{1}{2}-s}}{4} \  \int_{(0)} \ \frac{H(\mu)}{\left| \Gamma(\mu)\right|^2} 	\cdot 	\mathcal{K}(s_{0},s; \alpha, \mu) \ \frac{d\mu}{2\pi i},
		\end{align}
		where the kernel function $	\mathcal{K}(s_{0},s; \alpha, \mu)$ is given explicitly by the double Barnes integrals
		\begin{align}\label{imporker}
		\mathcal{K}(s_{0},s; \alpha, \mu) \ := \   &\int_{-i\infty}^{i\infty} \int_{-i\infty}^{i\infty}  \  \frac{\Gamma\left(\frac{s-s_{1}+\mu}{2}\right) \Gamma\left(\frac{s-s_{1}-\mu}{2}\right) \prod\limits_{i=1}^{3} \ \Gamma\left( \frac{s_{1}-\alpha_{i}}{2}\right)  }{ \Gamma\left(\frac{1+s_{1}}{2}-s_{0}\right)} \nonumber\\
		& \hspace{40pt} \cdot    \frac{ \Gamma\left( \frac{u}{2}\right) \Gamma\left( \frac{s_{1}-s_{0}+u}{2}+ \frac{1}{2}-s\right)    \prod\limits_{i=1}^{3} \ \Gamma\left( \frac{s_{0}-u+\alpha_{i}}{2}\right)  \Gamma\left(s-\frac{s_{0}+u}{2}\right) }{\Gamma\left( \frac{1-u}{2}\right) \Gamma\left(\frac{s_{0}-u+s_{1}}{2}\right)}  \  \frac{du}{2\pi i}   \ \frac{ds_{1}}{2\pi i },
		\end{align} 
		and the contours follow  the  Barnes convention. 
		
	\end{prop}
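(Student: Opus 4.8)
\textbf{Proof proposal for Proposition \ref{kerextr}.}

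The plan is to start from the expression (\ref{streamlinebarnes}) for $\left(\mathcal{F}_{\Phi} H\right)\left(s_{0},  s \right)$ obtained at the end of Section \ref{2stepana}, namely the double Mellin-Barnes integral over $s_1$ on the line $\re s_1 = \eta - 1/2$ and over $u$ on the line $\re u = \epsilon$, with the integrand built out of the various $\Gamma$-factors in (\ref{streamlinebarnes}), and then insert the formula (\ref{mellin}) from Lemma \ref{mellingl2} for $\widetilde{h}\big(s - s_1 - \tfrac12\big)$. This introduces the $\mu$-integral on $\re\mu = 0$ together with the factor $H(\mu)/|\Gamma(\mu)|^2$ and the two $GL(2)$ gamma factors $\Gamma\!\big(\tfrac{s-s_1+\mu}{2}\big)\Gamma\!\big(\tfrac{s-s_1-\mu}{2}\big)$ (after writing $w = s - s_1 - \tfrac12$, so $w + \tfrac12 \pm \mu = s - s_1 \pm \mu$). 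The resulting quadruple integral, once Fubini is justified, is precisely $\tfrac{\pi^{1/2-s}}{4}\int_{(0)} \frac{H(\mu)}{|\Gamma(\mu)|^2}\,\mathcal{K}(s_0,s;\alpha,\mu)\,\frac{d\mu}{2\pi i}$ with $\mathcal{K}$ the $(s_1,u)$-double Barnes integral displayed in (\ref{imporker}). So the content of the proposition is: (i) the application of Lemma \ref{mellingl2} is legitimate on the stated domain, (ii) the triple interchange of the $\mu$-, $s_1$-, $u$-integrations is valid by absolute convergence, and (iii) the contours for $s_1$ and $u$ can be taken straight (rather than genuinely Barnes-indented) or, conversely, that moving from $\re s_1 = \eta - \tfrac12$, $\re u = \epsilon$ to contours separating the ascending and descending poles of the gammas does not cross any pole — which is what the five inequalities in (\ref{expdom}) encode.

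The key steps, in order, would be: First, record which $\Gamma$-factors in (\ref{imporker}) have poles depending on $s_1$ and on $u$, and check that on the domain (\ref{expdom}) the lines $\re s_1 = \eta - \tfrac12$ (with $\eta > 40$) and $\re u = \epsilon$ already lie on the correct side of all of them — e.g. $\Gamma(u/2)$ forces $\re u > 0$, $\prod_i \Gamma\!\big(\tfrac{s_0 - u + \alpha_i}{2}\big)$ forces $\re u < \sigma_0$, $\Gamma\!\big(s - \tfrac{s_0+u}{2}\big)$ forces $\re u < 2\sigma - \sigma_0$, and $\Gamma\!\big(\tfrac{s_1 - s_0 + u}{2} + \tfrac12 - s\big)$ together with $\Gamma\!\big(\tfrac{s-s_1+\mu}{2}\big)\Gamma\!\big(\tfrac{s-s_1-\mu}{2}\big)$ constrain $\re s_1$ from below and above; the precise bookkeeping is where the conditions $\sigma_0 + 2\sigma - 1 - \epsilon > 0$ and $1 + \epsilon - \sigma_0 - \sigma > 0$ come in, the latter guaranteeing that $\re(s - s_1 - \tfrac12) = \sigma - (\eta-\tfrac12) - \tfrac12$ lies in the strip $(-\tfrac12, \eta)$ of validity of (\ref{mellin}), together with the gamma-pole separation needed to write $\mathcal{K}$ with Barnes contours. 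Second, establish absolute convergence of the quadruple integral by Stirling's formula (\ref{stirremind}): the exponential decay is governed by an exponent of the same flavour as $\mathcal{E}(t_0,t_1,v)$ in the proof of Proposition \ref{anconpr}, now augmented by the $\mu$-variable through the two new $GL(2)$ gammas and the factor $H(\mu) \ll e^{-2\pi|\mu|}$; the point is that the $e^{-2\pi|\mu|}$ from $H$ dominates any exponential growth that $1/|\Gamma(\mu)|^2 \asymp e^{\pi|\mu|}(1+|\mu|)^{\,\cdots}$ and the new gammas can contribute, so the $\mu$-integral converges absolutely and uniformly, and the $(s_1,u)$-part converges as in Section \ref{Stirl}. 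Third, apply Fubini--Tonelli to interchange the order of integration freely, then substitute (\ref{mellin}) and collect terms, identifying $\pi^{1/2-s}$ as the product $\pi^{-s_1}\cdot \pi^{-(w+1/2)/2}\big|_{w = s-s_1-1/2}$ combined with the $\sqrt\pi$ in (\ref{streamlinebarnes}) — a short bookkeeping computation — to land exactly on (\ref{imporker}).

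The main obstacle I expect is Step 1, the pole-separation bookkeeping: one must verify simultaneously that (a) the line $\re s_1 = \eta - \tfrac12$ can be deformed (or is already positioned) so that it passes to the right of the poles of $\prod_i\Gamma\!\big(\tfrac{s_1-\alpha_i}{2}\big)$ and of $\Gamma\!\big(\tfrac{s_1-s_0+u}{2}+\tfrac12-s\big)$ and to the left of the poles of $\Gamma\!\big(\tfrac{s-s_1\pm\mu}{2}\big)$, uniformly as $u$ and $\mu$ range over their contours, and (b) similarly for the $u$-contour, with the five inequalities (\ref{expdom}) being exactly the region where no collision occurs. Because $\mu$ ranges over the whole imaginary axis, the condition for the $s_1$-contour to stay left of the poles of $\Gamma\!\big(\tfrac{s-s_1\pm\mu}{2}\big)$ is automatic ($\re(s-s_1\pm\mu) = \sigma - (\eta - \tfrac12) < 0$ for $\eta$ large), but one must still confirm that the would-be indentations do not force the $s_1$- or $u$-contour outside the strips where the \emph{other} gammas behave, and that in the end every contour can legitimately be taken to be the straight vertical line appearing in (\ref{streamlinebarnes}) before inserting (\ref{mellin}). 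Once the geometry of the poles is pinned down on (\ref{expdom}), the convergence estimates and the interchange are routine consequences of Stirling and the rapid decay of $H \in \mathcal{C}_\eta$, and the formula (\ref{imporker}) drops out.
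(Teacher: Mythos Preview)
Your overall strategy is the same as the paper's --- insert Lemma \ref{mellingl2} into (\ref{streamlinebarnes}) and read off the kernel --- but there is a genuine gap in the contour geometry, and it is exactly the point the paper's proof is about.

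You write that on $\re s_1=\eta-\tfrac12$ the quantity $\re(s-s_1-\tfrac12)=\sigma-\eta$ lies in the strip $(-\tfrac12,\eta)$ where (\ref{mellin}) is valid, and that the $s_1$-contour automatically lies to the left of the poles of $\Gamma\!\big(\tfrac{s-s_1\pm\mu}{2}\big)$. Both claims are false: since $\eta>40$ and $\sigma<4$ we have $\sigma-\eta<-36$, well outside $(-\tfrac12,\eta)$, so Lemma \ref{mellingl2} simply does not apply on $\re s_1=\eta-\tfrac12$; and the poles of $\Gamma\!\big(\tfrac{s-s_1\pm\mu}{2}\big)$ have $\re s_1=\sigma,\sigma+2,\ldots$, so the line $\re s_1=\eta-\tfrac12$ lies to their \emph{right}, violating the Barnes convention. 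The missing step is therefore not bookkeeping but an honest contour shift: one must move the $s_1$-line from $\eta-\tfrac12$ down to some $\sigma_1$ with
\[
\max\{0,\ \sigma_0+2\sigma-1-\epsilon\}\ <\ \sigma_1\ <\ \sigma,
\]
checking that no pole of $\prod_i\Gamma\!\big(\tfrac{s_1-\alpha_i}{2}\big)$ or of $\Gamma\!\big(\tfrac{s_1-s_0+u}{2}+\tfrac12-s\big)$ is crossed (this is where $\sigma_1>0$ and $\sigma_1>\sigma_0+2\sigma-1-\epsilon$ enter). Only after this shift does $\re(s-s_1-\tfrac12)>-\tfrac12$, so that (\ref{mellin}) may be substituted. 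The existence of such a $\sigma_1$ requires $\sigma_0+2\sigma-1-\epsilon<\sigma$, i.e.\ $\sigma_0+\sigma<1+\epsilon$, which is the last inequality in (\ref{expdom}); you attributed this condition to the wrong purpose. The remaining conditions $\sigma_0>\epsilon$, $2\sigma-\sigma_0-\epsilon>0$ handle the $u$-contour as you indicated. Once the shift is made, the Fubini and Stirling arguments you outline go through without difficulty.
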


\begin{rem}
	\
	\begin{enumerate}
		\item 	The domain (\ref{expdom}) is certainly non-empty as it includes our point of interest $(\sigma_{0}, \sigma)=(1/2, 1/2)$. 
		
		\item  The contours of (\ref{imporker}) may be taken explicitly as the vertical lines  $\re u =\epsilon$ and $\re s_{1}=\sigma_{1}$  with
		\begin{align}\label{pickcont}
			\sigma_{0}+2\sigma-1-\epsilon \ < \  \sigma_{1} \ < \  \sigma.
		\end{align}
	\end{enumerate}
\end{rem}


	\begin{proof}
		Suppose
		\begin{align}\label{stirdom}
		\sigma_{0} \ > \ \epsilon,  \hspace{15pt}  \sigma \ < \ 4, \hspace{15pt} \text{ and } \hspace{15pt}  2\sigma-\sigma_{0}-\epsilon \ > \ 0
		\end{align}
		as in Proposition \ref{anconpr}. Recall the expression (\ref{streamlinebarnes}) for $\left(\mathcal{F}_{\Phi}H\right)(s_{0}, s)$. This time, we shift  the line of integration of the $s_{1}$-integral to $\re s_{1}=\sigma_{1}$ satisfying
		\begin{align}\label{valid}
		\sigma_{1} \ < \ \sigma
		\end{align}
		and no pole is crossed during this shift as long as
		\begin{align}\label{nopolecr}
		\sigma_{1} \ > \ 0 \hspace{15pt} \text{ and } \hspace{15pt}  \sigma_{1} \ > \ \sigma_{0}+2\sigma-1-\epsilon. 
		\end{align}
		
		Now, assume  (\ref{expdom}). The restrictions (\ref{stirdom}), (\ref{valid}), (\ref{nopolecr}) hold and such a line of integration for the $s_{1}$-integral exists. Upon shifting the line of integration to such a position, substituting  (\ref{mellin}) into (\ref{streamlinebarnes}) and the result follows. 
	\end{proof}

	The second step is to apply a very useful rearrangement  of  the $\Gamma$-factors in the $(n-1)$-fold Mellin transform of the  $GL(n)$ spherical Whittaker function  as discovered in Ishii-Stade \cite{IS07}. We shall only need the case of $n=3$ which we describe as follows.  Recall 
	\begin{align}\label{oriGam}
		G_{\alpha}(s_{1},s_{2})  \ := \   \pi^{-s_{1}-s_{2}}  \cdot  \frac{\prod\limits_{i=1}^{3} \Gamma\left( \frac{s_{1}+\alpha_{i}}{2}\right) \Gamma\left( \frac{s_{2}-\alpha_{i}}{2}\right)}{\Gamma\left(\frac{s_{1}+s_{2}}{2}\right)}
	\end{align}
from Proposition \ref{vtmellin}.  The First Barnes Lemma, i.e., 
	\begin{align}\label{firBarn}
		\int_{-i\infty}^{i\infty} \ \Gamma\left(w+ \alpha\right)\Gamma\left(w+\mu\right)  \Gamma\left( \gamma-w\right)\Gamma\left(\delta-w\right) \ \frac{dw}{2\pi i} 
		\ = \ \frac{\Gamma\left( \alpha+\gamma\right) \Gamma\left(\alpha+\delta\right) \Gamma\left(\mu+\gamma\right) \Gamma\left(\gamma+\delta\right)}{\Gamma\left( \alpha+\mu+\gamma+\delta\right)},
	\end{align}
	can be applied \textit{in reverse} such that  (\ref{oriGam}) can be rewritten as
	\begin{align}\label{ISrec}
		G_{\alpha}(s_{1},s_{2})  \  = \  \pi^{-s_{1}-s_{2}}  &\cdot \Gamma\left(\frac{s_{1}+\alpha_{1}}{2} \right) \Gamma\left(\frac{s_{2}-\alpha_{1}}{2} \right) \nonumber\\
		&\cdot \int_{-i\infty}^{i\infty} \ \Gamma\left(z+ \frac{s_{1}}{2}-\frac{\alpha_{1}}{4}\right)\Gamma\left(z+\frac{s_{2}}{2}+\frac{\alpha_{1}}{4}\right) \Gamma\left(\frac{\alpha_{2}}{2}+\frac{\alpha_{1}}{4}-z  \right) \Gamma\left(\frac{\alpha_{3}}{2}+\frac{\alpha_{1}}{4}-z  \right) \ \frac{dz}{2\pi i},
	\end{align}
see  Section 2 of \cite{IS07}. 
	Although (\ref{ISrec}) is  less symmetric than (\ref{oriGam}),  it is really (\ref{ISrec}) that displays the recursive structure of the $GL(3)$ Whittaker function in terms of the $K$-Bessel function.

	

	\begin{thm}\label{beausym}
	Suppose  $\re s_{0}= \re s=1/2$ \ and \  $\re \alpha_{i}=\re \mu =0$. Then $\mathcal{K}(s_{0},s; \alpha, \mu) $ is equal to 
			\begin{align}\label{ISform}
				  &4\cdot  \gamma\left(-\frac{s_{0}+\alpha_{1}}{2}\right) \cdot \prod_{\pm} \  \Gamma\left( \frac{s\pm\mu-\alpha_{1}}{2}\right)  \nonumber\\
			&\hspace{15pt}  \cdot \int_{-i\infty}^{i\infty}  \  \int_{-i\infty}^{i\infty}  \   \Gamma\left(s+t\right)  \Gamma\left( \frac{1-\alpha_{1}}{2}+t\right)\cdot  \Gamma\left(\frac{\alpha_{2}}{2}+\frac{\alpha_{1}}{4}-z  \right) \Gamma\left(\frac{\alpha_{3}}{2}+\frac{\alpha_{1}}{4}-z  \right)  \cdot \prod_{\pm} \  \Gamma\left( \frac{-s\pm\mu}{2}+ \frac{\alpha_{1}}{4}  +z-t\right) \nonumber\\
			&\hspace{80pt}  \cdot \frac{\gamma\left(t+ \frac{s_{0}}{2}\right) \gamma\left(\frac{\alpha_{1}}{4}-z- \frac{s_{0}}{2}\right) }{ \gamma\left(\frac{\alpha_{1}}{4}+t-z\right)} \     \frac{dz}{2\pi i} \  	\frac{dt}{2\pi i},
		\end{align}
	where the contours may be explicitly taken as the vertical lines $\re t = a$ and $\re z=b$ satisfying  
	\begin{align}\label{IScont}
		-1/2 \ <  \ a  \ <  \ -1/4, \hspace{10pt} -1/4 \ <  \ b \ < \ 0, \hspace{10pt} \text{ and } \hspace{10pt} b-a  \ > \ 1/4
		\end{align}
	and 
	\begin{align}
		\gamma(x) \ :=  \ \frac{\Gamma(-x)}{\Gamma\left(\frac{1}{2}+x\right)}. 
	\end{align}

	\end{thm}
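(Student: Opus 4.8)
The plan is to run the double Mellin--Barnes integral (\ref{imporker}) through a short chain of Barnes-lemma moves in which the factors $\Gamma\!\big(\tfrac{\alpha_2}{2}+\tfrac{\alpha_1}{4}-z\big)\Gamma\!\big(\tfrac{\alpha_3}{2}+\tfrac{\alpha_1}{4}-z\big)$ stay inert throughout, exactly as they do in the Ishii--Stade recursion. First I would isolate inside the integrand of (\ref{imporker}) the block
\[
\frac{\prod_{i=1}^{3}\Gamma\!\big(\tfrac{s_0-u+\alpha_i}{2}\big)\prod_{i=1}^{3}\Gamma\!\big(\tfrac{s_1-\alpha_i}{2}\big)}{\Gamma\!\big(\tfrac{s_0-u+s_1}{2}\big)}\ =\ \pi^{(s_0-u)+s_1}\,G_{\alpha}(s_0-u,\,s_1),
\]
(the $\pi$-power coming from the $\Gamma_{\mathbb R}$'s in (\ref{vtmellin})), and replace it by the right-hand side of the Ishii--Stade identity (\ref{ISrec}) specialized to $(s_1,s_2)\mapsto(s_0-u,\,s_1)$. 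This introduces the new contour variable $z$, pulls out the pair $\Gamma\!\big(\tfrac{s_0-u+\alpha_1}{2}\big)\Gamma\!\big(\tfrac{s_1-\alpha_1}{2}\big)$, freezes $\Gamma\!\big(\tfrac{\alpha_2}{2}+\tfrac{\alpha_1}{4}-z\big)\Gamma\!\big(\tfrac{\alpha_3}{2}+\tfrac{\alpha_1}{4}-z\big)$, and leaves the coupling factors $\Gamma\!\big(z+\tfrac{s_0-u}{2}-\tfrac{\alpha_1}{4}\big)\Gamma\!\big(z+\tfrac{s_1}{2}+\tfrac{\alpha_1}{4}\big)$. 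By Stirling and the standing hypotheses $\re s_0=\re s=\tfrac12$, $\re\alpha_i=\re\mu=0$ the resulting triple contour integral in $(u,s_1,z)$ is absolutely convergent, so Fubini applies and the order of integration is at our disposal.

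Next I would collapse the triple integral to a double integral by performing one of the two original integrations --- the one that will keep $s_1$ alive, renamed to $t$ via the affine substitution $s_1=2t+1$ (so that $\Gamma(\tfrac{s_1-\alpha_1}{2})$ becomes $\Gamma(\tfrac{1-\alpha_1}{2}+t)$, matching (\ref{ISform})). Grouping the $u$-dependent factors --- $\Gamma(\tfrac u2)$, $\Gamma(\tfrac{s_1-s_0+u}{2}+\tfrac12-s)$ (the extra factor produced by the $u$-integral of Section \ref{separaOD}), $\Gamma(\tfrac{s_0+\alpha_1-u}{2})$, $\Gamma(s-\tfrac{s_0+u}{2})$, the denominator $\Gamma(\tfrac{1-u}{2})^{-1}$, and the Ishii--Stade factor $\Gamma\!\big(z+\tfrac{s_0-u}{2}-\tfrac{\alpha_1}{4}\big)$ --- one rewrites $\Gamma(\tfrac u2)/\Gamma(\tfrac{1-u}{2})$ as $\gamma(-\tfrac u2)$ via reflection (this is the genesis of the $\gamma$'s carrying a $\tfrac12$) and then recognizes the remaining $u$-integrand as a configuration governed by Barnes' second lemma (Lemma \ref{secBarn}); the balance of its numerator/denominator parameters is what forces the use of $\alpha_1+\alpha_2+\alpha_3=0$ together with the precise $\tfrac14$-shifts supplied by the Ishii--Stade step. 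Evaluating it produces the prefactor $\gamma\!\big(-\tfrac{s_0+\alpha_1}{2}\big)\prod_{\pm}\Gamma\!\big(\tfrac{s\pm\mu-\alpha_1}{2}\big)$, cancels the standing factor $\Gamma(\tfrac{1+s_1}{2}-s_0)^{-1}$ against one of the output Gammas, and leaves a double integral in $(t,z)$; the leftover quotients are repackaged through $\gamma(x)=\Gamma(-x)/\Gamma(\tfrac12+x)$, the three $\gamma$'s of (\ref{ISform}) being pinned down by the relation $\gamma(t+\tfrac{s_0}{2})\gamma(\tfrac{\alpha_1}{4}-z-\tfrac{s_0}{2})/\gamma(\tfrac{\alpha_1}{4}+t-z)$ whose top and bottom arguments add up correctly. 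The constant $4$ is the product of the two Jacobians from the doublings $s_1\mapsto 2t+1$ and $u\mapsto -2(\cdot)$.

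Finally one must record the admissible contours. Writing the free variables as vertical lines $\re t=a$, $\re z=b$, I would check --- again using $\re s_0=\re s=\tfrac12$ and $\re\alpha_i=\re\mu=0$ --- that the choices $-\tfrac12<a<-\tfrac14$, $-\tfrac14<b<0$, $b-a>\tfrac14$ separate every ascending pole sequence of the $\Gamma$'s in the numerator from every descending one, and that the intermediate contour relocations (the $z$-contour of Ishii--Stade, the $u$-contour before applying Lemma \ref{secBarn}) sweep across no poles, so that no residues are lost. This is a finite check on a handful of linear inequalities in $a,b$ and the fixed real parts, of the same flavour as the contour discussion preceding (\ref{secBar}) in Lemma \ref{secBarn} and as in the proof of Theorem \ref{secMTcom}.

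\textbf{Main obstacle.} The delicate point is the collapse step: confirming that, after the Ishii--Stade substitution and the reflection rewrite of $\Gamma(\tfrac u2)/\Gamma(\tfrac{1-u}{2})$, the one-dimensional $u$-integrand genuinely lies in the Saalschützian (balanced) form demanded by Lemma \ref{secBarn}. This is not automatic --- the factor $\Gamma(\tfrac{s_1-s_0+u}{2}+\tfrac12-s)$ carried over from Section \ref{separaOD} must conspire with the $GL(2)$ factors, the pulled-out $GL(3)$ factor and the Ishii--Stade coupling factor so that the excess parameter vanishes identically, and if a naive grouping leaves a residual $\tfrac12$-shift one first absorbs it via an additional reflection (again feeding the $\gamma$-normalization) before re-grouping; should even that fail, the $u$- and $z$-integrations would have to be evaluated jointly by a double-integral Barnes lemma of the type used by Ishii--Stade, which is the fallback. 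Beyond this, the only work is the bookkeeping of the numerous powers of $\pi$ and $2$ through the duplication/reflection identities and of the contour positions, which is routine.
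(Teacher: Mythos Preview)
Your opening move --- inserting the Ishii--Stade rearrangement (\ref{ISrec}) into the block $G_{\alpha}(s_{0}-u,s_{1})$ inside (\ref{imporker}) to introduce the $z$-variable --- is exactly what the paper does. From that point on, however, your collapse step is aimed at the wrong integral and the wrong lemma.

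After the Ishii--Stade substitution the $u$-integrand carries five numerator $\Gamma$'s and one denominator $\Gamma$, and the paper explicitly notes that it is a \emph{non-Saalsch\"utz} ${}_{3}F_{2}(1)$. Concretely, with $u\mapsto -2u$ and parameters $a=s-\tfrac{s_{0}}{2}$, $b=\tfrac{s_{0}+\alpha_{1}}{2}$, $c=z+\tfrac{s_{0}}{2}-\tfrac{\alpha_{1}}{4}$, $d=0$, $e=\tfrac{s_{1}-s_{0}}{2}+\tfrac{1}{2}-s$, the balance required by Lemma~\ref{secBarn} reads $a+b+c+d+e=\tfrac{1}{2}$, i.e.\ $z+\tfrac{s_{1}}{2}+\tfrac{\alpha_{1}}{4}=0$, which is not an identity. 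No reflection trick fixes this: the excess is a genuine linear function of the free variables $z,s_{1}$, so your ``main obstacle'' is in fact an obstruction, and the fallback of a joint $(u,z)$ Barnes evaluation does not produce the stated formula either.

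The paper's route is different in two essential respects. First, it does not \emph{evaluate} the $u$-integral at all; it \emph{transforms} it via the ${}_{3}F_{2}$ identity (\ref{Ishii3F21}) (a Thomae-type relation from Bailey), producing a new integration variable $t$. The point of this transformation is that its prefactor contains $\Gamma\!\big(\tfrac{1+s_{1}}{2}-s_{0}\big)$ in the numerator and $\Gamma\!\big(\tfrac{s_{1}}{2}+z+\tfrac{\alpha_{1}}{4}\big)$ in the denominator, which cancel exactly against the two remaining $s_{1}$-dependent factors sitting outside the $u$-integral. Second, after this cancellation the $s_{1}$-integrand has only four numerator $\Gamma$'s and no denominator, so the \emph{First} Barnes Lemma (\ref{firBarn}) evaluates it in closed form, eliminating $s_{1}$ entirely. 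The surviving double integral is in $(t,z)$, where $t$ descends from the transformed $u$-integral --- not from a rescaling of $s_{1}$ as you propose. A final shift $t\mapsto t+\tfrac{s_{0}}{2}$ gives (\ref{ISform}); the constant $4$ is the product of the Jacobians from $u\mapsto -2u$ and from the halving implicit in applying (\ref{firBarn}) to arguments of the form $s_{1}/2$.
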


	\begin{rem}\label{whygdBarnes}
		\
	\begin{enumerate}
		\item  The assumptions of Theorem \ref{beausym}  have already covered the most interesting case for the moments in Theorem \ref{maingl3gl2}, i.e.,  on the critical line and for the tempered forms, but they are by no means essential. They were imposed to obtain a clean description of the contours (\ref{IScont}).

		\item Furthermore,  if we have either of the followings: 
		\begin{enumerate}
			\item  the cusp form $\Phi$ is considered to be fixed  and  the implicit constants in the estimates are allowed to depend on $\Phi$, 
			
			\item or $\Phi= E_{\min}^{(3)}(*; \alpha)$ where the `shifts' $\alpha_{i}$'s  are considered to be small as in \cite{CFKRS05} (i.e.,  $\ll 1/\log R$ in view of Remark \ref{testfunc}), 
		\end{enumerate}
then  it suffices to consider the case when $\alpha_{1}=\alpha_{2}= \alpha_{3}=0$ by continuity.  With $s=1/2$, this results in a simpler-looking formula for  (\ref{ISform}), i.e., 
	\begin{align*}
	\hspace{20pt}&4\cdot  \gamma\left(-\frac{s_{0}}{2}\right) \cdot \prod_{\pm} \  \Gamma\left( \frac{1/2\pm\mu}{2}\right) \int_{-i\infty}^{i\infty}  \  \int_{-i\infty}^{i\infty}  \   \Gamma\left(\frac{1}{2}+t\right)^2\cdot  \Gamma(-z)^2 \cdot \prod_{\pm} \  \Gamma\left( \frac{-1/2\pm\mu}{2}+z-t\right) \nonumber\\
	&\hspace{200pt}  \cdot  \frac{\gamma\left(t+ \frac{s_{0}}{2}\right) \gamma\left(-z- \frac{s_{0}}{2}\right) }{ \gamma\left(t-z\right)}  \  \frac{dz}{2\pi i} \  	\frac{dt}{2\pi i},
\end{align*}

\item In terms of analytic applications for $GL(n)$ involving Whittaker functions, experience has shown that the new formula of \cite{IS07} is more useful than the ones obtained previously.  For example, 

\begin{enumerate}
	\item The formula (\ref{ISrec}) was used in Buttcane \cite{Bu20} (cf. Theorem 2 therein) to simplify (considerably) the archimedean Rankin-Selberg calculation of $GL(3)$ previously  done  by Stade \cite{St93}.
	
	\item  In the recent work on the orthogonality relation for $GL(n)$ (see  \cite{GSW23+}),  such formula is crucial for strong bounds for the Whittaker functions and  the inverse Whittaker transform of their test function. 
\end{enumerate}
(This was pointed out to the author by Prof. Eric Stade and Prof. Dorian Goldfeld.  The author would like to thank their comments here.)

\item Indeed, a simple application of Stirling's formula shows that the integrand of the  Mellin-Barnes representation (\ref{ISform}) now decays exponentially as long as $|\im z|, |\im t| \to \infty$  \textit{regardless} of the size of $|\im s_{0}|$.  This favourable feature is not shared by the one of  (\ref{mainintrans}). 

	\end{enumerate}

	\end{rem}
	
		

	\begin{proof}[Proof of Theorem \ref{beausym}]
	Substitute  (\ref{ISrec}) into (\ref{imporker})   rearrange the integrals, we find that
	\begin{align}
	\mathcal{K}(s_{0},s; \alpha, \mu) \ := \   &\int_{-i\infty}^{i\infty} \  \frac{\Gamma\left(\frac{s-s_{1}+\mu}{2}\right) \Gamma\left(\frac{s-s_{1}-\mu}{2}\right) \Gamma\left(\frac{s_{1}-\alpha_{1}}{2} \right)   }{ \Gamma\left(\frac{1+s_{1}}{2}-s_{0}\right)}   \nonumber\\
	& \hspace{20pt} \cdot  \int_{-i\infty}^{i\infty} \  \Gamma\left(\frac{\alpha_{2}}{2}+\frac{\alpha_{1}}{4}-z  \right) \Gamma\left(\frac{\alpha_{3}}{2}+\frac{\alpha_{1}}{4}-z  \right)  \Gamma\left(z+\frac{s_{1}}{2}+\frac{\alpha_{1}}{4}\right) \  \nonumber\\
	&\hspace{50pt} \cdot \int_{-i\infty}^{i\infty}   \frac{ \Gamma\left( \frac{u}{2}\right) \Gamma\left( \frac{s_{1}-(s_{0}-u)}{2}+ \frac{1}{2}-s\right)      \Gamma\left(s-\frac{s_{0}+u}{2}\right)	\Gamma\left(\frac{s_{0}-u+\alpha_{1}}{2} \right)   \Gamma\left(z+ \frac{s_{0}-u}{2}-\frac{\alpha_{1}}{4}\right)}{\Gamma\left( \frac{1-u}{2}\right) }  \nonumber\\
	&\hspace{330pt}  \frac{du}{2\pi i} \ 	\frac{dz}{2\pi i} \ \frac{ds_{1}}{2\pi i }. \nonumber\\
	\label{ISexp}
	\end{align} 
	
	The innermost $u$-integral is of  $_{3}F_{2}(1)$-type (non-Saalsch\"{u}tz) in place of the original $_{4}F_{3}(1)$-type (Saalsch\"{u}tz). This offers some extra flexibility to transform the integrals further.  We will apply the following transformation identity for Barnes integrals of  $_{3}F_{2}(1)$-type (see Bailey \cite{Ba64}):
		\begin{align}\label{Ishii3F21}
		&\int_{-i\infty}^{i\infty} \ \frac{\Gamma(a+u)\Gamma(b+u)\Gamma(c+u)\Gamma(f-u)\Gamma(-u)}{\Gamma(e+u)} \ \frac{du}{2\pi i} \nonumber\\
		\ &\hspace{40pt} =  \ \frac{\Gamma(b)\Gamma(c)\Gamma(f+a)}{\Gamma(f+a+b+c-e)\Gamma(e-b)\Gamma(e-c)} \nonumber\\
		& \hspace{100pt}\cdot  \int_{-i\infty}^{i\infty} \ \frac{\Gamma(a+t)\Gamma(e-c+t)\Gamma(e-b+t)\Gamma(f+b+c-e-t)\Gamma(-t)}{\Gamma(e+t)} \ \frac{dt}{2\pi i}. 
		\end{align}

	Make a change of variable $u \to -2u$ and take
	\begin{align}
	a \  = \  s-\frac{s_{0}}{2}, & \hspace{15pt} b \ = \ \frac{s_{0}+\alpha_{1}}{2},  \hspace{15pt} c \ = \  z+\frac{s_{0}}{2}-\frac{\alpha_{1}}{4},  \nonumber\\
	f \ = \  &\frac{s_{1}-s_{0}}{2}+ \frac{1}{2}-s, \hspace{15pt} e \ = \  1/2
	\end{align}
	in (\ref{Ishii3F21}),  the $u$-integral of (\ref{ISexp}) can be written as
	\begin{align}
	2 &\cdot  \frac{\Gamma\left( \frac{s_{0}+\alpha_{1}}{2}\right) \Gamma\left(z+ \frac{s_{0}}{2}-\frac{\alpha_{1}}{4}\right) \Gamma\left(\frac{1+s_{1}}{2}-s_{0}\right)}{\Gamma\left( \frac{s_{1}}{2}+z+ \frac{\alpha_{1}}{4}\right) \Gamma\left( \frac{1-s_{0}-\alpha_{1}}{2}\right) \Gamma\left( \frac{1}{2}-z-\frac{s_{0}}{2}+ \frac{\alpha_{1}}{4}\right)} \nonumber\\
	& \hspace{20pt} \cdot \int_{-i\infty}^{i\infty} \ \frac{\Gamma\left(t+s-\frac{s_{0}}{2}\right) \Gamma\left(t+\frac{1}{2}-z-\frac{s_{0}}{2}+\frac{\alpha_{1}}{4}\right) \Gamma\left(t+\frac{1}{2}-\frac{s_{0}+\alpha_{1}}{2}\right) \Gamma\left( \frac{s_{0}+s_{1}}{2} +z-s+ \frac{\alpha_{1}}{4}-t\right) \Gamma(-t)}{\Gamma\left( \frac{1}{2}+t\right)} \ \frac{dt}{2\pi i}. 
	\end{align}
	
	Putting this back into (\ref{ISexp}). Observe that two pairs of $\Gamma$-factors involving $s_{1}$ will be cancelled and we can then execute the $s_{1}$-integral. More precisely, 
	\begin{align}
		\frac{1}{2} \cdot \mathcal{K}(s_{0},s; \alpha, \mu) \ = \ & \frac{\Gamma\left( \frac{s_{0}+\alpha_{1}}{2}\right)}{\Gamma\left( \frac{1-s_{0}-\alpha_{1}}{2}\right)}  \ \cdot \int_{-i\infty}^{i\infty}  \ \frac{dt}{2\pi i} \ \frac{\Gamma\left(t+s-\frac{s_{0}}{2}\right)  \Gamma\left(t+\frac{1}{2}-\frac{s_{0}+\alpha_{1}}{2}\right)  \Gamma(-t)}{\Gamma\left( \frac{1}{2}+t\right)}  \nonumber\\
		& \hspace{20pt} \cdot \int_{-i\infty}^{i\infty}  \ \frac{dz}{2\pi i} \  \frac{\Gamma\left(\frac{\alpha_{2}}{2}+\frac{\alpha_{1}}{4}-z  \right) \Gamma\left(\frac{\alpha_{3}}{2}+\frac{\alpha_{1}}{4}-z  \right)  \Gamma\left( z+ \frac{s_{0}}{2}- \frac{\alpha_{1}}{4}\right)}{\Gamma\left( \frac{1}{2}-z-\frac{s_{0}}{2}+ \frac{\alpha_{1}}{4}\right)} \ \Gamma\left(t+\frac{1}{2}-z-\frac{s_{0}}{2}+\frac{\alpha_{1}}{4}\right)   \nonumber\\
		&\hspace{-10pt} \cdot \int_{-i\infty}^{i\infty} \  \frac{ds_{1}}{2\pi i} \ \Gamma\left( \frac{s_{0}+s_{1}}{2} +z-s+ \frac{\alpha_{1}}{4}-t\right) \Gamma\left(\frac{s-s_{1}+\mu}{2}\right) \Gamma\left(\frac{s-s_{1}-\mu}{2}\right) \Gamma\left(\frac{s_{1}-\alpha_{1}}{2} \right).    
		\end{align}
		Applying (\ref{firBarn}) once again, we obtain 
		\begin{align}
		\hspace{-10pt}	\frac{1}{4} \cdot \mathcal{K}(s_{0},s; \alpha, \mu) \ = \   &\frac{\Gamma\left( \frac{s_{0}+\alpha_{1}}{2}\right)}{\Gamma\left( \frac{1-s_{0}-\alpha_{1}}{2}\right)} \ \Gamma\left( \frac{s+\mu-\alpha_{1}}{2}\right) \Gamma\left( \frac{s-\mu-\alpha_{1}}{2}\right) \nonumber\\
			& \cdot \int_{-i\infty}^{i\infty}  \ \frac{dt}{2\pi i} \  \Gamma\left(s+t-\frac{s_{0}}{2}\right)  \Gamma\left( \frac{1-\alpha_{1}}{2}+t-\frac{s_{0}}{2}\right) \frac{ \Gamma(-t)}{\Gamma\left( \frac{1}{2}+t\right)}  \nonumber\\
		&	\hspace{30pt} \cdot \int_{-i\infty}^{i\infty}  \ \frac{dz}{2\pi i} \  \Gamma\left(\frac{\alpha_{2}}{2}+\frac{\alpha_{1}}{4}-z  \right) \Gamma\left(\frac{\alpha_{3}}{2}+\frac{\alpha_{1}}{4}-z  \right)  \frac{ \Gamma\left( z+ \frac{s_{0}}{2}- \frac{\alpha_{1}}{4}\right)}{\Gamma\left( \frac{1}{2}-z-\frac{s_{0}}{2}+ \frac{\alpha_{1}}{4} \right)} \nonumber\\
			&\hspace{60pt}\cdot  \Gamma\left( \frac{-s+\mu}{2}+ \frac{\alpha_{1}}{4} +\frac{s_{0}}{2} +z-t\right) \Gamma\left( \frac{-s-\mu}{2}+ \frac{\alpha_{1}}{4}+ \frac{s_{0}}{2}+ z-t\right) \nonumber\\
			&\hspace{70pt} \cdot\frac{\Gamma\left(\frac{1}{2}+\frac{\alpha_{1}}{4} -\frac{s_{0}}{2}-z+t\right)}{\Gamma\left( - \frac{\alpha_{1}}{4}+\frac{s_{0}}{2}+z-t\right)}. 
		\end{align}
A final cleaning can be performed via the change of variables $t\to t+\frac{s_{0}}{2}$. This leads to (\ref{ISform}) and completes the proof. 

	\end{proof}


		\section{Notes}
		
			\begin{rem}[Note added in Dec. 2021]
			The first version of our preprint appeared on Arxiv in December 2021.  Peter Humphries has kindly informed the author that the moment of Theorem \ref{maingl3gl2} arises naturally from the context of the $L^{4}$-norm problem of $GL(2)$ Maass forms and  can also be investigated under another set of `Kuznetsov-Voronoi' method (see \cite{BK19a, BK19b, BLM19}) that is distinct from \cite{Li09, Li11}. This is his on-going work with Rizwanur Khan.  
		\end{rem}

		\begin{rem}[Note added in Oct. 2022/ Apr. 2023]
			The preprint of Humphries-Khan has now appeared, see \cite{HK22+}.  The spectral moments considered in  \cite{HK22+} and the present paper are distinct in a number of ways.   In one case, our  spectral moments  coincide when both  $\Phi=\widetilde{\Phi}$ and $s=1/2$ hold true, but otherwise extra twistings by root numbers are present in the one considered by \cite{HK22+}. This would then lead to different conclusions in view of the Moment Conjecture of \cite{CFKRS05} (see the discussions in Section \ref{comCI}). In the other case, our spectral moments differ by a full holomorphic spectrum and thus give rise to distinct conclusions in applications toward non-vanishing (say).  All these result in  different ways of making choices of test functions, as well as different shapes of the dual sides.  The self-duality assumption was used in \cite{HK22+} to annihilate two of the terms in their proof, but no such treatment is necessary for our method. 
			
			There is also the recent preprint of Bir\'{o} \cite{Bi22+} which studies another instance of reciprocity closely related to ours, but with the decomposition `$4=2\times 2$' on the dual side instead.  His integral construction consists of  a product of an automorphic kernel with a copy of  $\theta$-function and Maass cusp form of $SL_{2}(\Z)$  attached to each variable. The integration is taken over both variables and over  the quotient $\Gamma_{0}(4)\setminus \mathfrak{h}^2$.  See equation (3.15) therein. 
		\end{rem}

		\section{Acknowledgement}
		
		This paper is an extension of the author's thesis \cite{Kw22}. It is a great pleasure to thank  Jack Buttcane, Peter Humphries,  Eric Stade,  and my Ph.D. advisor Dorian Goldfeld  for helpful and interesting discussions, as well as the reviewers for a careful reading of the article and their valuable comments.  Part of the work was completed during the author's stay at the American Institute of Mathematics (AIM) in Summer 2021. I would like to thank AIM for the generous hospitality.

\ \\
\end{document}